\newtheorem{theorem}{Theorem}
\newtheorem{lemma}{Lemma}
\newtheorem{remark}{Remark}
\newcommand{\Z}{{\bf Z}}
\newcommand{\X}{{\bf X}}
\newcommand{\I}{{\bf I}}
\newcommand{\Y}{{\bf Y}}
\newcommand{\U}{{\bf U}}
\newcommand{\V}{{\bf V}}
\newcommand{\M}{{\bf M}}
\newcommand{\A}{{\bf A}}
\newcommand{\y}{{\bf y}}
\newcommand{\z}{{\bf z}}
\newcommand{\s}{{\bf s}}
\newcommand{\m}{{\bf m}}
\newcommand{\e}{{\bf e}}
\newcommand{\q}{{\bf q}}
\newcommand{\p}{{\bf p}}
\makeatletter \@addtoreset{equation}{section} \makeatother
\author{Yashi Wei}
\author{Jiang Hu}
\author{Zhidong Bai}
\affil{KLASMOE and School of Mathematics and Statistics, Northeast Normal University}
\date{\today}  
		\title{Edgeworth corrections for spiked eigenvalues of non-Gaussian sample covariance matrices with applications}
\begin{document}

\maketitle

\begin{abstract}
\cite{yang2018edgeworth} established an Edgeworth correction for the largest sample eigenvalue in a spiked covariance model under the assumption of Gaussian observations, leaving the extension to non-Gaussian settings as an open problem. In this paper, we address this issue by establishing first-order Edgeworth expansions for spiked eigenvalues in both single-spike and multi-spike scenarios with non-Gaussian data. Leveraging these expansions, we construct more accurate confidence intervals for the population spiked eigenvalues and propose a novel estimator for the number of spikes. Simulation studies demonstrate that our proposed methodology outperforms existing approaches in both robustness and accuracy across a wide range of settings, particularly in low-dimensional cases.
\end{abstract}

\textbf{KEY WORDS: Edgeworth expansion; spiked covariance model; largest eigenvalue.
}

\tableofcontents

\addtocontents{toc}{\setcounter{tocdepth}{2}}
\section{Introduction}\label{sec:intro}

Empirical evidence from diverse fields, including wireless communication engineering, speech recognition, and genetics, has consistently shown that the extreme eigenvalues of sample covariance matrices often exhibit a distinct separation from the bulk spectrum. This phenomenon motivated the development of the spiked covariance model, first formalized by \cite{johnstone2001distribution}, which has since become a cornerstone of modern high-dimensional statistical theory. The model characterizes covariance structures where a few population eigenvalues (spikes) are well-separated from the rest, yielding significant insights for both theoretical analysis and real-world applications.

The extreme eigenvalues of sample covariance matrices have garnered considerable theoretical interest, with most research focusing on the almost-sure limits of sample eigenvalues, their fluctuation behavior as described by central limit theorems (CLT), and the distinctive phase transitions occurring at critical eigenvalue thresholds. For instance, \cite{johnstone2001distribution} showed that the limiting distribution of the largest eigenvalue of a Wishart matrix is the Tracy-Widom (T-W) distribution. \cite{peche2003universality} established that this T-W convergence holds more broadly, provided the largest population spike is not excessively large. For complex-valued data, \cite{baik2005phase} and \cite{paul2007asymptotics} confirmed that the largest sample eigenvalues converge to a T-W distribution below a phase transition threshold and to a Gaussian distribution above it. 
	Further contributions include the work of \cite{bai2008central}, who provided the limiting distribution under more relaxed conditions, and \cite{bai2012estimation}, who proposed a consistent estimation method for spiked eigenvalues. Approximate distributions for the largest eigenvalue under Gaussianity were derived by \cite{bai2012sample}, \cite{johnstone2017roy}, and \cite{yang2018edgeworth}. More recently, \cite{wang2017asymptotics} extended the results of \cite{paul2007asymptotics} to a more general setting, while \cite{jiang2021generalized} and \cite{jiang2021partial} obtained universality results for the central limit theorem of eigenvalues of high-dimensional covariance matrices.

Moreover, the investigation of spiked models has found extensive applications across diverse disciplines; see, for example, \cite{shen2016statistics}, \cite{wang2017asymptotics}, \cite{huang2017asymptotic}, \cite{johnstone2018pca}, \cite{li2020asymptotic}, \cite{zhang2022heteroskedastic}, \cite{zhang2022asymptotic}, \cite{ding2023spiked}, \cite{dornemann2024detecting}, and \cite{tang2025mode}. Beyond the canonical sample covariance matrix, similar spiked models have been studied for other random matrix ensembles, including sample correlation matrices (\cite{jiang2021limits}), F-matrices (\cite{wang2017extreme}, \cite{xie2021limiting}), Beta-matrices (\cite{lamarre2019edge}), and canonical correlation coefficient matrices (\cite{bao2019canonical}). For each of these models, significant progress has been made in understanding limiting eigenvalue behavior, phase transitions, and associated central limit theorems, alongside their practical applications (\cite{jiang2021limits}).

Among the extensive literature, the work of \cite{yang2018edgeworth} is particularly noteworthy for its derivation of an Edgeworth correction for the largest sample eigenvalue under Gaussian observations. Their expansion provides a first-order correction to the limiting distribution, refining the standard central limit theorem and offering deeper insight into the finite-sample behavior of the leading eigenvalue. Such higher-order approximations are valuable for improving inferential accuracy, as they capture distributional features not described by simple Gaussian limits. For instance, a more precise characterization of the spiked eigenvalue's distribution enables more accurate estimation of the number of spikes--a fundamental challenge in signal detection and factor analysis, as demonstrated by \cite{passemier2012determining}, \cite{passemier2017estimation}, \cite{bai2018consistency}, \cite{cai2020limiting}, and \cite{jiang2021generalized}.

While the work of \cite{yang2018edgeworth} provides a comprehensive theory for Gaussian data, its applicability is limited in many real-world scenarios where observations are predominantly non-Gaussian. This paper fills this gap by establishing first-order Edgeworth expansions for sample spiked eigenvalues in a spiked covariance model under non-Gaussian settings. To summarize, the contributions of this paper are as follows:

\begin{itemize}
\item We develop a unified framework that handles both single-spike and multi-spike scenarios under non-Gaussian settings. Our results resolve an open problem posed by \cite{yang2018edgeworth} and refine the asymptotic distributions of spiked eigenvalues derived in \cite{jiang2021generalized}. Our approach extends the foundational work of \cite{hall2013bootstrap} and \cite{petrov2000classical} on Edgeworth expansions for sums of independent, non-identically distributed random variables, adapting and generalizing these techniques to the complex setting of high-dimensional eigenvalue distributions.

		\item We introduce novel confidence interval constructions that leverage our Edgeworth-corrected asymptotic distributions, offering significant improvements over conventional Gaussian-based approximations. The first method extends the work of \cite{jiang2021generalized}, while the second builds upon that of \cite{yang2019edgeworth}. We develop corresponding estimation techniques for the number of spikes, particularly in low-dimensional settings, where our method demonstrates notable precision.
	\end{itemize}
Our extensive simulation studies demonstrate that Edgeworth-corrected distributions provide a substantially better fit to the finite-sample behavior of spiked eigenvalues. Furthermore, we confirm the precision of our spike number estimation method in low-dimensional settings through comprehensive simulations across various distributions and covariance structures.
	
	The remainder of the paper is organized as follows. Section \ref{main results} presents our main theoretical results. Section \ref{application} demonstrates the utility of our theory through practical applications. Section \ref{simulation} provides numerical simulations to illustrate the practical implications of our findings. Section \ref{limitingdistribution} outlines the proof strategies for our main theorems and related preliminary results.  Section \ref{future} concludes the paper by summarizing key contributions and discussing future research directions.  The detailed proofs of our main results are provided in the Appendix.

	\subsection*{Conventions} In this paper,  we denote the trace of a matrix $\A$ by $\mathrm{tr}(\A)$ and its transpose by $\A'$. 
	We use the standard stochastic order notation: $O_p(1)$ denotes boundedness in probability, and $o_p(1)$ indicates convergence to zero in probability. $\Phi(x)$ and $\phi(x)$ denote the cumulative distribution function and probability density function of the standard normal distribution, respectively. We also denote the infinity norm by $\|\A\|_{\infty}=\max_i\sum_j|a_{ij}|$. 

\section{Main Results}\label{main results}

	\subsection{Edgeworth expansion for the spiked eigenvalues}
	Consider a spiked covariance model with a fixed number of spikes, $r\ge 1$. Let the data matrix be
	\[\X=(\mathbf{ x}_1,\dots,\mathbf{ x}_n)', \quad 1\le i\le n, 1\le j\le p+r,\] 
	where $\{\mathbf{x}_1,\dots,\mathbf{x}_n\}$ are independent and identically distributed (i.i.d.) random vectors with mean vector $0$ and covariance matrix $\bm{\Sigma}$. We further assume that each vector $\mathbf{x}_i$ is a linear transformation of a vector of i.i.d. real random variables, i.e.,
	\[\X=\Z\bm{\Sigma}^{1/2},\]
	where 
	$\Z=(\mathbf{ z}_1,\dots,\mathbf{ z}_n)'=(Z_{ij})_{n\times (p+r)}$ with the entries $Z_{ij}$ are i.i.d. random variables with $\mathbb{E}(Z_{ij})=0$ and $\mathrm{Var}(Z_{ij})=1$.
The eigendecomposition of $\bm{\Sigma}$ is given by
	\begin{align*}
\bm{\Sigma}=\V\text{diag}(l_1,\dots,l_r,1,\dots,1)\V'=(\V_1,\V_2)\text{diag}(l_1,\dots,l_r,1,\dots,1)(\V_1,\V_2)',
\end{align*}
	where $\V$ is a $(p+r)\times (p+r)$ orthogonal matrix and $\infty>l_1>\dots>l_r>1$ are the population spiked eigenvalues. 
	Our main results are established under the following assumptions:
\setlength{\leftmargini}{93pt}
	\begin{itemize}
		\item[\bf{Assumption (a)}] The random variables satisfy
		\[\mathbb{E}Z_{11}=0,\quad \mathbb{E}Z_{11}^2=1, \quad\beta_z=\mathbb{E}Z_{11}^4-3,\quad \Delta=\mathbb{E}Z_{11}^6<\infty.\]
		\item[\bf{Assumption (b)}] The dimensionality ratio converges, i.e.,
		\[\gamma_n=p/n\to\gamma\in(0,\infty).\]
\item[\bf{Assumption (c)}] The smallest spike is sufficiently large:  \[l_r>1+\sqrt \gamma.\]
		\item[\bf{Assumption (d)}] The distribution of $Z_{11}$ satisfies Cram\'{e}r's condition:
		\[\limsup_{|t|\to \infty}|\mathbb{E}[\exp(itZ_{11})]|<1.\]
	\end{itemize}

\begin{remark}
Assumption (a), which specifies a zero mean and unit variance, is a standard normalization. The finite sixth moment condition is crucial, as our first-order Edgeworth expansion involves the third-order cumulant of squared observations, which is equivalent to requiring a finite sixth moment. Assumption (b) is a standard condition in random matrix theory concerning the asymptotic dimensionality ratio. Assumption (c) is the standard phase transition condition for spiked eigenvalues. Assumption (d), Cram\'{e}r's condition, is a classical smoothness criterion in Edgeworth expansion theory that ensures the proper decay of the characteristic function. Notably, \cite{yang2018edgeworth} did not need to impose this assumption, as the known properties of the Gaussian distribution inherently satisfy this requirement.
\end{remark}

Let $\mathbf{S}=n^{-1}\X'\X$ be the sample covariance matrix and $\hat l_k$ be its $k$-th largest eigenvalue. A primary result of this paper is a skewness correction to the normal approximation for the distribution of the spiked eigenvalue statistic:
	\[R_k=\frac{n^{1/2}(\hat l_k-\rho_{nk})}{\tilde\sigma_{nk}},\quad (1\le k\le r)\]
	under non-Gaussian observations. The centering and scaling functions are defined as
	\[\rho_{nk}=l_k+\frac{\gamma_nl_k}{(l_k-1)},\qquad (\tilde\sigma_{nk})^2=\frac{4\sigma_{nk}^{-2}+\pi_kl_k^{-2}}{4\sigma_{nk}^{-4}}, \qquad\sigma_{nk}^2=2l_k^2\bigg(1-\frac{\gamma_n}{(l_k-1)^2}\bigg),\]
	where $\pi_{k}=\lim\sum_{t=1}^{p+r} v_{tk}^4\beta_z$ and $\mathbf{v}_k=(v_{1k},\dots,v_{(p+r)k})'$ is the $k$-th column of the matrix $\V_1$. Let $\tilde {\Z}_k=\Z \mathbf{v}_k=(\tilde Z_{1k},\dots,\tilde Z_{nk})'$.  Then we have the first-order Edgeworth expansion of the distribution of $R_k$.
	\begin{theorem}\label{multi}
		  Under Assumptions (a)--(d), the first-order Edgeworth expansion for the distribution of $R_k$ is given by
		\begin{align}\label{main equality}
			&\sup_x\bigg|\mathbb{P}(R_k\le x)-\Phi(x)-n^{-1/2}\bigg(\frac 16\kappa_{2,k}^{-3/2}\kappa_{3,k}(1-x^2)-\kappa_{2,k}^{-1/2}\Big[\mu(g_{nk})+A(g_{nk})\Big]\bigg)\phi(x)\bigg|\\\nonumber
			&=o(n^{-1/2}),
		\end{align}
		where
		\begin{align*}
			\kappa_{2,k}&=\frac{\tilde\kappa_{2,k}(1-l_k^{-1})^2}{(l_k-1)^2-\gamma_n},~~
			\kappa_{3,k}=\frac{\tilde\kappa_{3,k}(1-l_k^{-1})^3}{\big[(l_k-1)^3+\gamma_n\big]\big[(l_k-1)^2-\gamma_n\big]^{3}},\\
			\mu(g_{nk})&=\frac{\gamma_n(l_k-1)^2-\beta_z\gamma_n^{3/2}[(l_k-1)^2-\gamma_n]}{[(l_k-1)^2-\gamma_n]^{2}(l_k-1)},\\
			A(g_{nk})&=\frac{l_k-1}{(l_k-1)^2-\gamma_n}\sum_{j\neq k}^r\frac{l_j-1}{l_k-l_j},
		\end{align*}
		and $\tilde\kappa_{2,k}$ and $\tilde\kappa_{3,k}$ are the second-order and third-order cumulants of $\tilde Z_{1k}^2-1$, respectively. 
	\end{theorem}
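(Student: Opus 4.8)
The plan is to reduce the distribution of $R_k$ to that of a sum of independent (but non-identically distributed) random variables, and then invoke a non-i.i.d. Edgeworth expansion of the Petrov/Hall type, carefully tracking the two extra terms $\mu(g_{nk})$ and $A(g_{nk})$ that arise from the nonlinear relationship between $\hat l_k$ and the underlying linear statistic. The starting point is the standard characterization of sample spiked eigenvalues: $\hat l_k$ solves a determinantal equation involving resolvents of the non-spiked part of the sample covariance matrix. Linearizing this equation around $\rho_{nk}$ shows that, to the order needed for a first-order Edgeworth correction, $n^{1/2}(\hat l_k-\rho_{nk})$ is asymptotically equivalent to a linear spectral–type statistic built from $\tilde\Z_k=\Z\mathbf v_k$, namely something of the form $\sigma_{nk}^{-2}\cdot n^{-1/2}\sum_{i=1}^n(\tilde Z_{ik}^2-1)$ plus lower-order corrections, with the cross-spike interaction contributing the term $A(g_{nk})$. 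This is essentially the non-Gaussian analogue of the reduction carried out by \cite{yang2018edgeworth} in the Gaussian case, and it draws on the resolvent estimates and CLT machinery of \cite{bai2008central} and \cite{jiang2021generalized}.

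\medskip
\noindent\textbf{Key steps.} First I would establish a stochastic expansion
\[
R_k = \frac{n^{-1/2}\sum_{i=1}^n (\tilde Z_{ik}^2-1)}{\sqrt{\tilde\kappa_{2,k}}}\cdot c_{nk} + n^{-1/2}\,b_{nk} + o_p(n^{-1/2}),
\]
where $c_{nk}$ and $b_{nk}$ are explicit deterministic quantities capturing the Jacobian of the eigenvalue equation and the deterministic bias (the latter producing $\mu(g_{nk})$ and $A(g_{nk})$ after normalization by $\kappa_{2,k}^{1/2}$); this requires showing the remainder is uniformly negligible, which in turn needs concentration bounds for quadratic forms $\mathbf v_k' \mathbf (\text{resolvent}) \mathbf v_k$ around their deterministic equivalents, at rate $o_p(n^{-1/2})$ after appropriate centering. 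Second, I would compute the cumulants of the leading linear statistic $W_n := n^{-1/2}\sum_{i=1}^n (\tilde Z_{ik}^2-1)$: since the $\tilde Z_{ik}$ are i.i.d. with variance $1$, $W_n$ has variance $\tilde\kappa_{2,k}=\Var(\tilde Z_{1k}^2)$ and third cumulant $n^{-1/2}\tilde\kappa_{3,k}$, which after the scaling factors $c_{nk}$ feed into $\kappa_{2,k}$ and $\kappa_{3,k}$ as stated; the sixth-moment assumption (a) guarantees $\tilde\kappa_{3,k}$ is finite. Third, I would verify Cramér's condition for the summands $\tilde Z_{ik}^2-1$: Assumption (d) on $Z_{11}$ transfers to $\tilde Z_{1k}=\mathbf v_k'\mathbf z$ because $\tilde Z_{1k}$ is a nontrivial linear combination of i.i.d. Cramér variables (using $\sum v_{tk}^2=1$ and $\max_t v_{tk}^2\to 0$, which is implied by the summability of $v_{tk}^4$ underlying the definition of $\pi_k$), and then to its square. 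Fourth, with these ingredients I would apply the classical Edgeworth expansion for a smooth function of a sum (the "delta-method Edgeworth" of \cite{hall2013bootstrap}, or the lattice-free expansion in \cite{petrov2000classical}), which yields precisely the displayed correction: the $\frac16\kappa_{2,k}^{-3/2}\kappa_{3,k}(1-x^2)$ term from the skewness of $W_n$, and the $-\kappa_{2,k}^{-1/2}[\mu(g_{nk})+A(g_{nk})]$ term from the first-order bias in the stochastic expansion. Finally, I would uniformize in $x$ by a standard smoothing-inequality argument, controlling the tails via the moment bounds and the Cramér condition to upgrade $o_p(n^{-1/2})$ pointwise bounds to $\sup_x$-bounds.

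\medskip
\noindent\textbf{Main obstacle.} The hard part will be the first step: pushing the stochastic expansion of $R_k$ to an error that is genuinely $o_p(n^{-1/2})$ \emph{uniformly}, rather than the $o_p(1)$ or $O_p(n^{-1/2})$ accuracy that suffices for a CLT. This demands sharp, higher-order control of the fluctuations of the relevant resolvent quadratic forms and of the non-spiked eigenvalue empirical measure — in particular, showing that the contributions of the bulk eigenvectors to the eigenvalue equation concentrate around their deterministic limits fast enough, and that the nonlinearity in inverting the eigenvalue equation does not generate $O(n^{-1/2})$ terms beyond $\mu(g_{nk})$ and $A(g_{nk})$. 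In the Gaussian case \cite{yang2018edgeworth} could exploit exact orthogonal invariance to decouple these pieces; without Gaussianity I expect to need anisotropic local-law-type estimates (in the spirit of the refined resolvent bounds behind \cite{jiang2021generalized}) together with careful bookkeeping of fourth-moment ($\beta_z$) corrections, which is where the $\beta_z\gamma_n^{3/2}$ term in $\mu(g_{nk})$ and the $\pi_k$ term in $\tilde\sigma_{nk}^2$ enter. A secondary technical point is ensuring that the Cramér condition for $\tilde Z_{1k}^2-1$ holds uniformly enough in $n$ given that $\mathbf v_k$ changes with $n$; this should follow from a quantitative version of the Cramér-condition-under-convolution argument but needs to be stated carefully.
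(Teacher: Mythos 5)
Your overall skeleton (reduce to independent summands, apply a Petrov-type expansion, track a deterministic bias) points in the right direction, but your Step 1 contains a step that is actually false, and it is the step where all the real work of the paper lives. You propose to write $R_k$ as a \emph{single deterministic constant} $c_{nk}$ times $n^{-1/2}\sum_i(\tilde Z_{ik}^2-1)/\sqrt{\tilde\kappa_{2,k}}$ plus a deterministic bias. The correct leading-order statistic (the paper's Theorem \ref{convergence}) is the resolvent quadratic form $\tilde S_n(g_n)\propto n^{-1/2}\big(\tilde\Z_k'(\rho_{nk}\I-n^{-1}\Z\Z')^{-1}\tilde\Z_k-\mathrm{tr}[(\rho_{nk}\I-n^{-1}\Z\Z')^{-1}]\big)=n^{-1/2}\big(\sum_i a_{ii}(\tilde Z_{ik}^2-1)+\sum_{i\neq j}a_{ij}\tilde Z_{ik}\tilde Z_{jk}\big)$, and this is \emph{not} asymptotically equivalent to a deterministic multiple of $n^{-1/2}\sum_i(\tilde Z_{ik}^2-1)$: replacing $a_{ii}$ by its deterministic equivalent $\mathrm{F}_{\gamma_n}(g_n)$ and discarding the off-diagonal bilinear part changes the variance at leading order (the true variance is of the form $2\mathrm{F}_{\gamma_n}(g_n^2)+\pi_k\mathrm{F}_{\gamma_n}^2(g_n)$, whereas your representation forces $(2+\pi_k)\,c_{nk}^2$, and $\mathrm{F}_{\gamma_n}(g_n^2)>\mathrm{F}_{\gamma_n}^2(g_n)$). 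The same mismatch shows up in the answer you are supposed to reach: the theorem's $\kappa_{2,k}$ and $\kappa_{3,k}$ are $\tilde\kappa_{2,k}\mathrm{F}_{\gamma_n}(g_{nk}^2)$- and $\tilde\kappa_{3,k}\mathrm{F}_{\gamma_n}(g_{nk}^3)$-type quantities (hence the factors $((l_k-1)^2-\gamma_n)^{-1}$ and $((l_k-1)^3+\gamma_n)((l_k-1)^2-\gamma_n)^{-3}$), which can only arise if the coefficients multiplying $(\tilde Z_{ik}^2-1)$ vary with $i$ (being $\approx g_{nk}(\lambda_i)$); a constant coefficient would yield $\mathrm{F}_{\gamma_n}^2(g_{nk})$ and $\mathrm{F}_{\gamma_n}^3(g_{nk})$ instead. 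So your expansion cannot produce the stated correction terms.

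The second missing idea is how to obtain independence at all in the non-Gaussian case. In your representation the coefficients $a_{ii}$ are entries of a resolvent built from $\Z$ itself, hence dependent on the $\tilde Z_{ik}$; "anisotropic local-law" concentration cannot fix this, because a local law gives high-probability entrywise bounds, not the $o(n^{-1/2})$-accurate \emph{distributional} statement needed before one may condition and invoke Petrov. The paper's resolution is a two-stage argument you have no analogue of: (i) a partial generalized four moment theorem (Theorem \ref{G4MT}) showing that swapping the bulk $\Z$ for an independent Gaussian array $\Y$ inside the resolvent changes the distribution by only $o(n^{-1/2})$, after which the coefficients are independent of $\tilde\Z_k$ and, conditionally on the bulk eigenvalues $\bm{\Lambda}$, the diagonal part is a genuine sum of independent variables while the off-diagonal part is disposed of separately; and (ii) a conditional Edgeworth expansion followed by averaging over $\bm{\Lambda}$, which is precisely where $\mu(g_{nk})$ enters --- not as a delta-method bias but as the asymptotic mean of the linear spectral statistic $\tilde G_n(g_{nk})$, including its non-Gaussian $\beta_z$ contribution. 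Your identification of $A(g_{nk})$ as a cross-spike effect and your remarks on transferring Cram\'er's condition to $\tilde Z_{1k}^2-1$ are consistent with the paper, but without the quadratic-form structure and the four-moment swap the proof cannot be completed along the lines you describe.
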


\begin{remark}
\cite{yang2019edgeworth} derived Edgeworth corrections for both single and multiple spikes under Gaussianity. Our results generalize their findings to the non-Gaussian setting. When the data follow a Gaussian distribution, our Edgeworth corrections for both single-spike and multi-spike cases are consistent with theirs. A key theoretical advancement is the characterization of a fundamental dependency structure: the asymptotic distribution of each spiked eigenvalue depends not only on its corresponding population spike but also on all other spikes. We rigorously quantify these interdependencies through the term $A(g_{nk})$.
\end{remark}	


\begin{remark}
The scaling function in our test statistic adopts a generalized form that naturally encompasses previous Gaussian-based results as special cases. We emphasize that $\kappa_{2,k}$ and $\kappa_{3,k}$ are not the exact conditional cumulants of $Z_{11}$ but rather carefully constructed approximations designed to capture the essential features of the underlying distribution while maintaining analytical tractability.
It is well known that for any random variable $X$ with a finite third-order moment, its second-order cumulant $\kappa_2$ and third-order cumulant $\kappa_3$ can be expressed as:
\[\kappa_2=\mathbb{E}X^2-(\mathbb{E}X)^2, \quad\kappa_3=\mathbb{E}(X^3)-3\mathbb{E}(X^2)\mathbb{E}X+2(\mathbb{E}X)^3.\]
Therefore, it is easy to obtain that
	\begin{align*}
	\tilde \kappa_{2,k}
	&=\beta_z\sum_{i=1}^{p+r}v_{ik}^4+2,
	\end{align*}and
		\begin{align*}
	\tilde\kappa_{3,k}
		&=(\Delta-15\beta_z-15)\sum_{i=1}^{p+r}v_{ik}^6+12\beta_z\sum_{i=1}^{p+r}v_{ik}^4+10(\mathbb{E}Z_{11}^3)^2\Big[\Big(\sum_{j=1}^{p+r}v_{jk}^3\Big)^2-\sum_{j=1}^{p+r}v_{jk}^6\Big]+8.
\end{align*}
It is worth noting that if $\|\V_1\|_{\infty}=o(1)$, then $\tilde \kappa_{2,k}=2+o(1)$ and $\tilde \kappa_{3,k}=8+o(1)$.
\end{remark}


For the special case of a single spike ($r=1$), the largest sample eigenvalue directly corresponds to the spike, which admits a simplified Edgeworth expansion. Let $\hat l$ be the largest eigenvalue of $\mathbf{S}$ and define
\[R_n=\frac{n^{1/2}(\hat l-\rho_{n1})}{\tilde\sigma_{n1}},\]
where $\rho_{n1}$ and $\tilde \sigma_{n1}$ are the centering and scaling parameters associated with the single population spike.

	\begin{theorem}\label{main}
		 Under Assumptions (a)--(d) with $r=1$, the first-order Edgeworth expansion for the distribution of $R_n$ is given by
		\begin{align}\label{main equality}
			\sup_x\bigg|\mathbb{P}(R_n\le x)-\Phi(x)-n^{-1/2}\bigg(\frac 16\kappa_{2,n}^{-3/2}\kappa_{3,n}(1-x^2)-\kappa_{2,n}^{-1/2}\mu(g_n)\bigg)\phi(x)\bigg|=o(n^{-1/2}),
		\end{align}
		where
		\begin{align*}
			\kappa_{2,n}&=\tilde\kappa_{2}(1-l^{-1})^2\big((l-1)^2-\gamma_n\big)^{-1},\\
			\kappa_{3,n}&=\tilde\kappa_{3}(1-l^{-1})^3\big((l-1)^3+\gamma_n\big)\big((l-1)^2-\gamma_n\big)^{-3},\\
			\mu(g_n)&=\frac{\gamma_n(l-1)^2-\beta_z\gamma_n^{3/2}[(l-1)^2-\gamma_n]}{[(l-1)^2-\gamma_n]^{2}(l-1)},
		\end{align*}
		and $\tilde\kappa_{2}$ and $\tilde\kappa_{3}$ are the second-order and third-order cumulants of $\tilde Z_{11}^2-1$, respectively.
	\end{theorem}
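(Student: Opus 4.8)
I would first observe that the statement is precisely the single-spike specialization of Theorem~\ref{multi}: taking $r=1$ and $k=1$ there, the sum $\sum_{j\neq 1}^{1}\frac{l_j-1}{l_1-l_j}$ defining $A(g_{n1})$ is empty, so $A(g_{n1})=0$, while $R_1=R_n$, the quantities $\kappa_{2,1},\kappa_{3,1},\mu(g_{n1})$ reduce verbatim to $\kappa_{2,n},\kappa_{3,n},\mu(g_n)$, and Assumptions (a)--(d) are unchanged. So the cleanest route is simply to invoke Theorem~\ref{multi}. The real content then lies in the (shared) proof of Theorem~\ref{multi}, which I sketch below in its lighter $r=1$ form, so that this also reads as a self-contained argument.

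\emph{Reduction.} I would write $\bm{\Sigma}=\I+(l-1)\mathbf{v}_1\mathbf{v}_1'$ and pass to the $n\times n$ companion matrix $\underline{\mathbf{S}}=n^{-1}\Z\bm{\Sigma}\Z'$, which shares the nonzero eigenvalues of $\mathbf{S}$. With $\V_2$ the block of $\V$ orthogonal to $\mathbf{v}_1$ and $\Z_\perp=\Z\V_2$, one has $\underline{\mathbf{S}}=\underline{\mathbf{S}}_\perp+\tfrac{l}{n}\tilde{\Z}_1\tilde{\Z}_1'$ with $\underline{\mathbf{S}}_\perp=n^{-1}\Z_\perp\Z_\perp'$, so under Assumption (c) the spiked eigenvalue $\hat l$ (which then separates from the bulk) is the largest root outside $\mathrm{spec}(\underline{\mathbf{S}}_\perp)$ of the rank-one secular equation
\[
\frac{1}{l}=\frac{1}{n}\,\tilde{\Z}_1'\big(\hat l\,\I-\underline{\mathbf{S}}_\perp\big)^{-1}\tilde{\Z}_1.
\]
Next I would use concentration of this quadratic form around $n^{-1}\mathrm{tr}\big(\hat l\,\I-\underline{\mathbf{S}}_\perp\big)^{-1}$ (a Marchenko--Pastur--Stieltjes transform of $\hat l$), together with the local law, to rewrite the equation --- after standardization --- as
\[
\hat l=G_n\!\Big(\tfrac{1}{n}\sum_{i=1}^{n}(\tilde Z_{i1}^2-1)\Big)+o_p(n^{-1/2}),
\]
for an explicit smooth deterministic map $G_n$ whose value and first two derivatives at $0$ are computable from $\rho_{n1},\sigma_{n1}$ and $\gamma_n$; the subleading resolvent fluctuations of the quadratic form should be of smaller order and enter only through a deterministic bias.

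\emph{Transfer.} I would then apply the classical Edgeworth expansion for sums of i.i.d.\ random variables to $T_n=\kappa_{2,n}^{-1/2}n^{-1/2}\sum_{i}(\tilde Z_{i1}^2-1)$, whose summands $\tilde Z_{11}^2-1$ have second and third cumulants $\tilde\kappa_{2}$ and $\tilde\kappa_{3}$, giving $\mathbb{P}(T_n\le x)=\Phi(x)-n^{-1/2}\tfrac16\kappa_{2,n}^{-3/2}\kappa_{3,n}(x^2-1)\phi(x)+o(n^{-1/2})$ uniformly in $x$, once Cram\'er's condition for $\tilde Z_{11}^2-1$ is checked (it follows from Assumption (d) for $Z_{11}$ via a standard lemma). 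A one-term Taylor (delta-method) transfer through $G_n$ then delivers the distribution of $R_n$: the linear term reproduces the skewness correction $\tfrac16\kappa_{2,n}^{-3/2}\kappa_{3,n}(1-x^2)\phi(x)$, while the curvature of $G_n$, combined with $\mathbb{E}T_n^2=1+o(1)$ and the $O(n^{-1})$ correction to the mean of the quadratic form --- which is where the fourth cumulant $\beta_z$ enters --- produces exactly the deterministic shift $-n^{-1/2}\kappa_{2,n}^{-1/2}\mu(g_n)\phi(x)$. Finally I would bound the remainder uniformly in $x$ by a smoothing argument: split the characteristic function, use Cram\'er's condition on compact frequency ranges, and the decay of the density outside.

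\emph{Main obstacle.} The hard part --- the same one as in Theorem~\ref{multi}, only lighter --- is the uniform-in-$x$ control of the reduction error at rate $o(n^{-1/2})$. Because for non-Gaussian $\Z$ the spike direction $\tilde{\Z}_1$ and the bulk $\Z_\perp$ are merely uncorrelated, not independent, I must establish quadratic-form concentration for $(\hat l\,\I-\underline{\mathbf{S}}_\perp)^{-1}$ at the spike location with an error that stays negligible after the CDF is smoothed, and I must pin down the $O(n^{-1})$ mean-correction terms --- including the $\beta_z$ and third-moment contributions and the effect of the delocalization of $\mathbf{v}_1$ --- that feed into $\mu(g_n)$ and $\tilde\kappa_3$. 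Verifying that Cram\'er's condition passes from $Z_{11}$ to the quadratic statistic $\tilde Z_{11}^2-1$ is the other delicate point; none of this should require ideas beyond those needed for Theorem~\ref{multi}.
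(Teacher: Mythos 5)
Your opening observation — that the statement is the $r=1$ case of Theorem \ref{multi} with $A(g_{n1})=0$ — is correct, but note the paper's logic runs the other way (Theorem \ref{multi} is proved by extending the single-spike argument), so the substance must come from your sketch. That sketch contains a genuine error at its central step. You claim that, after standardization, $\hat l$ is an explicit smooth function of the single scalar $n^{-1}\sum_{i}(\tilde Z_{i1}^2-1)$ up to $o_p(n^{-1/2})$. This is false: the fluctuation of the quadratic form $n^{-1}\tilde{\Z}_1'(\rho_n\I-n^{-1}\Z\Z')^{-1}\tilde{\Z}_1$ about its trace is driven by the eigenvalue-\emph{weighted} sum $n^{-1/2}\sum_i g_n(\tilde\lambda_i)(\tilde\omega_i^2-1)$, and since $g_n$ is non-constant on the Mar\v{c}enko--Pastur support, replacing the weights by their average changes the answer at leading order, not at $o_p(n^{-1/2})$. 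Concretely, the variance of $n^{1/2}(\hat l-\rho_n)$ is proportional to $2\mathrm{F}_{\gamma_n}(g_n^2)+\pi_1\mathrm{F}_{\gamma_n}^2(g_n)$, with \emph{different} functionals multiplying the Gaussian and the $\beta_z$ parts; a delta-method image $G_n(T)$ of the plain sum $T=n^{-1}\sum_i(\tilde Z_{i1}^2-1)$ has variance $G_n'(0)^2(2+\beta_z\sum_t v_{t1}^4)/n$, which cannot match both pieces simultaneously because $\mathrm{F}_{\gamma_n}(g_n^2)\neq\mathrm{F}_{\gamma_n}(g_n)^2$. The same defect propagates to the skewness term: the correct $\kappa_{3,n}$ carries the factor $(l-1)^3+\gamma_n$, i.e.\ $\mathrm{F}_{\gamma_n}(g_n^3)$, which arises from $n^{-1}\sum_i c_{ni}^3$ with non-constant weights $c_{ni}\approx g_n(\lambda_i)$ and cannot be produced by the curvature of a scalar map applied to $T$. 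This is why the paper's final step (Theorem \ref{limiting}) invokes the Edgeworth expansion for sums of independent but \emph{non-identically distributed} variables $c_{ni}(\tilde Z_{i1}^2-1)$, not the classical i.i.d.\ expansion you cite.

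The second, related gap is that you name the non-Gaussian dependence between $\tilde{\Z}_1$ and the bulk as the "main obstacle" but propose to resolve it by "quadratic-form concentration," which cannot work: as just noted, the deviation of the quadratic form from the trace is of the \emph{same} order as the signal, so the issue is one of identifying its distribution to accuracy $o(n^{-1/2})$, not of showing it is negligible. The paper's mechanism is a Lindeberg-type swapping argument (Theorem \ref{G4MT}, a refined partial four-moment comparison under sixth moments) that replaces the bulk $\Z\Z'$ by an independent Gaussian $\Y\Y'$ with characteristic-function error $o(n^{-1/2})$; only after this substitution are the summands conditionally independent given the bulk. Without an ingredient of this kind your argument does not go through for non-Gaussian data, which is precisely the content of the theorem. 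A further minor point: passing Cram\'er's condition from $Z_{11}$ to $\tilde Z_{11}^2-1$ is not a one-line "standard lemma" and deserves an explicit verification, as in condition R3 of the paper's Lemma on Edgeworth expansions.
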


\begin{remark}It is worth noting that the term $\mu(g_n)$ corresponds to the asymptotic mean in the central limit theorem for linear spectral statistics under non-Gaussian assumptions. A detailed derivation of $\mu(g_n)$ is provided in Section B.1 of the Supplementary Material.
\end{remark}

\subsection{Estimation the moments of $Z_{11}$}

Define $\Gamma=\mathbb{E}Z_{11}^3$. Since $\tilde\kappa_{2k}$ and $\tilde\kappa_{3k}$ is related to $\beta_z$, $\Gamma^2$ and $\Delta$, and all three are unknown in practice, this subsection proposes consistent estimators for $\beta_z$, $\Gamma$ and $\Delta$ to facilitate the practical implementation of our theoretical results. Denote by $\textbf{S}_j^{-1}$  the leave-one-out inverse sample covariance matrix, obtained by excluding the $j$-th observation vector ${\bf x}_j$ from $\bf X$. Denote by $\textbf{S}_{ij}^{-1}$  the inverse sample covariance matrix, obtained by excluding the $i$-th observation vector ${\bf x}_i$ and $j$-th observation vector ${\bf x}_j$ from $\bf X$.  Let
\begin{align}
\label{threemoment}	&\hat\Gamma^2=\frac{(1-\gamma_n)^3}{n(n-1)p}\sum_{i\neq j}({\bf x}_i'\textbf{S}_{ij}^{-1}{\bf x}_j)({\bf x}_i'\textbf{S}_{ij}^{-1}{\bf x}_j)({\bf x}_j'\textbf{S}_{ij}^{-1}{\bf x}_j)\\
	&\hat\beta_z=\frac {(1-\gamma_n)^2}{np}\sum_{j=1}^n\Big({\bf x}_j'\textbf{S}_j^{-1}{\bf x}_j-\frac{p}{1-\gamma_n}\Big)^2-\frac{2}{1-\gamma_n},\\
	\label{delta1}&\hat\Delta=\frac{(1-\gamma_n)^3}{np}\sum_{j=1}^n\Big({\bf x}_j'\textbf{S}_j^{-1}{\bf x}_j-\frac{p}{1-\gamma_n}\Big)^3-\frac{12\hat\beta_z+6}{1-\gamma_n}+15\hat\beta_z+21-\frac{8(1+\gamma_n)}{(1-\gamma_n)^2}.
\end{align}

\begin{theorem}\label{sixmoment}
	Under the same assumptions of Theorem \ref{multi},  we have that the estimators $\hat\beta_z$, $\hat\Gamma$ and $\hat\Delta$ are weakly consistent and asymptotically unbiased.
\end{theorem}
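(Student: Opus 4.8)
The plan is to first reduce the problem to the null case $\bm{\Sigma}=\mathbf{I}$, then compute conditional moments of the relevant bilinear forms, substitute deterministic equivalents from random matrix theory, and finally control fluctuations by a martingale argument. The key preliminary observation is that every statistic in \eqref{threemoment}--\eqref{delta1} is invariant under the whitening $\mathbf{X}\mapsto\mathbf{Z}$: since $\mathbf{X}=\mathbf{Z}\bm{\Sigma}^{1/2}$ gives $\mathbf{x}_i=\bm{\Sigma}^{1/2}\mathbf{z}_i$ and $\mathbf{S}_j=\bm{\Sigma}^{1/2}\big(n^{-1}\sum_{i\neq j}\mathbf{z}_i\mathbf{z}_i'\big)\bm{\Sigma}^{1/2}$, one has $\mathbf{x}_i'\mathbf{S}_j^{-1}\mathbf{x}_j=\mathbf{z}_i'\big(n^{-1}\sum_{\ell\neq j}\mathbf{z}_\ell\mathbf{z}_\ell'\big)^{-1}\mathbf{z}_j$, and similarly with the two-deleted inverse $\mathbf{S}_{ij}^{-1}$. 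Hence $\hat\beta_z,\hat\Gamma^2,\hat\Delta$ are functions of $\mathbf{Z}$ alone; in particular the spike structure plays no role, which is why only the assumptions of Theorem~\ref{multi} are needed (and, implicitly, $\gamma<1$ so that the inverses exist). After the reduction I write $\mathbf{B}_j:=\big(n^{-1}\sum_{i\neq j}\mathbf{z}_i\mathbf{z}_i'\big)^{-1}$ and $\mathbf{A}_{ij}:=\mathbf{S}_{ij}^{-1}$; these are independent of the deleted vectors, and on the event where the least eigenvalue of the corresponding Gram matrix exceeds $(1-\sqrt\gamma)^2-\varepsilon$ --- whose complement has probability $o(1)$ by the Bai--Yin theorem under the sixth-moment assumption --- their operator norms are bounded.

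Next I would compute the conditional expectations. For $\hat\beta_z$ and $\hat\Delta$, condition on $\mathbf{B}_j$ and expand the centred quadratic form $Q_j:=\mathbf{z}_j'\mathbf{B}_j\mathbf{z}_j-\mathrm{tr}\mathbf{B}_j$; the classical moment formulas for quadratic forms in a vector of i.i.d.\ entries give $\mathbb{E}[Q_j^2\mid\mathbf{B}_j]=\beta_z\sum_a(\mathbf{B}_j)_{aa}^2+2\,\mathrm{tr}\mathbf{B}_j^2$ and an analogous degree-three identity expressing $\mathbb{E}[Q_j^3\mid\mathbf{B}_j]$ as a linear combination --- with coefficients built from $1,\beta_z,\Gamma^2,\Delta$ --- of $\mathrm{tr}\mathbf{B}_j^3$, products of the form $(\mathrm{tr}\mathbf{B}_j)(\mathrm{tr}\mathbf{B}_j^2)$, $\sum_a(\mathbf{B}_j)_{aa}^3$, $\sum_{a,b}(\mathbf{B}_j)_{aa}(\mathbf{B}_j)_{ab}^2$ and $\sum_{a,b}(\mathbf{B}_j)_{aa}(\mathbf{B}_j)_{bb}(\mathbf{B}_j)_{ab}$. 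For $\hat\Gamma^2$, condition on $\mathbf{A}_{ij}$ and integrate out $\mathbf{z}_i$ and then $\mathbf{z}_j$; since $\mathbb{E}Z_{11}^3=\Gamma$ contributes only through the all-indices-equal contractions, the conditional mean of the summand equals $\Gamma^2\sum_{a,b}(\mathbf{A}_{ij})_{aa}(\mathbf{A}_{ij})_{ab}(\mathbf{A}_{ij})_{bb}$ up to lower-order terms carrying the other moments.

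I would then supply the deterministic equivalents of these matrix functionals for the inverse sample covariance with identity population and ratio $\gamma_n$: from the Marchenko--Pastur law $q^{-1}\mathrm{tr}\mathbf{B}_j^{k}\to\int x^{-k}\,dF_\gamma(x)$, with $\int x^{-1}=(1-\gamma)^{-1}$, $\int x^{-2}=(1-\gamma)^{-3}$, $\int x^{-3}=(1+\gamma)(1-\gamma)^{-5}$ (here $q=p+r$); from delocalization of $\mathbf{B}_j$ --- which follows from the one-coordinate leave-out identity, as $1/(\mathbf{B}_j)_{aa}$ equals $n^{-1}$ times a residual sum of squares with conditional mean $n-q$ --- one gets $\max_a|(\mathbf{B}_j)_{aa}-(1-\gamma)^{-1}|\to0$ and hence $q^{-1}\sum_a(\mathbf{B}_j)_{aa}^{k}\to(1-\gamma)^{-k}$; the mixed sums reduce to these via delocalization together with the isotropic estimate $\mathbf{u}'\mathbf{B}_j\mathbf{u}=(1-\gamma)^{-1}\|\mathbf{u}\|^2(1+o_p(1))$ for a fixed vector $\mathbf{u}$ (used with $\mathbf{u}=\mathbf{1}$). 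I would also record $\mathrm{tr}\mathbf{B}_j=q(1-\gamma_n)^{-1}+O_p(1)$, so that the centring constant $p/(1-\gamma_n)$ in \eqref{threemoment}--\eqref{delta1} agrees with $\mathrm{tr}\mathbf{B}_j$ up to $O_p(1)$; this gap, and the $q$-versus-$p$ mismatch, are negligible because they always appear multiplied by the $n^{-1}p^{-1}$-order normalizations. Plugging these limits into the conditional means of the previous step and matching with the explicit constants and correction terms --- which are designed for exactly this cancellation --- yields $\mathbb{E}\hat\beta_z\to\beta_z$, $\mathbb{E}\hat\Gamma^2\to\Gamma^2$, $\mathbb{E}\hat\Delta\to\Delta$, the contribution of the exceptional eigenvalue event being discarded by a crude H\"older bound using polynomial moment estimates for the quadratic forms; this is the asymptotic unbiasedness. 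For weak consistency it then suffices to show the variances vanish: I would write each statistic as a martingale with respect to the filtration generated by $\mathbf{z}_1,\dots,\mathbf{z}_n$, bound the martingale differences through rank-one resolvent perturbation (Sherman--Morrison), and apply Burkholder's inequality to obtain $\mathrm{Var}(\hat\beta_z)=O(n^{-1})$ and likewise for $\hat\Delta$; the double sum $\hat\Gamma^2$ I would treat as a resolvent-dependent $U$-statistic whose Hoeffding projection onto single coordinates is $O_p(n^{-1/2})$ and whose degenerate part is $O_p(n^{-1})$, again using the perturbation bounds to absorb the dependence of $\mathbf{A}_{ij}$ on the summation indices. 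Finally $\hat\Gamma=\sqrt{\hat\Gamma^2\vee 0}\to|\Gamma|$ by the continuous mapping theorem, with convergence of its mean following by uniform integrability.

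The main obstacle I anticipate is the third-moment bookkeeping for $\hat\Delta$: writing down the exact degree-three quadratic-form identity, computing the precise limits of all the mixed matrix functionals it involves, and verifying that the lower-order and bias corrections --- including the $O_p(1)$ discrepancy between $\mathrm{tr}\mathbf{B}_j$ and $p/(1-\gamma_n)$ and the $p+r$ versus $p$ mismatch --- cancel exactly rather than leaving a residual drift of order $1$. A secondary difficulty is that, with only a sixth moment at hand, the uniform resolvent and delocalization bounds must be obtained by truncation and polynomial concentration rather than exponential estimates, which makes the passage from convergence in probability to convergence in mean more delicate.
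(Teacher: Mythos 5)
Your plan follows essentially the same route as the paper's proof: reduce to moment identities for quadratic forms in the i.i.d.\ vector (the paper does the whitening $\mathbf{x}_i'\mathbf{S}_j^{-1}\mathbf{x}_j=\mathbf{z}_i'(\cdots)^{-1}\mathbf{z}_j$ only implicitly, so your making it explicit is a genuine clarification rather than a deviation), expand $\mathbb{E}(x_j'\mathbf{S}_j^{-1}x_j)^2$ and $\mathbb{E}(x_j'\mathbf{S}_j^{-1}x_j)^3$ conditionally on the leave-one-out inverse, substitute the Mar\v{c}enko--Pastur limits of $\mathrm{tr}\mathbf{S}_j^{-k}$ and of the Hadamard-power traces $\mathrm{tr}(\mathbf{S}_j^{-1}\circ\cdots\circ\mathbf{S}_j^{-1})$ (your values $(1-\gamma)^{-1},(1-\gamma)^{-3},(1+\gamma)(1-\gamma)^{-5}$ match the paper's), verify that the explicit correction constants in \eqref{delta1} cancel the bias exactly, and for $\hat\Gamma^2$ observe that only the all-indices-equal contractions carry $\Gamma^2$, giving the leading term $\Gamma^2\,\mathrm{tr}(\mathbf{S}_{jk}^{-1}\circ\mathbf{S}_{jk}^{-1}\circ\mathbf{S}_{jk}^{-1})\sim p\Gamma^2(1-\gamma_n)^{-3}$. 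The two places where your route diverges are minor but worth noting. First, for the variance you propose a martingale decomposition with Burkholder plus a Hoeffding projection for the $U$-statistic $\hat\Gamma^2$; the paper instead bounds $\mathbb{E}(\hat\Delta-\Delta)^2$ by directly expanding the second moment of the sum, killing the diagonal terms with a truncation-based sixth-power bound and factorizing the off-diagonal cross terms. Your approach is more systematic and arguably handles the dependence between summands more carefully, at the cost of additional resolvent-perturbation bookkeeping; the paper's is shorter but leans on an approximate factorization. Second, you dispose of the case $p>n$ by restricting to $\gamma<1$, whereas Assumption (b) permits $\gamma>1$ and the paper covers that regime by replacing $\mathbf{S}_j^{-1}$ with the pseudo-inverse $\mathbf{S}_j^{+}$ (the estimates then differ only by a factor involving $\gamma_n$); if you intend the result under the full Assumption (b) you need to add this step. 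Your final remark that $\hat\Gamma=\sqrt{\hat\Gamma^2\vee 0}$ recovers only $|\Gamma|$ is accurate, and harmless since only $\Gamma^2$ enters $\tilde\kappa_{3,k}$.
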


\begin{remark}
	The consistency of $\hat\beta_z$ for estimating $\beta_z$ has been established in Theorem 2.7 of \cite{zhang2019invariant}. Thus, we only give the proofs for $\Gamma$ and $\Delta$ in Section \ref{proof}.
\end{remark}

\section{Application}\label{application}
This section develops methods for two statistical problems in high-dimensional statistics: (i) statistical inference for spiked population eigenvalues, and (ii) estimation of the number of spikes. These problems hold an important position in both theoretical and applied high-dimensional analysis.

\subsection{Inference for spiked eigenvalues}\label{Yang}
Traditional approaches to spiked eigenvalue estimation typically presume a known number of spikes, while empirical applications frequently require simultaneous dimension estimation and inference. \cite{yang2019edgeworth} established a rigorous framework for spike selection and inference through Edgeworth-corrected asymptotic distributions under Gaussian observations. Extending this methodology to non-Gaussian setting, we develop procedures for spike identification and inference under more general distributional assumptions.

Let $\hat l_k$ be the $k$-th largest eigenvalue of the sample covariance matrix $\textbf{S}$. Define $\mathbf{l}=(l_1,\dots,l_r)$. Let $F_{kn}(\cdot, \mathbf{l})$ be the distribution function of $\hat l_k$ under above model. For any distribution $F$, define $\bar F$ as $\bar F=1-F(x)$. Assume $\theta_n=(1+\sqrt{\gamma_n})^2+n^{-1/3}\sqrt{\gamma_n}$. Then the exact pivot
\[u_{kn}(\hat l_k,\mathbf{l})=\bar F_{kn}(\hat  l_k, \mathbf{l})\]
satisfies
\begin{equation}\label{Yang}
u_{kn}(\hat l_k,\mathbf{l})|\hat l_k>\theta_n\sim U(0,1).
\end{equation}
Therefore, utilizing the Gaussian approximation, we can define the $Z$-type pivot:
\[u_{n}^z(\hat l_k,l_k)=\bar \Phi_{kn}(z_n(\hat l_k, l_k)).\]
Define $\hat{\mathbf{l}}$ is a eatimator for $\mathbf{l}$. Similarly, based on Theorem \ref{multi}, we have the $E$-type pivot:
\[u_{n}^E(\hat l_k,l_k)=\bar F^E_{kn}(z_n(\hat l_k, l_k),\hat{\mathbf{l}}),\]
where 
\begin{align*}
&\bar F^E_{kn}(x, \hat{\mathbf{l}})=\Phi(x)+n^{-1/2}p_{1k}(x)\phi(x),~~z_n(\hat  l_k,l_k)=\frac{n^{1/2}(\hat l_k-\rho_{n}( l_{k},\gamma_n))}{\tilde\sigma_{n}( l_{k},\gamma_n)},\\
&\rho_{n}( l_{k},\gamma_n)= l_k+\frac{\gamma_n l_k}{( l_k-1)},\qquad (\tilde\sigma_{n}( l_{k},\gamma_n))^2=\frac{4\sigma_{nk}^{-2}+\pi_k  l_k^{-2}}{4\sigma_{nk}^{-4}},\\
&p_{1k}(x)=\frac 16\kappa_{2,k}^{-3/2}\kappa_{3,k}(1-x^2)-\kappa_{2,k}^{-1/2}\Big[\mu(g_{nk})+A(g_{nk})\Big].
\end{align*}

Our methodology incorporates post-selection inference principles to rigorously account for selection effects,  guaranteeing valid inference following data-adaptive model selection. The theoretical foundations of our work build upon fundamental results in post-selective inference established by \cite{berk2013valid}, \cite{lee2016exact}) and \cite{yang2019edgeworth}.

\begin{theorem}\label{Pivots}
Under above settings, the quantity $P(u(\hat \rho)\le \alpha|\hat l_k>\theta_n)-\alpha$ is
\begin{itemize}
\item[(1)] $O(n^{-1/2})$ for $Z$-type pivot;
\item[(2)] $o(n^{-1/2})$ for $E$-type pivot.
\end{itemize}
\end{theorem}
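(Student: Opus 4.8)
The plan is to analyze the conditional distribution of the pivot given the selection event $\{\hat l_k > \theta_n\}$ by leveraging the Edgeworth expansion from Theorem \ref{multi}, treating the $Z$-type and $E$-type cases in parallel but tracking the order of the remainder in each. First I would record the key structural fact that underlies \eqref{Yang}: if $F_{kn}(\cdot,\mathbf{l})$ were the exact conditional distribution function of $\hat l_k$ given $\{\hat l_k > \theta_n\}$, then the probability-integral transform would make $u_{kn}(\hat l_k,\mathbf{l})$ exactly $U(0,1)$, so that $P(u_{kn}(\hat l_k,\mathbf{l}) \le \alpha \mid \hat l_k > \theta_n) = \alpha$ identically. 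The whole theorem then reduces to controlling the discrepancy between the \emph{approximate} distribution functions actually used in the pivots ($\bar\Phi_{kn}$ for the $Z$-type, $\bar F^E_{kn}$ for the $E$-type) and the true conditional distribution function, and to controlling the error incurred by plugging in the estimator $\hat{\mathbf{l}}$ for $\mathbf{l}$.

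Second, I would make the conditioning explicit. By Theorem \ref{multi}, $P(R_k \le x) = \Phi(x) + n^{-1/2} p_{1k}(x)\phi(x) + o(n^{-1/2})$ uniformly in $x$, and since $\{\hat l_k > \theta_n\}$ corresponds to $\{R_k > x_n\}$ for a fixed threshold $x_n$ (obtained by applying the monotone map $z_n(\cdot,l_k)$ to $\theta_n$; one checks $x_n$ is $O(1)$ or drifts only mildly, using the phase-transition gap in Assumption (c)), the conditional distribution function of $R_k$ given the selection event is
\[
P(R_k \le x \mid R_k > x_n) = \frac{\big[\Phi(x)-\Phi(x_n)\big] + n^{-1/2}\big[p_{1k}(x)\phi(x)-p_{1k}(x_n)\phi(x_n)\big] + o(n^{-1/2})}{\big[1-\Phi(x_n)\big] + n^{-1/2}\big[-p_{1k}(x_n)\phi(x_n)\big] + o(n^{-1/2})},
\]
and expanding the denominator geometrically gives a first-order Edgeworth expansion for the \emph{conditional} law with a correction polynomial that again differs from the Gaussian tail by $O(n^{-1/2})$ and matches $\bar F^E_{kn}$ up to $o(n^{-1/2})$. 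The $Z$-type pivot uses only the leading Gaussian term of this conditional expansion, so its selection-conditional coverage error is exactly the size of the discarded $n^{-1/2}$ term, i.e.\ $O(n^{-1/2})$, giving part (1). For the $E$-type pivot, $\bar F^E_{kn}$ reproduces the $n^{-1/2}$ term as well, so after substituting the true $\mathbf{l}$ the coverage error is the genuine $o(n^{-1/2})$ remainder of Theorem \ref{multi}.

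Third, I would handle the plug-in: $\bar F^E_{kn}$ is evaluated at $\hat{\mathbf{l}}$ rather than $\mathbf{l}$. Here I would invoke the consistency and asymptotic unbiasedness of the moment estimators (Theorem \ref{sixmoment}), together with standard $\sqrt{n}$-consistency of the sample spiked eigenvalues for the population spikes (from \cite{bai2008central,jiang2021generalized}), to argue that $\hat{\mathbf{l}} - \mathbf{l} = O_p(n^{-1/2})$ and hence, since $x \mapsto p_{1k}(x)$ and $\mathbf{l}\mapsto p_{1k}(x;\mathbf{l})$ are smooth with bounded derivatives on the relevant compact range (using Assumption (c) to keep $(l_k-1)^2 - \gamma_n$ bounded away from zero and the eigenvalue gaps $l_k - l_j$ bounded away from zero), the perturbation $\bar F^E_{kn}(x,\hat{\mathbf{l}}) - \bar F^E_{kn}(x,\mathbf{l})$ is $n^{-1/2} \cdot O_p(n^{-1/2}) = O_p(n^{-1})$, which is absorbed into the $o(n^{-1/2})$ remainder. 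Combining the conditional Edgeworth expansion, the matching of $\bar F^E_{kn}$ to its first-order term, and the negligible plug-in error yields $P(u^E_n \le \alpha \mid \hat l_k > \theta_n) - \alpha = o(n^{-1/2})$, proving part (2).

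The main obstacle I anticipate is the rigorous justification that the Edgeworth expansion of Theorem \ref{multi}, which is stated for the unconditional law of $R_k$, transfers to the law conditioned on the selection event with a remainder that is still uniformly $o(n^{-1/2})$ — in particular, one must ensure the selection threshold $x_n$ does not drift into a region where $1-\Phi(x_n)$ is so small that dividing by it inflates the remainder, and one must control the behavior near the Tracy--Widom edge where the Edgeworth/Gaussian approximation degrades; this is precisely why the cutoff is taken at $\theta_n = (1+\sqrt{\gamma_n})^2 + n^{-1/3}\sqrt{\gamma_n}$ rather than at the edge itself, and a careful accounting of this $n^{-1/3}$ buffer against the $n^{-1/2}$ target accuracy, in the spirit of \cite{yang2019edgeworth}, is the technical heart of the argument.
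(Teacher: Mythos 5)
Your proposal is correct in its essential logic and rests on the same key input as the paper --- the uniform first-order Edgeworth expansion of Theorem \ref{multi}, which makes the coverage error of a pivot equal (up to an anti-concentration term) to the uniform distance between the approximating distribution function and the true one: $O(n^{-1/2})$ when only the Gaussian term is kept, $o(n^{-1/2})$ when the $n^{-1/2}$ correction is included. The route is genuinely different, though. The paper never expands the conditional law: it inserts the oracle pivot $\bar F_{kn}(\hat l_k,\mathbf{l})$, which is exactly $U(0,1)$ given selection by \eqref{Yang}, and bounds
\[
|P(u\le\alpha)-\alpha|\le P\big(|u-\bar F_{kn}|>n^{-1/2}\epsilon_n\big)+2n^{-1/2}\epsilon_n,
\]
then feeds in $\sup_x|F_{kn}-\Phi|\le Cn^{-1/2}$ (resp.\ $\sup_x|F_{kn}-\bar F^E_{kn}|\le C_nn^{-1/2}$ with $C_n=o(1)$) from Theorem \ref{multi}. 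You instead write the conditional distribution function of $R_k$ given $\{R_k>x_n\}$ as a ratio of Edgeworth expansions and expand the denominator. Both work; the paper's triangle-inequality/anti-concentration device is shorter and sidesteps the ratio expansion entirely, while your version makes the selection effect and the plug-in of $\hat{\mathbf{l}}$ explicit --- two points the paper's proof glosses over, so your treatment of them (in particular the $n^{-1/2}\cdot O_p(n^{-1/2})=O_p(n^{-1})$ accounting for the estimated spikes) is a genuine improvement in rigor.

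One correction: your claim that the standardized threshold $x_n=n^{1/2}(\theta_n-\rho_{nk})/\tilde\sigma_{nk}$ is ``$O(1)$ or drifts only mildly'' is wrong, and your closing worry about $1-\Phi(x_n)$ becoming small points in the wrong direction. Under Assumption (c) the centering $\rho_{nk}$ exceeds the bulk edge $(1+\sqrt{\gamma_n})^2$ by a fixed margin, so $x_n\asymp -n^{1/2}\to-\infty$; hence $1-\Phi(x_n)\to 1$, all the $\Phi(x_n)$ and $\phi(x_n)$ terms in your ratio are superpolynomially small, and the conditional expansion collapses to the unconditional one up to $o(n^{-1/2})$. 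The conditioning step is therefore trivial rather than delicate for a supercritical spike, and the $n^{-1/3}$ buffer in $\theta_n$ plays no role in the accuracy analysis --- it only serves to separate spiked eigenvalues from the bulk in the selection rule.
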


\begin{remark}
This theorem establishes that the $E$-type pivot demonstrates superior asymptotic performance compared to the $Z$-type pivot, exhibiting faster converagence rates. Furthermore, the $E$-type pivot provides more accurate interval estimation for spiked eigenvalues, particularly crucial in practical applications requiring precise approximation.
\end{remark}

\begin{remark}
For every sample eigenvalue $l_k$, based on the equation \eqref{Yang}, we can calculate an corresponding equation
\begin{align}
\label{eq1}\rho_n(\hat l_{k1},\gamma_n)+n^{-1/2}\sigma_n(\hat l_{k1},\gamma_n)E_{0.05}-\rho_k=0,\\
\label{eq2}\rho_n(\hat l_{k2},\gamma_n)+n^{-1/2}\sigma_n(\hat l_{k2},\gamma_n)E_{0.95}-\rho_k=0,
\end{align}
where $\rho_k$ denotes the $k$-th largest eigenvalue from data matrix, ordered in descending. Hence, we derive the confidence intervals for $ l_k$:
\[
C_k = [\hat l_{k1} , \hat l_{k2}],
\]
where $\hat l_{k1}, \hat l_{k2}$ are the solutions of the equations \eqref{eq1} and \eqref{eq2}. 

\end{remark}

\subsection{Estimation the number of spiked eigenvalues} \label{Jiang}
The accurate identification of spiked eigenvalues represents a cornerstone problem in high-dimensional statistics theory, with significant implications for principal component analysis and its application. 

Extending the asymptotic framework developed by \cite{jiang2021generalized}, we introduce an estimation methodology incorporating Edgeworth expansion corrections. Our methodology constructs confidence intervals for each population eigenvalue $l_k$ via:
\[C_k=\Big[\Big(\frac{E_{0.05}\tilde\sigma_n}{\sqrt n}+1\Big)\rho_n,\Big(\frac{E_{0.95}\tilde\sigma_n}{\sqrt n}+1\Big)\rho_n\Big].\]
The corrected quantiles $E_\alpha$ are obtained through Cornish-Fisher expansions in \cite{hall2013bootstrap}:
\[
E_\alpha = z_\alpha - n^{-1/2}p_{1k}(z_\alpha).
\]
where $z_\alpha$ is the standard normal $\alpha$-quantile and $p_{1k}(x)$ is our first-order Edgeworth correction term from Section~\ref{main results}.  Besides, $p_{1k}(x)$ is a polynomial related to the values of $r$ and spikes. Based on empirical considerations, we may select a relatively large initial value $r_0$ and proper spikes values for the iterative procedure. The number of spikes is determined through the cardinality estimator:
\[\hat r=\sum_{k=1}^p\mathbb{I}(\hat l_k\in C_k),\]
where $\hat{l}_k$ denotes the $k$-th largest sample eigenvalue.

\section{Simulation}\label{simulation}

\subsection{Finite-sample distribution of $R_n$}
This section presents a comprehensive simulation study evaluating the finite-sample performance of both Gaussian approximation and our proposed first-order Edgeworth expansion for the distribution of spiked eigenvalues under non-Gaussian observations.  

We generate Monte Carlo samples from various distributions with mean 0 and variance 1 to evaluate the empirical density of the spiked eigenvalue statistic $R_n$. To balance the trade-off between computational efficiency and the effectiveness of the Edgeworth expansion, we select a sample size of $n=200$. This size is large enough for the asymptotic theory to be relevant but small enough that the high-order corrections of the Edgeworth expansion remain impactful. Let $p=20$ be the dimension of the sample covariance matrix $\mathbf{S}$. For each configuration, we perform 10,000 Monte Carlo replications.  Using these results, we plot the empirical density function of $R_n$, along with the theoretical curves from the Gaussian and Edgeworth approximations. The experimental design is summarized in Table \ref{tab:settings}.

\begin{table}[H]
\caption{Experimental Settings Summary}
\label{tab:settings}
\centering 
\setlength{\tabcolsep}{10pt} 
\renewcommand{\arraystretch}{1.3} 
\begin{tabular}{lll|>{\centering\arraybackslash}p{3cm}} 
\toprule
\textbf{Setting} & \textbf{Covariance Matrix} & \textbf{Spikes} & \textbf{Distribution of $Z_{11}$} \\
\midrule
1 & $\bm{\Sigma}=\operatorname{diag}(4,1,\dots,1)$ & 4 & \multirow{9}{=}{%
    \centering 
    $U(-\sqrt{3},\sqrt{3})$\\[2.5mm]
    $\frac{1}{\sqrt 2}(\chi^2(1)-1)$ \\[2.5mm]
    $Ga(1,1)-1$ \\[2.5mm]
    $2Ga(1,2)-1$\\ [2.5mm]
    $\sqrt{2}(Ga(2,2)-1)$\\[2.5mm]
    $\sqrt{3}(Ga(3,3)-1)$%
} \\
2 & $\bm{\Sigma}=\operatorname{diag}(6,4,1,\dots,1)$ & 6, 4 & \\
3 & $\bm{\Sigma}=\operatorname{diag}(8,6,4,1,\dots,1)$ & 8, 6, 4 & \\
\cmidrule(lr){1-3}
4 & $\bm{\Sigma}=\mathbf{Q}_1'\operatorname{diag}(4,1,\dots,1)\mathbf{Q}_1$ & 4 & \\
5 & $\bm{\Sigma}=\mathbf{Q}_1'\operatorname{diag}(6,4,1,\dots,1)\mathbf{Q}_1$ & 6, 4 & \\
6 & $\bm{\Sigma}=\mathbf{Q}_1'\operatorname{diag}(8,6,4,1,\dots,1)\mathbf{Q}_1$ & 8, 6, 4 & \\
\cmidrule(lr){1-3}
7 & $\bm{\Sigma}=\mathbf{Q}_2'\operatorname{diag}(4,1,\dots,1)\mathbf{Q}_2$ & 4 & \\
8 & $\bm{\Sigma}=\mathbf{Q}_2'\operatorname{diag}(6,4,1,\dots,1)\mathbf{Q}_2$ & 6, 4 & \\
9 & $\bm{\Sigma}=\mathbf{Q}_2'\operatorname{diag}(8,6,4,1,\dots,1)\mathbf{Q}_2$ & 8, 6, 4 & \\
\bottomrule
\end{tabular}
\par 
\smallskip
\footnotesize
Here $\mathbf{Q}_1=\begin{pmatrix}\mathbf{O},&\bf 0\\
		\bf 0,&\mathbf{I}\end{pmatrix}$, and $\mathbf{O}$ is  a $3\times 3$ 
		Haar-distributed orthogonal matrix, and $\mathbf{Q}_2$ is a $p\times p$ 
		Haar-distributed orthogonal matrix.
\end{table}

The numerical results, illustrated in Figures \ref{fig:both_images1} and \ref{fig:both_images2}, demonstrate that the Edgeworth expansion provides a substantially more accurate approximation than the standard Gaussian approximations. For clarity, we present results for two representive cases here, with complete simulation results provided in Section C of the Supplementary Material.

\begin{figure}
\begin{subfigure}{0.33\textwidth}
			\centering
			\includegraphics[width=\linewidth]{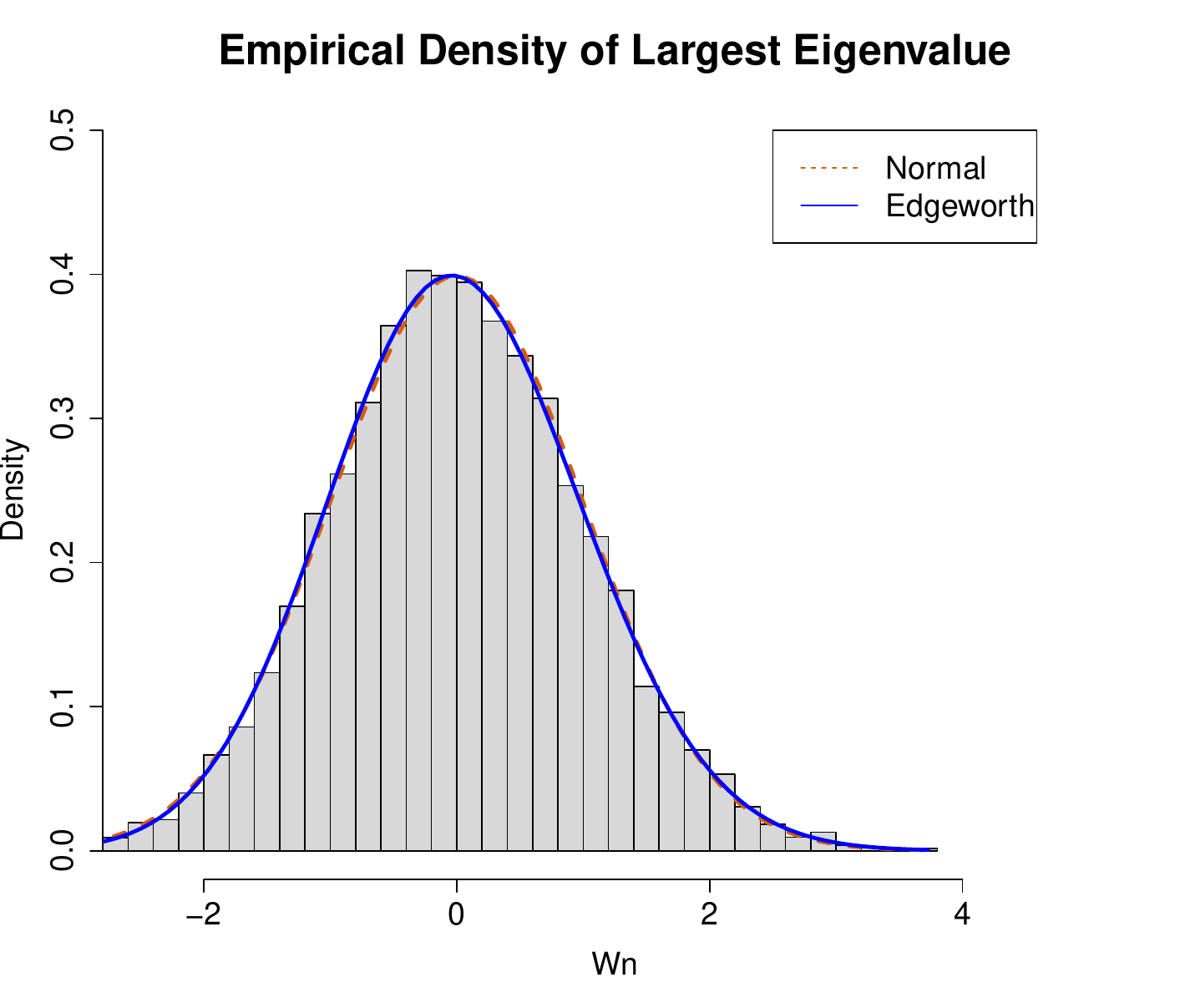}
			\caption{ $U(-\sqrt{3},\sqrt{3}),~ l=4$}
			\label{fig:image1}
		\end{subfigure}%
		\begin{subfigure}{0.33\textwidth}
			\centering
			\includegraphics[width=\linewidth]{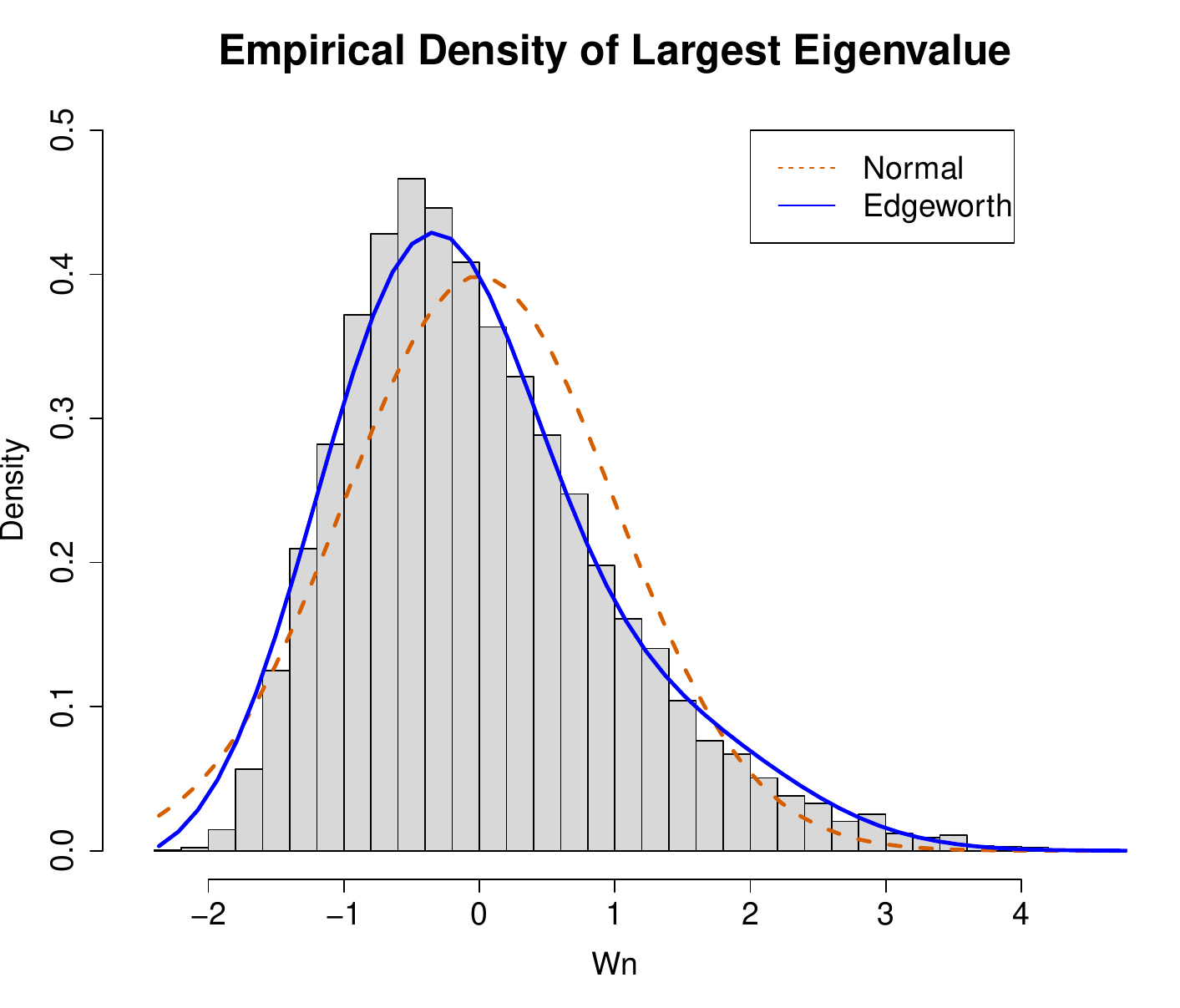}
			\caption{$\chi^2(1),~ l=4$}
			\label{fig:image2}
		\end{subfigure}
               \begin{subfigure}{0.33\textwidth}
			\centering
			\includegraphics[width=\linewidth]{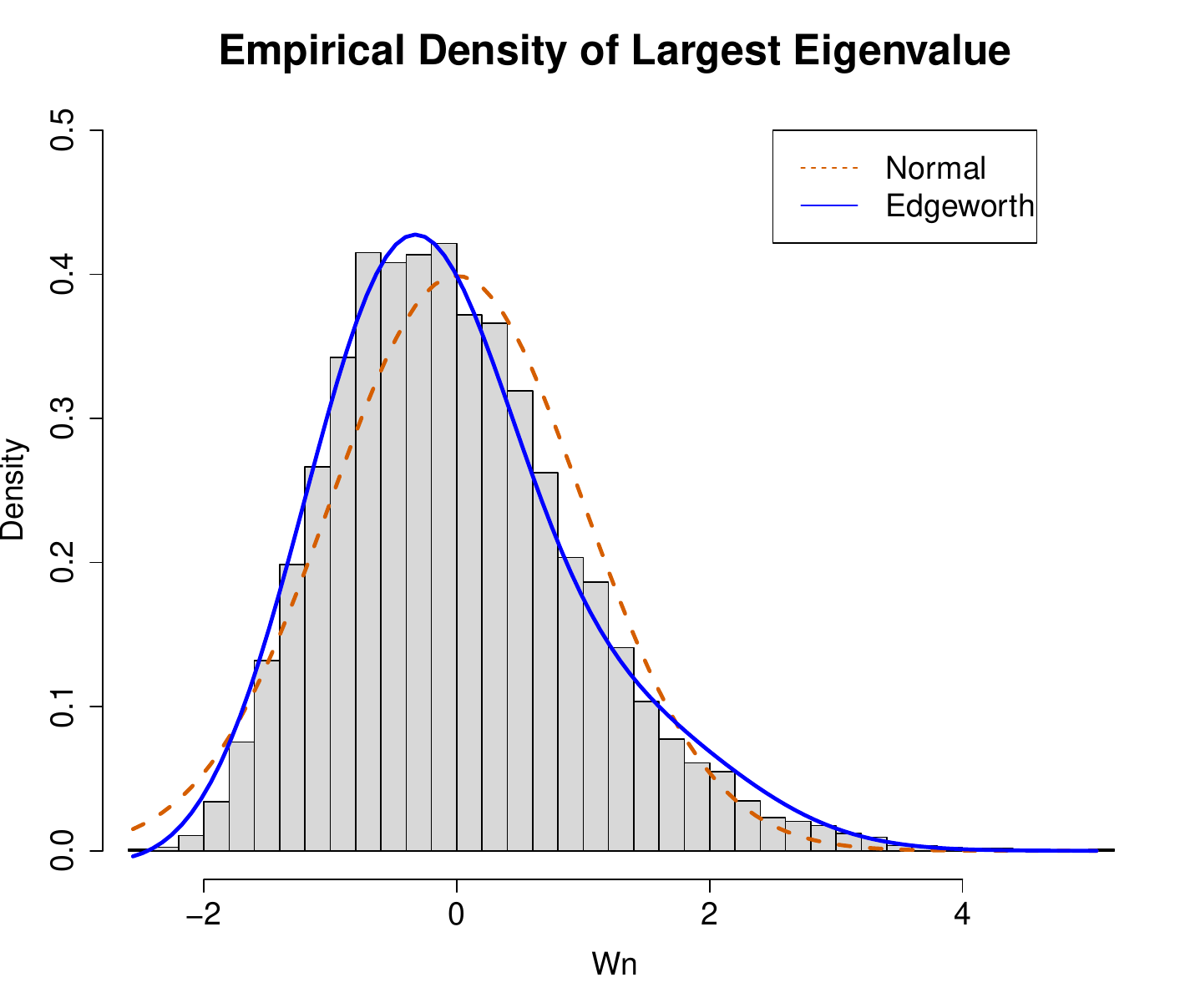}
			\caption{ $Ga(1,1),~ l=4$}
			\label{fig:image1}
		\end{subfigure}%

		\begin{subfigure}{0.33\textwidth}
			\centering
			\includegraphics[width=\linewidth]{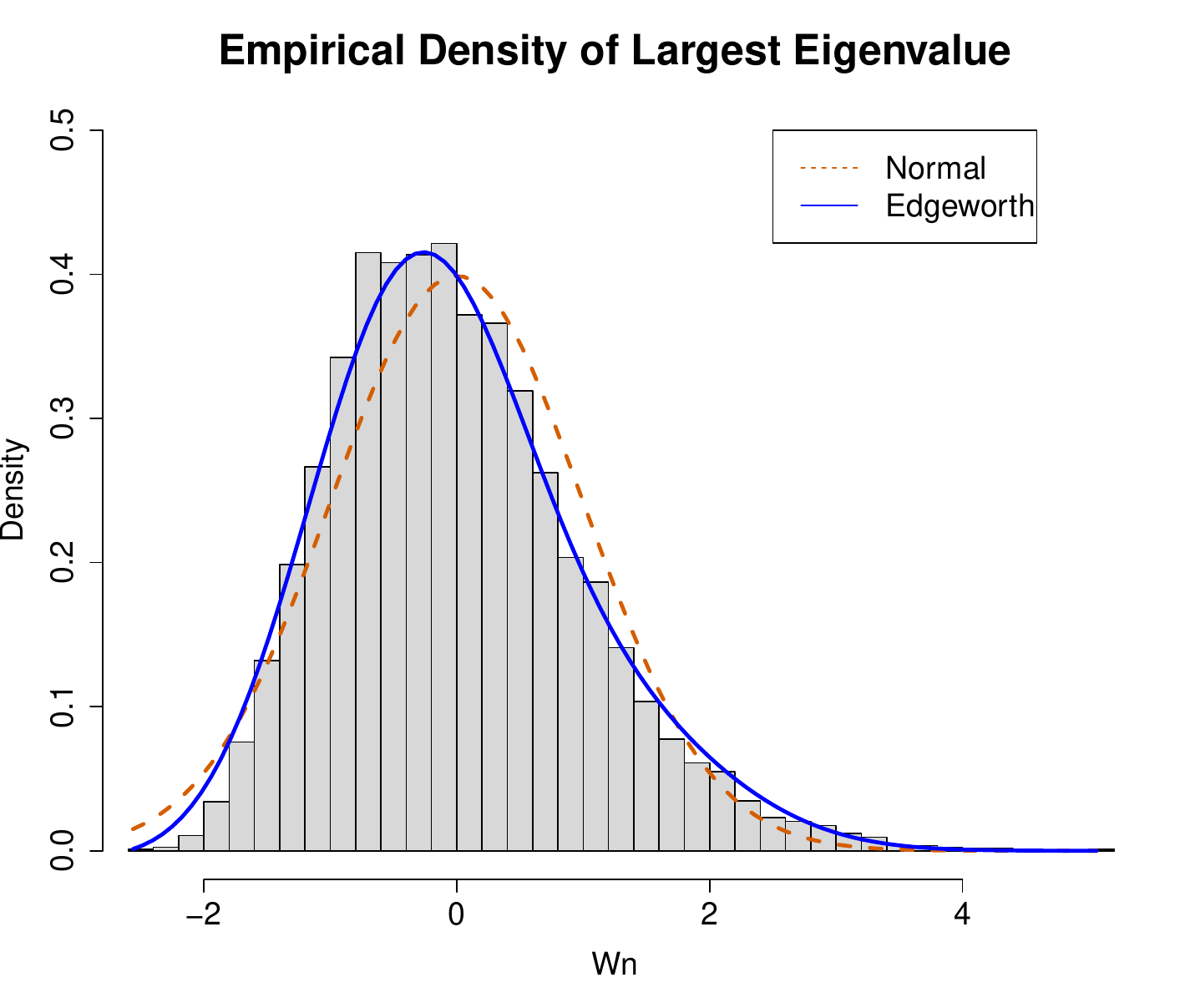}
			\caption{ $Ga(1,2),~ l=4$}
			\label{fig:image1}
		\end{subfigure}%
		\begin{subfigure}{0.33\textwidth}
			\centering
			\includegraphics[width=\linewidth]{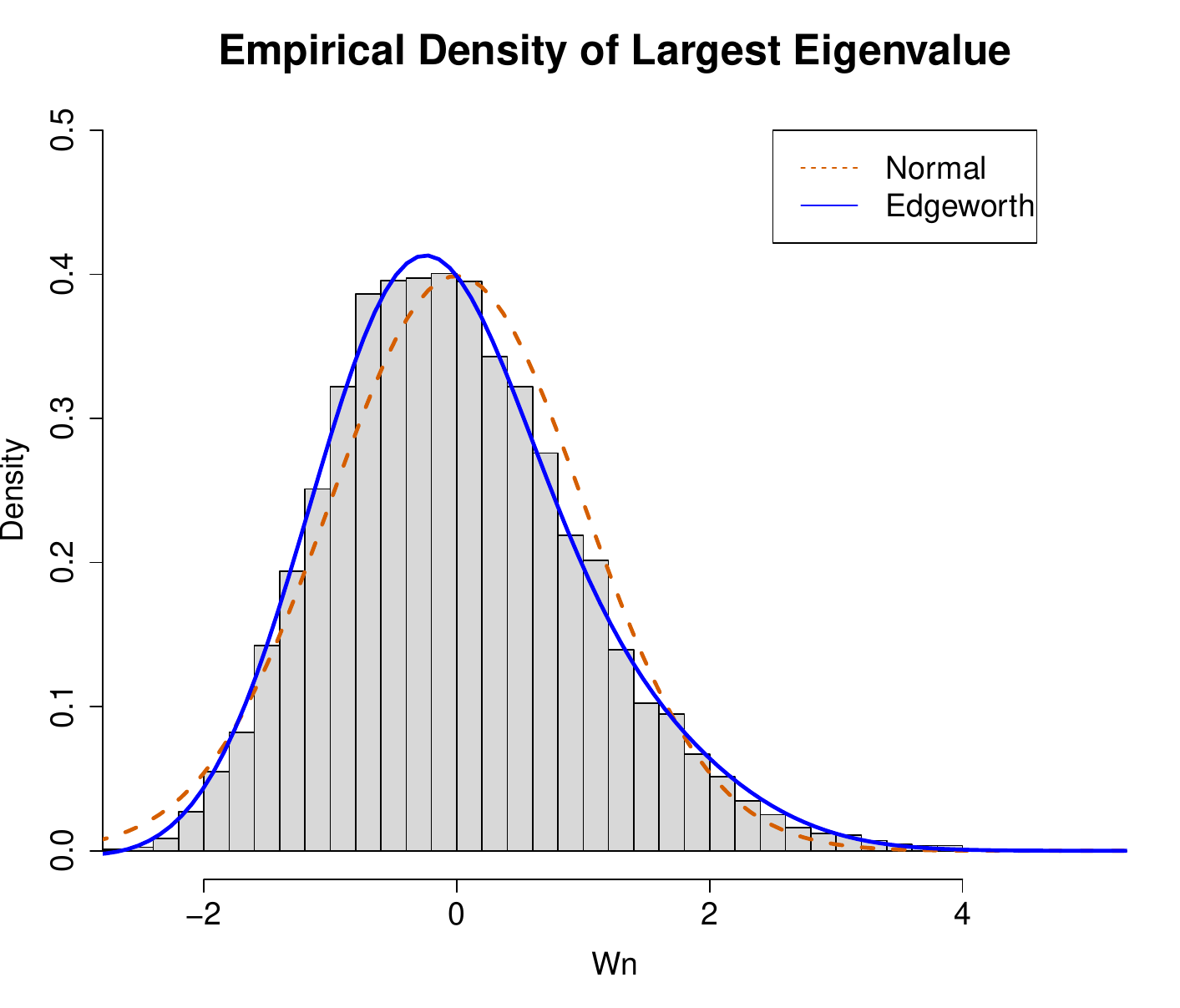}
			\caption{$Ga(2,2),~ l=4$}
			\label{fig:image2}
		\end{subfigure}
               \begin{subfigure}{0.33\textwidth}
			\centering
			\includegraphics[width=\linewidth]{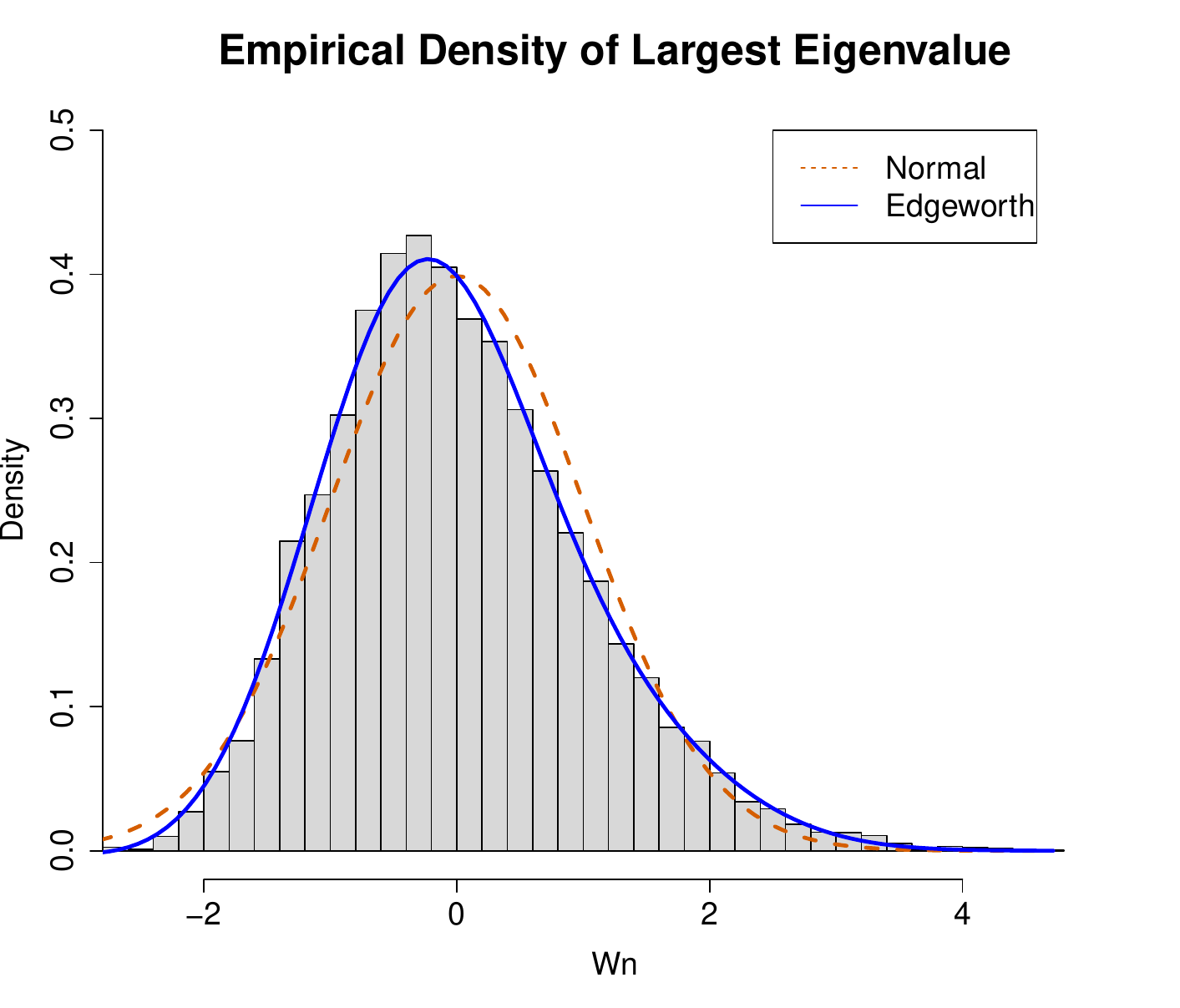}
			\caption{ $Ga(3,3),~ l=4$}
			\label{fig:image1}
		\end{subfigure}%
		\caption{ Edgeworth expansion  for different samples under Setting 1}
		\label{fig:both_images1}
	\end{figure}

	\begin{figure}

\begin{subfigure}{0.33\textwidth}
			\centering
			\includegraphics[width=\linewidth]{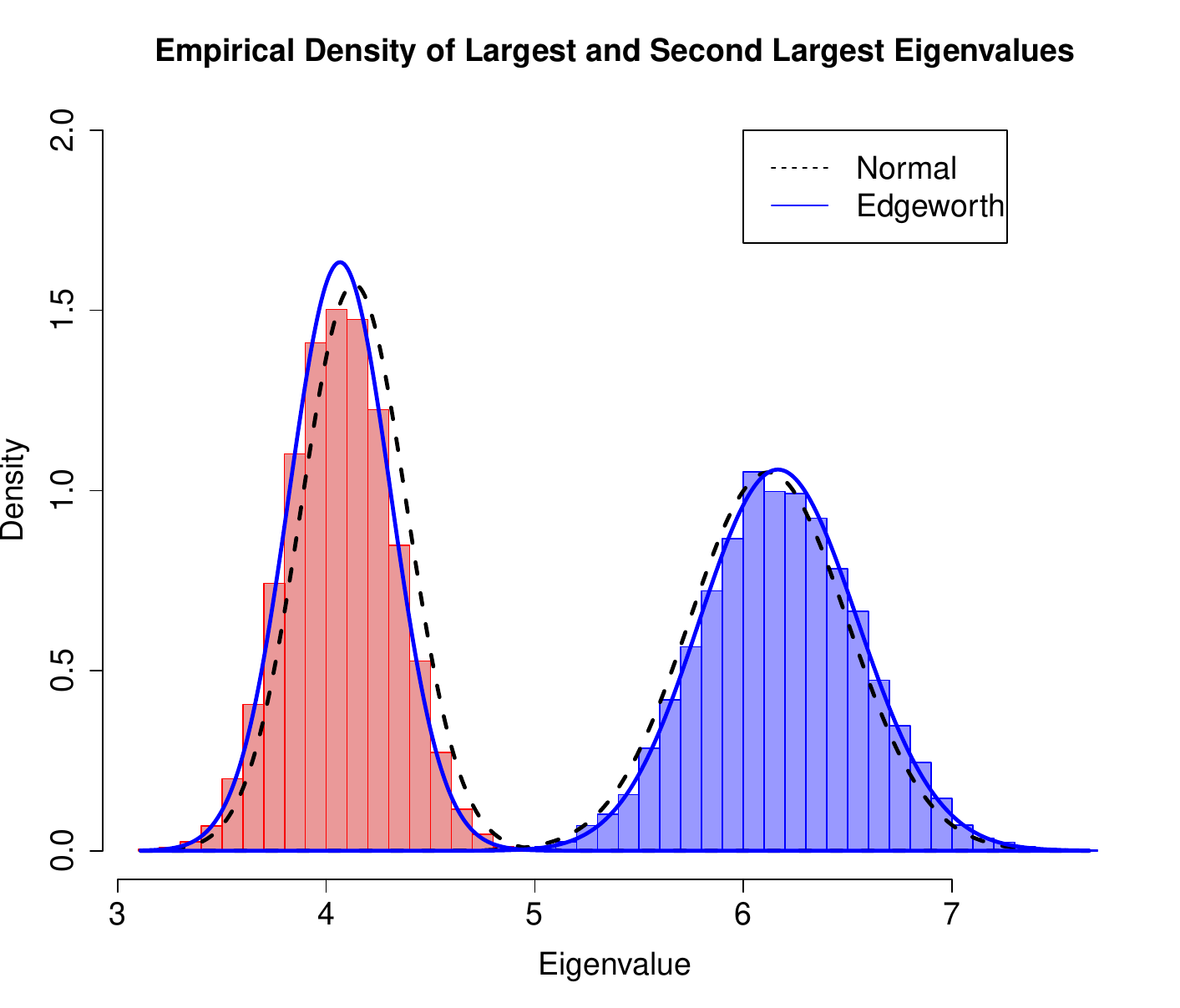}
			\caption{ $U(-\sqrt{3},\sqrt{3}),l_1=6,l_2=4$}
			\label{fig:image21}
		\end{subfigure}%
\begin{subfigure}{0.33\textwidth}
			\centering
			\includegraphics[width=\linewidth]{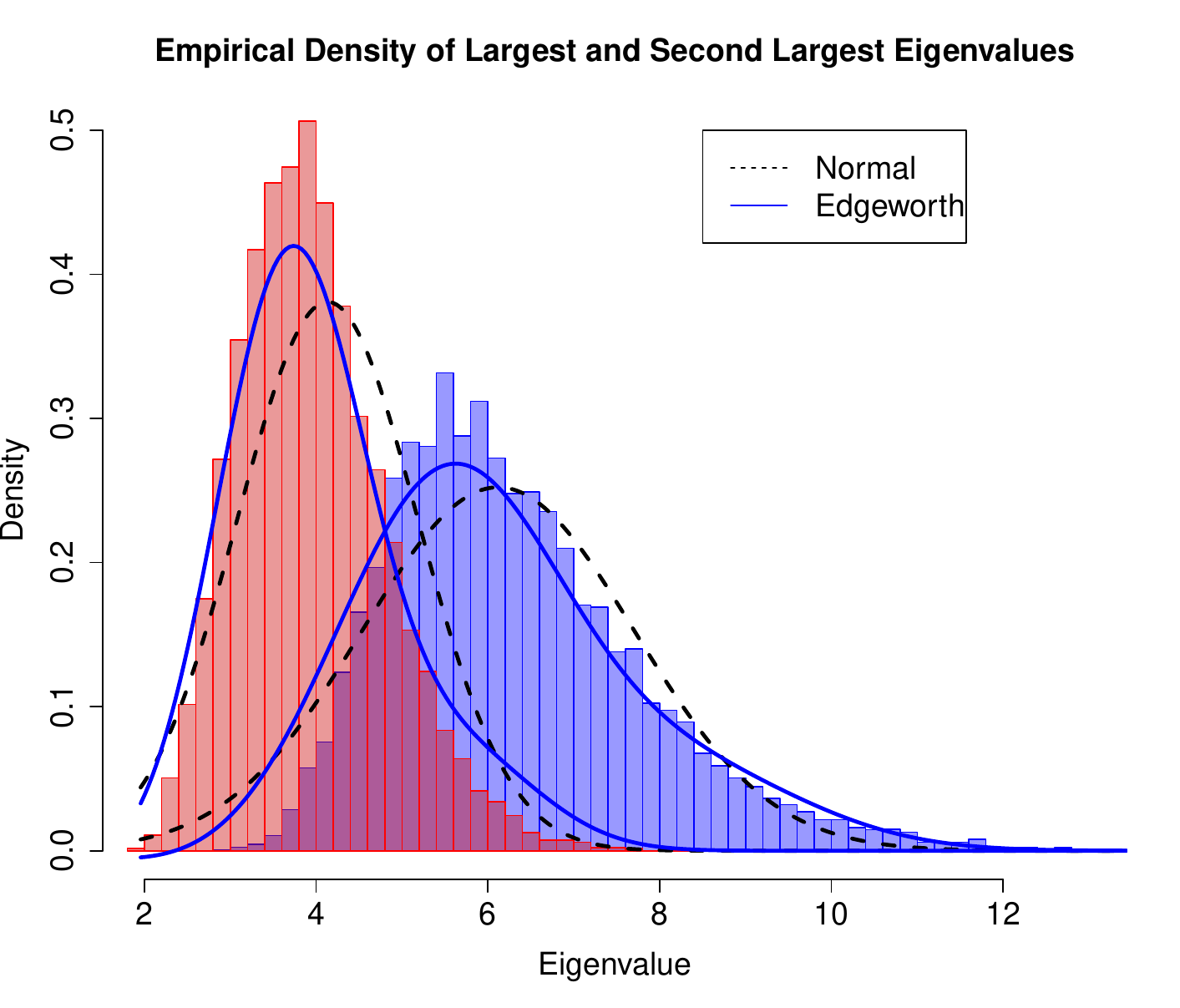}
			\caption{ $\chi^2(1),~ l_1=6,~l_2=4$}
			\label{fig:image18}
		\end{subfigure}%
\begin{subfigure}{0.33\textwidth}
			\centering
			\includegraphics[width=\linewidth]{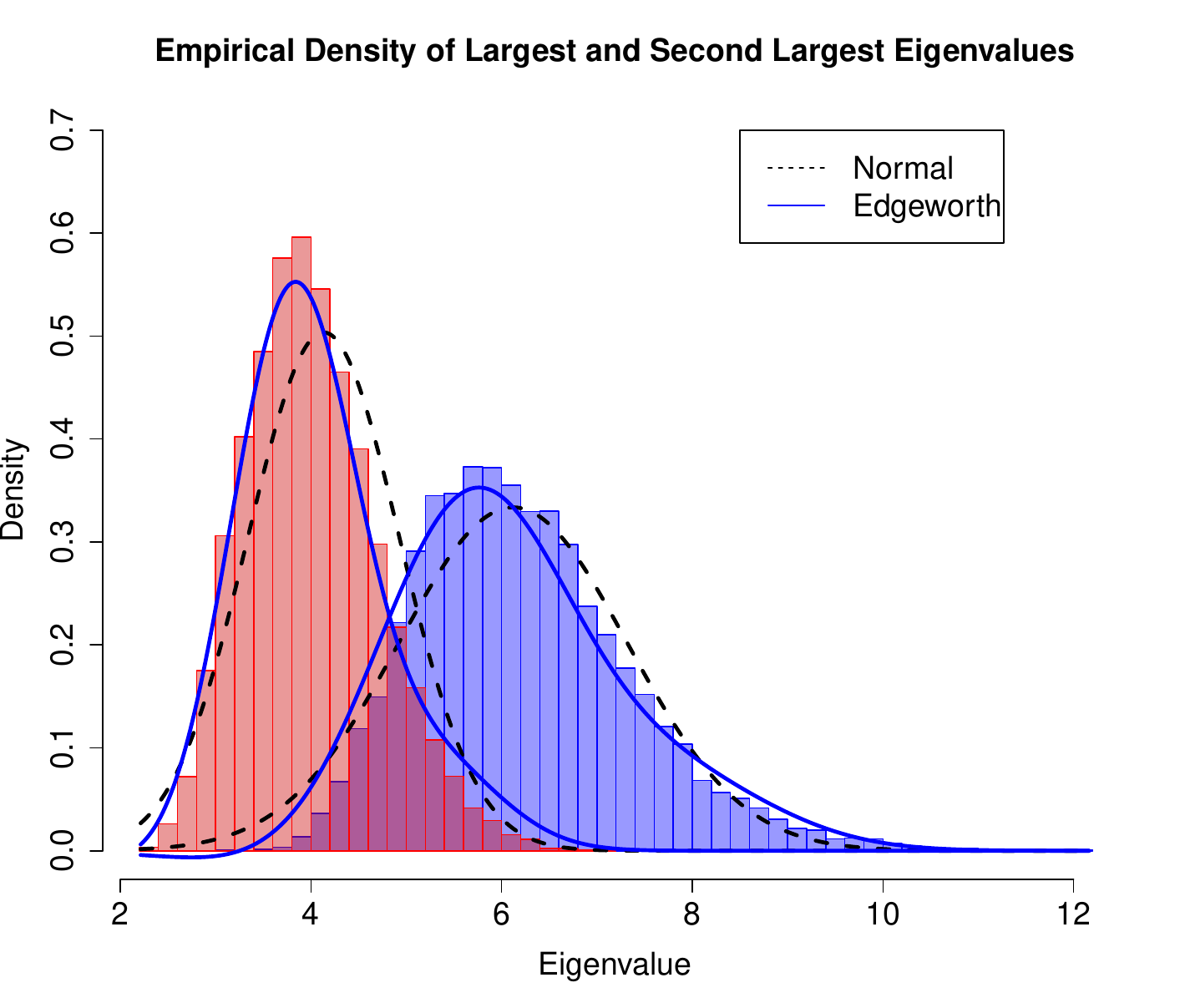}
			\caption{ $Ga(1,1),~ l_1=6,~l_2=4$}
			\label{fig:image21}
		\end{subfigure}%

\begin{subfigure}{0.33\textwidth}
			\centering
			\includegraphics[width=\linewidth]{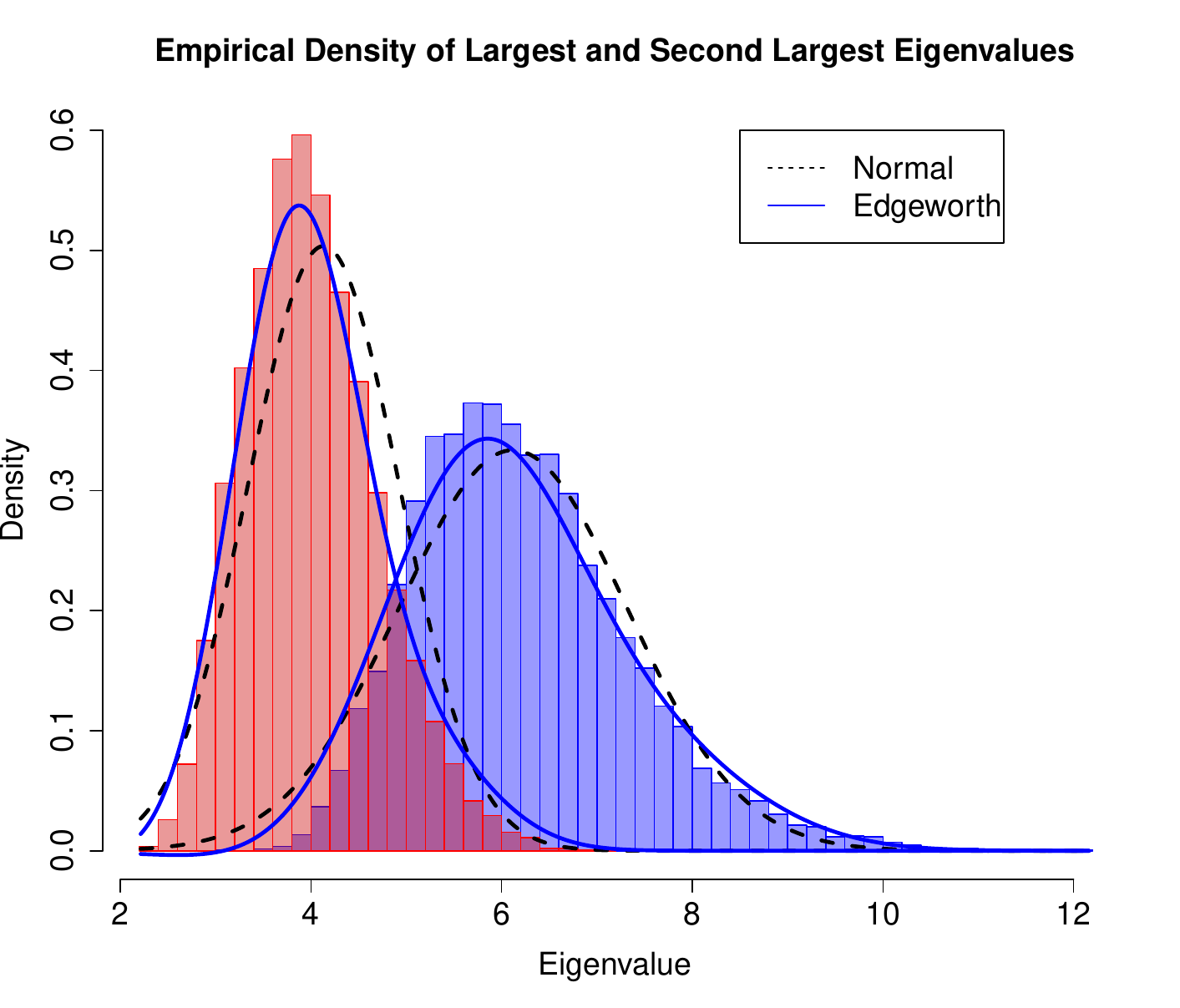}
			\caption{ $Ga(1,2),~ l_1=6,~l_2=4$}
			\label{fig:image18}
		\end{subfigure}%
		\begin{subfigure}{0.33\textwidth}
			\centering
			\includegraphics[width=\linewidth]{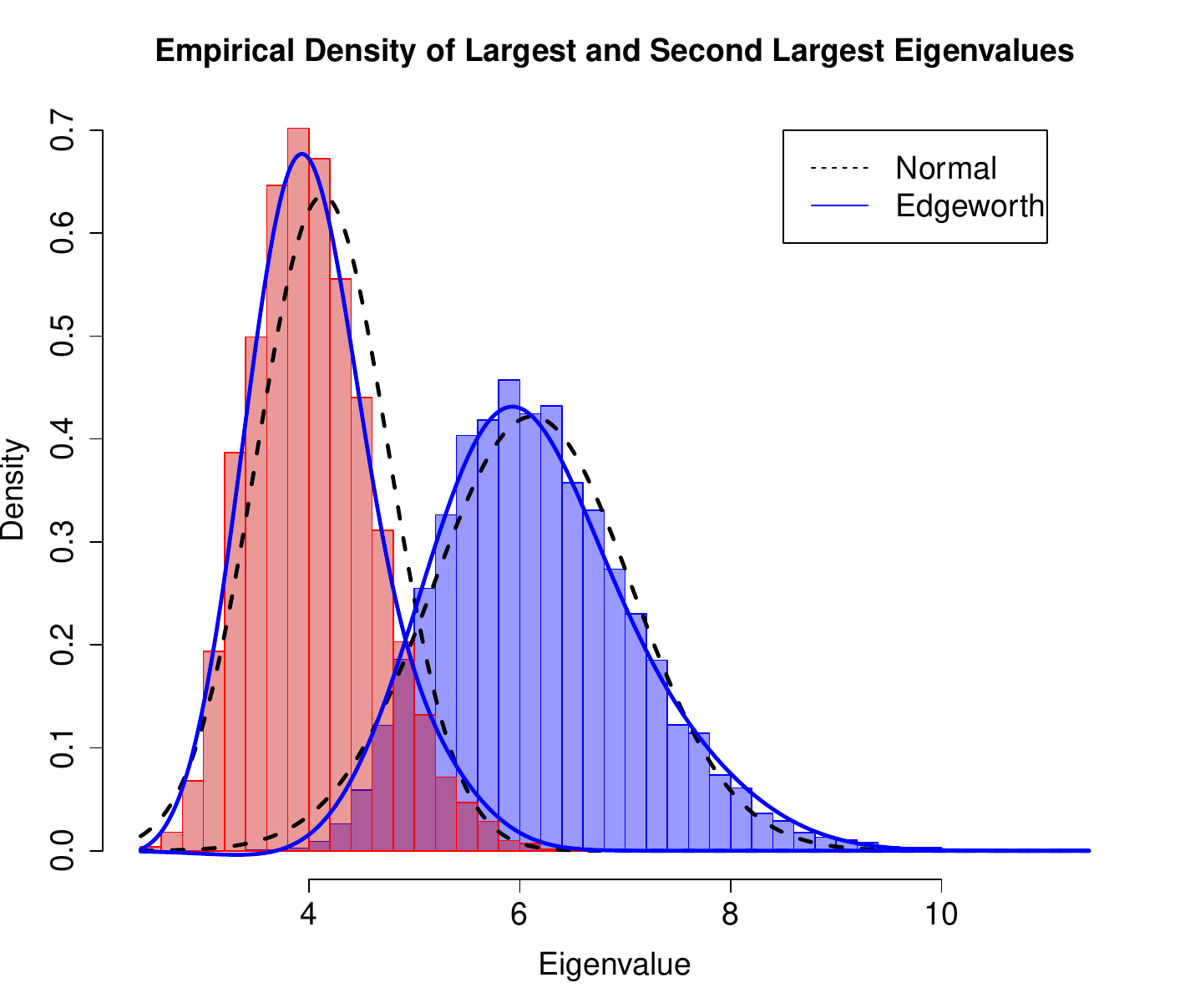}
			\caption{$Ga(2,2),~ l_1=6,~l_2=4$}
			\label{fig:image19}
		\end{subfigure}
		\begin{subfigure}{0.33\textwidth}
			\centering
			\includegraphics[width=\linewidth]{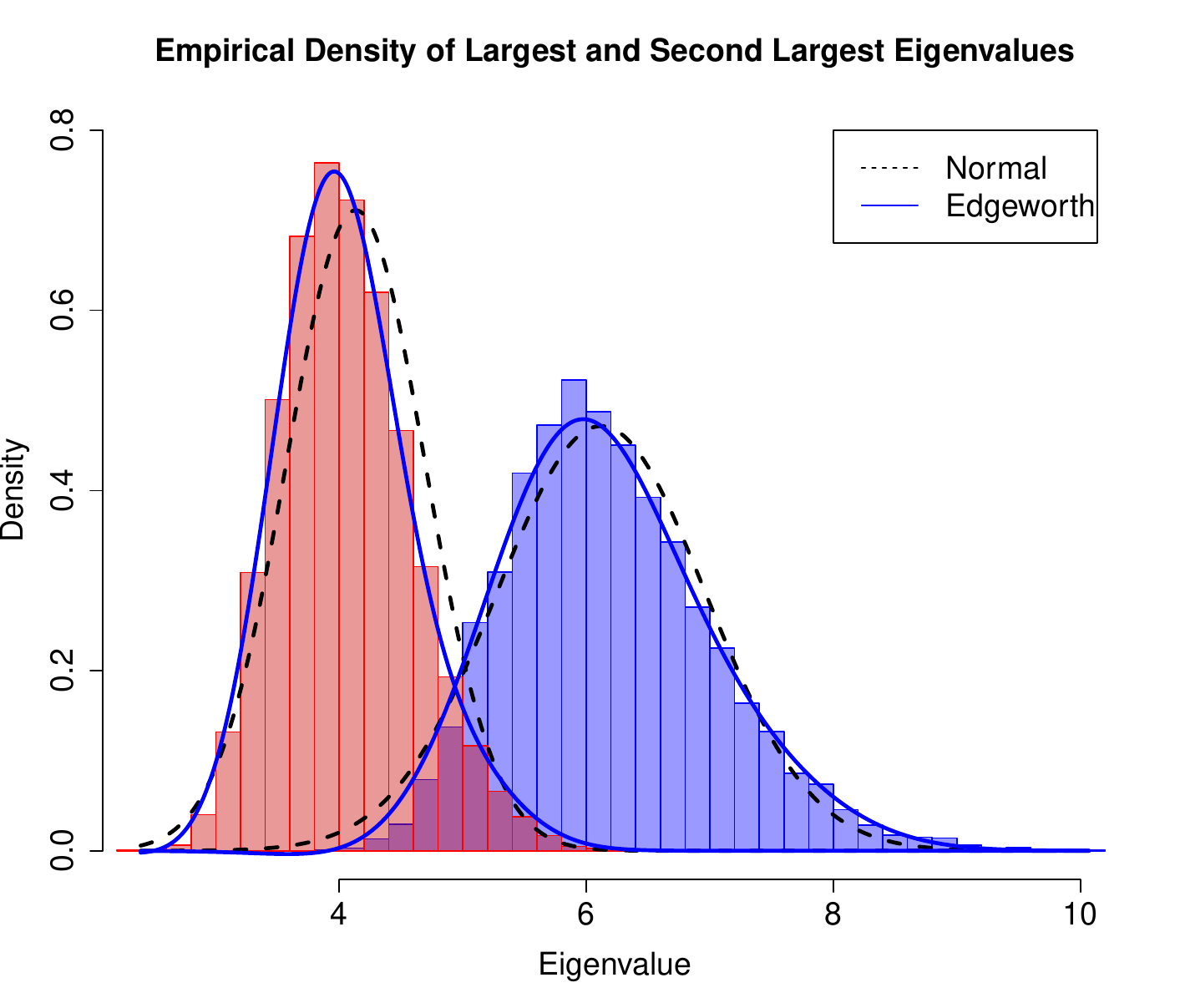}
			\caption{$Ga(3,3),~ l_1=6,~l_2=4$}
			\label{fig:image22}
		\end{subfigure}
		\caption{ Edgeworth expansion  for different samples under Setting 2}
		\label{fig:both_images2}
	\end{figure}

\subsection{Estimation of the number of spikes}
In this section, we implement the interval estimation frameworks established in Sections \ref{Yang} and \ref{Jiang} to estimate the number of spikes. It should be noted that the proposed estimation procedure in Section \ref{Jiang} involves several unknown parameters that require specification. Parameters $\rho_n$ and $\tilde \sigma_n$ are estimated via the consistent estimators from \cite{jiang2021generalized}. The initial value $r_0$ is selected to be 5. Similar to \cite{xie2024higher}, we employ a bootstrap procedure to estimate the remaining unspecified coefficients appearing in the Edgeworth expansion expressions. The initial spike values in the Edgeworth expansion expression are calculated by expanding the sample size to 1000 through Bootstrap resampling, and subsequently applying the method developed by \cite{jiang2021generalized}. 

Furthermore, we evaluate performance across three distributional scenarios:
\begin{itemize}
    \item Gamma-distributed observations: $2Ga(1,2)-1$,
    \item Uniformly-distributed observations: $U(-\sqrt 3, \sqrt 3)$,
    \item Gaussian-distributed observations: $N(0,1)$.
\end{itemize}

For each distribution, we consider three covariance structures:
\begin{itemize}
    \item Diagonal matrices: $\bm{\Sigma}=\text{diag}(3.5,3,2.5,1,\dots,1)$,
    \item Block-diagonal matrices: $\bm{\Sigma}=\mathbf{Q}_1'\text{diag}(3.5,3,2.5,1,\dots,1)\mathbf{Q}_1$,
    \item General positive-definite matrices: $\bm{\Sigma}=\mathbf{Q}_2'\text{diag}(3.5,3,2.5,1,\dots,1)\mathbf{Q}_2$.
\end{itemize}
Let $p$ be the dimension of the sample covariance matrix $\mathbf{S}$. The dimensional ratio $p/n$ varies in $\{0.1, 0.2, 0.3\}$ to assess performance under different growth regimes.

\begin{table}\center
\caption{Estimation accuracy under $2Ga(1,2)-1$ of diagonal covariance matrix (All values in percentages)}
\label{sphericcase1}
\begin{tabular}{@{\hspace{0.1cm}}>{\centering\arraybackslash}m{1.3cm}@{\hspace{0.6cm}}c@{\hspace{0.6cm}}c
@{\hspace{0.6cm}}c@{\hspace{0.6cm}}c@{\hspace{0.4cm}}c@{\hspace{0.4cm}}c@{\hspace{0.5cm}}c@{\hspace{0.4cm}}c@{\hspace{0.3cm}}c}
\hline
 $(p,n)$ &J\&B-E&Y\&J-E&J\&B&Y\&J & P\&Y& BCF(AIC)&K\&N\\[6pt]
     (5,50)&\textbf{82.80}&34.20&65.00&0&9.70&46.20 &48.40\\
 (10,100)&\textbf{95.40}&76.50&32.10&58.90&22.20&39.70 &78.90\\
  (20,200)&\textbf{98.80}&84.00&6.10&83.60&47.20&40.40 &84.30\\
  (40,400)&\textbf{99.00}&85.60&1.40&85.60&74.90&52.20 &84.60 \\
  (10,50)&\textbf{71.00}&24.70&50.40&0&3.70&31.90 &37.80\\
  (20,100)&\textbf{88.90}&64.70&41.50&50.10&9.90&43.40 &66.60\\
  (40,200)&\textbf{96.60}&84.10&19.80&83.80&33.00&57.80 &82.80\\
  (80,400)&\textbf{99.50}&91.30&4.30&91.30&68.70&82.00 &86.10 \\
 (15,50)&\textbf{65.80}&25.60&48.80&0&2.70&32.40 &32.10\\
 (30,100)&\textbf{83.70}&55.70&43.20&40.90&6.20&46.30 &58.10\\
 (60,200)&\textbf{94.90}&79.60&29.40&79.40&24.60&66.70 &76.40\\
 (120,400)&\textbf{99.20}&93.40&16.20&93.40&59.60& 92.70&88.70\\
 \hline
\end{tabular}
\end{table}

\begin{table}\center
\caption{Estimation accuracy under $U(-\sqrt{3},\sqrt{3})$ of diagonal covariance matrix (All values in percentages)}
\label{sphericcase2}
\begin{tabular}{@{\hspace{0.1cm}}>{\centering\arraybackslash}m{1.3cm}@{\hspace{0.6cm}}c@{\hspace{0.6cm}}c
@{\hspace{0.6cm}}c@{\hspace{0.6cm}}c@{\hspace{0.4cm}}c@{\hspace{0.4cm}}c@{\hspace{0.5cm}}c@{\hspace{0.4cm}}c@{\hspace{0.3cm}}c}
 \hline
 $(p,n)$ &J\&B-E&Y\&J-E&J\&B&Y\&J & P\&Y& BCF(AIC)&K\&N\\[6pt]
    (5,50)&\textbf{91.70}&90.10&73.80&90.10&10.40&87.40 &85.10\\
 (10,100)&95.50&\textbf{100.00}&92.50&\textbf{100.00}&37.10&91.00 &\textbf{100.00}\\
  (20,200)&99.00&\textbf{100.00}&96.50&\textbf{100.00}&68.50&94.60 &\textbf{100.00}\\
  (40,400)&99.90&\textbf{100.00}&99.70&\textbf{100.00}&91.90&96.50 &\textbf{100.00} \\
  (10,50)&\textbf{88.00}&70.10&72.20&70.10&2.00&81.60 &66.60\\
  (20,100)&95.80&\textbf{96.70}&93.70&\textbf{96.70}&19.80&95.10 &\textbf{96.70}\\
  (40,200)&99.10&92.00&97.70&76.60&59.50&97.70 &\textbf{99.90}\\
  (80,400)&\textbf{100.00}&0&99.80&0&88.20&99.30 &99.80 \\
 (15,50)&\textbf{84.20}&47.50&65.10&47.60&0.50&73.10 &46.90\\
 (30,100)&69.40&69.20&\textbf{91.20}&61.60&8.10&90.10 &86.80\\
 (60,200)&75.60&0&97.90&0&49.20&98.50 &\textbf{99.80}\\
 (120,400)&99.40&0&99.70&0&85.30&\textbf{99.90} &99.80\\
 \hline
\end{tabular}
\end{table}

\begin{table}\center
\caption{Estimation accuracy under $N(0,1)$ of diagonal covariance matrix (All values in percentages)}
\label{sphericcase3}
\begin{tabular}{@{\hspace{0.1cm}}>{\centering\arraybackslash}m{1.3cm}@{\hspace{0.6cm}}c@{\hspace{0.6cm}}c
@{\hspace{0.6cm}}c@{\hspace{0.6cm}}c@{\hspace{0.4cm}}c@{\hspace{0.4cm}}c@{\hspace{0.5cm}}c@{\hspace{0.4cm}}c@{\hspace{0.3cm}}c}
 \hline
 $(p,n)$ &J\&B-E&Y\&J-E&J\&B&Y\&J & P\&Y& BCF(AIC)&K\&N\\[6pt]
        (5,50)&\textbf{88.50}&83.30&76.10&83.10&11.30&75.30 &77.10\\
 (10,100)&97.00&\textbf{98.40}&69.50&\textbf{98.40}&33.20&85.00 &98.10\\
  (20,200)&98.20&\textbf{99.90}&66.20&\textbf{99.90}&64.00&90.40 &\textbf{99.90}\\
  (40,400)&99.60&\textbf{100.00}&81.10&\textbf{100.00}&81.50&92.40 &99.20 \\
  (10,50)&\textbf{83.90}&59.90&74.40&59.70&1.90&67.10 &55.80\\
  (20,100)&\textbf{94.50}&91.20&79.00&91.20&19.10&87.60 &91.10\\
  (40,200)&99.00&\textbf{99.80}&81.00&\textbf{99.80}&53.20&96.00 &\textbf{99.80}\\
  (80,400)&99.50&99.50&95.10&94.70&82.70&99.20 &\textbf{99.60} \\
 (15,50)&\textbf{78.30}&40.30&66.40&40.10&1.40&60.30 &39.90\\
 (30,100)&\textbf{90.00}&77.10&85.00&77.10&9.80&82.80 &79.20\\
 (60,200)&97.70&75.10&91.60&87.20&40.70&96.90 &\textbf{98.50}\\
(120,400)&99.50&0.20&98.90&2.20&77.80&\textbf{99.80} &99.50\\
 \hline
\end{tabular}
\end{table}

Tables \ref{sphericcase1}--\ref{sphericcase3} present numerical comparisons between our Edgeworth-corrected approachs (J\&B-E, Y\&J-E) and existing methods from by \cite{jiang2021generalized} (J\&B), \cite{passemier2012determining} (P\&Y),  \cite{bai2018consistency} (BCF(AIC)) and \cite{yang2019edgeworth} (Y\&J). The simulation results demonstrate that our methodology achieves superior spike detection accuracy for Gamma, Uniform  and Gaussian data in low-dimensional settings (small $n$ and $p$), with consistent improvements observed for various $(n,p)$ configurations. Compete simulation results for all cases are provided in Supplementary Material. 

Notably, the J\&B-E method exhibits particularly strong performance for Gamma-distributed data, where traditional Gaussian approximations can be inadequate. In some instances, the results from the Edgeworth expansion methods demonstrate poorer performance compared to those obtained via a Gaussian approximation. This discrepancy is primarily due to the challenge of accurately estimating the coefficients in the Edgeworth expansion from data. We note that if the true, oracle coefficients were utilized, the Edgeworth expansion consistently yields superior results compared to the Gaussian approximation. Furthermore, comparative analysis reveals that the J\&B-E method exhibits greater robustness than the Y\&J-E approach.

Furthermore, our simulation studies reveal distinct performance patterns across different distributional assumptions and dimensions. For Gamma-distributed data, the J\&B-E method demonstrates superior performance compared to all competing approaches. For uniform and Gaussian data, our methods shows advantages primarily in low-dimensional settings (small $n$ and $p$). However, in high-dimensional regimes (large $n$ and $p$), the performance difference between our method and the best alternative is small.

\section{Preliminary results and proof strategy}\label{limitingdistribution}
In this section, we first detail the proof strategy for the single-spike scenario ($r=1$); the generalization to multiple spikes ($r>1$) can be achieved through analogous analytical techniques. We develop the technical foundations in Sections \ref{yi}--\ref{san}, and Section \ref{strategy} outlines the proof of Theorem \ref{main}.

	\subsection{Approximating the distribution of $R_n$}\label{yi}
	Let $\hat l$ denote the largest eigenvalue in the single-spike case. By definition, $\hat l$ is a root of the characteristic equation of $\mathbf{S}$:
	\begin{align*}
		0&=|\hat l \I-\mathbf{S}|=\bigg|\hat l\I-n^{-1}\begin{pmatrix}l\V_1'\Z'\Z\V_1,& \sqrt l \V_1'\Z'\Z\V_2\\
		\sqrt l \V_2'\Z'\Z\V_1,&\V_2'\Z'\Z\V_2\end{pmatrix}\bigg|\\
&=|\hat l\I_p-n^{-1}\V_2'\Z'\Z\V_2||\hat l -n^{-1}l\V_1'\Z'\Z\V_1-n^{-2} l \V_1'\Z'\Z(\hat l \I_{p+1}-n^{-1}\Z'\Z)^{-1}\Z'\Z\V_1|.
	\end{align*}
	We focus on the sample spiked eigenvalue $\hat l$, for which it is known that $\hat l>1+\sqrt\gamma$ almost surely. Under this condition, we have
	\[|\hat l\I_p-n^{-1}\V_2'\Z'\Z\V_2|=|\hat l\I_{p+1}-n^{-1}\Z'\Z|\neq 0.\]
	Consequently, the characteristic equation simplifies to
	\begin{align}\label{determinant}
		0&=\Big|\frac{\hat l}{l}-n^{-1}\V_1'\Z'\Z\V_1-n^{-2}  \V_1'\Z'\Z(\hat l \I_{p+1}-n^{-1}\Z'\Z)^{-1}\Z'\Z\V_1\Big|\\\nonumber
		&=\Big|\frac{\hat l}{l}-n^{-1}\V_1'\Z'\Big(\I+n^{-1}\Z(\hat l\I_{p+1}-n^{-1}\Z'\Z)^{-1}\Z'\Big)\Z\V_1\Big|\\\nonumber
		&=\Big|\frac{\hat l}{l}-  \hat ln^{-1}\V_1'\Z'(\hat l\I_{n}-n^{-1}\Z\Z')^{-1} \Z\V_1\Big|  .
	\end{align}
	Define
	\[\Omega(\lambda,\Z)=\frac{\lambda}{\sqrt n}\Big(\mathrm{tr}\big[(\lambda\I_{n}-n^{-1}\Z\Z')^{-1}\big]-\V_1'\Z'(\lambda \I_{n}-n^{-1}\Z\Z')^{-1} \Z\V_1\Big).\]
	The empirical spectral distribution $\mathrm{F}_n$ of $n^{-1}\Z\Z'$ converges to the companion M-P law, denoted $\mathrm{F}_{\gamma}$:
	\[\mathrm{F}_{\gamma}=(1-\gamma)I_{[0,\infty)}+\gamma F_{\gamma},\]
where the upright $\mathrm{F}$ denotes the M-P law to which the empirical spectral distribution of $n^{-1}\Z\Z'$ converges, while the italic $F$ represents the M-P law for the empirical spectral distribution of $n^{-1}\Z'\Z$. 

Let the elements of the $n\times (p+1)$ matrix $\Y=(\mathbf{y}_1,\dots,\mathbf{y}_n)'$ be i.i.d. draws from $N(0,1)$. Consider the eigendecomposition $n^{-1}\Y\Y'=\U\bm{\Lambda}\U'$, where $\U$ is an $n\times n$ orthogonal matrix and $\bm{\Lambda}$ is the diagonal matrix containing all the nonzero eigenvalues of $n^{-1}\Y\Y'$. Let $\bm{\omega}=\U'\Y\V_1=(\omega_1,\dots, \omega_n)'$. Similarly, let $n^{-1}\Z\Z'=\U_\omega\bm{\Lambda}_\omega\U_\omega'$ be the eigendecomposition for the non-Gaussian case, where $\U_\omega$ is an $n\times n$ orthogonal matrix and $\bm{\Lambda}_\omega$ is the diagonal matrix containing all the nonzero eigenvalues $\tilde\lambda_1, \dots, \tilde\lambda_n$ of $n^{-1}\Z\Z'$. Let $\tilde{\bm{\omega}}=\U_\omega'\Z\V_1=(\tilde \omega_1,\dots,\tilde \omega_n)'$ and $g_n(\lambda)=(\rho_n-\lambda)^{-1}$. Define 
	\begin{align*}
		&\tilde S_n(f)=n^{-1/2}\Big(\V_1'\Z'\U_\omega'f(\bm{\Lambda}_\omega)\U_\omega'\Z\V_1-\sum_{i=1}^nf(\tilde \lambda_i )\Big), ~{\mathrm{F}}_{\gamma_n}(f)=\int f( \lambda )\mathrm{F}_{\gamma_n}(d \lambda ),\\
&S_n(f)=n^{-1/2}\sum_{i=1}^nf( \lambda_i )\big[( \omega_i)^2-1\big],~\tilde G_n(f)=\sum_{i=1}^nf(\tilde \lambda_i )-n\int f(\tilde \lambda )\mathrm{F}_{\gamma_n}(d\tilde \lambda ).
	\end{align*}
	
	\begin{theorem}\label{convergence}
		Under Assumptions (a) and (b), the distribution of $R_n$ can be approximated as follows:
\[\sup_x\bigg|\mathbb{P}(R_n\le x)-\mathbb{P}\bigg(\frac{-\Omega(\rho_n, \Z)+\rho_n n^{-1/2}\tilde G_n(g_n)}{[\rho_n\mathrm{F}_{\gamma_n}(g_n^2)-n^{-1/2}\tilde S_n(g_nh_n)]\tilde\sigma_n}\le x\bigg)\bigg|=o(n^{-1/2}).\]
	\end{theorem}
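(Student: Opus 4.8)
The plan is to start from the simplified characteristic equation \eqref{determinant} and convert it into an implicit equation for $\hat l$ that can be inverted around the deterministic location $\rho_n$. Writing $\lambda = \hat l$, equation \eqref{determinant} says $\hat l/l = \hat l\, n^{-1}\V_1'\Z'(\hat l\I_n - n^{-1}\Z\Z')^{-1}\Z\V_1$. I would rearrange this into the form
\[
\frac{1}{l} = n^{-1}\mathrm{tr}\big[(\hat l\I_n - n^{-1}\Z\Z')^{-1}\big] - n^{-1/2}\,\hat l^{-1}\,\Omega(\hat l,\Z) \cdot \text{(bookkeeping)},
\]
i.e. split the quadratic form $\V_1'\Z'(\cdot)^{-1}\Z\V_1$ into its trace part plus the centered fluctuation $\Omega(\lambda,\Z)/\sqrt n$ (up to the $\lambda/\sqrt n$ normalization built into $\Omega$). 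The trace part is $\gamma_n$ times the Stieltjes transform of $\mathrm F_n$ evaluated at $\hat l$, which concentrates around the companion M-P Stieltjes transform $\underline m(\hat l)$; and by the fundamental self-consistency relation for the M-P law, the equation $1/l = \gamma\underline m(\lambda) + \dots$ has deterministic root exactly at the population-to-sample map whose linearization is $\rho_n = l + \gamma_n l/(l-1)$. So the strategy is the classical one: expand both sides of the characteristic equation around $\lambda = \rho_n$, keep terms up to the order that contributes at scale $n^{-1/2}$, and solve for $n^{1/2}(\hat l - \rho_n)$.

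Concretely, the steps are: (i) rewrite \eqref{determinant} as $0 = \Omega(\hat l, \Z) - \sqrt n\big(\text{deterministic part}(\hat l) - \text{deterministic part}(\rho_n)\big) + (\text{M-P fluctuation at }\hat l)$, where the deterministic part is built from $\mathrm F_{\gamma_n}(g_{\hat l})$-type integrals; (ii) replace the empirical-trace term $n^{-1}\sum_i (\hat l\I - n^{-1}\Z\Z')^{-1}$ by its deterministic counterpart plus the centered linear statistic $n^{-1/2}\tilde G_n(g_n)$ — this is where $\tilde G_n(g_n)$ enters the numerator; (iii) Taylor-expand the deterministic part around $\rho_n$: since $\rho_n$ is (to leading order) the zero of the deterministic equation, the constant term vanishes and the linear coefficient is $\rho_n \mathrm F_{\gamma_n}(g_n^2)$ (the derivative of $\mathrm F_{\gamma_n}(g_\lambda)$ in $\lambda$, noting $g_\lambda'=g_\lambda^2$), which produces the bracket $[\rho_n \mathrm F_{\gamma_n}(g_n^2) - n^{-1/2}\tilde S_n(g_nh_n)]$ in the denominator after one also accounts for the fluctuation of $\Omega$ itself around its own expectation (the $\tilde S_n(g_n h_n)$ correction comes from differentiating the quadratic-form part); (iv) solve the resulting affine-in-$n^{1/2}(\hat l-\rho_n)$ equation and divide by $\tilde\sigma_n$ to land on the displayed ratio. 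The error terms discarded — the second-order Taylor remainder, the $O_p(n^{-1})$ difference between empirical and deterministic traces beyond the linear statistic, and the localization event $\{\hat l > 1+\sqrt\gamma\}$ failing — must each be shown to be $o_p(n^{-1/2})$ uniformly, which combined with a bound on the density of the limit gives the $o(n^{-1/2})$ in Kolmogorov distance.

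The main obstacle I anticipate is controlling the Taylor expansion of the random function $\lambda \mapsto n^{-1}\V_1'\Z'(\lambda\I_n - n^{-1}\Z\Z')^{-1}\Z\V_1$ near $\lambda = \rho_n$ with the $o(n^{-1/2})$ precision required: one needs not just that $\hat l - \rho_n = O_p(n^{-1/2})$, but that the second derivative of this resolvent quadratic form stays bounded (and its randomness does not inflate the remainder) on a shrinking neighborhood of $\rho_n$ — a neighborhood that must be taken small enough that $\rho_n$ is bounded away from the M-P edge $(1+\sqrt{\gamma_n})^2$, which is exactly where Assumption (c) is used. Establishing uniform (in $x$) control of the probability statement then requires a smoothing/anti-concentration argument, showing the mapping from $\hat l$ to the displayed ratio is a smooth enough perturbation of a linear map that Kolmogorov distances are preserved up to $o(n^{-1/2})$; this is where the delicacy lies, since several $o_p(n^{-1/2})$ random quantities appear both in the numerator and denominator and their joint effect on the CDF must be bounded rather than merely their marginal sizes.
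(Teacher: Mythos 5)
Your proposal follows essentially the same route as the paper's proof: the paper likewise inverts the characteristic equation around $\rho_n$ to get an exact ratio for $\hat l-\rho_n$, inserts the stochastic decomposition $n^{-1}\V_1'\Z'(\rho_n\I-n^{-1}\Z\Z')^{-1}\Z\V_1=\mathrm{F}_{\gamma_n}(g_n)+n^{-1/2}\tilde S_n(g_n)+n^{-1}\tilde G_n(g_n)$, Taylor-expands both $\Omega(\hat l,\Z)$ and the trace term in $\delta_n=\sqrt n(\hat l/\rho_n-1)$ to collect exactly the numerator $-\Omega(\rho_n,\Z)+\rho_n n^{-1/2}\tilde G_n(g_n)$ and the denominator bracket $\rho_n\mathrm{F}_{\gamma_n}(g_n^2)-n^{-1/2}\tilde S_n(g_nh_n)$, and then converts the $o_p(n^{-1/2})$ remainder into an $o(n^{-1/2})$ Kolmogorov bound via a perturbation lemma (Lemma~\ref{delta}) of precisely the anti-concentration type you describe. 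Your identification of where $\tilde G_n(g_n)$, the derivative coefficient $\rho_n\mathrm{F}_{\gamma_n}(g_n^2)$, and the $\tilde S_n(g_nh_n)$ correction enter matches the paper's derivation.
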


	\begin{remark}
		The terms $\tilde S_n(f)$, $\tilde G_n(f)$ and $\tilde{\mathrm{F}}_{\gamma_n}(f)$ are all of order $O_p(1)$. A detailed justification is provided in Supplementary Material.
	\end{remark}

	\subsection{Approximation under partial generalized four moment theorem (PG4MT)}\label{CLT}
	
Recall the definitions of $\Omega(\rho_n,\Z)$ and its Gaussian counterpart $\Omega(\rho_n,\Y)$:
	\begin{align*}
		\Omega(\rho_n,\Z)&=\frac{\rho_n}{\sqrt n}\Big(\mathrm{tr}\big[(\rho_n\I_{n}-n^{-1}\Z\Z')^{-1}\big]-\V_1'\Z'(\rho_n \I_{n}-n^{-1}\Z\Z')^{-1} \Z\V_1\Big),\\
		\Omega(\rho_n,\Y )&=\frac{\rho_n}{\sqrt n}\Big(\mathrm{tr}\big[(\rho_n\I_{n}-n^{-1}\Y\Y')^{-1}\big]-\V_1'\Z'(\rho_n \I_{n}-n^{-1}\Y\Y')^{-1} \Z\V_1\Big).
	\end{align*}
	Through a series of rigorous calculations, we derive
	\begin{align*}
		\Omega(\rho_n,\Z)&=\frac{\rho_n}{\sqrt n}\Big(\mathrm{tr}\big[(\rho_n\I_{n}-n^{-1}\Z\Z')^{-1}\big]-\V_1'\Z'(\rho_n \I_{n}-n^{-1}\Z\Z')^{-1} \Z\V_1\Big)\\\nonumber
		&=\frac{\rho_n}{\sqrt n}\Big(\mathrm{tr}\big[\U_\omega(\rho_n\I-\bm{\Lambda}_\omega)^{-1}\U_\omega'\big]-\V_1'\Z'\U_\omega(\rho_n\I-\bm{\Lambda}_\omega )^{-1}\U_\omega'\Z\V_1\Big)\\\nonumber
		&=\frac{\rho_n}{\sqrt n}\Big(\mathrm{tr}\big[(\rho_n\I-\bm{\Lambda}_\omega)^{-1}\big]-\tilde{\bm{\omega}}_1'(\rho_n\I-\bm{\Lambda}_\omega )^{-1}\tilde{\bm{\omega}}_1\Big)=-\rho_n\tilde S_n(g_n).
	\end{align*}
	Therefore, based on the relationship between $\Omega(\rho_n, \Z)$ and $\tilde S_n(g_n)$, we derive
	\begin{align*}
		&\quad\mathbb{P}\bigg(\frac{-\Omega(\rho_n, \Z)+\rho_n n^{-1/2}\tilde G_n(g_n)}{[\rho_n\mathrm{F}_{\gamma_n}(g_n^2)-n^{-1/2}\tilde S_n(g_nh_n)]\tilde\sigma_n}\le x\bigg)\\
		&=\mathbb{P}\Big(\tilde S_n(g_n)+n^{-1/2}\tilde S_n(g_nh_n)\rho_n^{-1}\tilde\sigma_nx\le\mathrm{F}_{\gamma_n}(g_n^2)\tilde\sigma_nx-n^{-1/2}\tilde G_n(g_n)\Big).
	\end{align*}
	The work of \cite{jiang2021partial} established that the limiting distributions of $\Omega(\lambda, \Z)$ and $\Omega(\lambda,\Y)$ are identical based on the CLT. We extend this by leveraging the PG4MT to establish the asymptotic equivalence of two related linear statistics, $\Omega_s(\Z\V_1,\Z)$ and $\Omega_s(\Z\V_1,\Y)$, demonstrating that the distance between their distributions converges to zero at a rate of $o(n^{-1/2})$. Here,
	\begin{align*}
		\Omega_s(\Z\V_1,\Z)&=\tilde S_n(g_n)+n^{-1/2}\tilde S_n(g_nh_n)\rho_n^{-1}\tilde\sigma_nx,\\
		\Omega_s(\Z\V_1,\Y)&=S_n(g_n)+n^{-1/2}S_n(g_nh_n)\rho_n^{-1}\tilde\sigma_nx.
	\end{align*}

	\begin{theorem}\label{G4MT}
		Let $\Z$ and $\Y$ be two double arrays of random variables satisfying Assumptions (a) and (b). Then, for any $t\in \mathbb{R}$, we have:
		\begin{align*}
			&\quad\mathbb{P}\Big(\tilde S_n(g_n)+n^{-1/2}\tilde S_n(g_nh_n)\rho_n^{-1}\tilde\sigma_nx\le t\Big)-\mathbb{P}\Big(S_n(g_n)+n^{-1/2}S_n(g_nh_n)\rho_n^{-1}\tilde\sigma_nx\le t\Big)\\
&\qquad=o(n^{-1/2}).
		\end{align*}
	\end{theorem}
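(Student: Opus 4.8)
\textbf{Proof proposal for Theorem \ref{G4MT}.}

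The plan is to prove this via a characteristic-function comparison argument combined with a Lindeberg-type swapping scheme, since Theorem \ref{G4MT} is essentially a refined (rate-$o(n^{-1/2})$) generalized four moment theorem for the pair of linear statistics $\Omega_s(\Z\V_1,\Z)$ and $\Omega_s(\Z\V_1,\Y)$. First I would observe that both statistics are built from the smooth resolvent-type functionals $\tilde S_n(g_n)$, $\tilde S_n(g_n h_n)$ (and their Gaussian analogues $S_n(g_n)$, $S_n(g_n h_n)$), where $g_n(\lambda)=(\rho_n-\lambda)^{-1}$ is analytic in a neighborhood of the support of the M-P law since $\rho_n$ sits strictly outside the bulk by Assumption (c). Because of this analyticity, I would first replace $g_n$ and $g_n h_n$ by polynomial approximations (or, equivalently, work with the resolvent and use a contour-integral representation $f(\lambda)=\frac{1}{2\pi i}\oint f(z)(z-\lambda)^{-1}dz$), reducing the problem to comparing bilinear forms $\V_1'\Z'\U_\omega R(z)\U_\omega'\Z\V_1$ and quadratic forms in the resolvent $R(z)=(z\I_n-n^{-1}\Z\Z')^{-1}$ across the two ensembles. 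The key point is that these are exactly the kind of quantities for which the PG4MT of \cite{jiang2021partial} applies: matching the first four moments of the entries of $\Z$ with those of $\Y$ (the first three already match by Assumption (a); the fourth-moment mismatch $\beta_z$ contributes only at order $O(n^{-1/2})$ to the \emph{mean}, which is already absorbed into the centering, not to the limiting fluctuation).

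Second, I would carry out the swap entry-by-entry (or block-by-block, swapping one row of $\Z$ for a Gaussian row at a time), using a third-order Taylor expansion of the relevant smooth functional in the swapped coordinate. The zeroth, first, and second order terms match exactly because $\mathbb{E}Z_{ij}^k=\mathbb{E}Y_{ij}^k$ for $k=1,2$ and the odd moments are handled by independence; the third-order Taylor remainder is controlled by the bound on $\mathbb{E}|Z_{ij}|^3$ (Assumption (a) gives finite sixth moment, more than enough), and each such remainder is $O(n^{-3/2})$ after accounting for the normalization and the fact that each entry enters the bilinear form with weight $O(n^{-1})$ while there are $O(n^2)$ entries — yielding a total error of $O(n^{-1/2})$, which is \emph{not} yet $o(n^{-1/2})$. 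To sharpen to $o(n^{-1/2})$ I would exploit that the dominant $O(n^{-1/2})$-order discrepancy is precisely the fourth-cumulant term, and that it is identical for $\tilde S_n(g_n)$ and its correctly-centered Gaussian counterpart because the statistics $\tilde\kappa_{2,k}$, $\tilde\kappa_{3,k}$ in the scaling $\tilde\sigma_n$ (equivalently, the M-P-law centering $\mathrm{F}_{\gamma_n}$) were constructed in Section \ref{main results} to absorb exactly this term. In other words, the genuine $O(n^{-1/2})$ contributions cancel in the difference, leaving a true $o(n^{-1/2})$ residual.

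Concretely, the steps in order: (i) truncate and recenter the entries of $\Z$ so that all moments up to sixth order are uniformly bounded, incurring only $o(n^{-1/2})$ error (standard truncation, using Assumption (a)); (ii) smooth the indicator $\mathbb{I}(\cdot\le t)$ by convolving with a narrow Gaussian kernel of bandwidth $n^{-1/2-\epsilon}$ so that $\mathbb{P}(\cdot\le t)$ can be compared through smooth test functions, controlling the smoothing error via the density bound that follows from Cram\'er's condition (Assumption (d)) together with the established $O_p(1)$ regularity of $\tilde S_n$, $\tilde G_n$ noted in the Remark after Theorem \ref{convergence}; (iii) apply the Lindeberg swap with third-order Taylor expansion to the smoothed functional of the resolvent, using the contour representation to move derivatives onto $R(z)$; (iv) show the second-order Taylor term, which carries the $\beta_z$-dependence, contributes an amount that matches (up to $o(n^{-1/2})$) between the two sides once the common recentering by $\mathrm{F}_{\gamma_n}$ is in place; (v) collect the error terms and undo the smoothing. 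The main obstacle I anticipate is step (iv): verifying that the fourth-moment (second-Taylor-order) discrepancy is \emph{exactly} the same for the non-Gaussian statistic and for its Gaussian surrogate, so that it cancels in the difference rather than leaving a residual of order $n^{-1/2}$. This requires a careful bookkeeping of how $\beta_z$ propagates through $\V_1'\Z'\U_\omega R(z)\U_\omega'\Z\V_1$ — in particular isolating the diagonal terms $\sum_i v_{ik}^4(Z_{i}^4-3)$-type contributions and matching them against the definition of $\pi_k$ and $\tilde\kappa_{2,k}$ used in the scaling $\tilde\sigma_{n}$ — and leaning on the PG4MT machinery of \cite{jiang2021partial} to guarantee that only these explicitly-tracked terms survive at order $n^{-1/2}$ while all others are genuinely negligible.
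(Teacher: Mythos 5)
Your proposal follows essentially the same route as the paper: a row-by-row Lindeberg swap applied to the characteristic functions of $\Omega_s(\Z\V_1,\Z)$ and $\Omega_s(\Z\V_1,\Y)$, with low-order moment matching killing the leading increments and the refined PG4MT of \cite{jiang2021partial} supplying the crucial cancellation at order $n^{-1/2}$. The paper's execution differs only in the technical details: instead of contour/polynomial approximation plus Gaussian smoothing of the indicator, it expands each swap increment \emph{exactly} via rank-one resolvent (Sherman--Morrison-type) identities into the explicit families $\alpha_{kij},\beta_{kij},\Xi_{kij}$, bounds their conditional moments in three lemmas, and obtains the $o(n^{-1/2})$ rate from the observation (Remark \ref{minus}) that the \emph{differences} $\alpha_{ki0}-\alpha_{ki0y}$ are $O(n^{-1})$ per row rather than from absorption of $\beta_z$ into the scaling; Cram\'er's condition, which you invoke for de-smoothing, is not used here (the theorem assumes only (a)--(b)). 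One bookkeeping slip to fix if you execute your version: $O(n^2)$ entries times an $O(n^{-3/2})$ remainder gives $O(n^{1/2})$, not $O(n^{-1/2})$ --- the correct accounting is $n$ row swaps each contributing a third-order remainder of $O(n^{-3/2})$.
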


\begin{remark}Under the assumption of finite sixth moments, the conclusion of the PG4MT from \cite{jiang2021partial} remains valid, and the asymptotic error bound can be shown to be of order $o(n^{-1/2})$. 
\end{remark}

	\subsection{Higher-order approximation via Edgeworth correction}\label{san}
	This subsection establishes the first-order Edgeworth expansion of the distribution of linear statistics related to high-dimensional eigenvalues, which serves as a pivotal tool for our main result. Additionally, this result is of independent interest, distinct from the developments in the preceding sections. Define:
	\begin{align*}
		\Omega_s(&\Z\V_1,\Y)= S_n(g_n)+n^{-1/2} S_n(g_nh_n)\rho_n^{-1}\tilde\sigma_nx= S_n\Big((1+n^{-1/2}\rho_n^{-1}\tilde\sigma_nxh_n)g_n\Big)\\
		&=n^{-1/2}\sum_{i=1}^ng_n( \lambda _i)(\omega_i^2-1)+n^{-1}\sum_{i=1}^n\tau_nh_n( \lambda _i)g_n( \lambda _i)(\omega_i^2-1)\\
		&=n^{-1/2}c_n\sum_{i=1}^ng_n( \lambda _i)(\omega_i^2-1)-n^{-1}\sum_{i=1}^n\tau_n\lambda_i g^2_n( \lambda _i)(\omega_i^2-1).
	\end{align*}
	where
	\[\tau_n=\rho_n^{-1}\tilde\sigma_n x,\quad c_n=1+n^{-1/2}\mathrm{F}_{\gamma_n}(\lambda g_n^3)/\mathrm{F}_{\gamma_n}(g_n^2).\]
	We decompose $\Omega_s(\Z\V_1,\Y)$ into a sum of independent random variables, conditioned on $\bm{\Lambda}$. Recall that $\tilde \Z=\Z\V_1=(\tilde Z_{i1})$ and define the key matrices:
\[\A=(\rho_n\I-n^{-1}\Y\Y')^{-1}=(a_{ij})_{n\times n},~~ \M=(n^{-1}\Y\Y')(\rho_n\I-n^{-1}\Y\Y')^{-2}=(m_{ij})_{n\times n}.\]
 Therefore, we can reformulate $\Omega_{s}(\Z\V_1,\Y)$ as a sum of random variables:
	\begin{align*}
		&\quad\Omega_{s}(\Z\V_1,\Y)=\frac{c_n}{\sqrt n}\Big(\mathrm{tr}(\A)-\V_1'\Z'\A\Z\V_1\Big)-\frac{1}{n}\tau_n\Big(\mathrm{tr}(\M)-\V_1'\Z'\M\Z\V_1\Big)\\
		&=-\frac{c_n}{\sqrt n}\Big(\sum_{i=1}^n a_{ii}(\tilde Z_{i1}^2-1)+\sum_{i\neq j} a_{ij}\tilde Z_{i1}\tilde Z_{j1}\Big)+\frac{\tau_n}{n}\Big(\sum_{j=1}^n m_{jj}(\tilde Z_{j1}^2-1)+\sum_{i\neq j} m_{ij}\tilde Z_{i1}\tilde Z_{j1}\Big).
	\end{align*}
	
	 Using the Markov inequality, we can readily derive the following result: 
	\begin{equation}
		\mathbb{P}\bigg(\Big|-\frac{c_n}{\sqrt n}\sum_{i\neq j} a_{ij}\tilde Z_{i1}\tilde Z_{j1}+\frac{1}{n}\tau_n\sum_{i\neq j} m_{ij}\tilde Z_{i1}\tilde Z_{j1}\Big|\ge n^{-1/2}\epsilon_n\bigg)=o(n^{-1/2}).
	\end{equation}

Therefore, the conditions of Lemma B.2 of our Supplementary Material are satisfied. Based on the result of Lemma B.2, we derive the final result:
	\[\mathbb{P}(\Omega_{s}(\Z\V_1,\Y)\le x)=\mathbb{P}\bigg(-\frac{c_n}{\sqrt n}\sum_{i=1}^n a_{ii}(\tilde Z_{i1}^2-1)+\frac{1}{n}\tau_n\sum_{j=1}^n m_{jj}(\tilde Z_{j1}^2-1) \le x\bigg).\]

	\begin{theorem}[Limiting distribution of $\Omega_s(\Z\V_1,\Y)$]\label{limiting}
		Assuming that $\Z$ and $\Y$ are two double arrays of random varibales satisfying Assumptions (a)--(c), and that $\Y$ consists of i.i.d standard Gaussian variables. Then,	
		\[\mathbb{P}\bigg(\frac{-\Omega(\rho_n, \Y)+\rho_n n^{-1/2}\tilde G_n(g_n)}{[\rho_n\mathrm{F}_{\gamma_n}(g_n^2)-n^{-1/2}S_n(g_nh_n)]\tilde\sigma_n}\le x\bigg)=\Phi(x)+p_1(x)+o(n^{-1/2}),\]
		where 
		\[p_1(x)=n^{-1/2}\bigg(\frac 16\kappa_{2,n}^{-3/2}\kappa_{3,n}(1-x^2)-\kappa_{2,n}^{-1/2}\mu(g_n)\bigg)\phi(x).  \]
	\end{theorem}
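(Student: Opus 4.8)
\textbf{Proof proposal for Theorem \ref{limiting}.}

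The plan is to reduce the quantity inside the probability to a normalized sum of i.i.d.\ random variables (conditioned on $\bm{\Lambda}$), and then apply a classical Edgeworth expansion for such sums together with a delta-method-type argument for the ratio. First I would use the identity $\Omega(\rho_n,\Y) = -\rho_n \tilde S_n(g_n)$ established in Section \ref{CLT} (with $\Z$ replaced by $\Y$ in the relevant places, so that it becomes $-\rho_n S_n(g_n)$ after the PG4MT reduction already carried out in the preceding subsection), to rewrite the numerator as $\rho_n S_n(g_n) + \rho_n n^{-1/2}\tilde G_n(g_n)$. Since $\tilde G_n(g_n)$ depends only on $\bm{\Lambda}$ (the eigenvalues of $n^{-1}\Y\Y'$), it is a non-random shift after conditioning; and the denominator $[\rho_n\mathrm{F}_{\gamma_n}(g_n^2) - n^{-1/2}S_n(g_nh_n)]\tilde\sigma_n$ is, up to the $O_p(n^{-1/2})$ correction $S_n(g_nh_n)$, the deterministic quantity $\rho_n\mathrm{F}_{\gamma_n}(g_n^2)\tilde\sigma_n$. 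Thus the event $\{\,\cdot\le x\,\}$ is equivalent (as shown in the display just before Theorem \ref{G4MT}) to $\{\Omega_s(\Z\V_1,\Y)\le \mathrm{F}_{\gamma_n}(g_n^2)\tilde\sigma_n x - n^{-1/2}\tilde G_n(g_n)\}$, so it suffices to produce a first-order Edgeworth expansion for $\Omega_s(\Z\V_1,\Y)$ evaluated at this (conditionally deterministic) threshold.

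Next I would invoke the reduction already performed at the end of Section \ref{san}: modulo an $o(n^{-1/2})$ error in probability, $\Omega_s(\Z\V_1,\Y)$ equals $W_n := -\frac{c_n}{\sqrt n}\sum_{i=1}^n a_{ii}(\tilde Z_{i1}^2-1) + \frac{1}{n}\tau_n\sum_{j=1}^n m_{jj}(\tilde Z_{j1}^2-1)$, a weighted sum of the independent centered variables $\tilde Z_{i1}^2-1$ with coefficients $w_{ni} = -c_n a_{ii}/\sqrt n + \tau_n m_{jj}/n$ that are bounded and depend only on $\bm{\Lambda}$. Conditioning on $\bm{\Lambda}$, I would compute the first three cumulants of $W_n$: the mean is $0$, the variance converges to $\kappa_{2,n}$ (matching the definition via $\tilde\kappa_2$, the moments of $\tilde Z_{11}^2-1$, and the limiting values $\mathrm{F}_{\gamma}(g^2)$, $\mathrm{F}_{\gamma}(\lambda g^3)$ of the relevant linear spectral functionals of the M-P law), and the third cumulant converges to $n^{-1/2}\kappa_{3,n}$ after extracting the $n^{-1/2}$ factor. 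These computations use the deterministic equivalents for $\sum_i a_{ii}^2 = \mathrm{tr}(\A^2)$, $\sum_i a_{ii}^3$, etc., which are standard linear-spectral-statistic limits for the companion M-P law $\mathrm{F}_\gamma$; the $\tau_n$-dependent cross terms contribute the $\mathrm{F}_{\gamma_n}(\lambda g_n^3)/\mathrm{F}_{\gamma_n}(g_n^2)$ correction folded into $c_n$. Then I would apply a Cram\'er-type Edgeworth theorem for sums of independent, non-identically distributed lattice-free variables (the $\tilde Z_{i1}^2-1$ inherit Cram\'er's condition from Assumption (d) on $Z_{11}$, and their moments up to order three are uniformly controlled by $\Delta<\infty$) — precisely the type of result attributed to \cite{petrov2000classical} and \cite{hall2013bootstrap} in the introduction — to obtain
\[
\mathbb{P}(W_n\le y\mid\bm{\Lambda}) = \Phi\!\Big(\tfrac{y}{\sqrt{\operatorname{Var}(W_n)}}\Big) + \tfrac{1}{6}\,\tfrac{\kappa_3(W_n)}{\operatorname{Var}(W_n)^{3/2}}\Big(1-\tfrac{y^2}{\operatorname{Var}(W_n)}\Big)\phi\!\Big(\tfrac{y}{\sqrt{\operatorname{Var}(W_n)}}\Big) + o(n^{-1/2}),
\]
uniformly in $y$. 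Substituting $y = \mathrm{F}_{\gamma_n}(g_n^2)\tilde\sigma_n x - n^{-1/2}\tilde G_n(g_n)$, expanding $\Phi$ and $\phi$ around the leading argument (the $n^{-1/2}\tilde G_n(g_n)$ shift interacts with $\Phi'$ to produce the $\mu(g_n)$ term, since $\tilde G_n(g_n)$ concentrates at the bias of the linear spectral statistic whose value is exactly $\mu(g_n)$ up to lower order — this is the derivation referenced in Section B.1 of the Supplement), and noting $\operatorname{Var}(W_n)^{1/2} = \mathrm{F}_{\gamma_n}(g_n^2)\tilde\sigma_n \kappa_{2,n}^{1/2}\rho_n^{-1}$ so that the leading argument collapses to $x$ after accounting for the $\tilde\sigma_n$ normalization, yields $\Phi(x) + p_1(x) + o(n^{-1/2})$ with $p_1$ as claimed. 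Finally I would remove the conditioning: since all the deterministic equivalents are almost-sure limits and the error terms are uniform, integrating over $\bm{\Lambda}$ (equivalently, replacing the random linear spectral functionals by their M-P limits at cost $o(n^{-1/2})$, which requires the concentration of these functionals — a consequence of the CLT for linear spectral statistics) preserves the expansion.

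The main obstacle I anticipate is the careful bookkeeping in the denominator and the $\tilde G_n(g_n)$ shift: one must show that replacing the random quantities $S_n(g_nh_n)$ and $\tilde G_n(g_n)$ by, respectively, $0$ and its deterministic bias introduces only $o(n^{-1/2})$ error \emph{after} the non-linear (ratio, then $\Phi/\phi$) operations, which is delicate because an $O_p(n^{-1/2})$ perturbation of a quantity that is itself multiplied by $n^{1/2}$ elsewhere can a priori contribute at the $O(n^{-1/2})$ level to the distribution function. Resolving this requires a second-order stochastic Taylor expansion of the ratio, keeping the $S_n(g_nh_n)$ and $\tilde G_n(g_n)$ terms to first order (they are exactly what generate the $\mu(g_n)$ correction and the $c_n$ modification of the variance) while showing their quadratic and cross contributions are genuinely $o(n^{-1/2})$; the uniformity in $x$ of the Edgeworth bound, combined with the boundedness of $g_n$ and $h_n$ away from the M-P bulk (guaranteed by Assumption (c), $l_r > 1+\sqrt\gamma$, which keeps $\rho_n$ strictly outside the support), is what makes this tractable. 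A secondary technical point is verifying that the effective summands $\tilde Z_{i1}^2-1$ satisfy a conditional Cram\'er condition uniformly — here one uses that $\tilde Z_{i1}$ is a fixed linear combination $\sum_t v_{t1} Z_{it}$ of i.i.d.\ variables each obeying Assumption (d), so the characteristic function of $\tilde Z_{i1}^2$ inherits the required decay; when $\|\V_1\|_\infty = o(1)$ this is immediate, and the general case needs the remark's explicit cumulant formulas to confirm non-degeneracy of $\kappa_{2,n}$.
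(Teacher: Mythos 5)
Your proposal follows essentially the same route as the paper's proof: condition on $\bm{\Lambda}$, reduce $\Omega_s(\Z\V_1,\Y)$ to the weighted sum $n^{-1/2}\sum_i c_{ni}(\tilde Z_{i1}^2-1)$ after discarding the off-diagonal terms, apply a Petrov/Cram\'er-type Edgeworth expansion for non-identically weighted independent summands (the paper's Lemma on conditions R1--R3), match the conditional cumulants $\bar V_n,\bar\kappa_{3,n}$ to $\kappa_{2,n},\kappa_{3,n}$ via deterministic equivalents of the linear spectral functionals, absorb the $n^{-1/2}\tilde G_n(g_n)$ shift into the $\mu(g_n)$ term using its concentration at the LSS bias, and then uncondition. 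The delicate points you flag (the $O_p(n^{-1/2})$ perturbations surviving the nonlinear ratio and $\Phi/\phi$ operations, and the conditional Cram\'er condition for $\tilde Z_{i1}^2-1$) are precisely what the paper's Lemmas on $PO(n^{-\alpha})$ bounds and the case split $x^2\lessgtr n$ are devoted to, so the plan is sound and matches the paper's argument.
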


\subsection{{Proof  strategy}}\label{strategy}
In \cite{yang2018edgeworth},   the observations $x_j$ follow a Gaussian distribution, an assumption instrumental in the derivation of independent and identically distributed variables $\z=(z_i)=\U'\Z_1$ (independent of the noise eigenvalues $\bm{\Lambda}$), which serve as input for the conditional Edgeworth expansion. Our proof strategy involves adapting the approach from \cite{yang2018edgeworth} for the Gaussian observations to the non-Gaussian observations. However, this adaptation is far from being straightforward. A key requirement is the generation of independent random variables to serve as inputs for the conditional Edgeworth expansion when dealing with non-Gaussian observations. This task forms the primary contribution of Theorem \ref{convergence} and Theorem \ref{G4MT}.

The derivation of the asymptotic distribution of eigenvalues typically begins with the characteristic equation, a methodology we also adopt. However, unlike \cite{bai2008central} and \cite{jiang2021generalized}, our objective is to achieve a more precise asymptotic distribution, which requires a more thorough derivation and the preservation of terms that they might have considered negligible. Through a series of rigorous derivations, we establish that the Euclidean distance between $R_n$ and 
\[\frac{1}{\rho_n\mathrm{F}_{\gamma_n}(g_n^2)-n^{-1/2}\tilde S_n(g_nh_n)}\Big(-\Omega(\rho_n, \Z)+\frac{\rho_n}{n^{1/2}}\tilde G_n(g_n)\Big)\]
is of order $o(n^{-1/2})$. This constitutes the principal finding of our Theorem \ref{convergence}.

However, simply obtaining this statistic is insufficient, as it incorporates $\U_\omega'\Z\V_1$, where $\U_\omega$ is a Haar matrix consisting of mutually orthogonal unit eigenvectors derived from $n^{-1}\Z\Z'$. In the Gaussian case, this corresponds to an independent and identically distributed random variable. But its behavior differs substantially in the non-Gaussian scenario, where independence is not generally maintained. To address this challenge, we leverage the PG4MT framework developed by \cite{jiang2021partial} to demonstrate that the Euclidean distance between the non-Gaussian statistic 
\[\frac{1}{\rho_n\mathrm{F}_{\gamma_n}(g_n^2)-n^{-1/2}\tilde S_n(g_nh_n)}\Big(-\Omega(\rho_n, \Z)+\frac{\rho_n}{n^{1/2}}\tilde G_n(g_n)\Big),\]
and its Gaussian counterpart
\[\frac{1}{\rho_n\mathrm{F}_{\gamma_n}(g_n^2)-n^{-1/2} S_n(g_nh_n)}\Big(-\Omega(\rho_n, \Y)+\frac{\rho_n}{n^{1/2}}\tilde G_n(g_n)\Big)\] is $o(n^{-1/2})$. This forms the core contribution of our Theorem \ref{G4MT}.

Consequently, we ascertain that the difference in ditribution between $R_n$ and the linear Gaussian statistic, measured by Euclidean distance, is of order $o(n^{-1/2})$. Our objective now is to derive the Edgeworth-corrected limiting distribution of the Gaussian-based statistic:
\[\frac{1}{\rho_n\mathrm{F}_{\gamma_n}(g_n^2)-n^{-1/2} S_n(g_nh_n)}\Big(-\Omega(\rho_n, \Y)+\frac{\rho_n}{n^{1/2}}\tilde G_n(g_n)\Big).\]
This statistic can be reformulated as a sum of independent random variables given a diagonal matrix $\bm{\Lambda}$. By applying the Edgeworth expansion results for sums of independent random variables from \cite{petrov2000classical}, Theorem \ref{limiting} provides a first-order Edgeworth expansion of this statistic given a diagonal matrix $\bm{\Lambda}$. Finally, combining these three steps, the conclusion of Theorem \ref{main} follows directly.

\section{Conclusion and discussion}\label{future}

	This paper establishes the first-order Edgeworth expansions for the distribution functions of spiked eigenvalues of sample covariance matrices under non-Gaussian observations. Our results resolve an open problem posed by \cite{yang2018edgeworth} and refine the asymptotic distributions derived in \cite{jiang2021generalized}.
	Furthermore, we employ Edgeworth-corrected formulations to enhance the accuracy of both confidence interval estimation for spike values and the determination of the number of spikes. This higher-order approximation yields improved finite-sample performance compared to conventional Gaussian approximations.
	
	A natural extension of this work would be to derive a second-order Edgeworth expansion. However, this remains a challenging open problem, as it would likely require a first-order approximation for the associated linear spectral statistic under non-Gaussian assumptions, which is not currently available.

\appendix

\section{Proofs of main results}\label{proof}
Section \ref{trunca} establishes that the truncation procedure does not affect the limiting distribution of the spiked eigenvalues with ﬁrst-order Edgeworth expansion. Therefore, to simplify our notation, all terms in the proofs will represent the truncated elements.

\subsection{Proof of Theorem \ref{multi}}
Define $\mathbf{l}=(l_1,\dots,l_r)$. And assume that $\mathbf{l}_k$ is the vector $\mathbf{l}$ with its $k$-th element removed. Without loss of generality, we assume that the population covariance matrix has the following form:
\begin{align*}
	\bm{\Sigma}=(\mathbf{v}_k,\V_k)\text{diag}(l_k,\mathbf{l}_{(k)},l_{r+1},\dots,l_{p+1})(\mathbf{v}_k,\V_k)',
\end{align*}
where $\V$ is an $n\times n$ orthogonal matrix. Denote by $\textbf{V}_k$  the leave-one-out orthogonal matrix, obtained by excluding the $k$-th vector $\mathbf{v}_k$ from $\V$. Define $\X=[\Z_1, \Z_2]\bm{\Sigma}^{1/2}$. Consequently, through the eigendecomposition
\[n^{-1}\Z_2\bm{\Sigma}\Z_2'=\U_\omega\bm{\Lambda}_\omega\U_\omega'=\U_\omega\text{diag}(\tilde\lambda_1,\dots,\tilde\lambda_n)\U_\omega',\qquad \tilde\lambda_1\ge \dots\ge\tilde\lambda_n,\]
we observe that $\tilde\lambda_1,\dots,\tilde\lambda_{r-1}$ are spiked eigenvalues, separated from the bulk of the M-P distribution almost surely. The empirical distribution $F_n$ of the $p$ sample eigenvalues converges to the compansion M-P law $F_{\gamma_n,H_n}$. The companion empirical distribution $\mathrm{F}_n$ of the $n$ sample eigenvalues of $n^{-1}\Z_2\bm{\Sigma}\Z_2'$ converges to the compansion M-P law $\mathrm{F}_{\gamma_n,H_n}$, where
\[{\mathrm{F}}_{\gamma_n,H_n}=(1-\gamma_n)I_{[0,\infty)}+\gamma_n F_{\gamma_n,H_n},\quad H_n=p^{-1}\Big(\sum_{j\neq k}I_{[l_j,\infty)}+(p-r+1)I_{[1,\infty)}\Big).\]
Besides, let $\bm{\omega}=\U'\Z\mathbf{v}_k$ and $g_{kn}=(\rho_{n}-\lambda_k)^{-1}$, we derive the following stochastic decomposition:
\begin{align*}
	&n^{-1}\bm{\omega}'f(\bm{\Lambda})\bm{\omega}={\mathrm{F}}_{\gamma_n,H_n}(f)+n^{-1/2}S_n(f)+n^{-1}G_n(f),~{\mathrm{F}}_{\gamma_n,H_n}(f)=\int f(\lambda)\mathrm{F}_{\gamma_n,H_n}(d\lambda),\\
	& S_n(f)=n^{-1/2}\Big(\bm{\omega}' f(\bm{\Lambda})\bm{\omega}-\mathrm{F}_{n}(f)\Big),~G_n(f)=n\Big(\mathrm{F}_{n}(f)-\mathrm{F}_{\gamma_n,H_n}(f)\Big).
\end{align*}
Let $\mathrm{F}_{\gamma_n}$ denote the compansion M-P law to which the companion empirical distribution $\mathrm{F}_n$ of the $n$ eigenvalues $(\tilde\lambda_1,\dots,\tilde\lambda_n) $ of $n^{-1}\Z_2\bm{\Sigma}\Z_2'$ converges  in the single-spike case. Hence, as $n$ grows, we observe:
\[H_n\to I_{[1,\infty)},\quad \mathrm{F}_{\gamma_n,H_n}\overset{a.s.}{ \longrightarrow} \mathrm{F}_{\gamma_n}.\]
Therefore, we can conclude that the distributional limits of $S_n(f)$ and $G_n(f)$ remain unchanged when compared to the single-spike case.

It is worth noting that $\mathrm{F}_{\gamma_n,H_n}$ lacks an explicit form for $r>1$. Instead, based on the results of \cite{wang2014note}, we utilize the following stochastic decomposition:
\begin{align*}
	&n^{-1}\bm{\omega}'f(\bm{\Lambda})\bm{\omega}={\mathrm{F}}_{\gamma_n}(f)+n^{-1/2}S_n(f)+n^{-1}\Big(G_n(f)+A_{\gamma_n}(f)\Big)+n^{-2}B_{\gamma_n}(f),\\
	& A_{\gamma_n}(f)=\sum_{j\neq k}\frac{1}{2\pi i}\int_{\mathcal{C}_n}f_n\Big(-\frac 1m+\frac{\gamma_n}{1+m}\Big)\frac{l_j-1}{(1+m)(l_j+m)}dm,\\
	&B_{\gamma_n}(f)=n^2\Big(\mathrm{F}_{\gamma_n,H_n}(f)-\mathrm{F}_{\gamma_n}(f)-n^{-1}A_{\gamma_n}(f)\Big),
\end{align*}
where $\mathcal{C}_n$ is a counter-clockwise contour. And it is important that $B_{\gamma_n}(f)$ is of order $O(1)$.
Besides, define
\[D_{n1}(f)=n^{1/2}\Big(n^{-1}\bm{\omega}'f(\bm{\Lambda})\bm{\omega}-\mathrm{F}_{\gamma_n}(f)\Big),\quad D_{n2}(f)=n\Big(\mathrm{F}_{n}(f)-\mathrm{F}_{\gamma_n}(f)\Big).\]
The key advantage of $D_{n1}$ and $D_{n2}$ is that both terms are of order $O_p(1)$. Building upon this stable foundation, we derive:
\begin{align*}
	n^{-1}\bm{\omega}'f(\bm{\Lambda})\bm{\omega}={\mathrm{F}}_{\gamma_n}(f)+n^{-1/2}D_{n1}(f)={\mathrm{F}}_{\gamma_n}(f)+n^{-1/2}S_n(f)+n^{-1}D_{n2}(f).
\end{align*}
Using the same methodology as in the single-spike case, we establish the following results:
\begin{align}
	\label{first}\sup_x\bigg|\mathbb{P}(&R_k\le x)-\mathbb{P}\bigg(\frac{-\Omega(\rho_k, \Z)+\rho_k n^{-1/2}[\tilde G_{kn}(g_{kn})+A_{\gamma_n}(g_{kn})]}{[\rho_k\mathrm{F}_{\gamma_n}(g_{kn}^2)-n^{-1/2}\tilde S_n(g_{kn}h_n)]\tilde\sigma_k}\le x\bigg)\bigg|=o(n^{-1/2}),\\
	\label{second}&\sup_x\bigg|\mathbb{P}\bigg(\frac{-\Omega(\rho_k, \Z)+\rho_k n^{-1/2}[\tilde G_{kn}(g_{kn})+A_{\gamma_n}(g_{kn})]}{[\rho_k\mathrm{F}_{\gamma_n}(g_{kn}^2)-n^{-1/2}\tilde S_n(g_{kn}h_n)]\tilde\sigma_k}\le x\bigg)\\\nonumber
	&\qquad-\mathbb{P}\bigg(\frac{-\Omega(\rho_k, \Y)+\rho_k n^{-1/2}[\tilde G_{kn}(g_{kn})+A_{\gamma_n}(g_{kn})]}{[\rho_k\mathrm{F}_{\gamma_n}(g_{kn}^2)-n^{-1/2}S_n(g_{kn}h_n)]\tilde\sigma_k}\le x\bigg)\bigg|=o(n^{-1/2}),\\
	\label{third}&\sup_x\bigg|\mathbb{P}\bigg(\frac{-\Omega(\rho_k, \Y)+\rho_k n^{-1/2}[\tilde G_{kn}(g_{kn})+A_{\gamma_n}(g_{kn})]}{[\rho_k\mathrm{F}_{\gamma_n}(g_{kn}^2)-n^{-1/2}S_n(g_{kn}h_n)]\tilde\sigma_k}\le x\bigg)-\Phi(x)-n^{-1/2}\\\nonumber
	&\qquad\times\bigg(\frac 16\kappa_{2,k}^{-3/2}\kappa_{3,k}(1-x^2)-\kappa_{2,k}^{-1/2}\big[\mu(g_{kn})+A_{\gamma_n}(g_{kn})\big]\bigg)\phi(x)\bigg|=o(n^{-1/2}).
\end{align}
For all sufficiently large $n$, $A_{\gamma_n}(g_{kn})$ is explicitly expressed as: 
\[A_{\gamma_n}(g_{kn})=\frac{l_k-1}{(l_k-1)^2-\gamma_n}\sum_{j\neq k}\frac{l_j-1}{l_k-l_j}.\]

Therefore, based on equations \eqref{first}, \eqref{second}, \eqref{third} and the proof of Theorem \ref{main}, we can derive the following result:
\begin{align*}
	&\sup_x\bigg|\mathbb{P}(R_k\le x)-\Phi(x)-n^{-1/2}\bigg(\frac 16\kappa_{2,k}^{-3/2}\kappa_{3,k}(1-x^2)-\kappa_{2,k}^{-1/2}\Big[\mu(g_{kn})+A_{\gamma_n}(g_{kn})\Big]\bigg)\phi(x)\bigg|\\\nonumber
	&=o(n^{-1/2}).
\end{align*}
Hence, the result holds.

\subsection{Proof of Theorem \ref{main}}

Based on the results of Theorem \ref{convergence}, \ref{G4MT} and \ref{limiting}, we derive the following core results: 
\begin{align}
	&\sup_x\bigg|\mathbb{P}(R_n\le x)-\mathbb{P}\bigg(\frac{-\Omega(\rho_n, \Z)+\rho_n n^{-1/2}\tilde G_n(g_n)}{[\rho_n\mathrm{F}_{\gamma_n}(g_n^2)-n^{-1/2}\tilde S_n(g_nh_n)]\tilde\sigma_n}\le x\bigg)\bigg|=o(n^{-1/2}),\\
	&\sup_x\bigg|\mathbb{P}\bigg(\frac{-\Omega(\rho_n, \Z)+\rho_n n^{-1/2}\tilde G_n(g_n)}{[\rho_n\mathrm{F}_{\gamma_n}(g_n^2)-n^{-1/2}\tilde S_n(g_nh_n)]\tilde\sigma_n}\le x\bigg)\\\nonumber
	&\qquad-\mathbb{P}\bigg(\frac{-\Omega(\rho_n, \Y)+\rho_n n^{-1/2}\tilde G_n(g_n)}{[\rho_n\mathrm{F}_{\gamma_n}(g_n^2)-n^{-1/2}S_n(g_nh_n)]\tilde\sigma_n}\le x\bigg)\bigg|=o(n^{-1/2}),\\
	&\sup_x\bigg|\mathbb{P}\bigg(\frac{-\Omega(\rho_n, \Y)+\rho_n n^{-1/2}\tilde G_n(g_n)}{[\rho_n\mathrm{F}_{\gamma_n}(g_n^2)-n^{-1/2}S_n(g_nh_n)]\tilde\sigma_n}\le x\bigg)\\\nonumber
	&\qquad-\Phi(x)-n^{-1/2}\bigg(\frac 16\kappa_{2,n}^{-3/2}\kappa_{3,n}(1-x^2)-\kappa_{2,n}^{-1/2}\mu(g_n)\bigg)\phi(x)\bigg|=o(n^{-1/2}).
\end{align}
Based on these results, we can readily derive the Edgeworth expansion for $R_n$, i.e. 
\[\sup_x\bigg|\mathbb{P}(R_n\le x)-\Phi(x)-n^{-1/2}\bigg(\frac 16\kappa_{2,n}^{-3/2}\kappa_{3,n}(1-x^2)-\kappa_{2,n}^{-1/2}\mu(g_n)\bigg)\phi(x)\bigg|=o(n^{-1/2}).\]
Hence, the result holds.

\subsection{Proof of Theorem \ref{sixmoment}}
From the definition of $\Delta$ and Chebyshev's inequality, we only need to prove the following two results:
\begin{equation}\label{con}
	\mathbb{E}\hat\Delta \to \Delta
\end{equation}
and
\begin{equation}\label{asy}
	\mathbb{E}(\hat\Delta-\Delta)^2\to 0.
\end{equation}
We consider the estimation of the sixth-order moment in two cases, specifically when $p<n$ and when $p>n$. Let $x_j=(x_{j1},\dots, x_{jp})$ be a vector. Furthermore, let $S_{ik}^{-1}$ denote the element in the $i$-th row and $k$-th column of the matrix $\mathbf{S}_j^{-1}$. Note that
\begin{align*}
	\left( x_j' \mathbf{S}_j^{-1} x_j \right)^2 = \bigg( \sum_{i,k} x_{ji} S^{-1}_{ik} x_{jk} \bigg) \bigg( \sum_{l,m} x_{jl} S^{-1}_{lm} x_{jm} \bigg)= \sum_{i,k,l,m} x_{ji} x_{jk} x_{jl} x_{jm}\, S^{-1}_{ik} S^{-1}_{lm}.
\end{align*}
Therefore, according to the independence of $x_j$ and $S^{-1}$, we obtain that
\begin{align*}
	\mathbb{E}\left( x_j' \mathbf{S}_j^{-1} x_j \right)^2&=(\mathbb{E}x_{ji}^4)\mathbb{E}\sum_{i=1}^p(S_{ii}^{-1})^2+(\mathbb{E}x_{ji}^2)^2 \mathbb{E}\bigg(  \sum_{i \neq l} S^{-1}_{ii} S^{-1}_{ll} +  2\sum_{i \neq k}  S^{-1}_{ik} S^{-1}_{ki} \bigg)\\
	&=(\mathbb{E}x_i^4)\mathbb{E}tr\left (\mathbf{S}_j^{-1}\circ \mathbf{S}_j^{-1}\right)+(\mathbb{E}x_i^2)^2\mathbb{E}\left((tr \mathbf{S}_j^{-1})^2-tr\left (\mathbf{S}_j^{-1}\circ \mathbf{S}_j^{-1}\right)\right)\\
	&\qquad+2(\mathbb{E}x_i^2)^2\mathbb{E}\left(tr(\mathbf{S}_j^{-2})-tr\left (\mathbf{S}_j^{-1}\circ \mathbf{S}_j^{-1}\right)\right)\\
	&=(\mathbb{E}x_i^4-3)\mathbb{E}tr\left (\mathbf{S}_j^{-1}\circ \mathbf{S}_j^{-1}\right)+2\mathbb{E}tr(\mathbf{S}_j^{-2})+(\mathbb{E}x_i^2)^2\mathbb{E}(tr \mathbf{S}_j^{-1})^2,
\end{align*}
where $\circ$ is the Hadamard product.
Hence,
\[ \left(x_j' \mathbf{S}_j^{-1} x_j \right) ^3=\bigg( \sum_{i,k} x_{ji} S^{-1}_{ik} x_{jk} \bigg)^3= \sum_{i_1,i_2,i_3,i_4,i_5,i_6} x_{ji_1} x_{ji_2} x_{ji_3} x_{ji_4}x_{ji_5}x_{ji_6} S^{-1}_{i_1i_2} S^{-1}_{i_3i_4}S^{-1}_{i_5i_6} . \]

Using the result of Theorem 1 of \cite{bai2007asymptotics}, we derive
\begin{align*}
	\sum_{i\neq k}S_{ik}^{-1}=\mathbf{1}'\mathbf{S}_j^{-1}\mathbf{1}-tr(\mathbf{S}_j^{-1})=o(p),
\end{align*}
where $\mathbf{1}$ is the $p$-dimensional vector. Hence,
\begin{align*}
	\mathbb{E}\sum_{i \neq l} S_{ii}^{-1} S_{il}^{-1} S_{ll}^{-1}
	&= \mathbb{E}\sum_{i \neq l} (S_{ii}^{-1} - \mathbb{E}S_{ii}^{-1}) S_{il}^{-1} (S_{ll}^{-1} - \mathbb{E}S_{ll}^{-1})+ \mathbb{E}S_{ii}^{-1} \mathbb{E}S_{ll}^{-1} \sum_{i \neq l} S_{il}^{-1} \\
	&\quad + \mathbb{E}S_{ii}^{-1} \sum_{i \neq l} S_{il}^{-1} (S_{ll}^{-1} - \mathbb{E}S_{ll}^{-1})+ \mathbb{E}S_{ll}^{-1} \sum_{i \neq l} (S_{ii}^{-1} - \mathbb{E}S_{ii}^{-1}) S_{il}^{-1} \\
	&= O(p^{-1})\sum_{i \neq l} S_{il}^{-1} + O(p^{-1/2})\sum_{i \neq l} S_{il}^{-1} + \sum_{i \neq l} S_{il}^{-1} = o(p), \\
	\mathbb{E}\sum_{i \neq k} S_{ik}^{-1} S_{ik}^{-1} S_{ik}^{-1}&= o(p),
\end{align*}
where the last equality holds because the off-diagonal entries of $\mathbf{S}_j^{-1}$ satisfy $\max_{i \neq j}|(\mathbf{S}_j^{-1})_{ij}|=O_p(n^{-1/2})$.

Therefore, according to the independence of $x_j$ and $\mathbf{S}_j^{-1}$ along with above equations, we obtain that:
\begin{align*}
	&\qquad\mathbb{E}\left( x_j' \mathbf{S}_j^{-1} x_j \right)^3\\
	&=(\mathbb{E}x_{ji}^6)\mathbb{E}\sum_{i=1}^n(S_{ii}^{-1})^3+(\mathbb{E}x_i^4)(\mathbb{E}x_i^2) \mathbb{E}\bigg(  3\sum_{i \neq l}(S_{ii}^{-1})^2 S^{-1}_{ll} +  12\sum_{i \neq l}  (S_{il}^{-1})^2 S^{-1}_{ll} \bigg)\\
	&\qquad+  (\mathbb{E}x_i^2)^3 \mathbb{E}\bigg(\sum_{i \neq k\neq l}S^{-1}_{ii}S^{-1}_{kk}S^{-1}_{ll}+6\sum_{i \neq k \neq l}  S^{-1}_{ll}S^{-1}_{ik} S^{-1}_{ki}+8\sum_{i \neq k \neq l}  S^{-1}_{ik}S^{-1}_{il} S^{-1}_{kl}\bigg)\\
	&\qquad +(\mathbb{E}x_i^3)^2 \mathbb{E}\bigg(6\sum_{i \neq l}S^{-1}_{ii}S^{-1}_{il}S^{-1}_{ll}+4\sum_{i \neq k }  S^{-1}_{ik}S^{-1}_{ik} S^{-1}_{ki}\bigg)\\
	&=(\mathbb{E}x_{ji_1}^6)\mathbb{E} tr(\mathbf{S}_j^{-1}\circ \mathbf{S}_j^{-1}\circ \mathbf{S}_j^{-1})+3(\mathbb{E}x_{ji_1}^4)\Big(\mathbb{E}tr (\mathbf{S}_j^{-1}\circ \mathbf{S}_j^{-1})\mathbb{E}tr(\mathbf{S}_j^{-1})\\
	&\qquad-\mathbb{E}tr\left (\mathbf{S}_j^{-1}\circ \mathbf{S}_j^{-1}\circ \mathbf{S}_j^{-1}\right)\Big) +12 (\mathbb{E}x_{ji_1}^4)\big(\mathbb{E}tr(\mathbf{S}_j^{-2}\circ \mathbf{S}_j^{-1})\\
	&\qquad -\mathbb{E}tr\big (\mathbf{S}_j^{-1}\circ \mathbf{S}_j^{-1}\circ \mathbf{S}_j^{-1}\big)\big)+  \big(\mathbb{E}(tr \mathbf{S}_j^{-1})^3-3\mathbb{E}tr\left (\mathbf{S}_j^{-1}\circ \mathbf{S}_j^{-1}\right)\mathbb{E}(tr \mathbf{S}_j^{-1})\\
	&\qquad+2tr(\mathbf{S}_j^{-1}\circ \mathbf{S}_j^{-1}\circ \mathbf{S}_j^{-1})\big)+6\Big(\mathbb{E}(tr \mathbf{S}_j^{-2})tr(\mathbf{S}_j^{-1})-\mathbb{E}tr\left (\mathbf{S}_j^{-1}\circ \mathbf{S}_j^{-1}\right)\mathbb{E}(tr \mathbf{S}_j^{-1})\\
	&\qquad-tr(\mathbf{S}_j^{-2}\circ \mathbf{S}_j^{-1})+tr(\mathbf{S}_j^{-1}\circ \mathbf{S}_j^{-1}\circ \mathbf{S}_j^{-1})\Big)+8\Big(\mathrm{tr}(\mathbf{S}_j^{-3}) - 3(\mathbb{E}\mathrm{tr}(\mathbf{S}_j^{-2} \circ \mathbf{S}_j^{-1})\\
	&\qquad - \mathbb{E}\mathrm{tr}(\mathbf{S}_j^{-1} \circ \mathbf{S}_j^{-1} \circ \mathbf{S}_j^{-1}))-\mathbb{E}\mathrm{tr}(\mathbf{S}_j^{-1} \circ \mathbf{S}_j^{-1} \circ \mathbf{S}_j^{-1})\Big)+o(p),
\end{align*}
where $\circ$ is the Hadamard product.

Given $p<n$, leveraging the analytical techniques presented in Section $3.3.2$ of \cite{bai2010spectral}, and integrating relevant methodological approaches from \cite{zhang2019invariant} and \cite{bai2015convergence}, we derive:
\begin{align*}
	&\mathbb{E} tr(\mathbf{S}_j^{-1})=\frac{p}{1-\gamma_n},\qquad\mathbb{E}tr(\mathbf{S}_j^{-1}\circ \mathbf{S}_j^{-1})=\frac{p}{(1-\gamma_n)^2}+C_1, \\
	&\mathbb{E} tr(\mathbf{S}_j^{-2})=\frac{p}{(1-\gamma_n)^3},\quad\mathbb{E}tr(\mathbf{S}_j^{-2}\circ \mathbf{S}_j^{-1})=\frac{p}{(1-\gamma_n)^4}+o(p),\\
	&\mathbb{E} tr(\mathbf{S}_j^{-3})=\frac{p(1+\gamma_n)}{(1-\gamma_n)^5},\quad \mathbb{E}tr(\mathbf{S}_j^{-1}\circ \mathbf{S}_j^{-1}\circ \mathbf{S}_j^{-1})=\frac{p}{(1-\gamma_n)^3}+o(p),
\end{align*}
\begin{align*}
	\mathbb{E}\left( x_j' \mathbf{S}_j^{-1} x_j \right)^3&=\frac{p\Delta}{(1-\gamma_n)^3}+\frac{p^3+3\beta_zp^2-15\beta_zp-21p}{(1-\gamma_n)^3}+\frac{3\beta_zC_1p}{1-\gamma_n}\\
	&\quad+\frac{6p^2+12p\beta_z+6p}{(1-\gamma_n)^4}+\frac{8(1+\gamma_n)p}{(1-\gamma_n)^5}+o(p),\\
	\mathbb{E}\left( x_j' \mathbf{S}_j^{-1} x_j \right)^2&=\frac{p\beta_z}{(1-\gamma_n)^2}+C_1\beta_z+\frac{2p}{(1-\gamma_n)^3}+\frac{p^2}{(1-\gamma_n)^2}.
\end{align*}

Hence, we derive
\begin{align*}
	&\qquad\mathbb{E}\Big(x_j'\mathbf{S}_j^{-1}x_j-\frac{p}{1-\gamma_n}\Big)^3\\
	&=\mathbb{E}(x_j'\mathbf{S}_j^{-1}x_j)^3-3\mathbb{E}(x_j'\mathbf{S}_j^{-1}x_j)^2\Big(\frac{p}{1-\gamma_n}\Big)+3\mathbb{E}(x_j'\mathbf{S}_j^{-1}x_j)\Big(\frac{p}{1-\gamma_n}\Big)^2-\Big(\frac{p}{1-\gamma_n}\Big)^3\\
	&=\frac{p\Delta}{(1-\gamma_n)^3}+\frac{12p\beta_z+6p}{(1-\gamma_n)^4}+\frac{-15\beta_zp-13p}{(1-\gamma_n)^3}+\frac{8(1+\gamma_n)p}{(1-\gamma_n)^5}+o(p).
\end{align*}

Therefore, we derive
\begin{align*}
	\hat\Delta=\frac{(1-\gamma_n)^3}{np}\sum_{j=1}^n\Big(x_j'\mathbf{S}_j^{-1}x_j-\frac{p}{1-\gamma_n}\Big)^3-\frac{12\hat\beta_z+6}{1-\gamma_n}+15\hat\beta_z+21-\frac{8(1+\gamma_n)}{(1-\gamma_n)^2}.
\end{align*}

In light of the theoretical results in \cite{zhang2019invariant}, which show $\hat\beta_z$ to be a consistent and unbiased estimator of $\beta_z$, equation \eqref{con} immediately holds.

For equation \eqref{asy}, the definition of $\hat\Delta$ combined with the result from \eqref{con} yields:
\begin{align}\label{delta2.7}
	\mathbb{E}(\hat\Delta-\Delta)^2&=\frac{(1-\gamma_n)^6}{n^2p^2}\mathbb{E}\Big[\sum_{j=1}^n\Big(x_j'\mathbf{S}_j^{-1}x_j-\frac{p}{1-\gamma_n}\Big)^3\Big]^2\\\nonumber
	&\quad-\Big(\Delta+\frac{12\hat\beta_z+6}{1-\gamma_n}-15\hat\beta_z-21+\frac{8(1+\gamma_n)}{(1-\gamma_n)^2}\Big)^2+o(1).
\end{align}
Furthermore, applying equation (2.3) from \cite{bai2004clt} yields:
\begin{align}
	\label{delta2.8}&\quad\frac{1}{p^2n^2}\sum_{j=1}^n\Big(x_j'\mathbf{S}_j^{-1}x_j-\frac{p}{1-\gamma_n}\Big)^6=O(\eta_n^8n)\to 0,\\
	\label{delta2.9}&\quad\sum_{i\neq j}^n\mathbb{E}\Big(x_i'\mathbf{S}_j^{-1}x_i-\frac{p}{1-\gamma_n}\Big)^3\Big(x_j'\mathbf{S}_j^{-1}x_j-\frac{p}{1-\gamma_n}\Big)^3\\\nonumber
	&=n(n-1)\mathbb{E}\Big(x_1'\mathbf{S}_j^{-1}x_1-\frac{p}{1-\gamma_n}\Big)^3\Big(x_2'\mathbf{S}_j^{-1}x_2-\frac{p}{1-\gamma_n}\Big)^3\\\nonumber
	&=n(n-1)\Big(\frac{p\Delta-15\beta_zp-21p}{(1-\gamma_n)^3}+\frac{12p\beta_z+6p}{(1-\gamma_n)^4}+\frac{8(1+\gamma_n)p}{(1-\gamma_n)^5}\Big)^2.
\end{align}
Combining equations \eqref{delta2.7}, \eqref{delta2.8} and \eqref{delta2.9} establishes the validity of \eqref{asy}. 

For the case where $p>n$, we replace $\mathbf{S}_j^{-1}$ with its pseudo-inverse $\mathbf{S}_j^{+}$, as its non-zero eigenvalue are identical to those of $\mathbf{S}_j^{-1}$. The estimates derived under these two scenarios ultimately differ by only a constant $\gamma_n$.

For equation \eqref{threemoment}, notice,
\begin{align*}
	\left(x_j' \mathbf{S}_{jk}^{-1} x_j \right)\left(x_j' \mathbf{S}_{jk}^{-1} x_k \right) \left(x_k' \mathbf{S}_{jk}^{-1} x_k \right) = \sum_{i_1,i_2,i_3,i_4,i_5,i_6} x_{ji_1} x_{ji_2} x_{ji_3} x_{ki_4}x_{ki_5}x_{ki_6} S^{-1}_{i_1i_2} S^{-1}_{i_3i_4}S^{-1}_{i_5i_6}.
\end{align*}

Hence,
\begin{align*}
	&\qquad\mathbb{E}\left(x_j' \mathbf{S}_{jk}^{-1} x_j \right)\left(x_j' \mathbf{S}_{jk}^{-1} x_k \right) \left(x_k' \mathbf{S}_{jk}^{-1} x_k \right) \\
	&=(\mathbb{E}x_{ji}^3)(\mathbb{E}x_{ki}^3)\mathbb{E}\sum_{i=1}^n(S_{ii}^{-1})^3+(\mathbb{E}x_{ji}^3)(\mathbb{E}x_{ki}^3)\mathbb{E}\sum_{i \neq l} S_{ii}^{-1} S_{il}^{-1} S_{ll}^{-1}\\
	&=\Delta_1^2\mathbb{E}tr(\mathbf{S}_{jk}^{-1}\circ \mathbf{S}_{jk}^{-1}\circ \mathbf{S}_{jk}^{-1})+o(p)=\frac{p\Delta_1^2}{(1-\gamma_n)^3}+o(p).
\end{align*}

A method similar to that employed for establishing the consistency of the sixth-order moment is utilized. Therefore, 
\begin{align*}
	\hat\Delta_1^2=\frac{(1-\gamma_n)^3}{np(n-1)}\sum_{j=1}^n\sum_{k=1}^n\left(x_j' \mathbf{S}_{jk}^{-1} x_j \right)\left(x_j' \mathbf{S}_{jk}^{-1} x_k \right) \left(x_k' \mathbf{S}_{jk}^{-1} x_k \right).
\end{align*}

Then we complete the proof of Theorem \ref{sixmoment}.

\subsection{Proof of Theorem  \ref{Pivots}}
Noting that
\begin{align}\label{unif}
	P( F_{kn}(\hat \rho_k, l)\le\alpha)=\alpha ~~ \mbox{and}~~ P(\bar F_{kn}(\hat \rho_k, l)\le\alpha)=\alpha.
\end{align}
By combining equation \eqref{unif} with fundamental inequalities from probability theory, we derive the following key result:
\begin{align*}
	|P(u(\hat \rho)\le \alpha)-\alpha|&=|P(u(\hat \rho)\le \alpha)-P(\bar F_{kn}(\hat \rho_k, l)\le\alpha)|\\
	&\le P(|u(\hat \rho)-\bar F_{kn}(\hat \rho_k, l)|>n^{-1/2}\epsilon_n)+P(|\bar F_{kn}(\hat \rho_k, l)-\alpha|<n^{-1/2}\epsilon_n)\\
	&\le P(|u(\hat \rho)-\bar F_{kn}(\hat \rho_k, l)|>n^{-1/2}\epsilon_n)+2n^{-1/2}\epsilon_n,
\end{align*}
where $\epsilon_n$ denotes any vanishing sequence.

Besides, building upon our theoretical framework established in previous sections, we derive:
\begin{align*}
	&|u_n^z(\hat\rho_k, l_k)-\bar F_{kn}(\hat \rho_k, l)|=| F_{kn}(\hat \rho_k, l)-\Phi(z_n(\hat\rho_k, l_k))|\le C n^{-1/2},\\
	&|u_n^E(\hat\rho_k, l_k)-\bar F_{kn}(\hat \rho_k, l)|=| F_{kn}(\hat \rho_k, l)-\bar F^E_{kn}(z_n(\hat\rho_k, l_k),l)|\le C_n n^{-1/2},
\end{align*}
where $C$ denote a positive constant $(C>0)$ and $C_n$ represents a vanishing sequence $(C_n=o(1))$ as $n\to\infty$. Hence the result holds.

\subsection{Proof of Theorem \ref{convergence}}According to the definition of $\Omega(\lambda, \Z)$, equation \eqref{determinant} can be decomposed as follows:
\begin{align}\label{4.1}
	0=\Big|\frac{\hat l}{l}+\frac{1}{\sqrt n}\Omega(\hat l, \Z)-\frac {\hat l}{n} \mathrm{tr}\big[(\hat l\I_n-n^{-1}\Z\Z')^{-1}\big] \Big|.
\end{align}
Continuing the calculation on equation \eqref{determinant}, we obtain:
\begin{align*}
	0&=\Big|\frac{\hat l}{l}-  \hat ln^{-1}\V_1'\Z'(\hat l\I_{n}-n^{-1}\Z\Z')^{-1} \Z\V_1\Big|\\
	&=\Big|\frac{\hat l}{l}-  \hat ln^{-1}\V_1'\Z'(\rho_n\I-n^{-1}\Z\Z')^{-1} \Z\V_1+(\hat l-\rho_n)\hat ln^{-1}\V_1'\Z'(\hat l\I-n^{-1}\Z\Z')^{-1}\\
	&\qquad\times(\rho_n\I-n^{-1}\Z\Z')^{-1} \Z\V_1\Big|\\
	&=\Big|\frac{\hat l}{l}-\rho_nn^{-1}\V_1'\Z'(\rho_n\I-n^{-1}\Z\Z')^{-1} \Z\V_1-(\hat l-\rho_n)n^{-1}\V_1'\Z'(\rho_n\I-n^{-1}\Z\Z')^{-1} \Z\V_1\\
	&\qquad+(\hat l-\rho_n)\hat ln^{-1}\V_1'\Z'(\hat l\I-n^{-1}\Z\Z')^{-1}(\rho_n\I-n^{-1}\Z\Z')^{-1} \Z\V_1\Big|\\
	&=\Big|\frac{\hat l}{l}-\rho_nn^{-1}\V_1'\Z'(\rho_n\I-n^{-1}\Z\Z')^{-1} \Z\V_1+(\hat l-\rho_n)n^{-1}\V_1'\Z'(n^{-1}\Z\Z')\\
	&\qquad\times(\hat l\I-n^{-1}\Z\Z')^{-1}(\rho_n\I-n^{-1}\Z\Z')^{-1} \Z\V_1\Big|.
\end{align*}
Therefore, we derive the following result:
\begin{align}\label{com1}
	\hat l-\rho_n=\frac{l\rho_n\big(n^{-1}\V_1'\Z'(\rho_n\I-n^{-1}\Z\Z')^{-1} \Z\V_1-l^{-1}\big)}{1+ln^{-1}\V_1'\Z'(n^{-1}\Z\Z') (\hat l\I-n^{-1}\Z\Z')^{-1}(\rho_n\I-n^{-1}\Z\Z')^{-1} \Z\V_1}.
\end{align}

The companion empirical distribution $\mathrm{F}_n$ of $n^{-1}\Z\Z'$ converges to the companion M-P law 
\[\mathrm{F}_{\gamma}=(1-\gamma)I_{[0,\infty)}+\gamma F_{\gamma}.\]
Then, we obtain the following result:
\begin{align}\label{com2}
	n^{-1}\V_1'\Z'(\rho_n\I-n^{-1}\Z\Z')^{-1} \Z\V_1=\mathrm{F}_{\gamma_n}(g_n)+n^{-1/2}\tilde S_n(g_n)+n^{-1}\tilde G_n(g_n).
\end{align}
Besides,
\begin{align}\label{com3}
	&\quad~ 1+ln^{-1}\V_1'\Z'(n^{-1}\Z\Z') (\hat l\I-n^{-1}\Z\Z')^{-1}(\rho_n\I-n^{-1}\Z\Z')^{-1} \Z\V_1\\\nonumber
	&=1+ln^{-1}\V_1'\Z'(n^{-1}\Z\Z')(\rho_n\I-n^{-1}\Z\Z')^{-2} \Z\V_1+ln^{-1}\V_1'\Z'(n^{-1}\Z\Z')\\\nonumber
	&\qquad \times (\rho_n-\hat l)(\hat l\I-n^{-1}\Z\Z')^{-1}(\rho_n\I-n^{-1}\Z\Z')^{-2} \Z\V_1\\\nonumber
	&=1+l\Big(\mathrm{F}_{\gamma_n}(\lambda g_n^2)+n^{-1/2}\tilde S_n(\tilde \lambda g_n^2)+n^{-1}\tilde G_n(\tilde \lambda g_n^2)\Big)+ln^{-1}\V_1'\Z'(n^{-1}\Z\Z') \\\nonumber
	&\qquad\times(\rho_n-\hat l)(\hat l\I-n^{-1}\Z\Z')^{-1}(\rho_n\I-n^{-1}\Z\Z')^{-2} \Z\V_1\\\nonumber
	&=l\rho_n\mathrm{F}_{\gamma_n}(g_n^2)+l\Big(n^{-1/2}\tilde S_n(\tilde \lambda g_n^2)+n^{-1}\tilde G_n(\tilde \lambda g_n^2)\Big)+ln^{-1}\V_1'\Z(n^{-1}\Z\Z') \\\nonumber
	&\qquad\times(\rho_n-\hat l)(\hat l\I-n^{-1}\Z\Z')^{-1}(\rho_n\I-n^{-1}\Z\Z')^{-2} \Z\V_1.
\end{align}  
Therefore, combining the limiting constant behavior of $\tilde S_n$ and $\tilde G_n$ with the results from equations \eqref{com1}, \eqref{com2}, and \eqref{com3}, we derive
\begin{gather}\label{xihua}
	n^{1/2}(\hat l-\rho_n)=\frac{\tilde S_n(g_n)}{\mathrm{F}_{\gamma_n}(g_n^2)}+O_p(n^{-1/2}).
\end{gather}
The functional $\mathrm{F}_{\gamma_n}(g_n^j)(j=1,2)$ encodes key transformations of the Mar\v{c}enko-Pastur distribution, with the special cases
\[\mathrm{F}_{\gamma_n}(g_n)=l^{-1},\qquad\mathrm{F}_{\gamma_n}(g_n^2)=2\sigma_n^{-2}.\] 
The detailed derivation of these relations appear in \cite{yang2018edgeworth}.
Note that
\begin{align*}
	\Omega(\hat l,\Z)&=\frac{\hat l}{\sqrt n}\Big(\mathrm{tr}\big[(\hat l\I-n^{-1}\Z\Z')^{-1}\big]-\V_1'\Z'(\hat l\I-n^{-1}\Z\Z')^{-1}\Z\V_1\Big)\\
	&=\frac{\hat l-\rho_n}{\sqrt n}\Big(\mathrm{tr}\big[(\hat l\I-n^{-1}\Z\Z')^{-1}\big]-\V_1'\Z'(\hat l\I-n^{-1}\Z\Z')^{-1}\Z\V_1\Big)+\frac{\rho_n}{\sqrt n}\Big(\mathrm{tr}\big[\\
	&\quad(\hat l\I-n^{-1}\Z\Z')^{-1}-(\rho_n\I-n^{-1}\Z\Z')^{-1}\big]-\V_1'\Z'(\hat l\I-n^{-1}\Z\Z')^{-1}\Z\V_1\\
	&\quad+\V_1'\Z'(\rho_n\I-n^{-1}\Z'\Z)^{-1}\Z\V_1\Big)+\Omega(\rho_n,\Z)\\
	&=\frac{\hat l-\rho_n}{\sqrt n}\Big(\mathrm{tr}\big[(\hat l\I-n^{-1}\Z\Z')^{-1}\big]-\V_1'\Z'(\hat l\I-n^{-1}\Z\Z')^{-1}\Z\V_1\Big)+\frac{\rho_n(\rho_n-\hat l)}{\sqrt n}\\
	&\quad\times\Big(\mathrm{tr}\big[(\hat l\I-n^{-1}\Z\Z')^{-1}(\rho_n\I-n^{-1}\Z\Z')^{-1}\big]-\V_1'\Z'\big[(\hat l\I-n^{-1}\Z\Z')^{-1}\\
	&\quad\times(\rho_n\I-n^{-1}\Z\Z')^{-1}\big]\Z\V_1\Big)+\Omega(\rho_n,\Z)\\
	&=\frac{\sqrt n(\hat l-\rho_n)}{n}\Big(\V_1'\Z'(\hat l\I-n^{-1}\Z\Z')^{-1}(\rho_n\I-n^{-1}\Z\Z')^{-1}(n^{-1}\Z\Z')\Z\V_1\\
	&\quad-\mathrm{tr}\big[(\hat l\I-n^{-1}\Z\Z')^{-1}(\rho_n\I-n^{-1}\Z\Z')^{-1}(n^{-1}\Z\Z')\big]\Big)+\Omega(\rho_n,\Z)\\
	&=\frac{\sqrt n(\hat l-\rho_n)}{n}\Big(\V_1'\Z'(\rho_n\I-n^{-1}\Z\Z')^{-2}(n^{-1}\Z\Z')\Z\V_1\\
	&\quad-\mathrm{tr}\big[(\rho_n\I-n^{-1}\Z\Z')^{-2}(n^{-1}\Z\Z')\big]\Big)+\Omega(\rho_n,\Z)+o_p(n^{-1/2}).
\end{align*}
The final equality follows from the matrix identity $\mathbf{A}^{-1}-\mathbf{B}^{-1}=\mathbf{A}^{-1}(\mathbf{B}-\mathbf{A})\mathbf{B}^{-1}$ combined with the fact that $\sqrt{n}(\hat l-\rho_n)$ is of order $O_p(1)$. This latter fact is based on Theorem 2 of \cite{jiang2021partial}. Therefore, we derive
\[\Omega(\hat l, \Z)=\Omega(\rho_n, \Z)+\sqrt n(\hat l-\rho_n)n^{-1/2}\tilde S_n(\lambda g_n^2)+o_p\big(n^{-1/2}\big).\]
Hence, equation \eqref{4.1} can be expressed as 
\begin{align}\label{4.2}
	0&=\Big|\frac{\rho_n}{l}+\frac{1}{\sqrt n}\Omega(\rho_n, \Z)+(\hat l-\rho_n)n^{-1/2}\tilde S_n(\lambda g_n^2)-\frac {\rho_n}{n} \mathrm{tr}\big[(\rho_n\I-n^{-1}\Z\Z')^{-1}\big]\\\nonumber
	&\qquad +B_1(\hat l)+B_2(\hat l)+o_p\big(n^{-1}\big)\Big|,
\end{align}
where
\begin{align*}
	B_1(\hat l)&=(\hat l-\rho_n)/l,\quad B_2(\hat l)=\frac {\rho_n}{n}  \mathrm{tr}\big[(\rho_n\I-n^{-1}\Z\Z')^{-1}\big] -\frac {\hat l}{n}  \mathrm{tr}\big[(\hat l\I-n^{-1}\Z\Z')^{-1}\big]. 
\end{align*}
Let $\delta_n=\sqrt n(\hat l/\rho_n-1)$. It then follows immediately that $\delta_n$ is of order $O_p(1)$. Utilizing the fundamental matrix identity $\mathbf{A}^{-1}-\mathbf{B}^{-1}=\mathbf{A}^{-1}(\mathbf{B}-\mathbf{A})\mathbf{B}^{-1}$  and the order of $\delta_n$, we derive:
\begin{align*}
	B_2(\hat l)&=\frac {\rho_n}{n}  \mathrm{tr}\big[(\rho_n\I-n^{-1}\Z\Z')^{-1}\big] -\frac {\hat l}{n}  \mathrm{tr}\big[(\hat l\I-n^{-1}\Z\Z')^{-1}\big]\\
	&=\frac {\rho_n}{n}\Big(  \mathrm{tr}(\rho_n\I-n^{-1}\Z\Z')^{-1} - (1+n^{-1/2}\delta_n)\mathrm{tr}(\hat l\I-n^{-1}\Z\Z')^{-1}\Big)\\
	&=\frac {\rho_n}{n}\Big(  \mathrm{tr}(\rho_n\I-n^{-1}\Z\Z')^{-1} - \mathrm{tr}(\hat l\I-n^{-1}\Z\Z')^{-1}\Big)\\
	&\qquad\qquad\quad-\frac {\rho_n}{n}n^{-1/2}\delta_n \mathrm{tr}(\hat l\I-n^{-1}\Z\Z')^{-1}\\
	&=\frac {\rho_n^2\delta_n}{n^{3/2}}  \mathrm{tr}\Big((\rho_n\I-n^{-1}\Z\Z')^{-1} [(1+n^{-1/2}\delta_n)\rho_n\I-n^{-1}\Z\Z']^{-1}\Big)\\
	&\qquad\qquad\quad-\frac {\rho_n}{n}n^{-1/2}\delta_n \mathrm{tr}(\hat l\I-n^{-1}\Z\Z')^{-1}\\
	&=\frac {\delta_n}{n^{1/2}}  \Big(\frac{\rho_n^2}{n}\mathrm{tr}(\rho_n\I-n^{-1}\Z\Z')^{-2}-\frac{\rho_n^3\delta_n}{n^{3/2}}\mathrm{tr}(\rho_n\I-n^{-1}\Z\Z')^{-3} -\frac {\rho_n}{n}\\
	&\qquad\times \mathrm{tr}(\rho_n\I-n^{-1}\Z\Z')^{-1}+\frac{\rho_n^2\delta_n}{n^{3/2}}\mathrm{tr}(\rho_n\I-n^{-1}\Z\Z')^{-2}\Big)+o_p(n^{-1})\\
	&=\frac {\delta_n}{n^{1/2}}\Big(\rho_n^2\mathrm{F}_{\gamma_n}(g_n^2)-\rho_n\mathrm{F}_{\gamma_n}(g_n)\Big)-\frac{\delta_n^2}{n}\Big(\rho_n^3\mathrm{F}_{\gamma_n}(g_n^3)-\rho_n^2\mathrm{F}_{\gamma_n}(g_n^2)\Big)+o_p(n^{-1}).
\end{align*}
Recall
\[\tilde G_n(g_n)=\mathrm{tr}\big[(\rho_n\I-n^{-1}\Z\Z')^{-1}\big]-n\int g_n(\lambda)\mathrm{F}_{\gamma_n}(d\lambda).\]
Substituting the transformation \eqref{xihua} into \eqref{4.2} while accounting for the order of $\delta_n$, we derive:
\begin{align*}
	0&=\bigg|\frac{\rho_n}{l}+\frac{1}{\sqrt n}\Omega(\rho_n, \Z)+(\hat l-\rho_n)n^{-1/2}\tilde S_n(\lambda g_n^2)-\frac {\rho_n}{n}  \mathrm{tr}\big[(\rho_n\I-n^{-1}\Z\Z')^{-1}\big] \\
	&\qquad+B_1(\hat l)+B_2(\hat l)+o_p\big(n^{-1}\big)\bigg|\\
	&=\bigg|\frac {\delta_n}{n^{1/2}}\Big(\frac{\rho_n}{l}+\rho_n^2\mathrm{F}_{\gamma_n}(g_n^2)-\rho_n\mathrm{F}_{\gamma_n}(g_n)\Big)-\frac{\delta_n^2}{n}\Big(\rho_n^3\mathrm{F}_{\gamma_n}(g_n^3)-\rho_n^2\mathrm{F}_{\gamma_n}(g_n^2)\Big)\\
	&\qquad-\frac{\rho_n}{n}\tilde G_n(g_n)+\frac{1}{ n^{1/2}}\Omega(\rho_n, \Z)+\frac{\delta_n\rho_n}{n}\tilde S_n(\lambda g_n^2)+o_p(n^{-1})\bigg|\\
	&=\bigg|\delta_n\rho_n\Big(\frac{1}{l}+\rho_n\mathrm{F}_{\gamma_n}(g_n^2)-\mathrm{F}_{\gamma_n}(g_n)\Big)-\frac{\delta_n^2\rho_n^2}{n^{1/2}}\Big(\rho_n\mathrm{F}_{\gamma_n}(g_n^3)-\mathrm{F}_{\gamma_n}(g_n^2)\Big)+\Omega(\rho_n, \Z)\\
	&\qquad+\frac{\delta_n\rho_n}{n^{1/2}}\tilde S_n(\lambda g_n^2)-\frac{\rho_n}{n^{1/2}}\tilde G_n(g_n)+o_p(n^{-1/2})\bigg|\\
	&=\bigg|\delta_n\rho_n\Big(\rho_n\mathrm{F}_{\gamma_n}(g_n^2)-n^{-1/2}\frac{\tilde S_n(g_n)\rho_n\mathrm{F}_{\gamma_n}(g_n^3)}{\mathrm{F}_{\gamma_n}(g_n^2)}+n^{-1/2}\tilde S_n(g_n)+n^{-1/2}\tilde S_n(\lambda g_n^2)\Big)\\
	&\qquad+\Omega(\rho_n, \Z)-\frac{\rho_n}{n^{1/2}}\tilde G_n(g_n)+o_p(n^{-1/2})\bigg|.
\end{align*}

Thus, combining the preceding proofs yields the final result:
\[\sup_x\bigg|\mathbb{P}(R_n\le x)-\mathbb{P}\bigg(\frac{-\Omega(\rho_n, \Z)+\rho_n n^{-1/2}\tilde G_n(g_n)}{[\rho_n\mathrm{F}_{\gamma_n}(g_n^2)-n^{-1/2}\tilde S_n(g_nh_n)]\tilde\sigma_n}\le x\bigg)\bigg|=o(n^{-1/2}).\]
Here,
\begin{align*}
	h_n&=\frac{\rho_n\mathrm{F}_{\gamma_n}(g_n^3)}{\mathrm{F}_{\gamma_n}(g_n^2)}-1-\lambda g_n=\frac{\mathrm{F}_{\gamma_n}(g_n^2)+\mathrm{F}_{\gamma_n}(\lambda g_n^3)}{\mathrm{F}_{\gamma_n}(g_n^2)}-1-\lambda g_n=\frac{\mathrm{F}_{\gamma_n}(\lambda g_n^3)}{\mathrm{F}_{\gamma_n}(g_n^2)}-\lambda g_n.
\end{align*} 
Additionally, the variance of $\Omega(\rho_n,\Z)$ is given by: 
\[\text{Var}\big(\Omega(\rho_n,\Z)\big)=2\rho_n^2\mathrm{F}_{\gamma_n}(g_n^2)+\rho_n^2\beta_z\mathrm{F}^2_{\gamma_n}(g_n)\]
and 
\[\tilde\sigma_n=\sqrt{\frac{2\mathrm{F}_{\gamma_n}(g_n^2)+\pi_1\mathrm{F}^2_{\gamma_n}(g_n)}{\mathrm{F}^2_{\gamma_n}(g_n^2)}}=\sqrt{\frac{4\sigma_n^{-2}+\pi_1l^{-2}}{4\sigma_n^{-4}}}.\]

\subsection{Proof of Theorem \ref{G4MT}}
Assume that $\Y=(\mathbf{y}_1,\dots, \mathbf{y}_n)'$ and $\Z=(\z_1,\dots,\z_{n})'$ are two independent random matrices satisfying Assumptions $(a)$ and $(b)$. Recall the previous definitions of $\Omega(\rho_n,\Z)$ and $\Omega(\rho_n,\Y)$,
\begin{align*}
	\Omega(\rho_n,\Z)&=\frac{\rho_n}{\sqrt n}\Big(\mathrm{tr}\big[(\rho_n\I-n^{-1}\Z\Z')^{-1}\big]-\tilde\Z_1'(\rho_n\I-n^{-1}\Z\Z')^{-1}\tilde\Z_1\Big),\\
	\Omega(\rho_n,\Y)&=\frac{\rho_n}{\sqrt n}\Big(\mathrm{tr}\big[(\rho_n\I-n^{-1}\Y\Y')^{-1}\big]-\tilde\Z_1'(\rho_n\I-n^{-1}\Y\Y')^{-1}\tilde\Z_1\Big).
\end{align*}

Next, consider the eigendecomposition $n^{-1}\Y\Y'=\U\bm{\Lambda}\U'$, where $\U$ is an $n\times n$ orthogonal matrix and $\bm{\Lambda}$ is the diagonal matrix containing all the nonzero eigenvalues of $n^{-1}\Y\Y'$. Recalling that $\bm{\omega}=\U'\tilde\Z_1$, we then obtain:
\begin{align}\label{sum}
	\Omega(\rho_n,\Y)&=\frac{\rho_n}{\sqrt n}\Big(\mathrm{tr}\big[(\rho_n\I-n^{-1}\Y\Y')^{-1}\big]-\tilde\Z_1'(\rho_n\I-n^{-1}\Y\Y')^{-1}\tilde\Z_1\Big)\\\nonumber
	&=\frac{\rho_n}{\sqrt n}\Big(\mathrm{tr}\big[(\rho_n\I-\U\bm{\Lambda} \U')^{-1}\big]-\tilde\Z_1'(\rho_n\I-\U\bm{\Lambda} \U')^{-1}\tilde\Z_1\Big)\\\nonumber
	&=\frac{\rho_n}{\sqrt n}\Big(\mathrm{tr}\big[\U(\rho_n\I-\bm{\Lambda})^{-1}\U'\big]-\tilde\Z_1'\U(\rho_n\I-\bm{\Lambda} )^{-1}\U'\tilde\Z_1\Big)\\\nonumber
	&=\frac{\rho_n}{\sqrt n}\Big(\mathrm{tr}\big[(\rho_n\I-\bm{\Lambda})^{-1}\big]-\bm{\omega}'(\rho_n\I-\bm{\Lambda} )^{-1}\bm{\omega}\Big)=-\rho_nS_n(g_n).
\end{align}
From equation \eqref{sum}, it follows that $\Omega(\rho_n, \Y)$ can be expressed as a sum of independent random variables conditioned on $\bm{\Lambda}$. Let 
\[g_n(\lambda)=(\rho_n-\lambda)^{-1}, \quad S_n(f)=n^{-1/2}\sum_{i=1}^nf(\lambda_i)(\omega_i^2-1)\]
and $R(x)=(\bm{\Lambda}-x\I)^{-1}$. Then, by using same method, we can rewrite $\Omega_s(\tilde\Z_1,\Z)$ and $\Omega_s(\tilde\Z_1,\Y)$ as follows:
\begin{align*}
	\Omega_s(\tilde\Z_1,\Z)&=\frac{c_n}{\sqrt n}\Big(\mathrm{tr}\big[(\rho_n\I-n^{-1}\Z\Z')^{-1}\big]-\tilde\Z_1'(\rho_n\I-n^{-1}\Z\Z')^{-1}\tilde\Z_1\Big)-\frac{1}{n}\tau_n\Big(\mathrm{tr}\big[\\
	&\quad (n^{-1}\Z\Z')(\rho_n\I-n^{-1}\Z\Z')^{-2}\big]-\tilde\Z_1'(n^{-1}\Z\Z')(\rho_n\I-n^{-1}\Z\Z')^{-2}\tilde\Z_1\Big)\\
	&:=\Omega_{s_1}(\tilde\Z_1,\Z)+\Omega_{s_{21}}(\tilde\Z_1,\Z)+\Omega_{s_{22}}(\tilde\Z_1,\Z),\\
	\Omega_{s}(\tilde\Z_1,\Y)&=\frac{c_n}{\sqrt n}\Big(\mathrm{tr}\big[(\rho_n\I-n^{-1}\Y\Y')^{-1}\big]-\tilde\Z_1'(\rho_n\I-n^{-1}\Y\Y')^{-1}\tilde\Z_1\Big)-\frac{1}{n}\tau_n\Big(\mathrm{tr}\big[\\
	&\qquad (n^{-1}\Y\Y')(\rho_n\I-n^{-1}\Y\Y')^{-2}\big]-\tilde\Z_1'(n^{-1}\Y\Y')(\rho_n\I-n^{-1}\Y\Y')^{-2}\tilde\Z_1\Big)\\
	&:=\Omega_{s_1}(\tilde\Z_1,\Y)+\Omega_{s_{21}}(\tilde\Z_1,\Y)+\Omega_{s_{22}}(\tilde\Z_1,\Y),
\end{align*}
where $c_n=1+n^{-1/2}\mathrm{F}_{\gamma_n}(\lambda g_n^3)/\mathrm{F}_{\gamma_n}(g_n^2)$.
To prove our result, it suffices to demonstrate that the difference between their characteristic functions tends to $0$. Define the following notations:
\begin{align*}
	&\Z_{k0}=(\z_1,\dots,\z_{k-1}, 0, \z_{k+1},\dots, \z_{n})',~~\Z_{0k}=(\z_1,\dots,\z_{k-1}, \z_{k+1},\dots, \z_{n})',\\
	&\Y_{k0}=(\z_1,\dots,\z_{k-1}, 0, \y_{k+1},\dots, \y_n)',~~\Y_{0k}=(\z_1,\dots,\z_{k-1}, \y_{k+1},\dots, \y_n)',\\
	& \Y_k=(\z_1,\dots,\z_{k-1},\y_{k},\dots, \y_n)',\\
	&\theta_k=\rho_n-\frac 1n\z_k\z_k'-\frac {1}{n^2} \z_k \Y_{0k}'\Big(\rho_n\I_{n-1}-\frac 1n\Y_{0k}\Y_{0k}'\Big)^{-1}\Y_{0k}\z_k',\\
	&\quad=\rho_n\bigg(1-\frac 1n\z_k\Big(\rho_n\I_{n-1}-\frac 1n\Y_{0k}\Y_{0k}'\Big)^{-1}\z_k'\bigg):=\rho_n\eta_k,\\
	&\bar\eta_k=1-\frac 1n \mathrm{tr}\Big[\Big(\rho_n\I_{n-1}-\frac 1n\Y_{0k}\Y_{0k}'\Big)^{-1}\Big],\\
	&\s_k=\frac 1n\Y_{0k}'\Big(\rho_n\I_{n-1}-\frac 1n\Y_{0k}\Y_{0k}'\Big)^{-1}\Z_{0k}\V_1,\\
	&\epsilon_k=\frac 1n \Big\{\z_k\Big(\rho_n\I_{n-1}-\frac 1n\Y_{0k}\Y_{0k}'\Big)^{-1}\z_k'-\mathrm{tr}\Big[\Big(\rho_n\I_{n-1}-\frac 1n\Y_{0k}\Y_{0k}'\Big)^{-1}\Big]\Big\},\\
	&\m_k=\Y_{0k}'\Big(\rho_n\I_{n-1}-\frac 1n\Y_{0k}\Y_{0k}'\Big)^{-1},~ \q_k=\Y_{0k}'\Big(\rho_n\I_{n-1}-\frac 1n\Y_{0k}\Y_{0k}'\Big)^{-1}\Y_{0k},\\
	&\e_k=\Y_{0k}'\Big(\rho_n\I_{n-1}-\frac 1n\Y_{0k}\Y_{0k}'\Big)^{-2},~ \p_k=\Y_{0k}'\Big(\rho_n\I_{n-1}-\frac 1n\Y_{0k}\Y_{0k}'\Big)^{-2}\Y_{0k}.
\end{align*}
Next, through straightforward calculations, we obtain the following result:
\begin{align*}
	&\Omega_{s_1}(\Z_1,\Y_{k+1})-\Omega_{s_1}(\Z_{k0},\Y_{k0})\\
	&=\frac{c_n }{\theta_k\sqrt n}\bigg(1-\V_1'\z_k\z_k'\V_1+\frac {1}{n^2}\z_k'\Y_{0k}'\Big(\rho_n\I_{n-1}-\frac 1n\Y_{0k}\Y_{0k}'\Big)^{-2}\Y_{0k}\z_k'\\
	&\quad-\frac 1n\V_1'\Z_{0k}'\Big(\rho_n\I_{n-1}-\frac 1n\Y_{0k}\Y_{0k}'\Big)^{-1}\Y_{0k}\z_k\z_k'\V_1\\
	&\quad-\frac 1n\V'_1\z_k\z_k'\Y_{0k}'\Big(\rho_n\I_{n-1}-\frac 1n\Y_{0k}\Y_{0k}'\Big)^{-1}\Z_{0k}\V_1\\
	&\quad-\frac{1}{n^2}\V_1'\Z_{0k}'\Big(\rho_n\I_{n-1}-\frac 1n\Y_{0k}\Y_{0k}'\Big)^{-1} \Y_{0k}\z_k\z_k'\Y_{0k}'\Big(\rho_n\I_{n-1}-\frac 1n\Y_{0k}\Y_{0k}'\Big)^{-1}\Z_{0k}\V_1\bigg)\\
	&=\sum_{i=1}^4\sum_{j=0}^2\alpha_{kij}+\frac{c_n }{\theta_k\sqrt n}(1-\V_1'\z_k\z_k'\V_1),
\end{align*}
where the expressions for $\alpha_{kij}$ are provided in the Appendix \ref{appendix}.	
Using a similar approach, we also derive the following result:
\begin{align*}
	&\Omega_{s_1}(\Z_1,\Y_{k})-\Omega_{s_1}(\Z_{k0},\Y_{k0})=\sum_{i=1}^4\sum_{j=0}^2\alpha_{kijy}+\frac{c_n }{\theta_k\sqrt n}(1-\V_1'\y_k\y_k'\V_1),\\
	&\Omega_{s_{22}}(\Z_1,\Y_{k})-\Omega_{s_{22}}(\Z_{k0},\Y_{k0})=\sum_{i=2}^{25}\sum_{j=0}^2\Xi_{kijy}+\frac{\tau_n}{\rho_n^2\eta_n\bar\eta_n n^2}\V_1'\Z_{0k}'\Y_{0k}\z_k\z_k'\V_1,
\end{align*}
where the explicit forms of  $\alpha_{kjiy}$ are given in Appendix \ref{appendix}. Consequently, employing the same method and matching the first four moments of $\z_k$ and $\y_k$, we obtain the following result:
\begin{align*}
	&\mathbb{E}\Big(\Omega_{s}(\Z_1,\Y_{k+1})-\Omega_{s}(\Z_{k0},\Y_{k0})\Big)\\
	&=\mathbb{E}\sum_{i=1}^5\sum_{j=0}^2(\alpha_{kij}-\alpha_{kijy})+\mathbb{E}\sum_{i=1}^{12}\sum_{j=0}^2(\beta_{kij}-\beta_{kijy})-\mathbb{E}\sum_{i=2}^{25}\sum_{j=0}^2(\Xi_{kij}-\Xi_{kijy}),
\end{align*}
where the expressions for $\beta_{kij}$, $\beta_{kijy}$, $\Xi_{kij}$ and $\Xi_{kijy}$ are provided in Appendix \ref{appendix}.	

Let $\mathbb{E}_k$ denote the conditional expectation of $\z_k$ or $\y_k$ given $\Z_{k0}$ and $\Y_{k0}$. Since the spectral norm of the identity matrix is determined by eigenvalues that are all equal to 1, and $\rho_n$ remains at a certain distance from 1, the spectral norm of $\Z_2$ is bounded. Furthermore, based on the boundedness of $\p_k, \q_k, \m_k, \e_k$ and $\Z_{0k}$, we obtain the following results:

\begin{lemma}\label{moment1}The equalities
	\begin{align*}
		&\mathbb{E}_k(\alpha_{ki0})= O(n^{-1/2}), \quad~ \mathbb{E}_k(\alpha_{ki0y})= O(n^{-1/2}),\\
		&\mathbb{E}_k(\beta_{kj0})= O(n^{-1}), \qquad \mathbb{E}_k(\beta_{kj0y})= O(n^{-1}),\\
		& \mathbb{E}_k(\Xi_{km0})= O(n^{-1}), ~~~~~~ \mathbb{E}_k(\Xi_{km0y})= O(n^{-1})
	\end{align*}
	hold for $i=1,\dots,4$, $j=1,\dots,12$ and $m=2\dots, 25$.
\end{lemma}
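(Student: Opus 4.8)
The plan is to integrate out the single fresh vector ($\z_k$, or $\y_k$) while holding $(\Z_{k0},\Y_{k0})$ fixed, reducing each of the six quantities to an explicit power of $n$ times a trace of a product of matrices whose operator norms are already under control. Recall from the construction in Appendix~\ref{appendix} that the subscript $j$ counts the number of factors of $\epsilon_k$ generated when the random scalar $\theta_k^{-1}=(\rho_n\eta_k)^{-1}$ (and, in the $\Omega_{s_{22}}$ contribution, $\theta_k^{-2}$) is expanded around its $\z_k$-free surrogate through $\eta_k=\bar\eta_k-\epsilon_k$; hence in the $j=0$ terms $\theta_k^{-1}$ has been replaced by $(\rho_n\bar\eta_k)^{-1}$, which is measurable with respect to $(\Z_{k0},\Y_{k0})$. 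Consequently $\alpha_{ki0}$, $\beta_{kj0}$ and $\Xi_{km0}$ are, conditionally on $(\Z_{k0},\Y_{k0})$, polynomials of bounded degree (at most four) in the entries of $\z_k$ whose coefficients are products of the $O(1)$ scalars $c_n,\tau_n,(\rho_n\bar\eta_k)^{-1}$, of the objects $\m_k,\q_k,\e_k,\p_k,\Z_{0k},\V_1$ introduced above, and of explicit powers of $n$; the same description applies verbatim to $\alpha_{ki0y},\beta_{kj0y},\Xi_{km0y}$ with $\z_k$ replaced by $\y_k$.

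The bound then follows in two steps. First I would record that, on an event $\mathcal B_n$ of probability $1-o(n^{-1/2})$ — the only form of control the subsequent Lindeberg swap requires — the resolvent $\mathbf R_k:=(\rho_n\I_{n-1}-n^{-1}\Y_{0k}\Y_{0k}')^{-1}$ and its square have spectral norm $O(1)$, because $\rho_n=l+\gamma_n l/(l-1)$ is bounded away from the spectrum of $n^{-1}\Y_{0k}\Y_{0k}'$: Assumption~(c) places $\rho_n$ strictly to the right of the upper companion M-P edge $(1+\sqrt\gamma)^2$, while $\rho_n$ is also bounded away from $0$; in turn $\bar\eta_k$ is bounded and bounded away from $0$, the quantities $\m_k,\q_k,\e_k,\p_k,\Z_{0k}$ obey the norm bounds recorded earlier, and after the truncation of Section~\ref{trunca} every entry satisfies $|Z_{ij}|\le\eta_n\sqrt n$ with $\eta_n\to0$, so every moment used below is finite and uniformly bounded. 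Second, $\mathbb E_k$ kills every monomial of odd total degree in $\z_k$, sends a quadratic form $\z_k\mathbf M\z_k'$ to $\mathrm{tr}(\mathbf M)$, and, for a degree-four monomial, produces a combination of $\mathrm{tr}(\mathbf M_1)\mathrm{tr}(\mathbf M_2)$, $\mathrm{tr}(\mathbf M_1\mathbf M_2)$ and a diagonal kurtosis term $\beta_z\sum_i(\cdot)_{ii}$, while the slots occupied by the $\z_k$-dependent row of $\tilde\Z_1$ collapse against $\V_1$ into fixed quadratic forms such as $\V_1'\Z_{0k}'\m_k'\V_1$. Because every matrix occurring is a product of the controlled factors above, each such trace, diagonal correction, or contracted form is of the order that, once divided by the explicit $n$-power carried in front of the term — namely $n^{-1/2}$ for $\alpha_{ki0}$ (the $c_n/(\theta_k\sqrt n)$ factor of $\Omega_{s_1}$), $n^{-1}$ for $\beta_{kj0}$ (from $\Omega_{s_{21}}$) and $n^{-1}$ for $\Xi_{km0}$ (the $n^{-1}\tau_n$ prefactor of $\Omega_{s_{22}}$ together with the extra $n^{-1}$ produced by the rank-one update of the squared resolvent) — yields the stated $O(n^{-1/2})$ and $O(n^{-1})$; the $y$-versions are identical, the only differences being that the fourth cumulant correction vanishes and the Gaussian moments replace those of $\z_k$, which only sharpens the bound.

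The substance here is the bookkeeping rather than any isolated estimate: Appendix~\ref{appendix} unfolds $\Omega_{s_1}(\Z_1,\Y_{k+1})-\Omega_{s_1}(\Z_{k0},\Y_{k0})$ and $\Omega_{s_{22}}(\Z_1,\Y_k)-\Omega_{s_{22}}(\Z_{k0},\Y_{k0})$ into on the order of forty monomials, and for each one must verify that the interplay of resolvent powers, $\Z_{0k}$ factors and $n$-normalizations produces exactly the advertised prefactor, with no monomial secretly of a larger order. The delicate cases are those $\Xi_{km0}$ carrying four factors of $\z_k$ — two from the $\z_k$-dependent row of $\tilde\Z_1$ and two from a residual resolvent quadratic form — whose conditional expectation is a product of a full trace and a contracted quadratic form and is therefore $O(n)$ rather than $O(1)$; one must confirm that such a monomial indeed carries an $n^{-2}$ prefactor, not merely $n^{-1}$, so that it still lands at $O(n^{-1})$ (the analogous point for the remainder term $\tau_n(\rho_n^2\eta_k\bar\eta_k n^2)^{-1}\V_1'\Z_{0k}'\Y_{0k}\z_k\z_k'\V_1$ is the same). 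A parallel check is needed for monomials of the form $\V_1'\Z_{0k}'\mathbf R_k\Y_{0k}\z_k\z_k'\V_1$, whose $\mathbb E_k$ equals $\V_1'\Z_{0k}'\mathbf R_k\Y_{0k}\V_1$ up to an $O(1)$ scalar and is $O(n)$ by the operator-norm bounds, after which the $n^{-1}$ or $n^{-2}$ prefactor gives the required order.
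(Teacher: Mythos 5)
Your proposal is correct and follows essentially the same route as the paper: condition on $(\Z_{k0},\Y_{k0})$, use the constant order of $\eta_k,\bar\eta_k$ (the paper cites Lemma C.3 of \cite{jiang2021generalized}) and the boundedness of $\p_k,\q_k,\m_k,\e_k,\Z_{0k}$, reduce each conditional expectation to traces and contracted quadratic forms via the moment identities of \cite{bai2004clt}, and count the explicit $n$-powers. The paper merely verifies one representative term and declares the remaining cases "similar," whereas you spell out the general bookkeeping (including the degree-four monomials whose conditional expectation is $O(n)$), so your write-up is if anything more complete than the original.
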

\begin{proof}
	
	According to Lemma C.3 of \cite{jiang2021generalized}, we can obtain $\bar\eta_k\to M$ and $\bar\eta_k-\eta_k\to 0$.
	This implies that both $\eta_k$ and $\bar \eta_k$ are of constant order.
	
	For $j=1$, 
	\[\beta_{k10}=\frac{\tau_n}{\rho_n^2\eta_n\bar\eta_n n^4}\z_k'\z_k \z_k'\m_k\m_k'\z_k.\]
	Therefore, based on Equation (2.3) of \cite{bai2004clt}, we derive
	\begin{align*}
		\mathbb{E}_k(\beta_{k120})=\frac{\tau_n}{\rho_n^2\eta_n\bar\eta_n n^2}\mathbb{E}_k(\z_k'\z_k )=O(n^{-1}).
	\end{align*}
	The proofs for $i=1,\dots,4$, $j=1,\dots,12$, and $m=2\dots, 25$  are similar. Hence, the result holds.
\end{proof}

\begin{remark}\label{minus}
	Although our analysis establishes only that $\alpha_{ki0}$ is of order $o(n^{-1/2})$, 	\cite{jiang2021partial} demonstrates that the  discrepancy $\alpha_{ki0}-\alpha_{ki0y}$ is of order $O(n^{-1})$. This refined bound plays a crucial role in our subsequent derivation.
\end{remark}

\begin{lemma}\label{moment2}The inequalities
	\begin{align*}
		&\mathbb{E}_k(\alpha_{ki1})= O(n^{-3/2}), \quad \mathbb{E}_k(\alpha_{ki1y})= O(n^{-3/2}),\\
		&\mathbb{E}_k(\beta_{kj1})= O(n^{-2}), ~~~\quad \mathbb{E}_k(\beta_{kj1y})= O(n^{-2}),\\
		& \mathbb{E}_k(\Xi_{km1})= O(n^{-2}), ~~~~~ \mathbb{E}_k(\Xi_{km1y})= O(n^{-2})
	\end{align*}
	hold for $i=1,\dots,4$, $j=1,\dots,12$ and $m=2\dots, 25$.
\end{lemma}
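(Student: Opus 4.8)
The plan is to follow the proof of Lemma \ref{moment1} essentially line by line, exploiting the one extra structural feature that every subscript-``$1$'' term is a subscript-``$0$'' term multiplied by the centered quadratic form
\[\epsilon_k=\frac1n\Big\{\z_k\Big(\rho_n\I_{n-1}-\tfrac1n\Y_{0k}\Y_{0k}'\Big)^{-1}\z_k'-\mathrm{tr}\Big[\Big(\rho_n\I_{n-1}-\tfrac1n\Y_{0k}\Y_{0k}'\Big)^{-1}\Big]\Big\}.\]
Indeed, in $\alpha_{kij},\beta_{kij},\Xi_{kij}$ the index $j$ records the order in the Neumann expansions of the factors $\eta_k^{-1},\eta_k^{-2},\dots$ generated by the rank-one Sherman--Morrison reductions: since $\eta_k=\bar\eta_k-\epsilon_k$, one has $\eta_k^{-1}=\bar\eta_k^{-1}+\bar\eta_k^{-2}\epsilon_k+\bar\eta_k^{-2}\eta_k^{-1}\epsilon_k^2$, and analogously for $\eta_k^{-2},\eta_k^{-3}$, so the $j=0,1,2$ pieces carry zero, one and at least two factors of $\epsilon_k$ respectively. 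By Lemma C.3 of \cite{jiang2021generalized} one has $\bar\eta_k\to M>0$ and $\eta_k-\bar\eta_k\to0$, so these expansions are legitimate and the factors $\bar\eta_k^{-1},\eta_k^{-1}$ are $O(1)$; consequently each $\alpha_{ki1}$ equals $\epsilon_k$ times a bounded factor times an expression of exactly the same algebraic type as $\alpha_{ki0}$ --- a quadratic or bilinear form in $\z_k$ whose coefficient matrices and vectors are built from $\p_k,\q_k,\m_k,\e_k,\s_k,\Z_{0k},\V_1$ and are controlled exactly as in the proof of Lemma \ref{moment1} --- and similarly for $\beta_{kj1},\Xi_{km1}$ and their $\y_k$-counterparts.

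First I would record the two elementary facts $\mathbb{E}_k(\epsilon_k)=0$ and $\mathbb{E}_k(\epsilon_k^2)=O(n^{-1})$, the second being the quadratic-form concentration bound, equation (2.3) of \cite{bai2004clt}, applied with $A:=(\rho_n\I_{n-1}-n^{-1}\Y_{0k}\Y_{0k}')^{-1}$, whose operator norm is $O(1)$ because $\rho_n$ stays bounded away from the support of the companion Mar\v{c}enko--Pastur law. The crucial step is then that, since $\mathbb{E}_k(\epsilon_k)=0$, $\mathbb{E}_k(\alpha_{ki1})$ equals a bounded multiple of $\mathrm{Cov}_k(\epsilon_k,\alpha_{ki0})$; expanding this covariance via the explicit identities for moments of products of quadratic and bilinear forms in i.i.d.\ coordinates --- the degree-at-most-six case being available because $\mathbb{E}Z_{11}^6<\infty$, finite after the truncation of Section \ref{trunca} --- the ``disconnected'' contribution (a product of two separate traces) is exactly what $\mathbb{E}_k(\epsilon_k)=0$ kills, while every remaining ``connected'' contribution, in which an index of $A$ is contracted against an index of the coefficient matrix of $\alpha_{ki0}$, carries an extra factor $n^{-1}$ relative to the corresponding term in the bound of Lemma \ref{moment1} (for the bilinear terms this is even cleaner, the relevant vectors such as $\s_k$ already having $O(1)$ norm through the projection onto $\V_1$). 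Hence the bound improves by a uniform factor $n^{-1}$, giving $\mathbb{E}_k(\alpha_{ki1})=O(n^{-3/2})$, $\mathbb{E}_k(\beta_{kj1})=O(n^{-2})$ and $\mathbb{E}_k(\Xi_{km1})=O(n^{-2})$; running the same computation with $\z_k$ replaced by the Gaussian $\y_k$ yields the bounds for $\alpha_{ki1y},\beta_{kj1y},\Xi_{km1y}$. I note that a bare Cauchy--Schwarz estimate $|\mathrm{Cov}_k(\epsilon_k,\alpha_{ki0})|\le\|\epsilon_k\|_2\,\|\alpha_{ki0}\|_2$ only recovers an $n^{-1/2}$ gain, so the exact covariance computation is indispensable.

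It then remains to run this argument through each of the $i=1,\dots,4$, $j=1,\dots,12$ and $m=2,\dots,25$ expressions tabulated in Appendix \ref{appendix}; for the cubic-in-$\z_k$ terms (those carrying $\mathbb{E}Z_{11}^3$) the reasoning is unchanged, the additional odd moments being uniformly bounded after truncation, and for terms whose coefficient matrix already hides an internal $n^{-1}$ it only gets easier. I expect the sole genuine obstacle to be the bookkeeping: there is no single hard estimate, but one must check case by case --- using the norm bounds on $\p_k,\q_k,\m_k,\e_k,\s_k,\Z_{0k}$ from the proof of Lemma \ref{moment1} together with $\bar\eta_k,\eta_k$ bounded and bounded away from zero --- that inserting $\epsilon_k$ indeed converts a free summation into a contracted one in every term, the most error-prone points being the correct separation of ``disconnected'' from ``connected'' contributions and the tracking of the powers of $n$ already hidden inside $\m_k,\p_k,\e_k$.
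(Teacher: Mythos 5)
Your proposal matches the paper's argument: the paper likewise isolates the single factor of $\epsilon_k$ carried by each $j=1$ term, uses $\mathbb{E}_k\epsilon_k=0$ together with the quadratic-form moment bound (equation (2.3) of \cite{bai2004clt}) to show that $\mathbb{E}_k(\epsilon_k\,Q)$ gains a factor $n^{-1}$ over $\mathbb{E}_k Q$, and then declares the remaining cases similar — exactly your covariance computation, just written for one representative term. The only small inaccuracy is your claim that Cauchy--Schwarz is insufficient: applied to $\epsilon_k$ against the \emph{centered} quadratic form (legitimate since $\mathbb{E}_k\epsilon_k=0$) it gives $O(n^{-1/2})\cdot O(n^{1/2})=O(1)$ versus a mean of $O(n)$, i.e.\ the full $n^{-1}$ gain, which is essentially what the cited bound delivers.
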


\begin{proof}
	For $j=1$, we have
	\[\beta_{k121}=\frac{\tau_n\epsilon_k}{\rho_n^2\eta_n\bar\eta_n n^2}\z_k'\z_k .\]
	Based on Equation (2.3) of \cite{bai2004clt}, we can establish:
	\begin{align*}
		\mathbb{E}_k(\beta_{k11})=\frac{\tau_n\epsilon_k}{\rho_n^2\eta_n\bar\eta_n n^2}\mathbb{E}_k(\epsilon_k\z_k'\z_k )=O(n^{-2}).
	\end{align*}
	The proofs for $i=2,\dots,12$, $j=1,\dots,12$ and $m=1\dots, 25$ follow a similar procedure. Hence, the result holds.
\end{proof}

\begin{lemma}\label{moment3}The inequalities
	\begin{align*}
		&\mathbb{E}_k(\alpha_{ki2})= O(n^{-5/2}), \quad \mathbb{E}_k(\alpha_{ki2y})= O(n^{-5/2}),\\
		&\mathbb{E}_k(\beta_{kj2})= O(n^{-3}), \quad ~~~\mathbb{E}_k(\beta_{kj2y})= O(n^{-3}),\\
		& \mathbb{E}_k(\Xi_{km2})= O(n^{-3}), ~~ ~~~\mathbb{E}_k(\Xi_{km2y})= O(n^{-3})
	\end{align*}
	hold for $i=1,\dots,4$, $j=1,\dots,12$ and $m=1\dots, 25$.
\end{lemma}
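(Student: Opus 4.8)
The plan is to prove Lemma~\ref{moment3} by exactly the routine already used for Lemmas~\ref{moment1} and~\ref{moment2}: the terms carrying the subscript $2$ are the pieces quadratic in $\epsilon_k$ (plus the expansion remainder) produced when the factor $\theta_k^{-1}=(\rho_n\eta_k)^{-1}$ is expanded about $(\rho_n\bar\eta_k)^{-1}$ via $\bar\eta_k-\eta_k=\epsilon_k$; in the Neumann series $\theta_k^{-1}=(\rho_n\bar\eta_k)^{-1}\sum_{j\ge0}(\epsilon_k/\bar\eta_k)^{j}$ the subscript $j$ attached to $\alpha_{kij}$, $\beta_{kij}$ and $\Xi_{kmj}$ simply counts the power of $\epsilon_k$. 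As before, I would treat one representative term in each family --- for instance the analogue of $\beta_{k11}$ with $\epsilon_k$ replaced by $\epsilon_k^{2}$ --- writing its conditional expectation in terms of the conditional moments of products of $\epsilon_k$ with centered quadratic (and, for the $\Xi$-family, quartic) forms in the entries of $\z_k$ whose coefficient matrices are built from $\A=(\rho_n\I-n^{-1}\Y\Y')^{-1}$, $\m_k$, $\q_k$, $\e_k$, $\p_k$ and $\Z_{0k}$.

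The only analytic input is Equation~(2.3) of \cite{bai2004clt}, the moment bound for quadratic forms: for a deterministic matrix $\B$ it yields $\mathbb{E}_k\bigl(\z_k\B\z_k'-\mathrm{tr}\,\B\bigr)=0$ and $\mathbb{E}_k\bigl|\z_k\B\z_k'-\mathrm{tr}\,\B\bigr|^{2j}=O\bigl((\mathrm{tr}\,\B\B^{\ast})^{j}\bigr)$, hence $\mathbb{E}_k\epsilon_k=0$ and $\mathbb{E}_k|\epsilon_k|^{2j}=O(n^{-j})$. Feeding this into the conditional expectations, together with $\bar\eta_k\to M$ and $\bar\eta_k-\eta_k\to0$ from Lemma~C.3 of \cite{jiang2021generalized} (so that $\eta_k,\bar\eta_k$ are of constant order and bounded away from $0$) and with the uniform boundedness of $\A,\m_k,\q_k,\e_k,\p_k,\Z_{0k}$ --- which holds because Assumption~(c) keeps $\rho_n$ a fixed distance from the support of the Mar\v{c}enko--Pastur law --- a Cauchy--Schwarz split that isolates the mean-zero part of each $\epsilon_k$ shows that each additional factor of $\epsilon_k$ improves the order by $n^{-1}$, so the subscript-$2$ terms are a full power of $n$ smaller than the subscript-$1$ bounds of Lemma~\ref{moment2}. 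The remaining indices $i=1,\dots,4$, $j=1,\dots,12$, $m=1,\dots,25$, and the $\y_k$-versions (identical, since $\y_k$ also satisfies Assumptions~(a)--(b)), then follow term by term from the explicit expressions recorded in Appendix~\ref{appendix}.

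The hard part is purely bookkeeping: for each of the roughly twenty-five terms one must check that the two factors of $\epsilon_k$ --- together with whatever other centered fluctuation is already present --- genuinely combine to gain a full power of $n$, which forces the use of the centering $\mathbb{E}_k\epsilon_k=0$ and of second-moment/covariance estimates rather than the crude bound $|\epsilon_k|=O_p(n^{-1/2})$, and one must keep the boundedness constants uniform in $k$. Granting this, the stated rates $O(n^{-5/2})$ for $\alpha_{ki2}$ and $O(n^{-3})$ for $\beta_{kj2},\Xi_{km2}$ follow, and after summation over the $n$ values of $k$ they contribute $o(n^{-1/2})$ to $\Omega_s(\Z_1,\cdot)-\Omega_s(\Z_{k0},\cdot)$, which is what the proof of Theorem~\ref{G4MT} needs.
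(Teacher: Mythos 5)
Your proposal follows essentially the same route as the paper's proof: both take a single representative term (the paper uses $\beta_{k12}=\tau_n\epsilon_k^2(\rho_n^2\eta_n\bar\eta_n n^2)^{-1}\z_k'\z_k$), bound its conditional expectation via Equation (2.3) of \cite{bai2004clt} together with the boundedness of $\eta_k$, $\bar\eta_k$ and the resolvent-type matrices, and dispose of the remaining indices and the $\y_k$-versions by declaring the proofs similar. Your extra remarks on the Neumann expansion of $\theta_k^{-1}$ in powers of $\epsilon_k$ and on using centering and second-moment estimates rather than the crude bound $|\epsilon_k|=O_p(n^{-1/2})$ merely make explicit what the paper leaves implicit.
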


\begin{proof}
	For $j=1$, 
	\[\beta_{k12}=\frac{\tau_n\epsilon_k^2}{\rho_n^2\eta_n\bar\eta_n n^2}\z_k'\z_k,\]
	then, according to the equation (2.3) of \cite{bai2004clt}, we can get
	\begin{align*}
		|\mathbb{E}_k\beta_{k12}|&=\frac{\tau_n\epsilon_k^2}{\rho_n^2\eta_n\bar\eta_n n^2}\mathbb{E}_k(\epsilon_k^2\z_k'\z_k )=O(n^{-3}).
	\end{align*}
	For $i=2,\dots,11$, $j=1,\dots,12$ and $m=1\dots, 25$, the proofs are similar. Hence, the result holds.
\end{proof}

We shall show that the difference in the characteristic functions tends to zero. Using the notations we introduced above, we have
\begin{align*}
	&\qquad\mathbb{E}\exp\Big(it\Omega_s(\Z_1,\Z_2)\Big)-\mathbb{E}\exp\Big(it\Omega_{s}(\Z_{1},\Y)\Big)\\
	&=\sum_{k=1}^n\mathbb{E}\Big(\exp [it\Omega_s(\Z_1,\Y_{k+1}) ]-\exp [it\Omega_{s}(\Z_1,\Y_k)]\Big)\\
	&=\sum_{k=1}^n\mathbb{E}\exp[it\Omega_s( \tilde{\Z}_{10},\Y_{k0})]\Big(\mathbb{E}_k\exp \big[it\big(\Omega_s(\Z_{1},\Y_{k+1}) -\Omega_s(\tilde{\Z}_{10},\Y_{k0})\big)\big]-\mathbb{E}_k  \exp \big[\\
	&\qquad it\big(\Omega_s(\Z_{1},\Y_k)-\Omega_s(\tilde{\Z}_{10},\Y_{k0})\big)\big]\Big)\\
	&=\sum_{k=1}^n\mathbb{E}\exp[it\Omega_s( \tilde{\Z}_{10},\Y_{k0})]\mathbb{E}_k\bigg(\exp{\Big[ it\sum_{j=0}^2\Big(\sum_{i_1=1}^5\alpha_{ki_1j}+\sum_{i_2=1}^{12}\beta_{ki_2j}-\sum_{i_3=1}^{25}\Xi_{ki_3j}\Big)\Big]}\\
	&\qquad-\exp{\Big[ it\sum_{j=0}^2\Big(\sum_{i_1=1}^5\alpha_{ki_1jy}+\sum_{i_2=1}^{12}\beta_{ki_2jy}-\sum_{i_3=1}^{25}\Xi_{ki_3jy}\Big)\Big]}\bigg)\\
	&=\sum_{k=1}^n\mathbb{E}\exp[it\Omega_s( \tilde{\Z}_{10},\Y_{k0})]\mathbb{E}_k\bigg(\exp{\Big[it\Big(\sum_{i_1=1}^5\alpha_{ki_10}+\sum_{i_2=1}^{12}\beta_{ki_20}-\sum_{i_3=1}^{25}\Xi_{ki_30}\Big)\Big]}\\
	&\qquad-\exp{\Big[ it\Big(\sum_{i_1=1}^5\alpha_{ki_10y}+\sum_{i_2=1}^{12}\beta_{ki_20y}-\sum_{i_3=1}^{25}\Xi_{ki_30y}\Big)\Big]}\bigg)+o(n^{-1/2})\\
	&=\sum_{k=1}^n\mathbb{E}\exp[it\Omega_s( \tilde{\Z}_{10},\Y_{k0})]\mathbb{E}_k\Big[1+it\Big(\sum_{i_1=1}^5\alpha_{ki_10}+\sum_{i_2=1}^{12}\beta_{ki_20}-\sum_{i_3=1}^{25}\Xi_{ki_30}\Big)+o\Big(\sum_{i_1=1}^5\\
	&\qquad \alpha_{ki_10}+\sum_{i_2=1}^{12}\beta_{ki_20}-\sum_{i_3=1}^{25}\Xi_{ki_30}\Big)-1-it\Big(\sum_{i_1=1}^5\alpha_{ki_10y}+\sum_{i_2=1}^{12}\beta_{ki_20y}-\sum_{i_3=1}^{25}\Xi_{ki_30y}\Big)\\
	&\qquad+o\Big(\sum_{i_1=1}^5\alpha_{ki_10y}+\sum_{i_2=1}^{12}\beta_{ki_20y}-\sum_{i_3=1}^{25}\Xi_{ki_30y}\Big)\Big]+o(n^{-1/2})\\
	&=o(n^{-1/2}).
\end{align*}
The validity of this final step is established through Lemmas~\ref{moment1}, \ref{moment2} and \ref{moment3}, along with Remark \ref{minus}. Collectively, these demonstrate that both $\Omega(\Z_1,\Z_2)$ and $\Omega(\Z_1,\Y)$ share the same first-order corrected Gaussian distributions.

\subsection{Proof of Theorem \ref{limiting}}
In this section, we represent the proposed statistics as sums of independent random variables, which allows us to apply established Edgeworth expansion theory for such sums to derive the corresponding expansion for the distribution function of our statistic. Recall that 
\[g_n(\lambda)=(\rho_n-\lambda)^{-1}, \quad S_n(f)=n^{-1/2}\sum_{i=1}^nf(\lambda_i)(\omega_i^2-1).\]
Then, we derive the following result:
\begin{align*}
	&\qquad\mathbb{P}\bigg(\frac{-\Omega(\rho_n, \Y)+\rho_n n^{-1/2}\tilde G_n(g_n)}{[\rho_n\mathrm{F}_{\gamma_n}(g_n^2)-n^{-1/2}S_n(g_nh_n)]\tilde\sigma_n}\le x\bigg)\\
	&=\mathbb{P}\Big(S_n(g_n)+n^{-1/2}S_n(g_nh_n)\rho_n^{-1}\tilde\sigma_nx\le\mathrm{F}_{\gamma_n}(g_n^2)\tilde\sigma_nx-n^{-1/2}\tilde G_n(g_n)\Big)\\
	&=\mathbb{E}\bigg[\mathbb{P}\Big(S_n(g_n)+n^{-1/2}S_n(g_nh_n)\rho_n^{-1}\tilde\sigma_nx \le\mathrm{F}_{\gamma_n}(g_n^2)\tilde\sigma_nx-n^{-1/2}\tilde G_n(g_n)\Big|\bm{\Lambda}\Big)\bigg]\\
	&=\mathbb{E}\bigg[\mathbb{P}\Big(S_n(g_n)+n^{-1/2}S_n(g_nh_n)\rho_n^{-1}\tilde\sigma_nx \le y_n \Big|\bm{\Lambda}\Big)\bigg],\\
	&=\mathbb{E}\bigg[\mathbb{P}\Big(-\frac{c_n}{\sqrt n}\sum_{i=1}^n a_{ii}(\tilde Z_{i1}^2-1)+\frac{\tau_n}{n}\sum_{j=1}^n m_{jj}(\tilde Z_{j1}^2-1) \le y_n\Big|\bm{\Lambda}\Big)\bigg],
\end{align*}
where 
\[y_n=\mathrm{F}_{\gamma_n}(g_n^2)\tilde\sigma_nx-n^{-1/2}\tilde G_n(g_n).\]
Next, we express the following statistic as a sums of independent random variables, namely,
\begin{align*}
	-\frac{c_n}{\sqrt n}\sum_{i=1}^n a_{ii}(\tilde Z_{i1}^2-1)+\frac{1}{n}\tau_n\sum_{j=1}^n m_{jj}(\tilde Z_{j1}^2-1) :=n^{-1/2}\sum_{i=1}^nX_{ni},
\end{align*}
where
\[X_{ni}=c_{ni}(\tilde Z_{i1}^2-1),\quad c_{ni}=-a_{ii}c_n+n^{-1/2}\rho_n^{-1}\tilde\sigma_nx m_{ii}.\]
We first establish the necessary groundwork before deriving the first-order Edgeworth expansion of the distribution of \[S_n(g_n)+n^{-1/2}S_n(g_nh_n)\rho_n^{-1}\tilde\sigma_nx\]
conditioned on $\bm{\Lambda}$. 

\textbf{Moment analysis}. Let $X_{ni}=c_{ni}(\tilde Z_{i1}^2-1), \bar V_n=\big(\mathbb{E}\tilde Z_{11}^4-(\mathbb{E}\tilde Z_{11}^2)^2\big)n^{-1}\sum_{i=1}^nc_{ni}^2$ and $c_{ni}=-a_{ii}c_n+n^{-1/2}\rho_n^{-1}\tilde\sigma_nx m_{ii}$. Let 
\[d_{ni}=\big(1+n^{-1/2}x_nh_n(\lambda_i)\big)g_n(\lambda_i).\]
Note that
\begin{align*}
	&\sum_{i=1}^nc_{ni}=\sum_{i=1}^n\big(1+n^{-1/2}x_nh_n(\lambda_i)\big)g_n(\lambda_i),~\sum_{i=1}^nc_{ni}^2\le\sum_{i=1}^n\big(1+n^{-1/2}x_nh_n(\lambda_i)\big)^2g_n(\lambda_i)^2,\\
	& \sum_{i=1}^nc_{ni}^3\le\sum_{i=1}^n\big(1+n^{-1/2}x_nh_n(\lambda_i)\big)^3g_n(\lambda_i)^3.
\end{align*}
Note that $\tilde Z_{11}$ is a random variable with mean 0 and variance 1. Let $\tilde\kappa_{2}$ and $\tilde\kappa_{3}$ represent the second-order and third-order cumulants of $\tilde Z_{11}^2-1$, respectively. We will verify that conditions $R1$ and $R2$ of Lemma \ref{EE condition} hold uniformly in $x\in R$.
\begin{align*}
	\bar V_n=\big(\mathbb{E}\tilde Z_{11}^4-(\mathbb{E}\tilde Z_{11}^2)^2\big)n^{-1}\sum_{i=1}^nc_{ni}^2\le \big(\mathbb{E}\tilde Z_{11}^4-(\mathbb{E}\tilde Z_{11}^2)^2\big)\max_{i=1,\dots,n}c^2_{ni}\le C\max_{i=1,\dots,n}c^2_{ni},
\end{align*}
\begin{align*}
	|c_{ni}|=|a_{ii}c_n+n^{-1/2}\rho_n^{-1}\tilde\sigma_nx m_{ii}|\le |a_{ii}c_n|+n^{-1/2}|\rho_n^{-1}\tilde\sigma_nx m_{ii}|\le C,
\end{align*}
\begin{align*}
	n^{-1}\bar V_n^{-3/2}\sum_{i=1}^n\mathbb{E}\Big(|X_{ni}|^3\Big)=n^{-1}\bar V_n^{-3/2}\sum_{i=1}^n|c_{ni}|^3\mathbb{E}\Big(|(\tilde Z_{i1}^2-1)|^3\Big)\le C\mathbb{E}\Big(|\tilde Z_{11}^2-1|^3\Big),
\end{align*}
hence $R1$ holds. Using Markov's inequality, we obtain:
\begin{align*}
	n^{-1}\bar V_{n}^{-3/2}\sum_{i=1}^n\mathbb{E}\Big(I\{\bar V_n^{-1/2}|X_{ni}|>n^{\tau}\}|X_{ni}|^3\Big)\le n^{-\tau-1}\bar V_n^{-2}\sum_{i=1}^n \mathbb{E}\Big(|X_{ni}|^4\Big)\le C,
\end{align*}
hence $R2$ holds. Under the assumption of Assumption $(c)$, we can derive:
\[n^{1/2}\int_{\{|t|>\epsilon\}}|t|^{-1}\Big|\mathbb{E}\Big[\exp\Big(it\bar V_n^{-1/2}\sum_{i=1}^n X_{ni}\Big)\Big]\Big|dt\le n^{1/2}\int_{\{|t|>\epsilon\}}|t|^{-1}\delta_i^n dt =o(1),\]
for every fixed $\epsilon>0$ and $0<\delta_i<1(i=1,\dots,n)$, hence $R3$ holds.  

Hence, Lemma \ref{EE condition} is applicable for $X_{ni}=c_{ni}(\tilde Z_{i1}^2-1)$. We then obtain:
\begin{align*}
	&\mathbb{P}\Big(S_n(g_n)+n^{-1/2}S_n(g_nh_n)\rho_n^{-1}\tilde{\sigma}_nx \le y_n \Big|\bm{\Lambda}\Big)\\
	&\qquad\qquad\qquad=\Phi (y_n)+\frac 16n^{-1/2}\bar V_n^{-3/2}\bar\kappa_{3,n}(1-y_n^2)\phi(y_n)+o(n^{-1/2})
\end{align*}
uniformly in $x\in R$, where $\bar\kappa_{3,n}=n^{-1}\tilde\kappa_{3}\sum_{i=1}^nc_{ni}^3$.

\begin{lemma}\label{PO} If there exists a polynomial $p_n(t)=\sum_{i=0}^kc_{ni}t^i$ and $\mathbb{E}|c_{ni}|=O(n^{-\alpha})$ for some $\alpha>0$, then $p_n$ is called $PO(n^{-\alpha})$. With this definition,
	\begin{align}
		\label{6.1}\frac{2(\tilde{\sigma}_n)^2}{ \tilde\kappa_{2,k}\sigma_n^2}\kappa_{2,n}- \bar V_n+\frac{\gamma_n}{(l-1)^2}\beta_z2\sigma_n^{-2}&=PO(n^{-1}),\\
		\label{6.2} \bar V_n-\kappa_{2,n}&=PO(n^{-1}),\\
		\label{6.3} \bar\kappa_{3,n}-\kappa_{3,n}&=PO(n^{-1/2}).
	\end{align}
\end{lemma}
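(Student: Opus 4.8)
The plan is to regard $\bar V_n$ and $\bar\kappa_{3,n}$ as low-degree polynomials in $x$ whose coefficients are averages of powers and products of the diagonal entries of the two Gaussian resolvent matrices $\A=(\rho_n\I-n^{-1}\Y\Y')^{-1}$ and $\M=(n^{-1}\Y\Y')(\rho_n\I-n^{-1}\Y\Y')^{-2}$, and to match those coefficients --- one power of $x$ at a time --- against the deterministic Marchenko--Pastur functionals $\mathrm{F}_{\gamma_n}(g_n^j)$ and $\mathrm{F}_{\gamma_n}(\lambda g_n^j)$ that define $\kappa_{2,n}$, $\kappa_{3,n}$, $\sigma_n$ and $\tilde\sigma_n$. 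This is exactly what is needed to pass from the conditional Edgeworth expansion (with the random, $\bm\Lambda$-dependent cumulant proxies $\bar V_n,\bar\kappa_{3,n}$) to the unconditional statement of Theorem~\ref{limiting}, whose coefficients are the deterministic $\kappa_{2,n},\kappa_{3,n}$.

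First I would substitute $c_{ni}=-a_{ii}c_n+n^{-1/2}\rho_n^{-1}\tilde\sigma_n x\,m_{ii}$ into $\bar V_n=\tilde\kappa_{2}\,n^{-1}\sum_i c_{ni}^2$ and $\bar\kappa_{3,n}=n^{-1}\tilde\kappa_{3}\sum_i c_{ni}^3$ and expand. This produces polynomials in $x$ of degrees $2$ and $3$ whose $x^0$ coefficients are $\tilde\kappa_{2}c_n^2\,n^{-1}\sum_i a_{ii}^2$ and $-\tilde\kappa_{3}c_n^3\,n^{-1}\sum_i a_{ii}^3$, while every coefficient of a positive power of $x$ carries at least one explicit factor $n^{-1/2}$ coming from $\tau_n=\rho_n^{-1}\tilde\sigma_n x$ multiplied by a diagonal sum of $\A$ and $\M$. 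Under Assumption (c), $\rho_n$ stays a fixed distance from the a.s.\ limiting support of $n^{-1}\Y\Y'$, so $\|\A\|$ and $\|\M\|$ (hence $\max_i|a_{ii}|$ and $\max_i|m_{ii}|$) are $O_p(1)$ with all moments bounded; consequently every positive-power coefficient is $PO(n^{-1/2})$, and those carrying a factor $n^{-1}$ are $PO(n^{-1})$. Combined with the extra $n^{-1/2}$ that multiplies $\bar\kappa_{3,n}$ in the Edgeworth term, this step essentially reduces all three assertions to comparing the two scalar sums $n^{-1}\sum_i a_{ii}^2$ and $n^{-1}\sum_i a_{ii}^3$ with the appropriate deterministic quantities.

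The heart of the proof is this $x^0$ comparison, and here I would exploit the orthogonal invariance of the Gaussian matrix $\Y$: conditionally on $\bm\Lambda$, the eigenvector matrix $\U$ of $n^{-1}\Y\Y'$ is Haar-distributed on $O(n)$, so $\mathbb{E}[a_{ii}^2\mid\bm\Lambda]$ and $\mathbb{E}[a_{ii}^3\mid\bm\Lambda]$ can be written, by moment (Weingarten) integration over the sphere, as explicit rational functions of $n$ and of $\mathrm{tr}\,\A,\ \mathrm{tr}\,\A^2,\ \mathrm{tr}\,\A^3$. I would then replace each $\mathrm{tr}\,\A^j=\sum_i g_n(\lambda_i)^j$ by $n\,\mathrm{F}_{\gamma_n}(g_n^j)+D_{n,j}$ with $D_{n,j}=O_p(1)$ (the central limit theorem for linear spectral statistics of Wishart matrices, e.g.\ \cite{bai2004clt}), insert the closed-form identities $\mathrm{F}_{\gamma_n}(g_n)=l^{-1}$, $\mathrm{F}_{\gamma_n}(g_n^2)=2\sigma_n^{-2}$ and their counterparts for $g_n^3$, $\lambda g_n^2$, $\lambda g_n^3$ recorded in \cite{yang2018edgeworth}, expand $c_n=1+n^{-1/2}\mathrm{F}_{\gamma_n}(\lambda g_n^3)/\mathrm{F}_{\gamma_n}(g_n^2)$, and collect. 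The gap between $n^{-1}\sum_i a_{ii}^2$ and the naive functional $\mathrm{F}_{\gamma_n}(g_n^2)$ is precisely what produces the deterministic correction $\tfrac{\gamma_n}{(l-1)^2}\beta_z\,2\sigma_n^{-2}$ appearing in \eqref{6.1}; the residual fluctuation contributions are $D_{n,j}/n=O_p(n^{-1})$, hence $PO(n^{-1})$ after taking expectations, which yields \eqref{6.1} and \eqref{6.2}. The same bookkeeping for $n^{-1}\sum_i a_{ii}^3$ is carried out only to accuracy $O(n^{-1/2})$, which is all \eqref{6.3} requires.

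Finally I would assemble the second and third steps. The main obstacle I anticipate is exactly the $x^0$ matching of the third step: the Weingarten expansion of $a_{ii}^2$ and $a_{ii}^3$ must be pushed far enough to separate the $O(1)$ contributions from the $O(n^{-1})$ (respectively $O(n^{-1/2})$) ones, and simultaneously the $O_p(1)$ linear-spectral-statistic fluctuations $D_{n,j}$ must be shown, after invoking the Marchenko--Pastur identities, to cancel down to the advertised remainder, so that only the stated deterministic $\beta_z$-correction survives in \eqref{6.1} and nothing spurious survives in \eqref{6.2} or \eqref{6.3}. Everything else --- the higher powers of $x$, and the passage from moment bounds to $PO$ statements --- is routine given the boundedness furnished by Assumption (c).
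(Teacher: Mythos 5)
Your overall skeleton for \eqref{6.2} and \eqref{6.3} — expand $c_{ni}$ in powers of $x$, note that every positive power of $x$ carries an explicit $n^{-1/2}$, and compare the $x^0$ part of $n^{-1}\sum_i c_{ni}^j$ with $\mathrm{F}_{\gamma_n}(g_n^j)$ via the CLT for linear spectral statistics and the closed-form identities $\mathrm{F}_{\gamma_n}(g_n)=l^{-1}$, $\mathrm{F}_{\gamma_n}(g_n^2)=2\sigma_n^{-2}$ — is essentially the paper's argument, except that the paper avoids Weingarten calculus entirely: it bounds $c_{ni}$ by $d_{ni}=(1+n^{-1/2}\tau_n h_n(\lambda_i))g_n(\lambda_i)$, so the relevant sums are already linear spectral statistics $n^{-1}\tilde G_n(g_n^j)+\mathrm{F}_{\gamma_n}(g_n^j)$ and no conditional integration over the Haar eigenvector matrix is needed.

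The genuine gap is in your treatment of \eqref{6.1}. You attribute the correction term $\frac{\gamma_n}{(l-1)^2}\beta_z 2\sigma_n^{-2}$ to "the gap between $n^{-1}\sum_i a_{ii}^2$ and the naive functional $\mathrm{F}_{\gamma_n}(g_n^2)$," i.e.\ to a Weingarten/fluctuation correction of the Gaussian resolvent diagonal. This cannot be right: $\A$ and $\M$ are built from the independent Gaussian matrix $\Y$, so no amount of moment integration over its (Haar-distributed) eigenvectors can produce a term involving $\beta_z$, the excess kurtosis of the non-Gaussian entries $Z_{11}$. In the paper the term is purely deterministic algebra, orthogonal to your second step: writing the left side of \eqref{6.1} as $(\kappa_{2,n}-\bar V_n)+\big(\tfrac{2\tilde\sigma_n^2}{\tilde\kappa_{2}\sigma_n^2}-1\big)\kappa_{2,n}+\tfrac{\gamma_n}{(l-1)^2}\beta_z 2\sigma_n^{-2}$, the first bracket is $PO(n^{-1})$ by \eqref{6.2}, and the remaining two terms cancel because $\kappa_{2,n}=\tilde\kappa_{2}\mathrm{F}_{\gamma_n}(g_n^2)$ while $\tfrac{2\tilde\sigma_n^2}{\sigma_n^2}=2+\tfrac{\pi_1 l^{-2}\sigma_n^2}{2}$ and $\tilde\kappa_{2}=2+\beta_z\sum_i v_{i1}^4$, so that $\big(\tfrac{2\tilde\sigma_n^2}{\sigma_n^2}-\tilde\kappa_{2}\big)\mathrm{F}_{\gamma_n}(g_n^2)=-\tfrac{\gamma_n}{(l-1)^2}\beta_z 2\sigma_n^{-2}$ up to negligible terms. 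The point of \eqref{6.1} is precisely that the scaling $\tilde\sigma_n$ used in $R_n$ (which carries $\pi_1$ from the variance of $\Omega(\rho_n,\Z)$) is not the cumulant-based variance $\kappa_{2,n}$ (which carries $\beta_z\sum v_{i1}^4$ from $\tilde Z_{11}^2-1$); your proposed mechanism would never surface this mismatch, and the lemma's most substantive assertion would remain unproved.
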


\begin{proof}Recall the definition of $\bar V_n=\big(\mathbb{E}\tilde Z_{11}^4-(\mathbb{E}\tilde Z_{11}^2)^2\big)n^{-1}\sum_{i=1}^nc_{ni}^2$ and $c_{ni}=a_{ii}c_n+n^{-1/2}\rho_n^{-1}\tilde\sigma_nx m_{ii}$. Let
	\[\tilde G_n(f)=\sum_{i=1}^nf(\tilde \lambda_i)-n\int f(\lambda)F_{\gamma_n}(d\lambda)=n\big(\tilde{\mathrm{F}}_n(f)-\mathrm{F}_{\gamma_n}(f)\big)\]
	and
	\begin{align*}
		\tilde\kappa_{2}=\mathbb{E}(\tilde Z_{11}^2-1)^2-\big(\mathbb{E}(\tilde Z_{11}^2-1)\big)^2=\mathbb{E}\tilde Z_{11}^4-(\mathbb{E}\tilde Z_{11}^2)^2.
	\end{align*}
	The proofs of equations \eqref{6.2} and \eqref{6.3} proceed as follows:
	\begin{align*}
		\bar V_n-\kappa_{2,n}&=\big(\mathbb{E}\tilde Z_{11}^4-(\mathbb{E}\tilde Z_{11}^2)^2\big)n^{-1}\sum_{i=1}^nc_{ni}^2-\kappa_{2,n}\\
		&=n^{-1}\tilde\kappa_{2}\sum_{i=1}^nc_{ni}^2-\tilde\kappa_{2}\mathrm{F}_{\gamma_n}(g_n^2)\le n^{-1}\tilde\kappa_{2}\sum_{i=1}^nd_{ni}^2-\tilde\kappa_{2}\mathrm{F}_{\gamma_n}(g_n^2)\\
		&=\tilde\kappa_{2}n^{-1}\Big(\tilde G_n(g_n^2)+\tilde{\mathrm{F}}_n(h_n^2g_n^2)\rho_n^{-2}(\tilde{\sigma}_n)^2x^2+2n^{1/2}\tilde{\mathrm{F}}_n(h_ng^2_n)\rho_n^{-1}(\tilde{\sigma}_n)x\Big)\\
		&=\tilde\kappa_{2}n^{-1}\Big(\tilde G_n(g_n^2)+\tilde{\mathrm{F}}_n(h_n^2g_n^2)\rho_n^{-2}(\tilde{\sigma}_n)^2x^2+2n^{-1/2}\tilde G_n(g_n^2h_n)\rho_n^{-1}(\tilde{\sigma}_n)x\Big)\\
		&=O(n^{-1}).
	\end{align*}
	The penultimate equality is obtained through the identity $\tilde{\mathrm{F}}_n(h_ng^2_n)-n^{-1}\tilde{G}_n(g_n^2h_n)=0$ combined with Lemma \ref{mu}. Applying analogous arguments yields:
	\begin{align*}
		\bar\kappa_{3,n}-\kappa_{3,n}&=n^{-1}\tilde\kappa_{3}\sum_{i=1}^nc_{ni}^3-\kappa_{3,n}\\
		&=n^{-1}\tilde\kappa_{3}\sum_{i=1}^nc_{ni}^3-\tilde\kappa_{3}\mathrm{F}_{\gamma_n}(g_n^3)\le n^{-1}\tilde\kappa_{3}\sum_{i=1}^nd_{ni}^3-\tilde\kappa_{3}\mathrm{F}_{\gamma_n}(g_n^3)\\
		&=\tilde\kappa_{3}\Big(n^{-1}\sum_{i=1}^n(1+n^{-1/2}h_n\rho_n^{-1}\tilde{\sigma}_nx)^3g^3_n(\lambda_i)-\mathrm{F}_{\gamma_n}(g_n^3)\Big)\\
		&=\tilde\kappa_{3}n^{-1}\Big(\tilde{\mathrm{F}}_n(g_n^3)-\mathrm{F}_{\gamma_n}(g_n^3)+n^{-1/2}\rho_n^{-3}(\tilde{\sigma}_n)^3x^3\tilde{\mathrm{F}}_n(h_n^3g_n^3)+3\rho_n^{-2}(\tilde{\sigma}_n)^2\\
		&\qquad\times x^2\tilde{\mathrm{F}}_n(g_n^3h_n^2)+3n^{1/2}\rho_n^{-1}\tilde{\sigma}_nx\tilde{\mathrm{F}}_n(g_n^3h_n)\Big)\\
		&=\tilde\kappa_{3}n^{-1/2}\Big(n^{-1/2}\tilde G_n(g_n^3)+n^{-1}\rho_n^{-3}(\tilde{\sigma}_n)^3x^3\tilde{\mathrm{F}}_n(h_n^3g_n^3)+n^{-1/2}3\rho_n^{-2}(\tilde{\sigma}_n)^2\\
		&\qquad\times x^2\tilde{\mathrm{F}}_n(g_n^3h_n^2)+3\rho_n^{-1}\tilde{\sigma}_nx\tilde{\mathrm{F}}_n(g_n^3h_n)\Big)\\
		&=O(n^{-1/2}).
	\end{align*}
	For equation \eqref{6.1}, the definitions of $\kappa_{2,n}$ and $\beta_z$,  together with direct computation yield:
	\begin{align*}
		&\quad\frac{ 2(\tilde{\sigma}_n)^2}{ \tilde\kappa_{2,k}\sigma_n^2}\kappa_{2,n}-\bar V_n+\frac{\gamma_n}{(l-1)^2}\beta_z2\sigma_n^{-2}\\
		&=(\kappa_{2,n}-\bar V_n)+\Big(\frac{ 2(\tilde{\sigma}_n)^2}{ \tilde\kappa_{2,k}\sigma_n^2}-1\Big)\kappa_{2,n}+\frac{\gamma_n}{(l-1)^2}\beta_z2\sigma_n^{-2}\\
		&=O(n^{-1})+\Big(\frac{ 2(\tilde{\sigma}_n)^2}{ \sigma_n^2}-\tilde\kappa_{2}\Big)\mathrm{F}_{\gamma_n}(g_n^2)+\frac{\gamma_n}{(l-1)^2}\beta_z2\sigma_n^{-2}\\
		&=O(n^{-1}).
	\end{align*}
	Hence, the result holds.
\end{proof}

\begin{lemma}\label{bound}
	Building on the definitions provided above, assume
	\[y_n=2\tilde{\sigma}_n\bar V_n^{-1/2}x/\sigma_n^2-n^{-1/2}\bar V_n\tilde G_n(g_n),\quad \bar V_n=\big(\mathbb{E}\tilde Z_{11}^4-(\mathbb{E}\tilde Z_{11}^2)^2\big)n^{-1}\sum_{i=1}^nc_{ni}^2,\]
	then we can get
	\begin{align}
		\label{converge1}&\mathbb{E}\Big(\Phi(y_n)-\Phi(x)+n^{-1/2}\kappa_{2,n}^{-1/2}\mu(g_n)\phi(x)\Big)=o(n^{-1/2}),\\
		\label{converge2}&\mathbb{E}\Big(\bar V_n^{-3/2}\bar\kappa_{3,n}(1-y_n^2)\phi(y_n)-\kappa_{2,n}^{-3/2}\kappa_{3,n}\big(1-x^2\big)\phi(x)\Big)=O(n^{-1/2}).
	\end{align}
\end{lemma}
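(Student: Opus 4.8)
The plan is to derive both estimates from a second--order Taylor expansion of the smooth function of $y_n$ about the point $x$, using Lemma~\ref{PO} to replace $\bar V_n$ and $\bar\kappa_{3,n}$ by their idealized counterparts $\kappa_{2,n},\kappa_{3,n}$ and to control the deterministic discrepancy between the leading coefficient of $y_n$ (the coefficient of $x$) and $1$, and then using the asymptotic--mean formula for the linear spectral statistic $\tilde G_n(g_n)$ (invoked as Lemma~\ref{mu} in the proof of Lemma~\ref{PO}) to identify the surviving $n^{-1/2}$--order term. All estimates are claimed uniformly in $x\in\mathbb{R}$: this is legitimate because every error term is multiplied by $\phi$ or one of its derivatives, whose super--polynomial decay absorbs the at--most--polynomial growth in $x$ carried by the factors coming from $y_n-x$ and $y_n+x$.

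For \eqref{converge1} I would first write $y_n-x = B_n x - n^{-1/2}\bar V_n^{-1/2}\tilde G_n(g_n)$ with $B_n = 2\tilde\sigma_n\bar V_n^{-1/2}\sigma_n^{-2}-1$. The combination of \eqref{6.1} and \eqref{6.2} of Lemma~\ref{PO}, together with the closed forms $\mathrm{F}_{\gamma_n}(g_n)=l^{-1}$, $\mathrm{F}_{\gamma_n}(g_n^2)=2\sigma_n^{-2}$ and $\kappa_{2,n}=\tilde\kappa_{2}\,\mathrm{F}_{\gamma_n}(g_n^2)$ from Section~\ref{yi}, reduces the deterministic part of $y_n-x$ to the required order and shows $y_n-x=O_{L^2}(n^{-1/2})$ with only polynomial growth in $x$; it also yields $\mathbb{E}(y_n-x) = -\,n^{-1/2}\kappa_{2,n}^{-1/2}\,\mathbb{E}\tilde G_n(g_n) + o(n^{-1/2})$. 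Next, Taylor expansion gives
\[\Phi(y_n)=\Phi(x)+(y_n-x)\phi(x)-\tfrac12 x\,(y_n-x)^2\phi(x)+O\big(|y_n-x|^3\big),\]
so upon taking expectations the quadratic and cubic remainders are $O(n^{-1})$ by the $L^2$/$L^3$ control, leaving $\mathbb{E}\big(\Phi(y_n)-\Phi(x)\big)=\phi(x)\,\mathbb{E}(y_n-x)+O(n^{-1})$. Finally Lemma~\ref{mu} gives $\mathbb{E}\tilde G_n(g_n)=\mu(g_n)+o(1)$ — this is precisely the mean in the CLT for linear spectral statistics of $n^{-1}\Z\Z'$, including its $\beta_z$--dependent part — whence $\phi(x)\,\mathbb{E}(y_n-x) = -\,n^{-1/2}\kappa_{2,n}^{-1/2}\mu(g_n)\phi(x)+o(n^{-1/2})$, which rearranges into \eqref{converge1}.

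For \eqref{converge2} only an $O(n^{-1/2})$ bound is required, so a cruder argument suffices: expand $\phi(y_n)=\phi(x)+(y_n-x)\phi'(\xi_n)$ for an intermediate $\xi_n$, and $1-y_n^2 = (1-x^2)-(y_n-x)(y_n+x)$, and combine these with $\bar\kappa_{3,n}-\kappa_{3,n}=PO(n^{-1/2})$ and $\bar V_n^{-3/2}-\kappa_{2,n}^{-3/2}=PO(n^{-1})$ (the latter from \eqref{6.2} via $t\mapsto t^{-3/2}$) together with $y_n-x=O_{L^1}(n^{-1/2})$. Multiplying the expansions out, every cross term is a product of a $PO(n^{-\alpha})$ factor ($\alpha\ge 1/2$) with an $L^1$--bounded (and polynomially $x$--controlled) factor, hence $O(n^{-1/2})$ in expectation; collecting terms and using the $\phi$--weight for uniformity in $x$ gives \eqref{converge2}.

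The main obstacle is the bookkeeping in the first step of \eqref{converge1}: showing, via \eqref{6.1}--\eqref{6.2}, that the coefficient of $x$ in $y_n$ differs from $1$ only by $o(n^{-1/2})$ in expectation — i.e.\ that the $\beta_z\gamma_n/(l-1)^2$ correction in \eqref{6.1} is exactly what reconciles the two representations of $\bar V_n$ and leaves no residual $O(1)$ bias on $x$ — and that what remains assembles precisely into $-\kappa_{2,n}^{-1/2}\mu(g_n)\phi(x)$. This rests on the explicit Mar\v{c}enko--Pastur identities relating $\mathrm{F}_{\gamma_n}(g_n)$, $\mathrm{F}_{\gamma_n}(g_n^2)$, $\sigma_n$, $\tilde\sigma_n$ and $\kappa_{2,n}$, and on the closed form of the LSS mean of $\tilde G_n(g_n)$; once these are settled, the remaining Taylor--remainder and uniformity estimates are routine.
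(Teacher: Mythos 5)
Your overall strategy --- Taylor expansion of $\Phi$ and of the skewness term about $x$, Lemma~\ref{PO} to replace $\bar V_n$ and $\bar\kappa_{3,n}$ by $\kappa_{2,n}$ and $\kappa_{3,n}$ and to kill the bias in the coefficient of $x$, and the LSS mean $\mathbb{E}\tilde G_n(g_n)=\mu(g_n)+o(1)$ to identify the surviving $n^{-1/2}$ term --- is essentially the paper's argument. In particular you correctly isolate the key reconciliation between $2\tilde\sigma_n/\sigma_n^2=\mathrm{F}_{\gamma_n}(g_n^2)\tilde\sigma_n$ and $\bar V_n^{1/2}$ via the $\beta_z\gamma_n/(l-1)^2$ term of \eqref{6.1}; this is exactly the paper's estimate \eqref{PO bound}. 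Your treatment of \eqref{converge2} via the $PO(n^{-1/2})$ and $PO(n^{-1})$ bounds also matches the paper's decomposition into \eqref{6.4} and \eqref{6.5}.

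The one step that is not sound as written is the claim of uniformity in $x$. Since $c_{ni}=-a_{ii}c_n+n^{-1/2}\rho_n^{-1}\tilde\sigma_n x\,m_{ii}$ contains $x$, the quantity $\bar V_n$ grows like $n^{-1}x^2$ once $|x|\gtrsim\sqrt n$; in that regime $2\tilde\sigma_n\bar V_n^{-1/2}x/\sigma_n^2$ stabilizes at order $\sqrt n$ no matter how large $x$ is, so $y_n-x$ is not small and is not $O_{L^2}(n^{-1/2})$ times a polynomial in $x$. Your assertion that ``every error term is multiplied by $\phi$ or one of its derivatives'' then fails for the cubic Lagrange remainder: it carries $\Phi'''$ at an intermediate point between $x$ and $y_n$, and when these two points are order $\sqrt n$ apart you have not located that point, so the remainder cannot be declared $O(n^{-1})$ by $L^3$ control. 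This is precisely why the paper splits into $x^2>n$ and $x^2\le n$: in the first case it abandons the Taylor expansion entirely and uses direct tail bounds ($\bar V_n^{-1/2}\phi(2\tilde\sigma_n\bar V_n^{-1/2}x/\sigma_n^2)=O(n^{-1})$ and $|\Phi(\cdot)-\Phi(x)|\le\int_x^{\infty}\phi$), and only in the second case does it run the expansion you describe. Your argument is correct on $[-\sqrt n,\sqrt n]$, and the large-$x$ regime is indeed harmless, but it must be handled by a separate non-Taylor estimate rather than by appeal to the Gaussian weight; with that case split added, the proposal coincides with the paper's proof.
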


\begin{proof} Observe that $\tilde{\sigma}_n, \sigma_n, \mathbb{E}\tilde Z_{i1}^4$ and $\mathbb{E}\tilde Z_{i1}^2$ are uniformly bounded for all $x\in R$. Therefore, we derive:
	\begin{align*}
		&\mathbb{E}\big[\Phi(2\tilde{\sigma}_n\bar V_n^{-1/2}x/\sigma_n^2-n^{-1/2}\bar V_n\tilde G_n(g_n))-\Phi(2\tilde{\sigma}_n\bar V_n^{-1/2}x/\sigma_n^2)-n^{-1/2}\bar V_n \tilde G_n(g_n)\\
		&\quad\times \phi(2\tilde{\sigma}_n\bar V_n^{-1/2}x/\sigma_n^2)\big]=O\Big(n^{-1}\mathbb{E}\big(G^2_n(g_n)\big)\Big),\\
		&\mathbb{E}\big[n^{-1/2}\bar V_n^{-1/2}\tilde G_n(g_n)-n^{-1/2}\kappa_{2,n}^{-1/2}\tilde G_n(g_n)\big]\phi(x)=o(n^{-1/2}).
	\end{align*}
	Next, we consider two cases to proceed with equation \eqref{converge1}: (Case 1) $x^2>n$. Recall the definition of $\bar V_n$,
	\begin{align*}
		\bar V_n=\tilde\kappa_{2} n^{-1}\sum_{i=1}^n (a_{ii}c_n+n^{-1/2}\rho_n^{-1}\tilde\sigma_nx m_{ii}).
	\end{align*} 
	Therefore, we find that $\bar V_n=O(n^{-1}x^2)$ for $x\in[-\sqrt n,\sqrt n]^c$. In this case, we derive:
	\begin{align*}
		&\mathbb{E}\big[\Phi(2\tilde{\sigma}_n\bar V_n^{-1/2}x/\sigma_n^2)-\Phi(x)\big]\le \int_{x}^\infty \phi(x) dx=O(n^{-1}),\\
		&\mathbb{E}\Big(n^{-1/2}\bar V_n^{-1/2}G_n(g_n)\big[\phi(2\tilde{\sigma}_n\bar V_n^{-1/2}x/\sigma_n^2)-\phi(x)\big]\Big)=O(n^{-1}).
	\end{align*}
	The final equation holds due to the fact that
	\[\bar V_n^{-1/2}\phi(2\tilde{\sigma}_n\bar V_n^{-1/2}x/\sigma_n^2)=O(n^{-1}),\quad \bar V_n^{-1/2}\phi(x)=O(n^{-1}).\]
	Additionally, based on the content of Section \ref{mu}, we have $\mathbb{E}\big(G_n(g_n)-\mu(g_n)\big)=o(1)$. Therefore, we derive:
	\begin{align*}
		&\quad\mathbb{E}\Big[\Phi(y_n)-\Phi( x)+n^{-1/2}\kappa_{2,n}^{-1/2}\mu(g_n)\phi(x)\Big]\\
		&=\mathbb{E}\Big[\Big(\Phi(2\tilde{\sigma}_n\bar V_n^{-1/2}x/\sigma_n^2-n^{-1/2}\bar V_nG'_n(g_n))-\Phi(2\tilde{\sigma}_n\bar V_n^{-1/2}x/\sigma_n^2)+n^{-1/2}\bar V_n^{-1/2}\\
		&\quad~~\times \tilde G_n(g_n)\phi(2\tilde{\sigma}_n\bar V_n^{-1/2}x/\sigma_n^2)\Big)-n^{-1/2}\bar V_n^{-1/2}\tilde G_n(g_n)\Big(\phi(2\tilde{\sigma}_n\bar V_n^{-1/2}x/\sigma_n^2)-\phi(x)\Big)\\
		&\quad~~-\Big(\Phi( x) -\Phi(2\tilde{\sigma}_n\bar V_n^{-1/2}x/\sigma_n^2)\Big)-\Big(n^{-1/2}\bar V_n^{-1/2}\tilde G_n(g_n)-n^{-1/2}\kappa_{2,n}^{-1/2}\mu(g_n)\Big)\phi(x)\Big] \\
		&=o(n^{-1/2}).
	\end{align*}

	(Case 2) $x^2\le n$. In this case, we first observe that:
	\begin{align*}
		\bar V_n=\tilde\kappa_{2} n^{-1}\sum_{i=1}^n (a_{ii}c_n+n^{-1/2}\rho_n^{-1}\tilde\sigma_nx m_{ii})
	\end{align*}
	is bounded uniformly in $x\in [-\sqrt n, \sqrt n]$ and 
	\begin{align}\label{PO bound}
		& \mathrm{F}_{\gamma_n}(g_n^2)\tilde{\sigma}_n-\bar V_n^{1/2}=\Big(\tilde\kappa_{2}\mathrm{F}_{\gamma_n}(g_n^2)\frac{2\tilde{\sigma}_n^2}{\tilde\kappa_{2}\sigma_n^2}-\bar V_n+\frac{\gamma_n}{(l-1)^2}\pi_1\mathrm{F}_{\gamma_n}(g_n^2)\Big)\bigg[\mathrm{F}_{\gamma_n}(g_n^2)\tilde{\sigma}_n\\\nonumber
		&~+\bar V_n^{1/2}+\sqrt{\frac{2}{\tilde\kappa_{2}}}\frac{\gamma_n \sigma_n^{-1}}{(l-1)^2}\pi_1\Big(\frac{\tilde{\sigma}_n}{\sigma_n}\sqrt{\frac{2}{\tilde\kappa_{2}}}-1\Big)^{-1}\bigg]^{-1}+\sqrt{\frac{2}{\tilde\kappa_{2}}}\frac{\gamma_n \sigma_n^{-1}}{(l-1)^2}\pi_1\Big(\frac{\tilde{\sigma}_n}{\sigma_n}\sqrt{\frac{2}{\tilde\kappa_{2}}}-1\Big)^{-1}\\\nonumber
		&~\times\bigg(\kappa_{2,n}^{1/2}-\bar V_n^{1/2}\bigg)\bigg[\mathrm{F}_{\gamma_n}(g_n^2)\tilde{\sigma}_n+\bar V_n^{1/2}+\sqrt{\frac{2}{\tilde\kappa_{2}}}\frac{\gamma_n \sigma_n^{-1}}{(l-1)^2}\pi_1\Big(\frac{\tilde{\sigma}_n}{\sigma_n}\sqrt{\frac{2}{\tilde\kappa_{2}}}-1\Big)^{-1}\bigg]^{-1}\\\nonumber
		&=PO(n^{-1}).
	\end{align}
	Then, using a first-order Taylor expansion, we derive:
	\begin{align*}
		&\qquad\mathbb{E}\Big(n^{-1/2}\bar V_n^{-1/2}\tilde G_n(g_n)\big[\phi(2\tilde{\sigma}_n\bar V_n^{-1/2}x/\sigma_n^2)-\phi(x)\big]\Big)\\
		&=\mathbb{E}\Big[n^{-1/2}\bar V_n^{-1/2}\tilde G_n(g_n)\phi'(x)(2\tilde{\sigma}_n\bar V_n^{-1/2}x/\sigma_n^2-x)+o\Big(2\tilde{\sigma}_n\bar V_n^{-1/2}x/\sigma_n^2-x\Big)\Big]\\
		&=\mathbb{E}\Big[n^{-1/2}\bar V_n^{-1/2}\tilde G_n(g_n)\phi'(x)\Big(\bar V_n^{-1/2}\mathrm{F}_{\gamma_n}(g_n^2)\tilde{\sigma}_n-1\Big)x+o\Big(\bar V_n^{-1/2}\mathrm{F}_{\gamma_n}(g_n^2)\tilde{\sigma}_nx-x\Big)\Big]\\
		&=\mathbb{E}\Big[n^{-1/2}\bar V_n^{-1}\tilde G_n(g_n)\phi'(x)\Big(\mathrm{F}_{\gamma_n}(g_n^2)\tilde{\sigma}_n-\bar V_n^{1/2}\Big)x+o\Big(\bar V_n^{-1/2}\mathrm{F}_{\gamma_n}(g_n^2)\tilde{\sigma}_nx-x\Big)\Big]\\
		&=O(n^{-1}),
	\end{align*}
	and
	\begin{align*}
		&\qquad \mathbb{E}\Big[\Phi(y_n)-\Phi(x)\Big]=\mathbb{E}\Big[\Phi(2\tilde{\sigma}_n\bar V_n^{-1/2}x/\sigma_n^2)-\Phi( x)\Big]\\
		&=\mathbb{E}\bigg[\phi(x)\Big(\bar V_n^{-1/2}\mathrm{F}_{\gamma_n}(g_n^2)\tilde{\sigma}_n-1\Big)x+o\Big(\bar V_n^{-1/2}\mathrm{F}_{\gamma_n}(g_n^2)\tilde{\sigma}_n-1\Big)\bigg]\\
		&=\mathbb{E}\bigg[x\phi(x)\bar V_n^{-1/2}\Big(\mathrm{F}_{\gamma_n}(g_n^2)\tilde{\sigma}_n-\bar V_n^{1/2}\Big)+o\Big(\bar V_n^{-1/2}\mathrm{F}_{\gamma_n}(g_n^2)\tilde{\sigma}_n-1\Big)\bigg]\\
		&=O(n^{-1}).
	\end{align*}
	The final step in both calculations above holds due to equation \eqref{PO bound}. Additionally, based on the content of Section \ref{mu}, we have $\mathbb{E}\big(\tilde G_n(g_n)-\mu(g_n)\big)=o(1)$. Therefore,
	\begin{align*}
		&\quad\mathbb{E}\Big[\Phi(y_n)-\Phi( x)+n^{-1/2}\kappa_{2,n}^{-1/2}\mu(g_n)\phi(x)\Big]\\
		&=\mathbb{E}\Big[\Big(\Phi\big(2\tilde{\sigma}_n\bar V_n^{-1/2}x/\sigma_n^2-n^{-1/2}\bar V_n\tilde G_n(g_n)\big)-\Phi(2\tilde{\sigma}_n\bar V_n^{-1/2}x/\sigma_n^2)+n^{-1/2}\bar V_n^{-1/2}\tilde G_n(g_n)\\
		&\qquad \times \phi(2\tilde{\sigma}_n\bar V_n^{-1/2}x/\sigma_n^2)\Big)-n^{-1/2}\bar V_n^{-1/2}\tilde G_n(g_n)\Big(\phi(2\tilde{\sigma}_n\bar V_n^{-1/2}x/\sigma_n^2)-\phi(x)\Big)\\
		&\qquad-\Big(\Phi( x)-\Phi(2\tilde{\sigma}_n\bar V_n^{-1/2}x/\sigma_n^2)\Big)-\Big(n^{-1/2}\bar V_n^{-1/2}\tilde G_n(g_n) -n^{-1/2}\kappa_{2,n}^{-1/2}\mu(g_n)\Big)\phi(x)\Big] \\
		&=o(n^{-1/2}).
	\end{align*}
	
	Therefore, combining these cases yields the desired result in equation \eqref{converge1}. Next, we decompose the right-hand side (RHS) of equation \eqref{converge2} into the following equation and proceed  with the proof:
	\begin{align}
		\label{6.4}&\mathbb{E}\bigg[\bar V_n^{-3/2}\bar \kappa_{3,n}\Big[\Big(1-\frac{2(\tilde{\sigma}_n)^2}{\tilde\kappa_{2,k}\sigma^2_n}\bar V_n^{-1}\kappa_{2,n}x^2\Big)\phi\Big(\frac{\sqrt{2}\tilde{\sigma}_n}{\sqrt{\tilde\kappa_{2,k}}\sigma_n}\bar V_n^{-1/2}\kappa_{2,n}^{1/2}x\Big)\\\nonumber
		&\qquad -(1-x^2)\phi(x)\Big]\bigg]=O(n^{-1}),\\
		\label{6.5}&\mathbb{E}\Big(\bar V_n^{-3/2}\bar\kappa_{3,n}(1-x^2)\phi(x)-\kappa_{2,n}^{-3/2}\kappa_{3,n}(1-x^2)\phi(x)\Big)=O(n^{-1}).
	\end{align}
	
	The proofs for equations \eqref{6.4} and \eqref{6.5} are straightforword when $x\in[-\sqrt n,\sqrt n]^c$. Therefore, we will focus on the case where $x\in[-\sqrt n,\sqrt n]$. For equation \eqref{6.5}, based on results \eqref{6.2} and \eqref{6.3} of Lemma \ref{PO}, we derive:
	\begin{align*}
		(\bar V_n^{-3/2}&-\kappa_{2,n}^{-3/2})\kappa_{3,n}(1-x^2)\phi(x)\\
		&=(\bar V_n-\kappa_{2,n})\Big(\bar V_n+\kappa_{2,n}+\bar V_n^{-1/2}\kappa_{2,n}^{-1/2}\Big)\Big(\bar V_n^{-1/2}+\kappa_{2,n}^{-1/2}\Big)^{-1}=PO(n^{-1}),
	\end{align*}
	and
	\begin{align*}
		&\mathbb{E}\Big(\bar V_n^{-3/2}\bar\kappa_{3,n}(1-x^2)\phi(x)-\kappa_{2,n}^{-3/2}\kappa_{3,n}(1-x^2)\phi(x)\Big)\\
		&=\mathbb{E}\Big(\bar V_n^{-3/2}\bar\kappa_{3,n}(1-x^2)\phi(x)-\bar V_n^{-3/2}\kappa_{3,n}(1-x^2)\phi(x)\\
		&\qquad\qquad+\bar V_n^{-3/2}\kappa_{3,n}(1-x^2)\phi(x)-\kappa_{2,n}^{-3/2}\kappa_{3,n}(1-x^2)\phi(x)\Big)\\
		&=\mathbb{E}\Big(\bar V_n^{-3/2}(\bar\kappa_{3,n}-\kappa_{3,n})(1-x^2)\phi(x)+(\bar V_n^{-3/2}-\kappa_{2,n}^{-3/2})\kappa_{3,n}(1-x^2)\phi(x)\Big)\\
		&=O(n^{-1}).
	\end{align*}
	Furthermore, the same method is applied to prove equation \eqref{6.4}. Based on the results of equation \eqref{PO bound}, we obtain:
	\begin{align*}
		&\mathbb{E}\bigg\{\bar V_n^{-3/2}\bar \kappa_{3,n}\bigg[\bigg(1-\frac{2(\tilde{\sigma}_n)^2}{\tilde\kappa_{2}\sigma^2_n}\bar V_n^{-1}\kappa_{2,n}x^2\bigg)\phi\bigg(\frac{\sqrt{2}\tilde{\sigma}_n}{\sqrt{\tilde\kappa_{2}}\sigma_n}\bar V_n^{-1/2}\kappa_{2,n}^{1/2}x\bigg)-(1-x^2)\phi(x)\bigg]\bigg\}\\
		&=\mathbb{E}\bigg\{\bar V_n^{-3/2}\bar\kappa_{3,n}\bigg(1-\frac{2(\tilde{\sigma}_n)^2}{\tilde\kappa_{2}\sigma^2_n}\bar V_n^{-1}\kappa_{2,n}x^2\bigg)\bigg[\phi\bigg(\frac{\sqrt{2}\tilde{\sigma}_n}{\sqrt{\tilde\kappa_{2}}\sigma_n}\bar V_n^{-1/2}\kappa_{2,n}^{1/2}x\bigg)-\phi(x)\bigg]\\
		&\qquad\quad+\bar V_n^{-3/2}\bar\kappa_{3,n}\bigg[\bigg(1-\frac{2(\tilde{\sigma}_n)^2}{\tilde\kappa_{2}\sigma^2_n}\bar V_n^{-1}\kappa_{2,n}x^2\bigg)-(1-x^2)\bigg]\phi(x)\bigg\}\\
		&=\mathbb{E}\bigg\{-\bar V_n^{-3/2}\bar\kappa_{3,n}\bigg(\frac{2(\tilde{\sigma}_n)^2}{\tilde\kappa_{2}\sigma^2_n}\bar V_n^{-1}\kappa_{2,n}x^2-1\bigg)\bigg(\frac{\sqrt{2}\tilde{\sigma}_n}{\sqrt{\tilde\kappa_{2}}\sigma_n}\bar V_n^{-1/2}\kappa_{2,n}^{1/2}x-x\bigg)\phi'(x)\\
		&\qquad+o\bigg(\frac{\sqrt{2}\tilde{\sigma}_n}{\sqrt{\tilde\kappa_{2}}\sigma_n}\bar V_n^{-1/2}\kappa_{2,n}^{1/2}x-x\bigg)-\bar V_n^{-3/2}\bar\kappa_{3,n}x^2\bigg(\frac{\sqrt{2}\tilde{\sigma}_n}{\sqrt{\tilde\kappa_{2}}\sigma_n}\bar V_n^{-1/2}\kappa_{2,n}^{1/2}-1\bigg)\phi(x)\\
		&\qquad \times \bigg(\frac{\sqrt{2}\tilde{\sigma}_n}{\sqrt{\tilde\kappa_{2}}\sigma_n}\bar V_n^{-1/2}\kappa_{2,n}^{1/2}+1\bigg)\bigg\}\\
		&=\mathbb{E}\bigg\{-\bar V_n^{-3/2}\bar\kappa_{3,n}x\bigg(\frac{2(\tilde{\sigma}_n)^2}{\tilde\kappa_{2}\sigma^2_n}\bar V_n^{-1}\kappa_{2,n}x^2-1\bigg)\bigg(\bar V_n^{-1/2}\Big[\mathrm{F}_{\gamma_n}(g_n^2)\tilde{\sigma}_n-\bar V_n^{1/2}\Big]\bigg)\phi'(x)\\
		&\qquad+o\bigg(\frac{\sqrt{2}\tilde{\sigma}_n}{\sqrt{\tilde\kappa_{2}}\sigma_n}\bar V_n^{-1/2}\kappa_{2,n}^{1/2}x-x\bigg)-\bar V_n^{-3/2}\bar\kappa_{3,n}x^2\bigg(\bar V_n^{-1/2}\Big[\mathrm{F}_{\gamma_n}(g_n^2)\tilde{\sigma}_n-\bar V_n^{1/2}\Big]\bigg)\phi(x)\\
		&\qquad \times \bigg(\frac{\sqrt{2}\tilde{\sigma}_n}{\sqrt{\tilde\kappa_{2}}\sigma_n}\bar V_n^{-1/2}\kappa_{2,n}^{1/2}+1\bigg)\bigg\}\\
		&=O(n^{-1}).
	\end{align*}
	The last step holds due to equation \eqref{PO bound}. Therefore, combining these bounds, we derive the bound for equation \eqref{6.3}:
	\begin{align*}
		&\quad\mathbb{E}\Big(\bar V_n^{-3/2}\bar\kappa_{3,n}(1-y_n^2)\phi(y_n)-\kappa_{2,n}^{-3/2}\kappa_{3,n}(1-x^2)\phi(x)\Big)\\
		&=\mathbb{E}\bigg[\bar V_n^{-3/2}\bar\kappa_{3,n}\Big(1-\frac{2(\tilde{\sigma}_n)^2}{\tilde\kappa_{2}\sigma^2_n}\bar V_n^{-1}\kappa_{2,n}x^2\Big)\phi\Big(\frac{\sqrt{2}\tilde{\sigma}_n}{\sqrt{\tilde\kappa_{2}}\sigma_n}\bar V_n^{-1/2}\kappa_{2,n}^{1/2}x\Big)\\
		&\qquad\qquad  -\kappa_{2,n}^{-3/2}\kappa_{3,n}(1-x^2)\phi(x)\bigg]\\
		&=\mathbb{E}\bigg\{\bar V_n^{-3/2}\bar \kappa_{3,n}\Big[\Big(1-\frac{2(\tilde{\sigma}_n)^2}{\tilde\kappa_{2}\sigma^2_n}\bar V_n^{-1}\kappa_{2,n}x^2\Big)\phi\Big(\frac{\sqrt{2}\tilde{\sigma}_n}{\sqrt{\tilde\kappa_{2}}\sigma_n}\bar V_n^{-1/2}\kappa_{2,n}^{1/2}x\Big) -(1-x^2)\phi(x)\Big]\bigg\}\\
		&\qquad\qquad+\mathbb{E}\Big(\bar V_n^{-3/2}\bar\kappa_{3,n}(1-x^2)\phi(x)-\kappa_{2,n}^{-3/2}\kappa_{3,n}(1-x^2)\phi(x)\Big)\\
		&=O(n^{-1}).
	\end{align*}
	Hence, the result holds.
\end{proof}

Based on the results of Lemma \ref{PO} and Lemma \ref{bound}, we establish the following conclusions:
\begin{align*}
	&\mathbb{E}\Big[\Phi(y_n)-\Phi(x)+n^{-1/2}\kappa_{2,n}^{-1/2}\mu(g_n)\phi(x)\Big]=o(n^{-1/2}),\\
	&\mathbb{E}\Big[\bar V_n^{-3/2}\bar\kappa_{3,n}(1-y_n^2)\phi(y_n)-\kappa_{2,n}^{-3/2}\kappa_{3,n}(1-x^2)\phi(x)\Big]=O(n^{-1/2})
\end{align*}
uniformly in $x\in R$. Hence, it can be demonstrated that
\begin{align*}
	\mathbb{E}&\Big[\Phi (y_n)+\frac 16n^{-1/2}\bar V_n^{-3/2}\bar\kappa_{3,n}(1-y_n^2)\phi(y_n)\Big]\\
	&=\Phi(x)+n^{-1/2}\bigg(\frac 16\kappa_{2,n}^{-3/2}\kappa_{3,n}(1-x^2)-\kappa_{2,n}^{-1/2}\mu(g_n)\bigg)\phi(x)+o(n^{-1/2}).
\end{align*}

Therefore, based on the preceding results, we obtain
\begin{align}
	&\mathbb{E}\bigg[\mathbb{P}\bigg( \frac{-\Omega(\rho_n, \Z)+\rho_n n^{-1/2}G_n(g_n)}{[\rho_n\mathrm{F}_{\gamma_n}(g_n^2)-n^{-1/2}S_n(g_nh_n)]\tilde{\sigma}_n}\le x\bigg|\bm{\Lambda}\bigg)\bigg]\\\nonumber
	&\qquad=\Phi(x)+n^{-1/2}\bigg(\frac 16\kappa_{2,n}^{-3/2}\kappa_{3,n}(1-x^2)-\kappa_{2,n}^{-1/2}\mu(g_n)\bigg)\phi(x)+o(n^{-1/2}).
\end{align}
Hence, the conclusion holds.

\section{Theoretical supplements} \label{appendix}

\subsection{CLT of linear spectral statistics}\label{mu}
In this section, we present a special  case of the CLT for linear spectral statistics involving non-Gaussian elements. \cite{bai2004clt} established the CLT for linear spectral statistics under Gaussian-like moment conditions, while \cite{wang2013sphericity} and \cite{zheng2015substitution} derived new CLT results specifically for non-Gaussian elements.

It is evident that Theorem A.1 of \cite{wang2013sphericity} is applicable for $g_n=(\rho_n-\tilde \lambda_i )^{-1}$. Consequently, we obtain:
\begin{align*}
	I_2&=\frac{1}{2\pi i}\oint_{|\xi|=1}g(|1+\sqrt\gamma\xi|^2)\frac{1}{\xi^3}d\xi=\frac{1}{2\pi i}\oint_{|\xi|=1}\frac{1}{\xi^2(\rho_n\xi-\xi-\sqrt\gamma-\sqrt\gamma\xi^2-\gamma\xi)}d\xi\\
	&=\frac{1}{2\pi i}\oint_{|\xi|=1}\frac{1}{(\xi-\frac{\sqrt \gamma}{l-1})(\xi-\frac{l-1}{\sqrt \gamma})\xi^2}d\xi=\frac{\gamma^{3/2}}{[\gamma-(l-1)^2](l-1)}.
\end{align*}
Hence, the corresponding expression for $\mu(g_n)$ is given by
\begin{align*}
	\mu(g_n)&=\bigg(\frac{g_n(a(\gamma))+g_n(b(\gamma))}{4}-\frac{1}{2\pi}\int_{a(\gamma)}^{b(\gamma)}\frac{g_n(x)}{\sqrt{4\gamma-(x-1-\gamma)^2}}dx\bigg)+\beta_z I_2\\
	&=\frac{\gamma_n(l-1)}{[(l-1)^2-\gamma_n]^{2}}+\beta_zI_2=\frac{\gamma_n(l-1)}{[(l-1)^2-\gamma_n]^{2}}+\frac{\beta_z\gamma_n^{3/2}}{[\gamma_n-(l-1)^2](l-1)}\\
	&=\frac{\gamma_n(l-1)^2-\beta_z\gamma_n^{3/2}[(l-1)^2-\gamma_n]}{[(l-1)^2-\gamma_n]^{2}(l-1)}
\end{align*}
for sufficiently large $n$.
Therefore, we derive:
\[\mathbb{E}|\tilde G_n(g_n)-\mu(g_n)|=o(1).\]

\subsection{Lemmas}
We introduce several auxiliary lemmas that are instrumental in deriving limiting distributions and formulating Edgeworth expansion expressions.

\begin{lemma}(Corolllary 4 of \cite{yang2018edgeworth})\label{EE condition} For fixed $k=3$ and $l=0$, and for $(X_{ni})_{n\in N,i\in\{1,\dots,n\}}$ satisfying the follwing regularity conditions:
	\begin{itemize}
		\item[R1] For all sufficiently large $n\in N$,
		\[n^{-1}\bar V_n^{-3/2}\sum_{i=1}^n\mathbb{E}\Big(|X_{ni}|^3\Big)<\infty.\]
		\item[R2] For some $\tau\in (0, 1/2)$,
		\[n^{-1}\bar V_{n}^{-3/2}\sum_{i=1}^n\mathbb{E}\Big(I\{\bar V_n^{-1/2}|X_{ni}|>n^{\tau}\}|X_{ni}|^k\Big)=o(1).\]
		\item[R3] A generalized Cram\'{e}r's condition
		\[n^{1/2}\int_{|t|>\epsilon}|t|^{-1}\Big|\mathbb{E}\Big[\exp\Big(it\bar V_n^{-1/2}\sum_{i=1}^n X_{ni}\Big)\Big]\Big|dt=o(1),\]
		then
		\begin{align*}
			\mathbb{P}\Big(n^{-1/2}\bar V_n^{-1/2}\sum_{i=1}^nX_{ni}\le x\Big)=\Phi(x)+\frac 16n^{-1/2}\bar\chi_{3,n}(1-x^2)\phi(x)+o(n^{-1/2}),
		\end{align*}
		uniformly in $x\in R$.
	\end{itemize}
\end{lemma}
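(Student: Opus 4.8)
The statement is the classical first-order (i.e.\ $k=3$) Edgeworth expansion for a triangular array of row-wise independent, non-identically distributed summands in the non-lattice case $l=0$, so the plan is to run the characteristic-function argument of Cram\'er--Esseen together with the Esseen smoothing inequality. Put $W_n=n^{-1/2}\bar V_n^{-1/2}\sum_{i=1}^n X_{ni}$; the normalization $\bar V_n$ standardizes $W_n$ to mean $0$ and variance $1$, and a short computation identifies its third cumulant with $n^{-1/2}\bar\chi_{3,n}$. Let $G_n(x)=\Phi(x)+\tfrac16 n^{-1/2}\bar\chi_{3,n}(1-x^2)\phi(x)$ be the candidate expansion; differentiating the Gaussian density one checks $G_n'(x)=\phi(x)-\tfrac16 n^{-1/2}\bar\chi_{3,n}\phi'''(x)$, hence the Fourier--Stieltjes transform of $G_n$ equals $\hat G_n(t)=e^{-t^2/2}\bigl(1+\tfrac16 n^{-1/2}\bar\chi_{3,n}(it)^3\bigr)$, which is exactly the two-term Edgeworth polynomial attached to a mean-zero, unit-variance law with third cumulant $n^{-1/2}\bar\chi_{3,n}$. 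The target is $\sup_x|F_{W_n}(x)-G_n(x)|=o(n^{-1/2})$.

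The first main step is a cumulant (Taylor) expansion near the origin. On $|t|\le\delta\sqrt n$ for a small fixed $\delta>0$, expand $\log\mathbb{E}e^{itY_{ni}}$ to third order for each $i$, where $Y_{ni}=n^{-1/2}\bar V_n^{-1/2}X_{ni}$: the quadratic terms sum to $-t^2/2$ by the choice of $\bar V_n$, the cubic terms sum to $\tfrac16 n^{-1/2}\bar\chi_{3,n}(it)^3$, and the remainder, after summing over $i$, is bounded by a constant times $|t|^3 n^{-1}\bar V_n^{-3/2}\sum_i\mathbb{E}|X_{ni}|^3$ (finite and controlled by R1) on the ``small'' summands plus $|t|^4 n^{-1}\bar V_n^{-2}\sum_i\mathbb{E}\bigl(I\{\bar V_n^{-1/2}|X_{ni}|>n^\tau\}|X_{ni}|^4\bigr)=o(1)$ on the complementary part (controlled by R2). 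Exponentiating and Taylor-expanding the exponential yields $\hat F_{W_n}(t)=\hat G_n(t)+o(n^{-1/2})(|t|^3+|t|^4)e^{-t^2/4}$ uniformly on $|t|\le\delta\sqrt n$; the little-$o$, as opposed to a mere $O$, is exactly what the Lindeberg-type form of R2 buys.

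The second step is the smoothing inequality and the tail. Apply Esseen's inequality for a function of bounded variation with a cutoff $T=T_n\to\infty$:
\[
\sup_x\bigl|F_{W_n}(x)-G_n(x)\bigr|\le \frac1\pi\int_{-T}^{T}\Bigl|\frac{\hat F_{W_n}(t)-\hat G_n(t)}{t}\Bigr|\,dt+\frac{C}{T}\sup_x|G_n'(x)|.
\]
Split the integral at $\delta\sqrt n$. On $|t|\le\delta\sqrt n$ the integrand is $o(n^{-1/2})$ times the integrable function $(|t|^2+|t|^3)e^{-t^2/4}$ by Step~1, contributing $o(n^{-1/2})$. On $\delta\sqrt n<|t|\le T$ the $\hat G_n$ part is negligible by Gaussian decay, and for $\hat F_{W_n}$ the substitution $s=n^{-1/2}t$ turns $\int_{\delta\sqrt n<|t|\le T}|t|^{-1}|\hat F_{W_n}(t)|\,dt$ into $\int_{|s|>\delta}|s|^{-1}\bigl|\mathbb{E}\exp(is\bar V_n^{-1/2}\sum_i X_{ni})\bigr|\,ds$, which is $o(n^{-1/2})$ by hypothesis R3. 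Since $\sup_x|G_n'(x)|$ is bounded uniformly in $n$, choosing $T_n\to\infty$ faster than $n^{1/2}$ makes the boundary term $o(n^{-1/2})$, and the extra range is still absorbed by R3. No bound depends on $x$, so the supremum is controlled and the expansion follows.

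The delicate point is the last step: forcing the smoothing remainder and the characteristic-function tail down to a genuine $o(n^{-1/2})$ rather than $O(n^{-1/2})$, which is precisely why the hypotheses are stated in their present quantitative little-$o$ forms (R2 of Lindeberg type, R3 carrying the explicit $n^{1/2}$ weight). A clean shortcut, and in fact the route the paper relies on, is to invoke the general non-i.i.d.\ Edgeworth theorem of \cite{petrov2000classical} and merely verify that R1--R3 are its hypotheses specialized to $k=3$; since the statement is itself Corollary~4 of \cite{yang2018edgeworth}, this verification is all that is strictly needed.
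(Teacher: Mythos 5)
The paper does not prove this lemma at all: it is imported verbatim as Corollary~4 of \cite{yang2018edgeworth}, which in turn rests on the general Edgeworth theory for sums of independent, non-identically distributed variables in \cite{petrov2000classical}. Your proposal instead reconstructs the underlying Cram\'er--Esseen argument from scratch, and the reconstruction is essentially sound: the identification of $\hat G_n(t)=e^{-t^2/2}\bigl(1+\tfrac16 n^{-1/2}\bar\chi_{3,n}(it)^3\bigr)$ is correct (since $(1-x^2)\phi(x)=-\phi''(x)$), the Esseen smoothing step is standard, and your change of variables $s=n^{-1/2}t$ correctly shows that R3 --- note the characteristic function there is of $\bar V_n^{-1/2}\sum_i X_{ni}$ \emph{without} the $n^{-1/2}$ --- delivers exactly the $o(n^{-1/2})$ tail bound over $\delta\sqrt n<|t|\le T_n$ for any $T_n$. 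The one imprecision is in the remainder of the cumulant expansion: with $k=3$, R2 controls the truncated \emph{third} absolute moment $\mathbb{E}\bigl(I\{\bar V_n^{-1/2}|X_{ni}|>n^\tau\}|X_{ni}|^3\bigr)$, not a fourth moment; the standard Lindeberg-type splitting uses R2 on the large summands (third-order remainder) and R1 together with the truncation level $n^{\tau-1/2}$ on the small summands (fourth-order remainder bounded by $n^{\tau-1/2}$ times the third moment, with $\tau<1/2$ giving the $o(1)$ gain). With that allocation corrected, your argument matches the classical proof; your closing observation --- that for the purposes of this paper one need only cite the result and verify R1--R3 in the application --- is exactly what the authors do.
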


\begin{lemma}Assume $\omega_i$ is an independent and identically distributed random variable with finite 6-th order moments and let
	\[\tilde S_n(f)=n^{-1/2}\sum_{i=1}^nf(\tilde \lambda_i )\big[(\tilde \omega_i)^2-1\big].\]
	Then, it follows that $\tilde S_n(f)=O_p(1)$.
\end{lemma}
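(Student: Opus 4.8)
The plan is to reduce the statement to a tightness bound for a single centered quadratic form and then control that form by a second‑moment argument. First I would use that $\U_\omega$ is orthogonal and $\bm{\Lambda}_\omega=\operatorname{diag}(\tilde\lambda_1,\dots,\tilde\lambda_n)$ collects the eigenvalues of $n^{-1}\Z\Z'$, so that
\[
\tilde S_n(f)=n^{-1/2}\Big(\tilde{\Z}_1'\,f(n^{-1}\Z\Z')\,\tilde{\Z}_1-\mathrm{tr}\,f(n^{-1}\Z\Z')\Big),\qquad \tilde{\Z}_1=\Z\mathbf{v}_1,
\]
where, since $\mathbf{v}_1$ is a fixed unit vector and the rows of $\Z$ are i.i.d., the entries of $\tilde{\Z}_1$ are i.i.d.\ with mean $0$, variance $1$ and (by Assumption (a) together with Rosenthal's inequality) finite sixth moment. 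Hence it suffices to prove that $n^{-1/2}\big(\tilde{\Z}_1'\A\tilde{\Z}_1-\mathrm{tr}\,\A\big)=O_p(1)$ with $\A:=f(n^{-1}\Z\Z')=(a_{ij})$.

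Next I would pin down the size of the kernel on the relevant spectral event. Under Assumption (c) the centering $\rho_n=\psi(l)$, with $\psi(t)=t+\gamma t/(t-1)$ increasing for $t>1+\sqrt{\gamma}$, satisfies $\rho_n>(1+\sqrt{\gamma})^2$ and stays uniformly bounded away from the support of the companion Mar\v{c}enko--Pastur law, from $0$, and (in the multi‑spike reduction) from every limiting spike location $\psi(l_j)$, $j\neq k$. Combining this with the standard localization of the extreme eigenvalues of $n^{-1}\Z\Z'$ (see \cite{bai2008central,jiang2021generalized}), there is an event $\mathcal{E}_n$ with $\mathbb{P}(\mathcal{E}_n^c)\to 0$ (in fact $o(n^{-1/2})$) on which $g_n$, $h_n$ and all their products used in the proofs are bounded on the spectrum of $n^{-1}\Z\Z'$ by an absolute constant, so that $\|\A\|_{\mathrm{op}}\le C$; since deleting $\mathcal{E}_n^c$ only affects probabilities by $o(1)$, I may assume $\|\A\|_{\mathrm{op}}\le C$ throughout.

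If $\A$ were deterministic the claim would be immediate: the elementary fourth‑moment identity $\mathrm{Var}\big(\tilde{\Z}_1'\A\tilde{\Z}_1-\mathrm{tr}\,\A\big)=\kappa_4\sum_i a_{ii}^2+2\,\mathrm{tr}(\A^2)\le(|\kappa_4|+2)\,n\,\|\A\|_{\mathrm{op}}^2$ gives $\mathrm{Var}(\tilde S_n(f))=O(1)$, and Chebyshev's inequality concludes. \emph{The main obstacle is that in the non‑Gaussian case the kernel $\A=f(n^{-1}\Z\Z')$ is not independent of the vector $\tilde{\Z}_1=\Z\mathbf{v}_1$ appearing in the quadratic form}, so one cannot simply condition on the eigenvalues as in the Gaussian setting. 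I would remove this by the usual martingale route: write $\tilde S_n(f)-\mathbb{E}\tilde S_n(f)=\sum_{k=1}^n(\mathbb{E}_k-\mathbb{E}_{k-1})\tilde S_n(f)$ for the filtration $\mathcal{F}_k=\sigma(\mathbf{z}_1,\dots,\mathbf{z}_k)$, express each increment through the rank‑one perturbation identity linking $f(n^{-1}\Z\Z')$ to its leave‑the‑$k$‑th‑row‑out analogue, and bound the martingale differences by Burkholder's inequality, where the finite sixth‑moment assumption supplies exactly the moments needed for the resolvent estimates; the deterministic bias $\mathbb{E}\tilde S_n(f)$ is shown to be $O(1)$ by one more leave‑one‑out expansion. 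An alternative shortcut is to invoke directly the central limit theorem for such centered quadratic forms / linear spectral statistics under a finite sixth moment (cf.\ \cite{bai2004clt,zheng2015substitution,jiang2021partial}), which yields $\tilde S_n(f)\Rightarrow N(\mu,\sigma^2)$ with finite parameters and hence $\tilde S_n(f)=O_p(1)$ a fortiori.
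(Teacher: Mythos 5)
Your argument is correct, but it takes a substantially different and heavier route than the paper's. The paper reads the lemma's hypothesis literally — the $\tilde\omega_i$ are i.i.d.\ with enough moments — and disposes of the claim in one line by a Rosenthal-type moment bound plus Markov's inequality: $\mathbb{P}(|\tilde S_n(f)|>M)\le \mathbb{E}|\tilde S_n(f)|^3/M^3\le C/M^3$, which is the standard tightness argument (indeed, under that hypothesis even a second-moment/Chebyshev bound suffices, so your machinery is overkill for the statement as written). What you do instead is refuse the i.i.d.\ hypothesis, rewrite $\tilde S_n(f)$ as the centered quadratic form $n^{-1/2}(\tilde\Z_1'f(n^{-1}\Z\Z')\tilde\Z_1-\mathrm{tr}\,f(n^{-1}\Z\Z'))$, and confront the real difficulty that the kernel $f(n^{-1}\Z\Z')$ is not independent of $\tilde\Z_1$ in the non-Gaussian case. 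That concern is legitimate — it is exactly the dependence issue the paper defers to its PG4MT step (Theorem \ref{G4MT}) rather than to this lemma — and your two exits (a martingale/Burkholder decomposition with leave-one-out resolvent identities, or directly citing the CLT for such quadratic forms in \cite{bai2004clt}, \cite{zheng2015substitution}, \cite{jiang2021partial}, since convergence in distribution implies $O_p(1)$) are both standard and sound; the variance identity and the operator-norm localization you use are correct. The trade-off: your route proves the bound for the statistic actually arising in the application, at the cost of leaving the Burkholder step as a sketch, whereas the paper's proof is complete but only under an independence hypothesis that does not hold verbatim for $\tilde{\bm\omega}=\U_\omega'\Z\V_1$. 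If you go the CLT-citation route, the sketched martingale step is not needed and your argument closes.
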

\begin{proof}Using Markov's inequality, we derive the following result:
	\begin{align*}
		\mathbb{P}(|\tilde S_n(g_n)|>M)\le\frac{\mathbb{E}|\tilde S_n(g_n)|^3}{M^3}\le\frac{Cn^{-1/2}\mathbb{E}\big|(\tilde \omega_i)^2-1\big|^3}{M^3}\to 0,
	\end{align*}
	where $C$ and $M$ are constants. Hence, the result holds.
\end{proof}

\begin{lemma}\label{delta}
	Suppose that $U_n$ admits the first-order Edgeworth expansion
	\[\mathbb{P}(U_n\le x)=\Phi(x)+n^{-1/2}p_1(x)\phi(x)+o(n^{-1/2})\]
	uniformly in $x\in R$. Assume that the random variables $J_n$ satisfy 
	\[\mathbb{P}(|J_n|\ge n^{-1/2}\epsilon_n)=o(n^{-1/2}).\]
	Then, we have
	\[\mathbb{P}(U_n+J_n\le x)-\mathbb{P}(U_n\le x)=o(n^{-1/2}).\]
\end{lemma}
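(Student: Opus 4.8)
The plan is to prove this by a straightforward sandwiching argument of the kind familiar from the robustness of Edgeworth expansions under asymptotically negligible perturbations. Throughout I would treat $\epsilon_n$ as a vanishing sequence, consistent with its usage elsewhere in the paper; this is not a loss of generality but a genuine requirement, since for a perturbation of fixed size the conclusion is false.

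First I would record the elementary event inclusion: on $\{|J_n| < n^{-1/2}\epsilon_n\}$ one has $\{U_n \le x - n^{-1/2}\epsilon_n\} \subseteq \{U_n + J_n \le x\} \subseteq \{U_n \le x + n^{-1/2}\epsilon_n\}$, which after intersecting with this event and its complement and a union bound gives, uniformly in $x$,
\begin{align*}
\mathbb{P}(U_n \le x - n^{-1/2}\epsilon_n) - \mathbb{P}(|J_n| \ge n^{-1/2}\epsilon_n) &\le \mathbb{P}(U_n + J_n \le x)\\
&\le \mathbb{P}(U_n \le x + n^{-1/2}\epsilon_n) + \mathbb{P}(|J_n| \ge n^{-1/2}\epsilon_n).
\end{align*}
The second term on each side is $o(n^{-1/2})$ directly by hypothesis, so it remains to compare $\mathbb{P}(U_n \le x \pm n^{-1/2}\epsilon_n)$ with $\mathbb{P}(U_n \le x)$.

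For that, I would write $G_n(t) = \Phi(t) + n^{-1/2}p_1(t)\phi(t)$ and use the assumed expansion $\mathbb{P}(U_n \le t) = G_n(t) + o(n^{-1/2})$ uniformly in $t$. Since $p_1$ is a polynomial, $p_1\phi$ is bounded with bounded derivative on $\mathbb{R}$, and $\phi$ is itself bounded, so $G_n$ is Lipschitz with a constant of the form $C_1 + C_2 n^{-1/2}$; a first-order Taylor bound then gives $G_n(x \pm n^{-1/2}\epsilon_n) - G_n(x) = O(n^{-1/2}\epsilon_n) = o(n^{-1/2})$ uniformly in $x$ because $\epsilon_n \to 0$. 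Chaining this with the expansion yields $\mathbb{P}(U_n \le x \pm n^{-1/2}\epsilon_n) = \mathbb{P}(U_n \le x) + o(n^{-1/2})$ uniformly in $x$, and substituting back into the sandwich gives the claim (indeed the stronger uniform statement $\sup_x |\mathbb{P}(U_n + J_n \le x) - \mathbb{P}(U_n \le x)| = o(n^{-1/2})$, which is the form in which the lemma is applied in Sections~\ref{CLT} and~\ref{strategy}).

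The argument involves no serious obstacle; the only point demanding a little care is the uniformity in $x$, which is delivered by the uniform Lipschitz property of the polynomial-times-Gaussian-density correction $p_1\phi$, and by the fact that the shift $n^{-1/2}\epsilon_n$ is of smaller order than $n^{-1/2}$ precisely because $\epsilon_n$ vanishes — without that, the leading term $n^{-1/2}\epsilon_n\phi(x)$ coming from the shift would only be $O(n^{-1/2})$ and the conclusion would degrade to that rate.
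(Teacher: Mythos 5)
Your proposal is correct and follows essentially the same route as the paper: a sandwich/union-bound reduction to controlling $\mathbb{P}(x-n^{-1/2}\epsilon_n\le U_n\le x+n^{-1/2}\epsilon_n)$, which is then handled via the uniform Edgeworth expansion together with the Lipschitz continuity of $\Phi(t)+n^{-1/2}p_1(t)\phi(t)$ and the fact that $\epsilon_n\to 0$. Your explicit remark that the conclusion genuinely requires $\epsilon_n$ to vanish matches the paper's (implicit) convention that $\epsilon_n$ denotes a vanishing sequence.
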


\begin{proof}
	Note that
	\begin{align*}
		|\mathbb{P}(U_n+J_n\le x)&-\mathbb{P}(U_n\le x)|\le \mathbb{P}(|J_n|>n^{-1/2}\epsilon_n)+\mathbb{P}(|U_n-x|\le n^{-1/2}\epsilon_n).
	\end{align*}
	Therefore, it suffices to show that
	\[\mathbb{P}(|U_n-x|\le n^{-1/2}\epsilon_n)=o(n^{-1/2}).\]  
	Notice
	\begin{align}
		\label{P1}|\Phi(x&+n^{-1/2}\epsilon_n)-\Phi(x)|\le n^{-1/2}\epsilon_n |\phi(x)|=o(n^{-1/2}),\\
		\label{P2}|p_1(x&+n^{-1/2}\epsilon_n)\phi(x+n^{-1/2}\epsilon_n)-p_1(x)\phi(x)|\\\nonumber
		& \qquad\qquad \qquad\qquad~\le n^{-1/2}\epsilon_n p_1(x+n^{-1/2}\epsilon_n)\phi(x)=o(n^{-1/2}).
	\end{align}
	By the uniform convergence of $U_n$ and equations \eqref{P1} and \eqref{P2}, we derive:
	\begin{align*}
		\mathbb{P}(|U_n-x|\le n^{-1/2}\epsilon_n)&=\mathbb{P}(x-n^{-1/2}\epsilon_n\le U_n\le x+n^{-1/2}\epsilon_n)\\
		&\le \mathbb{P}(x-n^{-1/2}\epsilon_n\le U_n\le x)+\mathbb{P}(x < U_n\le x+n^{-1/2}\epsilon_n)\\
		&=o(n^{-1/2}).
	\end{align*}
	Hence, the result holds.
\end{proof}

\subsection{Truncation}\label{trunca}
This section presents the truncation and renormalization procedures under the condition of sixth-order moment existence. Analogous to \cite{jiang2021generalized},  define the $(i,j)$-th entry of $\tilde \Z_{ij}$  as $(\hat Z_{ij}-\mathbb{E}\hat Z_{ij})/\sigma_n$, where $ \hat Z_{ij}=Z_{ij}\mathbb{I}\{|Z_{ij}|\leq \eta_n\sqrt{n}\}$, $\eta_n\to0$ slowly, $\sigma_n^2=\mathbb{E}| \hat Z_{ij}-\mathbb{E} \hat Z_{ij}|^2$ denotes the variance of the centered entries.
Similar to the notation
\[\mathbf{S}=\frac 1n \X'\X=\frac 1n\bm{\Sigma}^{1/2}\Z'\Z\bm{\Sigma}^{1/2},\]
we define 
\[
\hat{\mathbf{S}}=\frac 1n \hat{\X}'\hat{\X}=\frac 1n\bm{\Sigma}^{1/2}\hat{\Z}'\hat{\Z}\bm{\Sigma}^{1/2},\]
and
\[
\tilde{\mathbf{S}}=\frac 1n \tilde{\X}'\tilde{\X}=\frac 1n\bm{\Sigma}^{1/2}\tilde{\Z}'\tilde{\Z}\bm{\Sigma}^{1/2},\]
By the Assumption $(a)$, we can obtain
\begin{align*}
	n^{3}\mathbb{P}(|Z_{11}|>\eta_n\sqrt n)\leq \eta_n^{-6}\mathbb{E}(|Z_{11}|^6\mathbb{I}(|Z_{ij}|>\eta_n\sqrt n))=o(1).
\end{align*}
Therefore, using Boole's inequality, we derive:
\begin{align}\label{SS'}
	\mathbb{P}(\mathbf{S}\neq\hat{\mathbf{S} })\le\sum_{i,j}\mathbb{P}(|x_{ij}|\ge \eta_n\sqrt n)=np\,\mathbb{P}(|x_{11}|\ge\eta_n\sqrt n)=o(n^{-1}).
\end{align}
From Supplement B in \cite{jiang2021generalized}, we know that for $i\leq r$,
$$n^{-1/2}|\lambda_i^{\hat{\mathbf{S}}}-\lambda_i^{\tilde{\mathbf{S}}}|=o_p(n^{-1/2}),$$
where $\lambda_i^{\hat{\mathbf{S}}}$ and $\lambda_i^{\tilde{\mathbf{S}}}$ are the i-th largest eigenvalue of  $\hat{\mathbf{S}}$ and $\tilde{\mathbf{S}}$, respectively.
Then we have that $n^{-1/2}|\hat l_i-\lambda_i^{\tilde{\mathbf{S}}}|=o_p(n^{-1/2})$. Moreover, it is easy to verify that $\mathbb{E}\tilde Z_{ij}^6=\mathbb{E} Z_{ij}^6+o(1)$. Therefore, in the following proofs, we can safely assume that $|Z_{ij}|\leq \eta_n\sqrt{n}$.

\subsection{Some useful equations}
In this section, we extend the calculations from the proof of Theorem \ref{G4MT}. Through straightforward computations, we derive the following result:
\begin{align*}
	&\Omega_{s_1}(\Z_1,\Y_{k+1})-\Omega_{s_1}(\Z_1,\Y_{k0})\\
	&=\frac{c_n }{\theta_k\sqrt n}\bigg(1-\V_1'\z_k\z_k'\V_1+\frac {1}{n^2}\z_k'\Y_{0k}'\Big(\rho_n\I_{n-1}-\frac 1n\Y_{0k}\Y_{0k}'\Big)^{-2}\Y_{0k}\z_k'\\
	&\quad-\frac 1n\V_1'\Z_{0k}'\Big(\rho_n\I_{n-1}-\frac 1n\Y_{0k}\Y_{0k}'\Big)^{-1}\Y_{0k}\z_k\z_k'\V_1\\
	&\quad-\frac 1n\V'_1\z_k\z_k'\Y_{0k}'\Big(\rho_n\I_{n-1}-\frac 1n\Y_{0k}\Y_{0k}'\Big)^{-1}\Z_{0k}\V_1\\
	&\quad-\frac{1}{n^2}\V_1'\Z_{0k}'\Big(\rho_n\I_{n-1}-\frac 1n\Y_{0k}\Y_{0k}'\Big)^{-1} \Y_{0k}\z_k\z_k'\Y_{0k}'\Big(\rho_n\I_{n-1}-\frac 1n\Y_{0k}\Y_{0k}'\Big)^{-1}\Z_{0k}\V_1\bigg)\\
	&=\sum_{i=1}^4\sum_{j=0}^2\alpha_{kij}+\frac{c_n }{\theta_k\sqrt n}(1-\V_1'\z_k\z_k'\V_1),
\end{align*}
where
\begin{align*}
	\alpha_{k10}&=-\frac{c_n}{n^{5/2}\rho_n\bar\eta_k}\z_k'\p_k\z_k,~ \alpha_{k11}=-\frac{c_n\epsilon_k}{n^{5/2}\rho_n\bar\eta_k^2}\z_k'\p_k\z_k,~\alpha_{k12}=-\frac{c_n\epsilon_k^2}{n^{5/2}\rho_n\eta_k\bar\eta_k^2}\z_k'\p_k\z_k,\\
	\alpha_{k20}&=-\frac{c_n}{n^{3/2}\rho_n\bar\eta_k}\s_k'\z_k\z_k'\V_1,\quad\alpha_{k21}=-\frac{c_n\epsilon_k}{n^{3/2}\rho_n\bar\eta_k^2}\s_k'\z_k\z_k'\V_1,\\
	\alpha_{k22}&=-\frac{c_n\epsilon_k^2}{n^{3/2}\eta_k\rho_n\bar\eta_k^2}\s_k'\z_k\z_k'\V_1,
	~\alpha_{k30}=-\frac{c_n}{n^{3/2}\rho_n\bar\eta_k}\V_1'\z_k\z_k'\s_k,\\
	\alpha_{k31}&=-\frac{c_n\epsilon_k}{n^{3/2}\rho_n\bar\eta_k^2}\V_1'\z_k\z_k'\s_k,~ \quad\alpha_{k32}=-\frac{c_n\epsilon_k^2}{n^{3/2}\rho_n\eta_k\bar\eta_k^2}\V_1'\z_k\z_k'\s_k,\\
	\alpha_{k40}&=\frac{c_n}{n^{5/2}\rho_n\bar\eta_k}\z_k'\Y_{0k}'\Big(\rho_n\I_{n-1}-\frac 1n\Y_{0k}\Y_{0k}'\Big)^{-2}\Y_{0k}\z_k,\\
	\alpha_{k41}&=\frac{c_n\epsilon_k}{n^{5/2}\rho_n\bar\eta_k^2}\z_k'\Y_{0k}'\Big(\rho_n\I_{n-1}-\frac 1n\Y_{0k}\Y_{0k}'\Big)^{-2}\Y_{0k}\z_k,\\
	\alpha_{k42}&=\frac{c_n\epsilon_k^2}{n^{5/2}\rho_n\eta_k\bar\eta_k^2}\z_k'\Y_{0k}'\Big(\rho_n\I_{n-1}-\frac 1n\Y_{0k}\Y_{0k}'\Big)^{-2}\Y_{0k}\z_k.
\end{align*}
Using the same method, we can also obtain the following result:
\begin{align*}
	&\Omega_{s_1}(\Z_1,\Y_{k})-\Omega_{s_1}(\Z_1,\Y_{k0})=\sum_{i=1}^4\sum_{j=0}^2\alpha_{kijy}+\frac{c_n }{\theta_k\sqrt n}(1-\V_1'\y_k\y_k'\V_1),
\end{align*}
where
\begin{align*}
	\alpha_{k10y}&=-\frac{c_n}{n^{5/2}\rho_n\bar\eta_k}\y_k'\p_k\y_k,\alpha_{k11y}=-\frac{c_n\epsilon_k}{n^{5/2}\rho_n\bar\eta_k^2}\y_k'\p_k\y_k,\alpha_{k12y}=-\frac{c_n\epsilon_k^2}{n^{5/2}\rho_n\eta_k\bar\eta_k^2}\y_k'\p_k\y_k,\\
	\alpha_{k20y}&=-\frac{c_n}{n^{5/2}\rho_n\bar\eta_k}\s_k'\y_k\y_k'\V_1,\quad\alpha_{k21y}=-\frac{c_n\epsilon_k}{n^{5/2}\rho_n\bar\eta_k^2}\s_k'\y_k\y_k'\V_1,\\
	\alpha_{k22y}&=-\frac{c_n\epsilon_k^2}{n^{5/2}\eta_k\rho_n\bar\eta_k^2}\s_k'\y_k\y_k'\V_1,
	~\alpha_{k30y}=-\frac{c_n}{n^{3/2}\rho_n\bar\eta_k}\V_1'\y_k\y_k'\s_k,\\
	\alpha_{k31y}&=-\frac{c_n\epsilon_k}{n^{3/2}\rho_n\bar\eta_k^2}\V_1'\y_k\y_k'\s_k,~ \quad\alpha_{k32y}=-\frac{c_n\epsilon_k^2}{n^{3/2}\rho_n\eta_k\bar\eta_k^2}\V_1'\y_k\y_k'\s_k,\\
	\alpha_{k40y}&=\frac{c_n}{n^{5/2}\rho_n\bar\eta_k}\y_k'\Y_{0k}'\Big(\rho_n\I_{n-1}-\frac 1n\Y_{0k}\Y_{0k}'\Big)^{-2}\Y_{0k}\y_k,\\
	\alpha_{k41y}&=\frac{c_n\epsilon_k}{n^{5/2}\rho_n\bar\eta_k^2}\y_k'\Y_{0k}'\Big(\rho_n\I_{n-1}-\frac 1n\Y_{0k}\Y_{0k}'\Big)^{-2}\Y_{0k}\y_k,\\
	\alpha_{k42y}&=\frac{c_n\epsilon_k^2}{n^{5/2}\rho_n\eta_k\bar\eta_k^2}\y_k'\Y_{0k}'\Big(\rho_n\I_{n-1}-\frac 1n\Y_{0k}\Y_{0k}'\Big)^{-2}\Y_{0k}\y_k.
\end{align*}
Using the same method, decompose the second term into the equation below. Through a  series of straightforward calculations, we can obtain the following result:
\begin{align*}
	&\qquad\Omega_{s_{21}}(\Z_1,\Y_{k+1})-\Omega_{s_{21}}(\tilde{\Z}_{10},\Y_{k0})\\
	&=-\frac{\tau_n}{\theta_k^2n^2}\bigg(\z_k'\z_k+\frac {1}{n^2}\z_k'\z_k\z_k'\Y_{0k}'\Big(\rho_n\I_{n-1}-\frac {1}{n^2}\Y_{0k}\Y_{0k}'\Big)^{-2}\Y_{0k}\z_k\\
	&\quad+\frac 1n\z_k'\Y_{0k}'\Big(\rho_n\I_{n-1}-\frac 1n\Y_{0k}\Y_{0k}'\Big)^{-1}\Y_{0k}\z_k\\
	&\quad+\frac {\theta_k}{ n}\z_k'\Y_{0k}'\Big(\rho_n\I_{n-1}-\frac {1}{n}\Y_{0k}\Y_{0k}'\Big)^{-2}\Y_{0k}\z_k\\
	&\quad+\frac{1}{n^3}\z_k'\Y_{0k}'\Big(\rho_n\I_{n-1}-\frac 1n\Y_{0k}\Y_{0k}'\Big)^{-1}\Y_{0k}\z_k\z_k'\Y_{0k}'\Big(\rho_n\I_{n-1}-\frac 1n\Y_{0k}\Y_{0k}'\Big)^{-2}\Y_{0k}\z_k\\
	&\quad+\frac{1}{n}\z_k'\Y_{0k}'\Big(\rho_n\I_{n-1}-\frac 1n\Y_{0k}\Y_{0k}'\Big)^{-1}\Y_{0k}\z_k\\
	&\quad+\frac{\theta_k}{ n}\z_k'\Y_{0k}' \Big(\rho_n\I_{n-1}-\frac 1n\Y_{0k}\Y_{0k}'\Big)^{-2}\Y_{0k}\z_k\\
	&\quad +\frac{1}{n^3}\z_k'\Y_{0k}'\Big(\rho_n\I_{n-1}-\frac 1n\Y_{0k}\Y_{0k}'\Big)^{-2}\Y_{0k}\z_k\z_k'\Y_{0k}'\Big(\rho_n\I_{n-1}-\frac 1n\Y_{0k}\Y_{0k}'\Big)^{-1}\Y_{0k}\z_k\\
	&\quad +\frac{1}{n^2}\z_k'\Y_{0k}'\Big(\rho_n\I_{n-1} -\frac 1n\Y_{0k}\Y_{0k}'\Big)^{-1}\Y_{0k}\Y_{0k}'\Big(\rho_n\I_{n-1}-\frac 1n\Y_{0k}\Y_{0k}'\Big)^{-1}\Y_{0k}\z_k\\
	&\quad +\frac{1}{n^4}\z_k'\Y_{0k}'\Big(\rho_n\I_{n-1} -\frac 1n\Y_{0k}\Y_{0k}'\Big)^{-2}\Y_{0k}\z_k\z_k'\Y_{0k}'\Big(\rho_n\I_{n-1}-\frac 1n\Y_{0k}'\Y_{0k}\Big)^{-1}\\
	&\quad\times\Y_{0k}\Y_{0k}'\Big(\rho_n\I_{n-1}-\frac 1n\Y_{0k}\Y_{0k}'\Big)^{-1}\Y_{0k}\z_k\\
	&\quad +\frac{\theta_k}{n^2}\z_k'\Y_{0k}'\Big(\rho_n\I_{n-1}-\frac 1n\Y_{0k}\Y_{0k}'\Big)^{-1}\Y_{0k}\Y_{0k}'\Big(\rho_n\I_{n-1}-\frac 1n\Y_{0k}\Y_{0k}'\Big)^{-2}\Y_{0k}\z_k\\
	&\quad +\frac{\theta_k}{n^2}\z_k'\Y_{0k}'\Big(\rho_n\I_{n-1}-\frac 1n\Y_{0k}\Y_{0k}'\Big)^{-2}\Y_{0k}\Y_{0k}'\Big(\rho_n\I_{n-1}-\frac 1n\Y_{0k}\Y_{0k}'\Big)^{-1}\Y_{0k}\z_k\bigg)\\
	&=\sum_{i=1}^{12}\sum_{j=0}^2\beta_{kij},
\end{align*}
where
\begin{align*}
	\beta_{k10}&=\frac{\tau_n}{\rho_n^2\eta_n\bar\eta_n n^4}\z_k'\z_k \z_k'\m_k\m_k'\z_k,~\beta_{k11}=\frac{\tau_n\epsilon_k}{\rho_n^2\eta_n\bar\eta_n^2 n^4}\z_k'\z_k \z_k'\m_k\m_k'\z_k,\\
	\beta_{k12}&=\frac{\tau_n\epsilon_k^2}{\rho_n^2\eta_n^2\bar\eta_n^2 n^4}\z_k'\z_k \z_k'\m_k\m_k'\z_k,~\beta_{k20}=\frac {\tau_n}{\rho_n^2\eta_n\bar\eta_n n^3}\z_k'\q_{k}\z_k,\\
	\beta_{k21}&=\frac {\tau_n\epsilon_k}{\rho_n^2\eta_n\bar\eta_n^2 n^3}\z_k'\q_{k}\z_k,
	~\beta_{k22}=\frac {\tau_n\epsilon_k^2}{\rho_n^2\eta_n^2\bar\eta_n^2 n^3}\z_k'\q_{k}\z_k,~\beta_{k30}=\frac{\tau_n\theta_k}{\rho_n^2\eta_n\bar\eta_n n^3}\z_k'\p_k\z_k,\\
	\beta_{k31}&=\frac{\tau_n\theta_k\epsilon_k}{\rho_n^2\eta_n\bar\eta_n^2 n^3}\z_k'\p_k\z_k,~\beta_{k32}=\frac{\tau_n\theta_k\epsilon_k^2}{\rho_n^2\eta_n^2\bar\eta_n^2 n^3}\z_k'\p_k\z_k,\\
	\beta_{k40}&=\frac{\tau_n}{\rho_n^2\eta_n\bar\eta_n n^5}\z_k'\q_{k}\z_k\z_k'\p_{k}\z_k,~\beta_{k41}=\frac{\tau_n\epsilon_k}{\rho_n^2\eta_n\bar\eta_n^2 n^5}\z_k'\q_{k}\z_k\z_k'\p_{k}\z_k,\\
	\beta_{k42}&=\frac{\tau_n\epsilon_k^2}{\rho_n^2\eta_n^2\bar\eta_n^2 n^5}\z_k'\q_{k}\z_k\z_k'\p_{k}\z_k,~\beta_{k50}=\frac{\tau_n}{\rho_n^2\eta_n\bar\eta_n n^3}\z_k\q_{k}\z_k',\\
	\beta_{k51}&=\frac{\tau_n\epsilon_k}{\rho_n^2\eta_n\bar\eta_n^2 n^3}\z_k\q_{k}\z_k',~\beta_{k52}=\frac{\tau_n\epsilon_k^2}{\rho_n^2\eta_n^2\bar\eta_n^2 n^3}\z_k\q_{k}\z_k',\\
	\beta_{k60}&=\frac{\tau_n\theta_k}{\rho_n^2\eta_n\bar\eta_n n^3}\z_k\p_k\z_k',~\beta_{k61}=\frac{\tau_n\theta_k\epsilon_k}{\rho_n^2\eta_n\bar\eta_n^2 n^3}\z_k\p_k\z_k',\\
	\beta_{k62}&=\frac{\tau_n\theta_k\epsilon_k^2}{\rho_n^2\eta_n^2\bar\eta_n^2 n^3}\z_k\p_k\z_k',~\beta_{k70}=\frac{\tau_n}{\rho_n^2\eta_n\bar\eta_n n^5}\z_k'\p_k\z_k\z_k'\q_k\z_k,\\
	\beta_{k71}&=\frac{\tau_n\epsilon_k}{\rho_n^2\eta_n\bar\eta_n^2 n^5}\z_k'\p_k\z_k\z_k'\q_k\z_k,~\beta_{k72}=\frac{\tau_n\epsilon_k^2}{\rho_n^2\eta_n^2\bar\eta_n^2 n^5}\z_k'\p_k\z_k\z_k'\q_k\z_k,\\
	\beta_{k80}&=\frac{\tau_n}{\rho_n^2\eta_n\bar\eta_n n^4}\z_k'\q_k\q_k\z_k,~\beta_{k81}=\frac{\tau_n\epsilon_k}{\rho_n^2\eta_n\bar\eta_n^2 n^4}\z_k'\q_k\q_k\z_k,\\
	\beta_{k82}&=\frac{\tau_n\epsilon_k^2}{\rho_n^2\eta_n^2\bar\eta_n^2 n^4}\z_k'\q_k\q_k\z_k,~\beta_{k90}=\frac{\tau_n}{\rho_n^2\eta_n\bar\eta_n n^6}\z_k'\p_k\z_k\z_k'\q_k\q_k\z_k,\\
	\beta_{k91}&=\frac{\tau_n\epsilon_k}{\rho_n^2\eta_n\bar\eta_n^2 n^6}\z_k'\p_k\z_k\z_k'\q_k\q_k\z_k,\\
	\beta_{k92}&=\frac{\tau_n\epsilon_k^2}{\rho_n^2\eta_n^2\bar\eta_n^2 n^6}\z_k'\p_k\z_k\z_k'\q_k\q_k\z_k,\\
	\beta_{k100}&=\frac{\tau_n}{\rho_n^2\eta_n\bar\eta_n n^4}\z_k'\q_k\p_k\z_k,~\beta_{k101}=\frac{\tau_n\epsilon_k}{\rho_n^2\eta_n\bar\eta_n^2 n^4}\z_k'\q_k\p_k\z_k,\\
	\beta_{k102}&=\frac{\tau_n\epsilon_k^2}{\rho_n^2\eta_n^2\bar\eta_n^2 n^4}\z_k'\q_k\p_k\z_k,~\beta_{k110}=\frac{\tau_n\theta_k}{\rho_n^2\eta_n\bar\eta_n n^4}\z_k'\p_k\q_k\z_k,\\
	\beta_{k111}&=\frac{\tau_n\theta_k\epsilon_k}{\rho_n^2\eta_n\bar\eta_n^2 n^4}\z_k'\p_k\q_k\z_k,~\beta_{k112}=\frac{\tau_n\theta_k\epsilon_k^2}{\rho_n^2\eta_n^2\bar\eta_n^2 n^4}\z_k'\p_k\q_k\z_k,\\
	\beta_{k120}&=\frac {\tau_n}{\rho_n^2\eta_n\bar\eta_n n^2}\z_k'\z_k,
	~\beta_{k121}=\frac {\tau_n\epsilon_k}{\rho_n^2\eta_n\bar\eta_n^2 n^2}\z_k'\z_k,~\beta_{k122}=\frac{\tau_n\epsilon^2_k}{\rho_n^2\eta_n^2\bar\eta_n^2 n^2}\z_k'\z_k.
\end{align*}
Using the same method, we can also obtain the following result:
\[\Omega_{s_{21}}(\Z_1,\Y_{k})-\Omega_{s_{21}}(\tilde{\Z}_{10},\Y_{k0})=\sum_{i=1}^{12}\sum_{j=0}^2\beta_{kijy},\]
where 
\begin{align*}
	\beta_{k10y}&=\frac{\tau_n}{\rho_n^2\eta_n\bar\eta_n n^4}\y_k'\y_k \y_k'\m_k\m_k'\y_k,~\beta_{k11y}=\frac{\tau_n\epsilon_k}{\rho_n^2\eta_n\bar\eta_n^2 n^4}\y_k'\y_k \y_k'\m_k\m_k'\y_k,\\
	\beta_{k12y}&=\frac{\tau_n\epsilon_k^2}{\rho_n^2\eta_n^2\bar\eta_n^2 n^4}\y_k'\y_k \y_k'\m_k\m_k'\y_k,~\beta_{k20y}=\frac {\tau_n}{\rho_n^2\eta_n\bar\eta_n n^3}\y_k'\q_{k}\y_k,\\
	\beta_{k21y}&=\frac {\tau_n\epsilon_k}{\rho_n^2\eta_n\bar\eta_n^2 n^3}\y_k'\q_{k}\y_k,
	~\beta_{k22y}=\frac {\tau_n\epsilon_k^2}{\rho_n^2\eta_n^2\bar\eta_n^2 n^3}\y_k'\q_{k}\y_k,~\beta_{k30y}=\frac{\tau_n\theta_k}{\rho_n^2\eta_n\bar\eta_n n^3}\y_k'\p_k\y_k,\\
	\beta_{k31y}&=\frac{\tau_n\theta_k\epsilon_k}{\rho_n^2\eta_n\bar\eta_n^2 n^3}\y_k'\p_k\y_k,~\beta_{k32y}=\frac{\tau_n\theta_k\epsilon_k^2}{\rho_n^2\eta_n^2\bar\eta_n^2 n^3}\y_k'\p_k\y_k,\\
	\beta_{k40y}&=\frac{\tau_n}{\rho_n^2\eta_n\bar\eta_n n^5}\y_k'\q_{k}\y_k\y_k'\p_{k}\y_k,~\beta_{k41y}=\frac{\tau_n\epsilon_k}{\rho_n^2\eta_n\bar\eta_n^2 n^5}\y_k'\q_{k}\y_k\y_k'\p_{k}\y_k,\\
	\beta_{k42y}&=\frac{\tau_n\epsilon_k^2}{\rho_n^2\eta_n^2\bar\eta_n^2 n^5}\y_k'\q_{k}\y_k\y_k'\p_{k}\y_k,~\beta_{k50y}=\frac{\tau_n}{\rho_n^2\eta_n\bar\eta_n n^3}\y_k\q_{k}\y_k',\\
	\beta_{k51y}&=\frac{\tau_n\epsilon_k}{\rho_n^2\eta_n\bar\eta_n^2 n^3}\y_k\q_{k}\y_k',~\beta_{k52y}=\frac{\tau_n\epsilon_k^2}{\rho_n^2\eta_n^2\bar\eta_n^2 n^3}\y_k\q_{k}\y_k',\\
	\beta_{k60y}&=\frac{\tau_n\theta_k}{\rho_n^2\eta_n\bar\eta_n n^3}\y_k\p_k\y_k',~\beta_{k61y}=\frac{\tau_n\theta_k\epsilon_k}{\rho_n^2\eta_n\bar\eta_n^2 n^3}\y_k\p_k\y_k',\\
	\beta_{k62y}&=\frac{\tau_n\theta_k\epsilon_k^2}{\rho_n^2\eta_n^2\bar\eta_n^2 n^3}\y_k\p_k\y_k',~\beta_{k70y}=\frac{\tau_n}{\rho_n^2\eta_n\bar\eta_n n^5}\y_k'\p_k\y_k\y_k'\q_k\y_k,\\
	\beta_{k71y}&=\frac{\tau_n\epsilon_k}{\rho_n^2\eta_n\bar\eta_n^2 n^5}\y_k'\p_k\y_k\y_k'\q_k\y_k,~\beta_{k72y}=\frac{\tau_n\epsilon_k^2}{\rho_n^2\eta_n^2\bar\eta_n^2 n^5}\y_k'\p_k\y_k\y_k'\q_k\y_k,\\
	\beta_{k80y}&=\frac{\tau_n}{\rho_n^2\eta_n\bar\eta_n n^4}\y_k'\q_k\q_k\y_k,~\beta_{k81y}=\frac{\tau_n\epsilon_k}{\rho_n^2\eta_n\bar\eta_n^2 n^4}\y_k'\q_k\q_k\y_k,\\
	\beta_{k82y}&=\frac{\tau_n\epsilon_k^2}{\rho_n^2\eta_n^2\bar\eta_n^2 n^4}\y_k'\q_k\q_k\y_k,~\beta_{k90y}=\frac{\tau_n}{\rho_n^2\eta_n\bar\eta_n n^6}\y_k'\p_k\y_k\y_k'\q_k\q_k\y_k,\\
	\beta_{k91y}&=\frac{\tau_n\epsilon_k}{\rho_n^2\eta_n\bar\eta_n^2 n^6}\y_k'\p_k\y_k\y_k'\q_k\q_k\y_k,\\
	\beta_{k92y}&=\frac{\tau_n\epsilon_k^2}{\rho_n^2\eta_n^2\bar\eta_n^2 n^6}\y_k'\p_k\y_k\y_k'\q_k\q_k\y_k,\\
	\beta_{k100y}&=\frac{\tau_n}{\rho_n^2\eta_n\bar\eta_n n^4}\y_k'\q_k\p_k\y_k,~\beta_{k101y}=\frac{\tau_n\epsilon_k}{\rho_n^2\eta_n\bar\eta_n^2 n^4}\y_k'\q_k\p_k\y_k,\\
	\beta_{k102y}&=\frac{\tau_n\epsilon_k^2}{\rho_n^2\eta_n^2\bar\eta_n^2 n^4}\y_k'\q_k\p_k\y_k,~\beta_{k110y}=\frac{\tau_n\theta_k}{\rho_n^2\eta_n\bar\eta_n n^4}\y_k'\p_k\q_k\y_k,\\
	\beta_{k111y}&=\frac{\tau_n\theta_k\epsilon_k}{\rho_n^2\eta_n\bar\eta_n^2 n^4}\y_k'\p_k\q_k\y_k,~\beta_{k112y}=\frac{\tau_n\theta_k\epsilon_k^2}{\rho_n^2\eta_n^2\bar\eta_n^2 n^4}\y_k'\p_k\q_k\y_k,\\
	\beta_{k120y}&=\frac {\tau_n}{\rho_n^2\eta_n\bar\eta_n n^2}\y_k'\y_k,
	~\beta_{k121y}=\frac {\tau_n\epsilon_k}{\rho_n^2\eta_n\bar\eta_n^2 n^2}\y_k'\y_k,~\beta_{k122y}=\frac{\tau_n\epsilon^2_k}{\rho_n^2\eta_n^2\bar\eta_n^2 n^2}\y_k'\y_k.
\end{align*}

Next, we continue to split the third term. Through a  series of straightforward calculations, we can obtain the following result:
\begin{align*}
	&\quad\Omega_{s_{22}}(\Z_1,\Y_{k+1})-\Omega_{s_{22}}(\tilde{\Z}_{10},\Y_{k0})\\
	&=-\frac{\tau_n}{\theta_k^2n^2}\bigg(\V_1'\z_k\z_k'\z_k\z_k'\V_1+\frac {1}{n^2}\V_1'\z_k\z_k'\z_k\z_k'\Y_{0k}'\Big(\rho_n\I_{n-1}-\frac {1}{n^2}\Y_{0k}\Y_{0k}'\Big)^{-2}\Y_{0k}\z_k\z_k'\V_1\\
	&\quad+\frac 1n\V_1'\z_k\z_k'\Y_{0k}'\Big(\rho_n\I_{n-1}-\frac 1n\Y_{0k}\Y_{0k}'\Big)^{-1}\Y_{0k}\z_k\z_k'\V_1\\
	&\quad+\frac {\theta_k }{n}\V_1'\z_k\z_k'\Y_{0k}'\Big(\rho_n\I_{n-1}-\frac {1}{n}\Y_{0k}\Y_{0k}'\Big)^{-2}\Y_{0k}\z_k\z_k'\V_1+\frac{1}{n^3}\V_1'\z_k\z_k'\Y_{0k}'\\
	&\quad\times\Big(\rho_n\I_{n-1}-\frac 1n\Y_{0k}\Y_{0k}'\Big)^{-1}\Y_{0k}\z_k\z_k'\Y_{0k}'\Big(\rho_n\I_{n-1}\frac 1n\Y_{0k}\Y_{0k}'\Big)^{-2}\Y_{0k}\z_k\z_k'\V_1\\
	&\quad +\V_1'\Z_{0k}'\Y_{0k}\z_k\z_k'\V_1+\frac{1}{n^2}\V_1'\Z_{0k}'\Y_{0k}\z_k\z_k'\Y_{0k}' \Big(\rho_n\I_{n-1}-\frac {1}{n}\Y_{0k}\Y_{0k}'\Big)^{-2}\Y_{0k}\z_k\z_k'\V_1\\
	&\quad+\frac 1n \V_1'\Z_{0k}'\Y_{0k}\Y_{0k}'\Big(\rho_n\I_{n-1}-\frac {1}{n}\Y_{0k}\Y_{0k}'\Big)^{-1}\Y_{0k}\z_k\z_k'\V_1\\
	&\quad+\frac{\theta_k}{ n}\V_1'\Z_{0k}'\Y_{0k}\Y_{0k}'\Big(\rho_n\I_{n-1}-\frac {1}{ n}\Y_{0k}\Y_{0k}'\Big)^{-2}\Y_{0k}\z_k\z_k'\V_1+\frac{1}{n^3}\V_1'\Z_{0k}'\Y_{0k}\\
	&\quad\times \Y_{0k}'\Big(\rho_n\I_{n-1}-\frac {1}{n}\Y_{0k}\Y_{0k}'\Big)^{-1}\Y_{0k}\z_k\z_k'\Y_{0k}'\Big(\rho_n\I_{n-1}-\frac {1}{n}\Y_{0k}\Y_{0k}'\Big)^{-2}\Y_{0k}\z_k\z_k'\V_1\\
	&\quad+\frac 1n\V_1'\z_k\z_k'\z_k\z_k'\Y_{0k}'\Big(\rho_n\I_{n-1}-\frac 1n\Y_{0k}\Y_{0k}'\Big)^{-1}\Z_{0k}\V_1\\
	&\quad+\frac{\theta_k}{n}\V_1'\z_k\z_k'\z_k\z_k'\Y_{0k}'\Big(\rho_n\I_{n-1}-\frac 1n\Y_{0k}\Y_{0k}'\Big)^{-2}\Z_{0k}\V_1\\
	&\quad+\frac{1}{n^3}\V_1'\z_k\z_k'\z_k\z_k'\Y_{0k}'\Big(\rho_n\I_{n-1}-\frac 1n\Y_{0k}\Y_{0k}'\Big)^{-2}\Y_{0k}\z_k\z_k'\Y_{0k}'\\
	&\quad\times(\rho_n\I_{n-1}-\frac 1n\Y_{0k}\Y_{0k}'\Big)^{-1}\Z_{0k}\V_1\\
	&\quad+\frac{1}{n^2}\V_1'\z_k\z_k'\Y_{0k}'\Big(\rho_n\I_{n-1}-\frac 1n\Y_{0k}\Y_{0k}'\Big)^{-1}\Y_{0k}\z_k\z_k'\Y_{0k}'\\
	&\quad\times\Big(\rho_n\I_{n-1}-\frac 1n\Y_{0k}\Y_{0k}'\Big)^{-1}\Z_{0k}\V_1+\theta_k^2\V_1'\z_k\z_k'\Y_{0k}'\Big(\rho_n\I_{n-1}-\frac 1n\Y_{0k}\Y_{0k}'\Big)^{-2}\Z_{0k}\V_1\\
	&\quad+\frac{1}{n^4}\V_1'\z_k\z_k'\Y_{0k}'\Big(\rho_n\I_{n-1}-\frac 1n\Y_{0k}\Y_{0k}'\Big)^{-1}\Y_{0k}\z_k\z_k'\Y_{0k}'\Big(\rho_n\I_{n-1}-\frac 1n\Y_{0k}\Y_{0k}'\Big)^{-2}\\
	&\quad\times\Y_{0k}\z_k\z_k'\Y_{0k}' (\rho_n\I_{n-1}-\frac 1n\Y_{0k}\Y_{0k}'\Big)^{-1}\Z_{0k}\V_1\\
	&\quad+\frac{\theta_k}{ n^2}\V_1'\z_k\z_k'\Y_{0k}'\Big(\rho_n\I_{n-1}-\frac 1n\Y_{0k}\Y_{0k}'\Big)^{-2}\Y_{0k}\z_k\z_k'\Y_{0k}'\\
	&\quad\times (\rho_n\I_{n-1}-\frac 1n\Y_{0k}\Y_{0k}'\Big)^{-1}\Z_{0k}\V_1+\frac{\theta_k}{ n^2}\V_1'\z_k\z_k'\Y_{0k}'\\
	&\quad\times \Big(\rho_n\I_{n-1}-\frac 1n\Y_{0k}\Y_{0k}'\Big)^{-1}\Y_{0k}\z_k\z_k'\Y_{0k}'\Big(\rho_n\I_{n-1}-\frac 1n\Y_{0k}\Y_{0k}'\Big)^{-2}\Z_{0k}\V_1\\
	&\quad+\frac 1n\V_1'\Z_{0k}'\Y_{0k}\z_k\z_k'\Y_{0k}'\Big(\rho_n\I_{n-1}-\frac 1n\Y_{0k}\Y_{0k}'\Big)^{-1}\Z_{0k}\V_1\\
	&\quad+\frac{\theta_k }{n}\V_1'\Z_{0k}'\Y_{0k}\z_k\z_k'\Y_{0k}'\Big(\rho_n\I_{n-1}-\frac 1n\Y_{0k}\Y_{0k}'\Big)^{-2}\Z_{0k}\V_1\\
	&\quad+\frac{1}{n^3}\V_1'\Z_{0k}'\Y_{0k}\z_k\z_k'\Y_{0k}'\Big(\rho_n\I_{n-1}-\frac 1n\Y_{0k}\Y_{0k}'\Big)^{-2}\Y_{0k}\z_k\z_k'\Y_{0k}' \\
	&\quad\times(\rho_n\I_{n-1}-\frac 1n\Y_{0k}\Y_{0k}'\Big)^{-1}\Z_{0k}\V_1\\
	&\quad+\frac{1}{n^2}\V_1'\Z_{0k}'\Y_{0k}\Y_{0k}'\Big(\rho_n\I_{n-1}-\frac 1n\Y_{0k}\Y_{0k}'\Big)^{-1}\Y_{0k}\z_k\z_k'\Y_{0k}' \\
	&\quad\times\Big(\rho_n\I_{n-1}-\frac 1n\Y_{0k}\Y_{0k}'\Big)^{-1}\Z_{0k}\V_1\\
	&\quad+\frac{1}{n^4}\V_1'\Z_{0k}'\Y_{0k}\Y_{0k}'\Big(\rho_n\I_{n-1}-\frac 1n\Y_{0k}\Y_{0k}'\Big)^{-1} \Y_{0k}\z_k\z_k'\Y_{0k}'\Big(\rho_n\I_{n-1}-\frac 1n\Y_{0k}\Y_{0k}'\Big)^{-2}\\
	&\quad\times\Y_{0k}\z_k\z_k'\Y_{0k}'(\rho_n\I_{n-1}-\frac 1n\Y_{0k}\Y_{0k}'\Big)^{-1}\Z_{0k}\V_1\\
	&\quad+\frac{\theta_k}{ n^2}\V_1'\Z_{0k}'\Y_{0k}\Y_{0k}'\Big(\rho_n\I_{n-1}-\frac 1n\Y_{0k}\Y_{0k}'\Big)^{-2}\Y_{0k}\z_k\z_k'\Y_{0k}'\\
	&\quad\times(\rho_n\I_{n-1}-\frac 1n\Y_{0k}\Y_{0k}'\Big)^{-1}\Z_{0k}\V_1\\
	&\quad+\frac{\theta_k}{ n^2}\V_1'\Z_{0k}'\Y_{0k}\Y_{0k}'\Big(\rho_n\I_{n-1}-\frac 1n\Y_{0k}\Y_{0k}'\Big)^{-1}\Y_{0k}\z_k\z_k'\Y_{0k}'\\
	&\quad\times\Big(\rho_n\I_{n-1}-\frac 1n\Y_{0k}\Y_{0k}'\Big)^{-2}\Z_{0k}\V_1\bigg)\\
	&=-\sum_{i=1}^{25}\sum_{j=1}^3\Xi_{kij}
\end{align*}
where
\begin{align*}
	\Xi_{k10}&=\frac{\tau_n}{\rho_n^2\eta_n\bar\eta_n n^2}\V_1'\z_k\z_k'\z_k\z_k'\V_1,
	\Xi_{k11}=\frac{\tau_n\epsilon_k}{\rho_n^2\eta_n\bar\eta_n n^2}\V_1'\z_k\z_k'\z_k\z_k'\V_1\\
	\Xi_{k12}&=\frac{\tau_n\epsilon_k^2}{\rho_n^2\eta_n\bar\eta_n n^2}\V_1'\z_k\z_k'\z_k\z_k'\V_1\\
	\Xi_{k20}&=\frac{\tau_n}{\rho_n^2\eta_n\bar\eta_n n^4}\V_1'\z_k\z_k'\z_k\z_k'\p_k\z_k\z_k'\V_1,~\Xi_{k21}=\frac{\tau_n\epsilon_k}{\rho_n^2\eta_n\bar\eta_n^2 n^4}\V_1'\z_k\z_k'\z_k\z_k'\p_k\z_k\z_k'\V_1\\
	\Xi_{k22}&=\frac{\tau_n\epsilon_k^2}{\rho_n^2\eta_n^2\bar\eta_n^2 n^4}\V_1'\z_k\z_k'\z_k\z_k'\p_k\z_k\z_k'\V_1,~\Xi_{k30}=\frac{\tau_n}{\rho_n^2\eta_n\bar\eta_n n^3}\V_1'\z_k\z_k'\q_k\z_k\z_k'\V_1,\\
	\Xi_{k31}&=\frac{\tau_n\epsilon_k}{\rho_n^2\eta_n\bar\eta_n n^3}\V_1'\z_k\z_k'\q_k\z_k\z_k'\V_1,~\Xi_{k32}=\frac{\tau_n\epsilon_k^2}{\rho_n^2\eta_n\bar\eta_n n^3}\V_1'\z_k\z_k'\q_k\z_k\z_k'\V_1,\\
	\Xi_{k40}&=\frac{\tau_n\theta_k}{\rho_n^2\eta_n\bar\eta_n n^3}\V_1'\z_k\z_k'\p_k\z_k\z_k'\V_1,~\Xi_{k41}=\frac{\tau_n\epsilon_k\theta_k}{\rho_n^2\eta_n\bar\eta_n n^3}\V_1'\z_k\z_k'\p_k\z_k\z_k'\V_1,\\
	\Xi_{k42}&=\frac{\tau_n\epsilon_k^2\theta_k}{\rho_n^2\eta_n\bar\eta_n n^3}\V_1'\z_k\z_k'\p_k\z_k\z_k'\V_1,~\Xi_{k50}=\frac{\tau_n}{\rho_n^2\eta_n\bar\eta_n n^5}\V_1'\z_k\z_k'\q_k\z_k\z_k'\p_k\z_k\z_k'\V_1,\\
	\Xi_{k51}&=\frac{\tau_n\epsilon_k^2}{\rho_n^2\eta_n^2\bar\eta_n^2 n^5}\V_1'\z_k\z_k'\q_k\z_k\z_k'\p_k\z_k\z_k'\V_1,\\
	\Xi_{k52}&=\frac{\tau_n\epsilon_k^2}{\rho_n^2\eta_n^2\bar\eta_n^2 n^5}\V_1'\z_k\z_k'\q_k\z_k\z_k'\p_k\z_k\z_k'\V_1,\\
	\Xi_{k60}&=\frac{\tau_n}{\rho_n^2\eta_n\bar\eta_n n^4}\V_1'\Z_{0k}'\Y_{0k}\z_k\z_k'\p_k\z_k\z_k'\V_1,\\
	\Xi_{k61}&=-\frac{\tau_n\epsilon_k}{\rho_n^2\eta_n\bar\eta_n^2 n^4}\V_1'\Z_{0k}'\Y_{0k}\z_k\z_k'\p_k\z_k\z_k'\V_1,\\
	\Xi_{k62}&=\frac{\tau_n\epsilon_k^2}{\rho_n^2\eta_n^2\bar\eta_n^2 n^4}\V_1'\Z_{0k}'\Y_{0k}\z_k\z_k'\p_k\z_k\z_k'\V_1,~\Xi_{k70}=\frac{\tau_n}{\rho_n^2\eta_n\bar\eta_n n^3}\V_1'\Z_{0k}'\Y_{0k}\q_k\z_k\z_k'\V_1,\\
	\Xi_{k71}&=\frac{\tau_n\epsilon_k}{\rho_n^2\eta_n\bar\eta_n^2 n^3}\V_1'\Z_{0k}'\Y_{0k}\q_k\z_k\z_k'\V_1,~\Xi_{k72}=\frac{\tau_n\epsilon_k^2}{\rho_n^2\eta_n^2\bar\eta_n^2 n^3}\V_1'\Z_{0k}'\Y_{0k}\q_k\z_k\z_k'\V_1,\\
	\Xi_{k80}&=\frac{\tau_n\theta_k}{\rho_n^2\eta_n\bar\eta_n n^3}\V_1'\Z_{0k}'\Y_{0k}\p_k\z_k\z_k'\V_1,~\Xi_{k81}=\frac{\tau_n\epsilon_k\theta_k}{\rho_n^2\eta_n\bar\eta_n^2 n^3}\V_1'\Z_{0k}'\Y_{0k}\p_k\z_k\z_k'\V_1,\\
	\Xi_{k82}&=\frac{\tau_n\epsilon_k^2\theta_k}{\rho_n^2\eta_n^2\bar\eta_n^2 n^3}\V_1'\Z_{0k}'\Y_{0k}\p_k\z_k\z_k'\V_1,\\
	\Xi_{k90}&=\frac{\tau_n}{\rho_n^2\eta_n\bar\eta_n n^5}\V_1'\Z_{0k}'\Y_{0k}\q_k\z_k\z_k'\p_k\z_k\z_k'\V_1,\\
	\Xi_{k91}&=\frac{\tau_n\epsilon_k}{\rho_n^2\eta_n\bar\eta_n^2 n^5}\V_1'\Z_{0k}'\Y_{0k}\q_k\z_k\z_k'\p_k\z_k\z_k'\V_1,\\
	\Xi_{k92}&=\frac{\tau_n\epsilon_k^2}{\rho_n^2\eta_n^2\bar\eta_n^2 n^5}\V_1'\Z_{0k}'\Y_{0k}\q_k\z_k\z_k'\p_k\z_k\z_k'\V_1,\\
	\Xi_{k100}&=\frac{\tau_n}{\rho_n^2\eta_n\bar\eta_n n^3}\V_1'\z_k\z_k'\z_k\z_k'\m_k\Z_{0k}\V_1,~\Xi_{k101}=\frac{\tau_n\epsilon_k}{\rho_n^2\eta_n\bar\eta_n^2 n^4}\V_1'\z_k\z_k'\z_k\z_k'\m_k\Z_{0k}\V_1,\\
	\Xi_{k102}&=\frac{\tau_n\epsilon_k^2}{\rho_n^2\eta_n^2\bar\eta_n^2 n^4}\V_1'\z_k\z_k'\z_k\z_k'\m_k\Z_{0k}\V_1,~\Xi_{k110}=\frac{\tau_n\theta_k}{\rho_n^2\eta_n\bar\eta_n n^3}\V_1'\z_k\z_k'\z_k\z_k'\e_k\Z_{0k}\V_1,\\
	\Xi_{k111}&=\frac{\tau_n\epsilon_k\theta_k}{\rho_n^2\eta_n\bar\eta_n^2 n^3}\V_1'\z_k\z_k'\z_k\z_k'\e_k\Z_{0k}\V_1,~\Xi_{k112}=\frac{\tau_n\epsilon_k^2\theta_k}{\rho_n^2\eta_n^2\bar\eta_n^2 n^3}\V_1'\z_k\z_k'\z_k\z_k'\e_k\Z_{0k}\V_1,\\
	\Xi_{k120}&=\frac{\tau_n}{\rho_n^2\eta_n\bar\eta_n n^5}\V_1'\z_k\z_k'\z_k\z_k'\p_k\z_k\z_k'\m_k\Z_{0k}\V_1,\\
	\Xi_{k121}&=\frac{\tau_n\epsilon_k}{\rho_n^2\eta_n\bar\eta_n^2 n^5}\V_1'\z_k\z_k'\z_k\z_k'\p_k\z_k\z_k'\m_k\Z_{0k}\V_1,\\
	\Xi_{k122}&=\frac{\tau_n\epsilon_k^2}{\rho_n^2\eta_n^2\bar\eta_n^2 n^5}\V_1'\z_k\z_k'\z_k\z_k'\p_k\z_k\z_k'\m_k\Z_{0k}\V_1,\\
	\Xi_{k130}&=\frac{\tau_n}{\rho_n^2\eta_n\bar\eta_n n^4}\V_1'\z_k\z_k'\q_k\z_k\z_k'\m_k\Z_{0k}\V_1,\\
	\Xi_{k131}&=\frac{\tau_n\epsilon_k}{\rho_n^2\eta_n\bar\eta_n^2 n^4}\V_1'\z_k\z_k'\q_k\z_k\z_k'\m_k\Z_{0k}\V_1,\\
	\Xi_{k132}&=\frac{\tau_n\epsilon_k^2}{\rho_n^2\eta_n^2\bar\eta_n^2 n^4}\V_1'\z_k\z_k'\q_k\z_k\z_k'\m_k\Z_{0k}\V_1,~\Xi_{k140}=\frac{\tau_n\theta_k^2}{\rho_n^2\eta_n\bar\eta_n n^2}\V_1'\z_k\z_k'\e_k\Z_{0k}\V_1,\\
	\Xi_{k141}&=\frac{\tau_n\epsilon_k\theta_k^2}{\rho_n^2\eta_n\bar\eta_n^2 n^2}\V_1'\z_k\z_k'\e_k\Z_{0k}\V_1,~\Xi_{k142}=\frac{\tau_n\epsilon_k^2\theta_k^2}{\rho_n^2\eta_n^2\bar\eta_n^2 n^2}\V_1'\z_k\z_k'\e_k\Z_{0k}\V_1,\\
	\Xi_{k150}&=\frac{\tau_n}{\rho_n^2\eta_n\bar\eta_n n^6}\V_1'\z_k\z_k'\q_k\z_k\z_k'\p_k\z_k\z_k'\m_k\Z_{0k}\V_1\\
	\Xi_{k151}&=\frac{\tau_n\epsilon_k}{\rho_n^2\eta_n\bar\eta_n^2 n^6}\V_1'\z_k\z_k'\q_k\z_k\z_k'\p_k\z_k\z_k'\m_k\Z_{0k}\V_1,\\
	\Xi_{k152}&=\frac{\tau_n\epsilon_k^2}{\rho_n^2\eta_n^2\bar\eta_n^2 n^6}\V_1'\z_k\z_k'\q_k\z_k\z_k'\p_k\z_k\z_k'\m_k\Z_{0k}\V_1,\\
	\Xi_{k160}&=\frac{\tau_n\theta_k}{\rho_n^2\eta_n\bar\eta_n n^4}\V_1'\z_k\z_k'\p_k\z_k\z_k'\m_k\Z_{0k}\V_1,\\
	\Xi_{k161}&=\frac{\tau_n\epsilon_k\theta_k}{\rho_n^2\eta_n\bar\eta_n^2 n^4}\V_1'\z_k\z_k'\p_k\z_k\z_k'\m_k\Z_{0k}\V_1,\\
	\Xi_{k162}&=\frac{\tau_n\epsilon_k^2\theta_k}{\rho_n^2\eta_n^2\bar\eta_n^2 n^4}\V_1'\z_k\z_k'\p_k\z_k\z_k'\m_k\Z_{0k}\V_1,\\
	\Xi_{k170}&=\frac{\tau_n\theta_k}{\rho_n^2\eta_n\bar\eta_n n^4}\V_1'\z_k\z_k'\q_k\z_k\z_k'\e_k\Z_{0k}\V_1,\\
	\Xi_{k171}&=\frac{\tau_n\epsilon_k\theta_k}{\rho_n^2\eta_n\bar\eta_n^2 n^4}\V_1'\z_k\z_k'\q_k\z_k\z_k'\e_k\Z_{0k}\V_1,\\
	\Xi_{k172}&=\frac{\tau_n\epsilon_k^2\theta_k}{\rho_n^2\eta_n^2\bar\eta_n^2 n^4}\V_1'\z_k\z_k'\q_k\z_k\z_k'\e_k\Z_{0k}\V_1,\\
	\Xi_{k180}&=\frac{\tau_n}{\rho_n^2\eta_n\bar\eta_n n^3}\V_1'\Z_{0k}'\Y_{0k}\z_k\z_k'\m_k\Z_{0k}\V_1,\\
	\Xi_{k181}&=\frac{\tau_n\epsilon_k}{\rho_n^2\eta_n\bar\eta_n^2 n^3}\V_1'\Z_{0k}'\Y_{0k}\z_k\z_k'\m_k\Z_{0k}\V_1,\\
	\Xi_{k182}&=\frac{\tau_n\epsilon_k^2}{\rho_n^2\eta_n^2\bar\eta_n^2 n^3}\V_1'\Z_{0k}'\Y_{0k}\z_k\z_k'\m_k\Z_{0k}\V_1,\\
	\Xi_{k190}&=\frac{\tau_n\theta_k}{\rho_n^2\eta_n\bar\eta_n n^3}\V_1'\Z_{0k}'\Y_{0k}\z_k\z_k'\e_k\Z_{0k}\V_1,\\
	\Xi_{k191}&=\frac{\tau_n\epsilon_k\theta_k}{\rho_n^2\eta_n\bar\eta_n^2 n^3}\V_1'\Z_{0k}'\Y_{0k}\z_k\z_k'\e_k\Z_{0k}\V_1,\\
	\Xi_{k192}&=\frac{\tau_n\epsilon_k^2\theta_k}{\rho_n^2\eta_n^2\bar\eta_n^2 n^3}\V_1'\Z_{0k}'\Y_{0k}\z_k\z_k'\e_k\Z_{0k}\V_1,\\
	\Xi_{k200}&=\frac{\tau_n}{\rho_n^2\eta_n\bar\eta_n n^5}\V_1'\Z_{0k}'\Y_{0k}\z_k\z_k'\p_k\z_k\z_k'\m_k\Z_{0k}\V_1,\\
	\Xi_{k201}&=\frac{\tau_n\epsilon_k}{\rho_n^2\eta_n\bar\eta_n^2 n^5}\V_1'\Z_{0k}'\Y_{0k}\z_k\z_k'\p_k\z_k\z_k'\m_k\Z_{0k}\V_1,\\
	\Xi_{k202}&=\frac{\tau_n\epsilon_k^2}{\rho_n^2\eta_n^2\bar\eta_n^2 n^5}\V_1'\Z_{0k}'\Y_{0k}\z_k\z_k'\p_k\z_k\z_k'\m_k\Z_{0k}\V_1,\\
	\Xi_{k210}&=\frac{\tau_n}{\rho_n^2\eta_n\bar\eta_n n^4}\V_1'\Z_{0k}'\Y_{0k}\q_k\z_k\z_k'\m_k\Z_{0k}\V_1,\\
	\Xi_{k211}&=\frac{\tau_n\epsilon_k}{\rho_n^2\eta_n\bar\eta_n^2 n^4}\V_1'\Z_{0k}'\Y_{0k}\q_k\z_k\z_k'\m_k\Z_{0k}\V_1,\\
	\Xi_{k212}&=\frac{\tau_n\epsilon_k^2}{\rho_n^2\eta_n^2\bar\eta_n^2 n^4}\V_1'\Z_{0k}'\Y_{0k}\q_k\z_k\z_k'\m_k\Z_{0k}\V_1,\\
	\Xi_{k220}&=\frac{\tau_n}{\rho_n^2\eta_n\bar\eta_n n^6}\V_1'\Z_{0k}'\Y_{0k}\q_k\z_k\z_k'\p_k\z_k\z_k'\m_k\Z_{0k}\V_1,\\
	\Xi_{k221}&=\frac{\tau_n\epsilon_k}{\rho_n^2\eta_n\bar\eta_n^2 n^6}\V_1'\Z_{0k}'\Y_{0k}\q_k\z_k\z_k'\p_k\z_k\z_k'\m_k\Z_{0k}\V_1,\\
	\Xi_{k222}&=\frac{\tau_n\epsilon_k^2}{\rho_n^2\eta_n^2\bar\eta_n^2 n^6}\V_1'\Z_{0k}'\Y_{0k}\q_k\z_k\z_k'\p_k\z_k\z_k'\m_k\Z_{0k}\V_1,\\
	\Xi_{k230}&=\frac{\tau_n\theta_k}{\rho_n^2\eta_n\bar\eta_n n^4}\V_1'\Z_{0k}'\Y_{0k}\p_k\z_k\z_k'\m_k\Z_{0k}\V_1,\\
	\Xi_{k231}&=\frac{\tau_n\epsilon_k\theta_k}{\rho_n^2\eta_n\bar\eta_n^2 n^4}\V_1'\Z_{0k}'\Y_{0k}\p_k\z_k\z_k'\m_k\Z_{0k}\V_1,\\
	\Xi_{k232}&=\frac{\tau_n\epsilon_k^2\theta_k}{\rho_n^2\eta_n^2\bar\eta_n^2 n^4}\V_1'\Z_{0k}'\Y_{0k}\p_k\z_k\z_k'\m_k\Z_{0k}\V_1,\\
	\Xi_{k240}&=\frac{\tau_n\theta_k}{\rho_n^2\eta_n\bar\eta_n n^4}\V_1'\Z_{0k}'\Y_{0k}\q_k\z_k\z_k'\e_k\Z_{0k}\V_1,\\
	\Xi_{k241}&=\frac{\tau_n\epsilon_k\theta_k}{\rho_n^2\eta_n\bar\eta_n^2 n^4}\V_1'\Z_{0k}'\Y_{0k}\q_k\z_k\z_k'\e_k\Z_{0k}\V_1,\\
	\Xi_{k242}&=\frac{\tau_n\epsilon_k^2\theta_k}{\rho_n^2\eta_n^2\bar\eta_n^2 n^4}\V_1'\Z_{0k}'\Y_{0k}\q_k\z_k\z_k'\e_k\Z_{0k}\V_1,~\Xi_{k250}=\frac{\tau_n}{\rho_n^2\eta_n\bar\eta_n n^2}\V_1'\Z_{0k}'\Y_{0k}\z_k\z_k'\V_1,\\
	\Xi_{k251}&=\frac{\tau_n\epsilon_k}{\rho_n^2\eta_n\bar\eta_n^2 n^2}\V_1'\Z_{0k}'\Y_{0k}\z_k\z_k'\V_1,~\Xi_{k252}=\frac{\tau_n\epsilon_k^2}{\rho_n^2\eta_n^2\bar\eta_n^2 n^2}\V_1'\Z_{0k}'\Y_{0k}\z_k\z_k'\V_1.
\end{align*}
Using the same method, we can also obtain the following result:
\[\Omega_{s_{22}}(\Z_1,\Y_{k})-\Omega_{s_{22}}(\tilde{\Z}_{10},\Y_{k0})=-\sum_{i=1}^{25}\sum_{j=1}^3\Xi_{kijy},\]
where 
\begin{align*}
	\Xi_{k10y}&=\frac{\tau_n}{\rho_n^2\eta_n\bar\eta_n n^2}\V_1'\y_k\y_k'\y_k\y_k'\V_1,
	\Xi_{k11y}=\frac{\tau_n\epsilon_k}{\rho_n^2\eta_n\bar\eta_n n^2}\V_1'\y_k\y_k'\y_k\y_k'\V_1\\
	\Xi_{k12y}&=\frac{\tau_n\epsilon_k^2}{\rho_n^2\eta_n\bar\eta_n n^2}\V_1'\y_k\y_k'\y_k\y_k'\V_1\\
	\Xi_{k20y}&=\frac{\tau_n}{\rho_n^2\eta_n\bar\eta_n n^4}\V_1'\y_k\y_k'\y_k\y_k'\p_k\y_k\y_k'\V_1,~\Xi_{k21y}=\frac{\tau_n\epsilon_k}{\rho_n^2\eta_n\bar\eta_n^2 n^4}\V_1'\y_k\y_k'\y_k\y_k'\p_k\y_k\y_k'\V_1\\
	\Xi_{k22y}&=\frac{\tau_n\epsilon_k^2}{\rho_n^2\eta_n^2\bar\eta_n^2 n^4}\V_1'\y_k\y_k'\y_k\y_k'\p_k\y_k\y_k'\V_1,~\Xi_{k30y}=\frac{\tau_n}{\rho_n^2\eta_n\bar\eta_n n^3}\V_1'\y_k\y_k'\q_k\y_k\y_k'\V_1,\\
	\Xi_{k31y}&=\frac{\tau_n\epsilon_k}{\rho_n^2\eta_n\bar\eta_n n^3}\V_1'\y_k\y_k'\q_k\y_k\y_k'\V_1,~\Xi_{k32y}=\frac{\tau_n\epsilon_k^2}{\rho_n^2\eta_n\bar\eta_n n^3}\V_1'\y_k\y_k'\q_k\y_k\y_k'\V_1,\\
	\Xi_{k40y}&=\frac{\tau_n\theta_k}{\rho_n^2\eta_n\bar\eta_n n^3}\V_1'\y_k\y_k'\p_k\y_k\y_k'\V_1,~\Xi_{k41y}=\frac{\tau_n\epsilon_k\theta_k}{\rho_n^2\eta_n\bar\eta_n n^3}\V_1'\y_k\y_k'\p_k\y_k\y_k'\V_1,\\
	\Xi_{k42y}&=\frac{\tau_n\epsilon_k^2\theta_k}{\rho_n^2\eta_n\bar\eta_n n^3}\V_1'\y_k\y_k'\p_k\y_k\y_k'\V_1,~\Xi_{k50y}=\frac{\tau_n}{\rho_n^2\eta_n\bar\eta_n n^5}\V_1'\y_k\y_k'\q_k\y_k\y_k'\p_k\y_k\y_k'\V_1,\\
	\Xi_{k51y}&=\frac{\tau_n\epsilon_k^2}{\rho_n^2\eta_n^2\bar\eta_n^2 n^5}\V_1'\y_k\y_k'\q_k\y_k\y_k'\p_k\y_k\y_k'\V_1,\\
	\Xi_{k52y}&=\frac{\tau_n\epsilon_k^2}{\rho_n^2\eta_n^2\bar\eta_n^2 n^5}\V_1'\y_k\y_k'\q_k\y_k\y_k'\p_k\y_k\y_k'\V_1,\\
	\Xi_{k60y}&=\frac{\tau_n}{\rho_n^2\eta_n\bar\eta_n n^4}\V_1'\Z_{0k}'\Y_{0k}\y_k\y_k'\p_k\y_k\y_k'\V_1,\\
	\Xi_{k61y}&=-\frac{\tau_n\epsilon_k}{\rho_n^2\eta_n\bar\eta_n^2 n^4}\V_1'\Z_{0k}'\Y_{0k}\y_k\y_k'\p_k\y_k\y_k'\V_1,\\
	\Xi_{k62y}&=\frac{\tau_n\epsilon_k^2}{\rho_n^2\eta_n^2\bar\eta_n^2 n^4}\V_1'\Z_{0k}'\Y_{0k}\y_k\y_k'\p_k\y_k\y_k'\V_1,~\Xi_{k70y}=\frac{\tau_n}{\rho_n^2\eta_n\bar\eta_n n^3}\V_1'\Z_{0k}'\Y_{0k}\q_k\y_k\y_k'\V_1,\\
	\Xi_{k71y}&=\frac{\tau_n\epsilon_k}{\rho_n^2\eta_n\bar\eta_n^2 n^3}\V_1'\Z_{0k}'\Y_{0k}\q_k\y_k\y_k'\V_1,~\Xi_{k72y}=\frac{\tau_n\epsilon_k^2}{\rho_n^2\eta_n^2\bar\eta_n^2 n^3}\V_1'\Z_{0k}'\Y_{0k}\q_k\y_k\y_k'\V_1,\\
	\Xi_{k80y}&=\frac{\tau_n\theta_k}{\rho_n^2\eta_n\bar\eta_n n^3}\V_1'\Z_{0k}'\Y_{0k}\p_k\y_k\y_k'\V_1,~\Xi_{k81y}=\frac{\tau_n\epsilon_k\theta_k}{\rho_n^2\eta_n\bar\eta_n^2 n^3}\V_1'\Z_{0k}'\Y_{0k}\p_k\y_k\y_k'\V_1,\\
	\Xi_{k82y}&=\frac{\tau_n\epsilon_k^2\theta_k}{\rho_n^2\eta_n^2\bar\eta_n^2 n^3}\V_1'\Z_{0k}'\Y_{0k}\p_k\y_k\y_k'\V_1,\\
	\Xi_{k90y}&=\frac{\tau_n}{\rho_n^2\eta_n\bar\eta_n n^5}\V_1'\Z_{0k}'\Y_{0k}\q_k\y_k\y_k'\p_k\y_k\y_k'\V_1,\\
	\Xi_{k91y}&=\frac{\tau_n\epsilon_k}{\rho_n^2\eta_n\bar\eta_n^2 n^5}\V_1'\Z_{0k}'\Y_{0k}\q_k\y_k\y_k'\p_k\y_k\y_k'\V_1,\\
	\Xi_{k92y}&=\frac{\tau_n\epsilon_k^2}{\rho_n^2\eta_n^2\bar\eta_n^2 n^5}\V_1'\Z_{0k}'\Y_{0k}\q_k\y_k\y_k'\p_k\y_k\y_k'\V_1,\\
	\Xi_{k100y}&=\frac{\tau_n}{\rho_n^2\eta_n\bar\eta_n n^3}\V_1'\y_k\y_k'\y_k\y_k'\m_k\Z_{0k}\V_1,~\Xi_{k101y}=\frac{\tau_n\epsilon_k}{\rho_n^2\eta_n\bar\eta_n^2 n^4}\V_1'\y_k\y_k'\y_k\y_k'\m_k\Z_{0k}\V_1,\\
	\Xi_{k102y}&=\frac{\tau_n\epsilon_k^2}{\rho_n^2\eta_n^2\bar\eta_n^2 n^4}\V_1'\y_k\y_k'\y_k\y_k'\m_k\Z_{0k}\V_1,~\Xi_{k110y}=\frac{\tau_n\theta_k}{\rho_n^2\eta_n\bar\eta_n n^3}\V_1'\y_k\y_k'\y_k\y_k'\e_k\Z_{0k}\V_1,\\
	\Xi_{k111y}&=\frac{\tau_n\epsilon_k\theta_k}{\rho_n^2\eta_n\bar\eta_n^2 n^3}\V_1'\y_k\y_k'\y_k\y_k'\e_k\Z_{0k}\V_1,~\Xi_{k112y}=\frac{\tau_n\epsilon_k^2\theta_k}{\rho_n^2\eta_n^2\bar\eta_n^2 n^3}\V_1'\y_k\y_k'\y_k\y_k'\e_k\Z_{0k}\V_1,\\
	\Xi_{k120y}&=\frac{\tau_n}{\rho_n^2\eta_n\bar\eta_n n^5}\V_1'\y_k\y_k'\y_k\y_k'\p_k\y_k\y_k'\m_k\Z_{0k}\V_1,\\
	\Xi_{k121y}&=\frac{\tau_n\epsilon_k}{\rho_n^2\eta_n\bar\eta_n^2 n^5}\V_1'\y_k\y_k'\y_k\y_k'\p_k\y_k\y_k'\m_k\Z_{0k}\V_1,\\
	\Xi_{k122y}&=\frac{\tau_n\epsilon_k^2}{\rho_n^2\eta_n^2\bar\eta_n^2 n^5}\V_1'\y_k\y_k'\y_k\y_k'\p_k\y_k\y_k'\m_k\Z_{0k}\V_1,\\
	\Xi_{k130y}&=\frac{\tau_n}{\rho_n^2\eta_n\bar\eta_n n^4}\V_1'\y_k\y_k'\q_k\y_k\y_k'\m_k\Z_{0k}\V_1,\\
	\Xi_{k131y}&=\frac{\tau_n\epsilon_k}{\rho_n^2\eta_n\bar\eta_n^2 n^4}\V_1'\y_k\y_k'\q_k\y_k\y_k'\m_k\Z_{0k}\V_1,\\
	\Xi_{k132y}&=\frac{\tau_n\epsilon_k^2}{\rho_n^2\eta_n^2\bar\eta_n^2 n^4}\V_1'\y_k\y_k'\q_k\y_k\y_k'\m_k\Z_{0k}\V_1,~\Xi_{k140y}=\frac{\tau_n\theta_k^2}{\rho_n^2\eta_n\bar\eta_n n^2}\V_1'\y_k\y_k'\e_k\Z_{0k}\V_1,\\
	\Xi_{k141y}&=\frac{\tau_n\epsilon_k\theta_k^2}{\rho_n^2\eta_n\bar\eta_n^2 n^2}\V_1'\y_k\y_k'\e_k\Z_{0k}\V_1,~\Xi_{k142}=\frac{\tau_n\epsilon_k^2\theta_k^2}{\rho_n^2\eta_n^2\bar\eta_n^2 n^2}\V_1'\y_k\y_k'\e_k\Z_{0k}\V_1,\\
	\Xi_{k150y}&=\frac{\tau_n}{\rho_n^2\eta_n\bar\eta_n n^6}\V_1'\y_k\y_k'\q_k\y_k\y_k'\p_k\y_k\y_k'\m_k\Z_{0k}\V_1\\
	\Xi_{k151y}&=\frac{\tau_n\epsilon_k}{\rho_n^2\eta_n\bar\eta_n^2 n^6}\V_1'\y_k\y_k'\q_k\y_k\y_k'\p_k\y_k\y_k'\m_k\Z_{0k}\V_1,\\
	\Xi_{k152y}&=\frac{\tau_n\epsilon_k^2}{\rho_n^2\eta_n^2\bar\eta_n^2 n^6}\V_1'\y_k\y_k'\q_k\y_k\y_k'\p_k\y_k\y_k'\m_k\Z_{0k}\V_1,\\
	\Xi_{k160y}&=\frac{\tau_n\theta_k}{\rho_n^2\eta_n\bar\eta_n n^4}\V_1'\y_k\y_k'\p_k\y_k\y_k'\m_k\Z_{0k}\V_1,\\
	\Xi_{k161y}&=\frac{\tau_n\epsilon_k\theta_k}{\rho_n^2\eta_n\bar\eta_n^2 n^4}\V_1'\y_k\y_k'\p_k\y_k\y_k'\m_k\Z_{0k}\V_1,\\
	\Xi_{k162y}&=\frac{\tau_n\epsilon_k^2\theta_k}{\rho_n^2\eta_n^2\bar\eta_n^2 n^4}\V_1'\y_k\y_k'\p_k\y_k\y_k'\m_k\Z_{0k}\V_1,\\
	\Xi_{k170y}&=\frac{\tau_n\theta_k}{\rho_n^2\eta_n\bar\eta_n n^4}\V_1'\y_k\y_k'\q_k\y_k\y_k'\e_k\Z_{0k}\V_1,\\
	\Xi_{k171y}&=\frac{\tau_n\epsilon_k\theta_k}{\rho_n^2\eta_n\bar\eta_n^2 n^4}\V_1'\y_k\y_k'\q_k\y_k\y_k'\e_k\Z_{0k}\V_1,\\
	\Xi_{k172y}&=\frac{\tau_n\epsilon_k^2\theta_k}{\rho_n^2\eta_n^2\bar\eta_n^2 n^4}\V_1'\y_k\y_k'\q_k\y_k\y_k'\e_k\Z_{0k}\V_1,\\
	\Xi_{k180y}&=\frac{\tau_n}{\rho_n^2\eta_n\bar\eta_n n^3}\V_1'\Z_{0k}'\Y_{0k}\y_k\y_k'\m_k\Z_{0k}\V_1,\\
	\Xi_{k181y}&=\frac{\tau_n\epsilon_k}{\rho_n^2\eta_n\bar\eta_n^2 n^3}\V_1'\Z_{0k}'\Y_{0k}\y_k\y_k'\m_k\Z_{0k}\V_1,\\
	\Xi_{k182y}&=\frac{\tau_n\epsilon_k^2}{\rho_n^2\eta_n^2\bar\eta_n^2 n^3}\V_1'\Z_{0k}'\Y_{0k}\y_k\y_k'\m_k\Z_{0k}\V_1,\\
	\Xi_{k190y}&=\frac{\tau_n\theta_k}{\rho_n^2\eta_n\bar\eta_n n^3}\V_1'\Z_{0k}'\Y_{0k}\y_k\y_k'\e_k\Z_{0k}\V_1,\\
	\Xi_{k191y}&=\frac{\tau_n\epsilon_k\theta_k}{\rho_n^2\eta_n\bar\eta_n^2 n^3}\V_1'\Z_{0k}'\Y_{0k}\y_k\y_k'\e_k\Z_{0k}\V_1,\\
	\Xi_{k192y}&=\frac{\tau_n\epsilon_k^2\theta_k}{\rho_n^2\eta_n^2\bar\eta_n^2 n^3}\V_1'\Z_{0k}'\Y_{0k}\y_k\y_k'\e_k\Z_{0k}\V_1,\\
	\Xi_{k200y}&=\frac{\tau_n}{\rho_n^2\eta_n\bar\eta_n n^5}\V_1'\Z_{0k}'\Y_{0k}\y_k\y_k'\p_k\y_k\y_k'\m_k\Z_{0k}\V_1,\\
	\Xi_{k201y}&=\frac{\tau_n\epsilon_k}{\rho_n^2\eta_n\bar\eta_n^2 n^5}\V_1'\Z_{0k}'\Y_{0k}\y_k\y_k'\p_k\y_k\y_k'\m_k\Z_{0k}\V_1,\\
	\Xi_{k202y}&=\frac{\tau_n\epsilon_k^2}{\rho_n^2\eta_n^2\bar\eta_n^2 n^5}\V_1'\Z_{0k}'\Y_{0k}\y_k\y_k'\p_k\y_k\y_k'\m_k\Z_{0k}\V_1,\\
	\Xi_{k210y}&=\frac{\tau_n}{\rho_n^2\eta_n\bar\eta_n n^4}\V_1'\Z_{0k}'\Y_{0k}\q_k\y_k\y_k'\m_k\Z_{0k}\V_1,\\
	\Xi_{k211y}&=\frac{\tau_n\epsilon_k}{\rho_n^2\eta_n\bar\eta_n^2 n^4}\V_1'\Z_{0k}'\Y_{0k}\q_k\y_k\y_k'\m_k\Z_{0k}\V_1,\\
	\Xi_{k212y}&=\frac{\tau_n\epsilon_k^2}{\rho_n^2\eta_n^2\bar\eta_n^2 n^4}\V_1'\Z_{0k}'\Y_{0k}\q_k\y_k\y_k'\m_k\Z_{0k}\V_1,\\
	\Xi_{k220y}&=\frac{\tau_n}{\rho_n^2\eta_n\bar\eta_n n^6}\V_1'\Z_{0k}'\Y_{0k}\q_k\y_k\y_k'\p_k\y_k\y_k'\m_k\Z_{0k}\V_1,\\
	\Xi_{k221y}&=\frac{\tau_n\epsilon_k}{\rho_n^2\eta_n\bar\eta_n^2 n^6}\V_1'\Z_{0k}'\Y_{0k}\q_k\y_k\y_k'\p_k\y_k\y_k'\m_k\Z_{0k}\V_1,\\
	\Xi_{k222y}&=\frac{\tau_n\epsilon_k^2}{\rho_n^2\eta_n^2\bar\eta_n^2 n^6}\V_1'\Z_{0k}'\Y_{0k}\q_k\y_k\y_k'\p_k\y_k\y_k'\m_k\Z_{0k}\V_1,\\
	\Xi_{k230y}&=\frac{\tau_n\theta_k}{\rho_n^2\eta_n\bar\eta_n n^4}\V_1'\Z_{0k}'\Y_{0k}\p_k\y_k\y_k'\m_k\Z_{0k}\V_1,\\
	\Xi_{k231y}&=\frac{\tau_n\epsilon_k\theta_k}{\rho_n^2\eta_n\bar\eta_n^2 n^4}\V_1'\Z_{0k}'\Y_{0k}\p_k\y_k\y_k'\m_k\Z_{0k}\V_1,\\
	\Xi_{k232y}&=\frac{\tau_n\epsilon_k^2\theta_k}{\rho_n^2\eta_n^2\bar\eta_n^2 n^4}\V_1'\Z_{0k}'\Y_{0k}\p_k\y_k\y_k'\m_k\Z_{0k}\V_1,\\
	\Xi_{k240y}&=\frac{\tau_n\theta_k}{\rho_n^2\eta_n\bar\eta_n n^4}\V_1'\Z_{0k}'\Y_{0k}\q_k\y_k\y_k'\e_k\Z_{0k}\V_1,\\
	\Xi_{k241y}&=\frac{\tau_n\epsilon_k\theta_k}{\rho_n^2\eta_n\bar\eta_n^2 n^4}\V_1'\Z_{0k}'\Y_{0k}\q_k\y_k\y_k'\e_k\Z_{0k}\V_1,\\
	\Xi_{k242y}&=\frac{\tau_n\epsilon_k^2\theta_k}{\rho_n^2\eta_n^2\bar\eta_n^2 n^4}\V_1'\Z_{0k}'\Y_{0k}\q_k\y_k\y_k'\e_k\Z_{0k}\V_1,~\Xi_{k250}=\frac{\tau_n}{\rho_n^2\eta_n\bar\eta_n n^2}\V_1'\Z_{0k}'\Y_{0k}\y_k\y_k'\V_1,\\
	\Xi_{k251}&=\frac{\tau_n\epsilon_k}{\rho_n^2\eta_n\bar\eta_n^2 n^2}\V_1'\Z_{0k}'\Y_{0k}\y_k\y_k'\V_1,~\Xi_{k252}=\frac{\tau_n\epsilon_k^2}{\rho_n^2\eta_n^2\bar\eta_n^2 n^2}\V_1'\Z_{0k}'\Y_{0k}\y_k\y_k'\V_1.
\end{align*}

\section{Additional numerical results}\label{app:simulations}
This section presents supplementary numerical results, including:  
(i) empirical density histograms of the spiked eigenvalues, and  
(ii) comprehensive tables quantifying spike number estimation accuracy.  
These analyses were deferred from Section \ref{simulation} due to space limitations.

\begin{table}[H]
	\caption{Estimation accuracy under Gamma, Uniform and Gaussian assumptions under general covariance matrix (All values in percentages)}
	\label{sphericcase4}
	\begin{tabular}{@{\hspace{0.1cm}}>{\centering\arraybackslash}m{1.3cm}@{\hspace{0.6cm}}c@{\hspace{0.6cm}}c
			@{\hspace{0.6cm}}c@{\hspace{0.6cm}}c@{\hspace{0.4cm}}c@{\hspace{0.4cm}}c@{\hspace{0.5cm}}c@{\hspace{0.4cm}}c@{\hspace{0.3cm}}c}
		\hline
		$Ga(1,2)$& $(p,n)$ &J\&B-E&Y\&J-E&J\&B&Y\&J& P\&Y& BCF(AIC)&K\&N\\[6pt]
		& (5,50)&\textbf{86.20}&0&61.00&0&13.70&58.50 &52.60\\
		& (10,100)&\textbf{97.60}&87.70&21.20&75.30&30.00&49.60 &91.30\\
		& (20,200)&\textbf{99.30}&90.10&1.60&90.10&64.90&46.20 &91.20\\
		& (40,400)&\textbf{99.70}&88.70&0.20&88.70&83.40&57.40 &87.50 \\
		& (10,50)&\textbf{82.30}&0&52.70&0&3.30&44.10 &49.20\\
		& (20,100)&\textbf{94.80}&82.40&31.90&68.80&15.70&53.50 &82.70\\
		& (40,200)&\textbf{98.10}&90.20&7.10&90.20&47.30&61.80 &88.80\\
		& (80,400)&\textbf{99.40}&92.40&0.70&92.40&82.20&83.90 &87.40 \\
		& (15,50)&\textbf{78.10}&0&54.10&0&2.10&43.40 &39.60\\
		& (30,100)&\textbf{92.00}&76.00&43.10&65.30&6.90&54.50 &77.30\\
		& (60,200)&\textbf{97.60}&91.60&19.10&91.60&34.20&76.80 &89.20\\
		& (120,400)&\textbf{99.50}&95.30&4.30&95.30&75.00&94.70 &90.70\\
		\hline
		$U(-\sqrt{3},\sqrt{3})$& $(p,n)$  &J\&B-E&Y\&J-E&J\&B&Y\&J& P\&Y& BCF(AIC)&K\&N\\[6pt]
		& (5,50)&\textbf{86.20}&87.80&74.90&87.80&10.30&80.40 &82.80\\
		& (10,100)&95.30&\textbf{98.50}&92.20&\textbf{98.50}&32.80&88.30 &98.30\\
		& (20,200)&98.00&\textbf{99.90}&95.90&\textbf{99.90}&62.10&94.60 &\textbf{99.90}\\
		& (40,400)&99.90&\textbf{100.00}&99.40&\textbf{100.00}&82.70&96.30 &99.70 \\
		& (10,50)&\textbf{76.00}&58.20&73.00&58.20&3.40&69.60 &54.60\\
		& (20,100)&\textbf{94.20}&93.00&92.10&93.00&18.70&91.70 &93.00\\
		& (40,200)&98.00&59.50&97.20&64.20&56.00&97.90 &\textbf{99.40}\\
		& (80,400)&99.50&0.20&99.40&0&83.20&99.30 &\textbf{99.80} \\
		& (15,50)&\textbf{76.20}&39.70&59.00&39.70&1.20&63.40 &39.10\\
		& (30,100)&\textbf{86.50}&39.70&86.30&52.60&8.90&84.70 &79.80\\
		& (60,200)&96.10&0&95.60&0.2&38.80&95.80 &\textbf{97.80}\\
		& (120,400)&98.90&0&99.20&0&77.60&\textbf{99.80} &\textbf{99.80}\\
		\hline
		$N(0,1)$& $(p,n)$  &J\&B-E&Y\&J-E&J\&B&Y\&J& P\&Y& BCF(AIC)&K\&N\\[6pt]
		& (5,50)&\textbf{86.70}&81.80&67.00&81.70&9.70&74.60 &76.10\\
		& (10,100)&97.20&\textbf{98.30}&70.80&\textbf{98.30}&35.00&85.10 &97.30\\
		& (20,200)&98.30&\textbf{99.70}&66.50&\textbf{99.70}&61.70&90.90 &\textbf{99.70}\\
		& (40,400)&99.50&\textbf{100.00}&81.10&99.60&84.40&92.70 &99.50 \\
		& (10,50)&\textbf{86.50}&58.10&73.10&57.80&4.10&70.10 &54.90\\
		& (20,100)&\textbf{93.10}&89.80&77.80&89.80&19.20&89.00 &89.80\\
		& (40,200)&98.40&\textbf{99.10}&77.40&\textbf{99.10}&54.20&94.70 &\textbf{99.10}\\
		& (80,400)&99.60&\textbf{99.90}&95.90&95.90&80.80&99.20 &99.50 \\
		& (15,50)&\textbf{76.40}&41.80&68.80&41.50&0.70&60.50 &41.30\\
		& (30,100)&\textbf{97.20}&75.80&70.80&75.80&7.80&83.70 &78.60\\
		& (60,200)&97.70&91.90&91.30&87.70&42.30&97.30 &\textbf{98.10}\\
		& (120,400)&\textbf{99.90}&0.60&98.80&2.80&79.20&99.80 &99.40\\
		\hline
	\end{tabular}
\end{table}

\begin{table}[H]
	\caption{Estimation accuracy under Gamma, Uniform and Gaussian assumptions under block covariance matrix (All values in percentages)}
	\label{sphericcase5}
	\begin{tabular}{@{\hspace{0.1cm}}>{\centering\arraybackslash}m{1.3cm}@{\hspace{0.6cm}}c@{\hspace{0.6cm}}c
			@{\hspace{0.6cm}}c@{\hspace{0.6cm}}c@{\hspace{0.4cm}}c@{\hspace{0.4cm}}c@{\hspace{0.5cm}}c@{\hspace{0.4cm}}c@{\hspace{0.3cm}}c}
		\hline
		$Ga(1,2)$& $(p,n)$  &J\&B-E&Y\&J-E&J\&B&Y\&J& P\&Y& BCF(AIC)&K\&N\\[6pt]
		& (5,50)&\textbf{83.80}&5.70&63.90&0&10.50&46.20 &48.00\\
		& (10,100)&\textbf{96.30}&75.90&33.00&61.40&23.00&40.30 &80.30\\
		& (20,200)&\textbf{99.30}&84.40&3.50&84.20&54.80&40.80 &85.30\\
		& (40,400)&\textbf{99.90}&84.90&0.40&84.90&83.70&51.20 &83.10 \\
		& (10,50)&\textbf{75.10}&8.80&49.70&0&4.00&28.20 &35.00\\
		& (20,100)&\textbf{91.40}&63.10&41.30&49.30&11.60&42.80 &66.40\\
		& (40,200)&\textbf{98.20}&83.50&18.00&83.30&37.80&56.80 &82.80\\
		& (80,400)&\textbf{99.70}&91.60&2.90&91.60&79.10&82.60 &85.80 \\
		& (15,50)&\textbf{65.70}&5.20&46.00&0&1.30&34.30 &29.40\\
		& (30,100)&\textbf{84.90}&52.70&46.80&40.20&6.30&45.30 &57.90\\
		& (60,200)&\textbf{95.80}&78.90&34.00&78.70&24.80&70.60 &78.40\\
		& (120,400)&\textbf{99.00}&93.90&16.70&93.90&65.90&93.00 &88.20\\
		\hline
		$U(-\sqrt{3},\sqrt{3})$& $(p,n)$  &J\&B-E&Y\&J-E&J\&B&Y\&J& P\&Y& BCF(AIC)&K\&N\\[6pt]
		& (5,50)&\textbf{85.10}&90.00&72.30&90.00&8.00&87.70 &84.30\\
		& (10,100)&95.40&\textbf{99.60}&92.00&\textbf{99.60}&35.50&93.00 &99.50\\
		& (20,200)&97.80&\textbf{100.00}&96.90&\textbf{100.00}&60.30&95.10 &\textbf{100.00}\\
		& (40,400)&99.80&\textbf{100.00}&99.70&\textbf{100.00}&80.70&95.70 &\textbf{100.00} \\
		& (10,50)&\textbf{83.70}&68.80&70.50&68.80&1.90&81.90 &65.70\\
		& (20,100)&94.20&\textbf{95.90}&92.40&\textbf{95.90}&20.10&94.80 &\textbf{95.90}\\
		& (40,200)&97.90&38.50&97.60&74.90&56.30&97.70 &\textbf{100.00}\\
		& (80,400)&99.30&0&99.30&0&80.00&99.60 &\textbf{100.00} \\
		& (15,50)&\textbf{78.00}&48.00&62.40&48.00&0.40&72.10 &47.20\\
		& (30,100)&89.30&48.50&90.20&63.40&8.00&\textbf{90.80} &88.00\\
		& (60,200)&97.10&0&96.90&0&42.80&98.00 &\textbf{99.00}\\
		& (120,400)&99.20&0&99.60&0&77.10&99.80 &\textbf{99.70}\\
		\hline
		$N(0,1)$& $(p,n)$  &J\&B-E&Y\&J-E&J\&B&Y\&J& P\&Y& BCF(AIC)&K\&N\\[6pt]
		& (5,50)&\textbf{88.60}&81.00&70.60&80.90&8.90&74.80 &74.10\\
		& (10,100)&94.30&\textbf{97.80}&68.80&\textbf{97.80}&30.90&84.20 &97.30\\
		& (20,200)&98.60&99.70&65.30&99.70&65.00&88.40 &\textbf{99.90}\\
		& (40,400)&99.50&\textbf{100.00}&79.60&99.70&83.40&91.80 &99.50 \\
		& (10,50)&\textbf{79.50}&59.50&70.00&59.40&2.60&65.80 &56.10\\
		& (20,100)&\textbf{95.70}&90.60&79.90&90.60&19.30&87.20 &90.60\\
		& (40,200)&98.40&\textbf{99.90}&78.00&\textbf{99.90}&53.60&94.90 &\textbf{99.90}\\
		& (80,400)&99.20&80.70&94.30&94.00&82.40&98.70 &\textbf{99.40} \\
		& (15,50)&64.00&42.40&\textbf{66.10}&42.20&1.10&59.30 &41.80\\
		& (30,100)&\textbf{90.00}&75.00&85.10&75.00&8.00&82.30 &78.60\\
		& (60,200)&96.80&44.60&91.10&88.60&45.40&97.00 &\textbf{98.60}\\
		& (120,400)&99.10&0&97.80&2.30&78.20&\textbf{99.60} &99.10\\
		\hline
	\end{tabular}
\end{table}

\begin{figure}[H]
	
	\begin{subfigure}{0.33\textwidth}
		\centering
		\includegraphics[width=\linewidth]{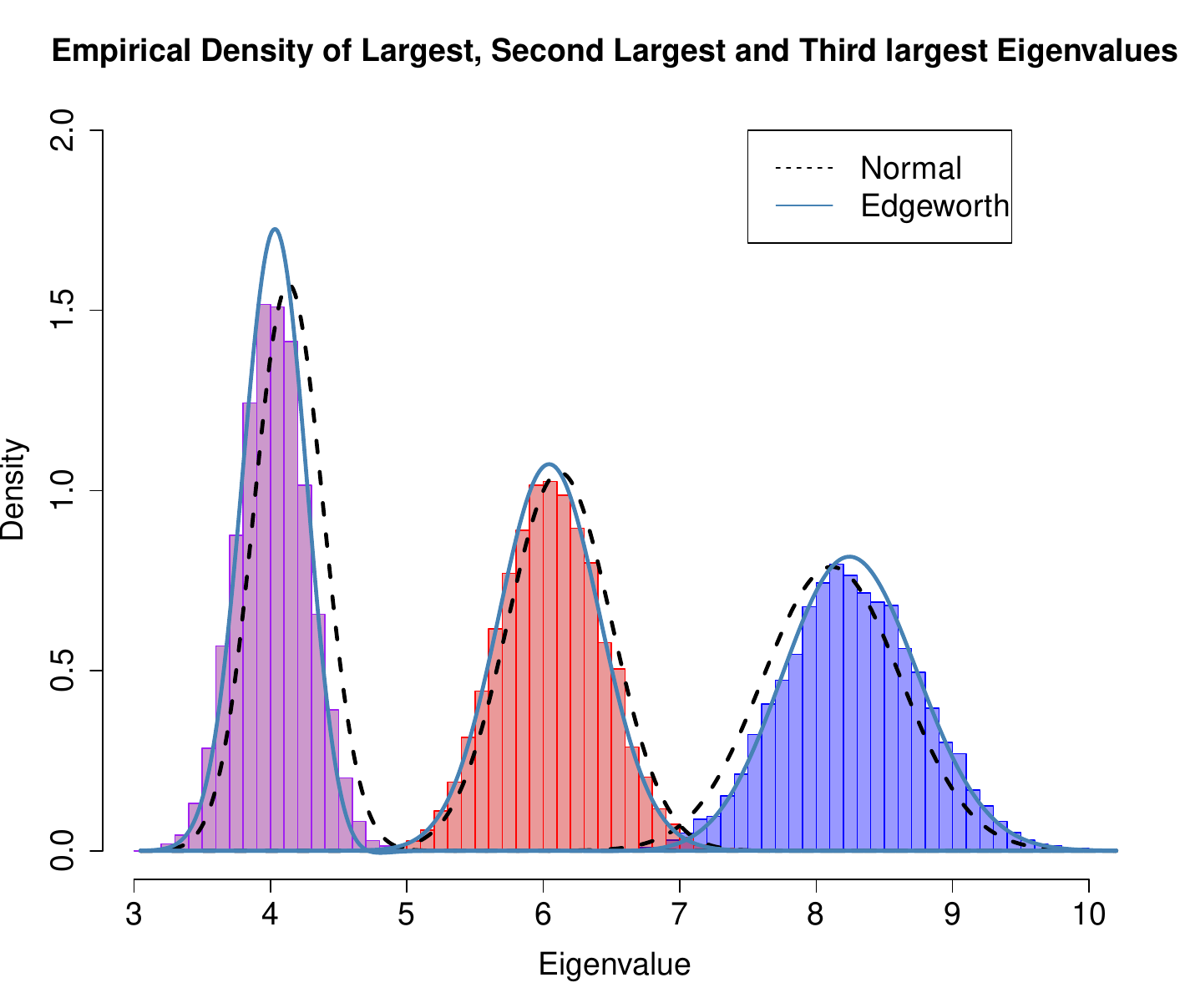}
		\caption{$U(-\sqrt{3}, \sqrt{3})$}
		\label{fig:image21}
	\end{subfigure}%
	\begin{subfigure}{0.33\textwidth}
		\centering
		\includegraphics[width=\linewidth]{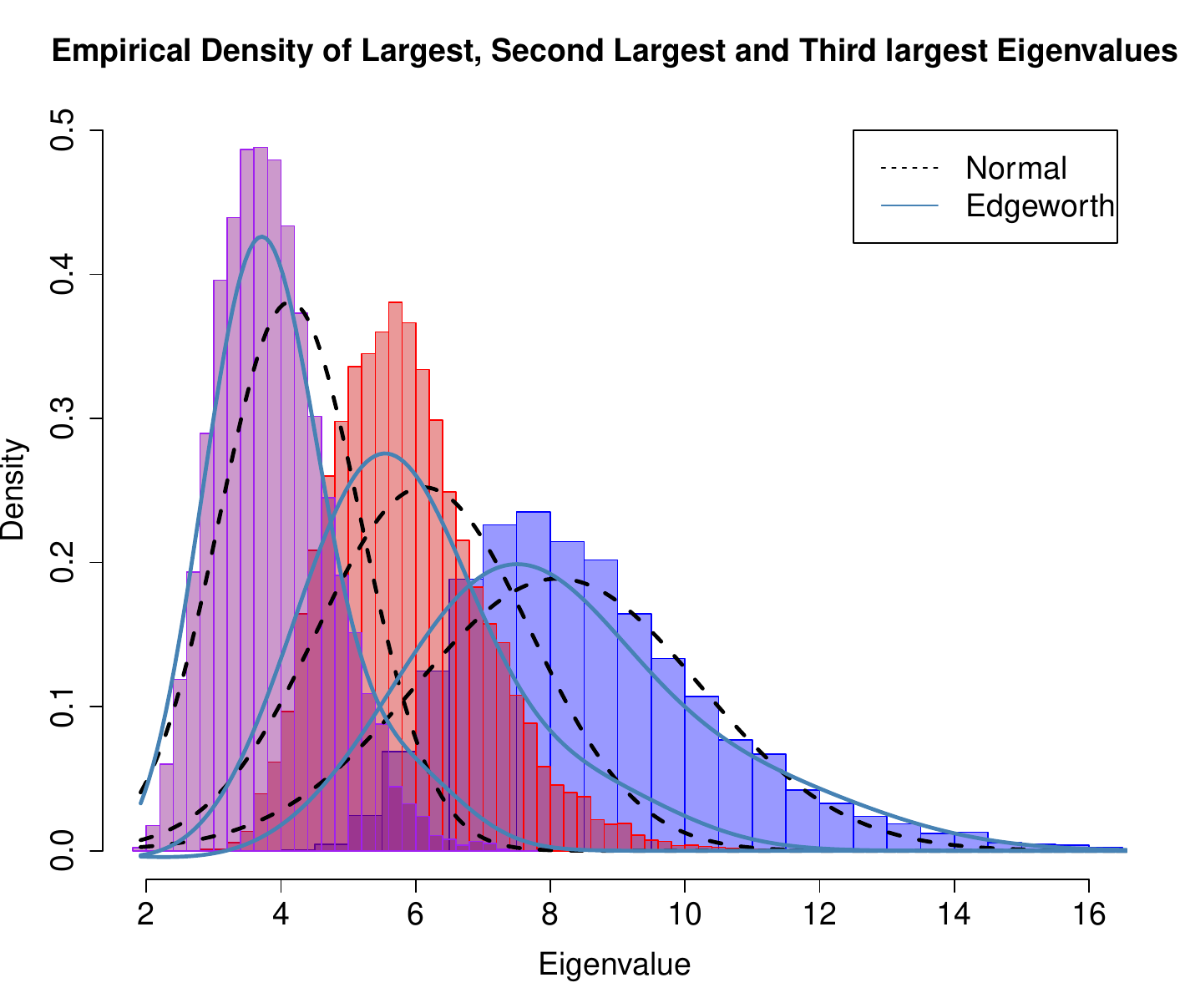}
		\caption{$\chi^2(1),~ l_1=8,~l_2=6,~l_3=4$}
		\label{fig:image18}
	\end{subfigure}%
	\begin{subfigure}{0.33\textwidth}
		\centering
		\includegraphics[width=\linewidth]{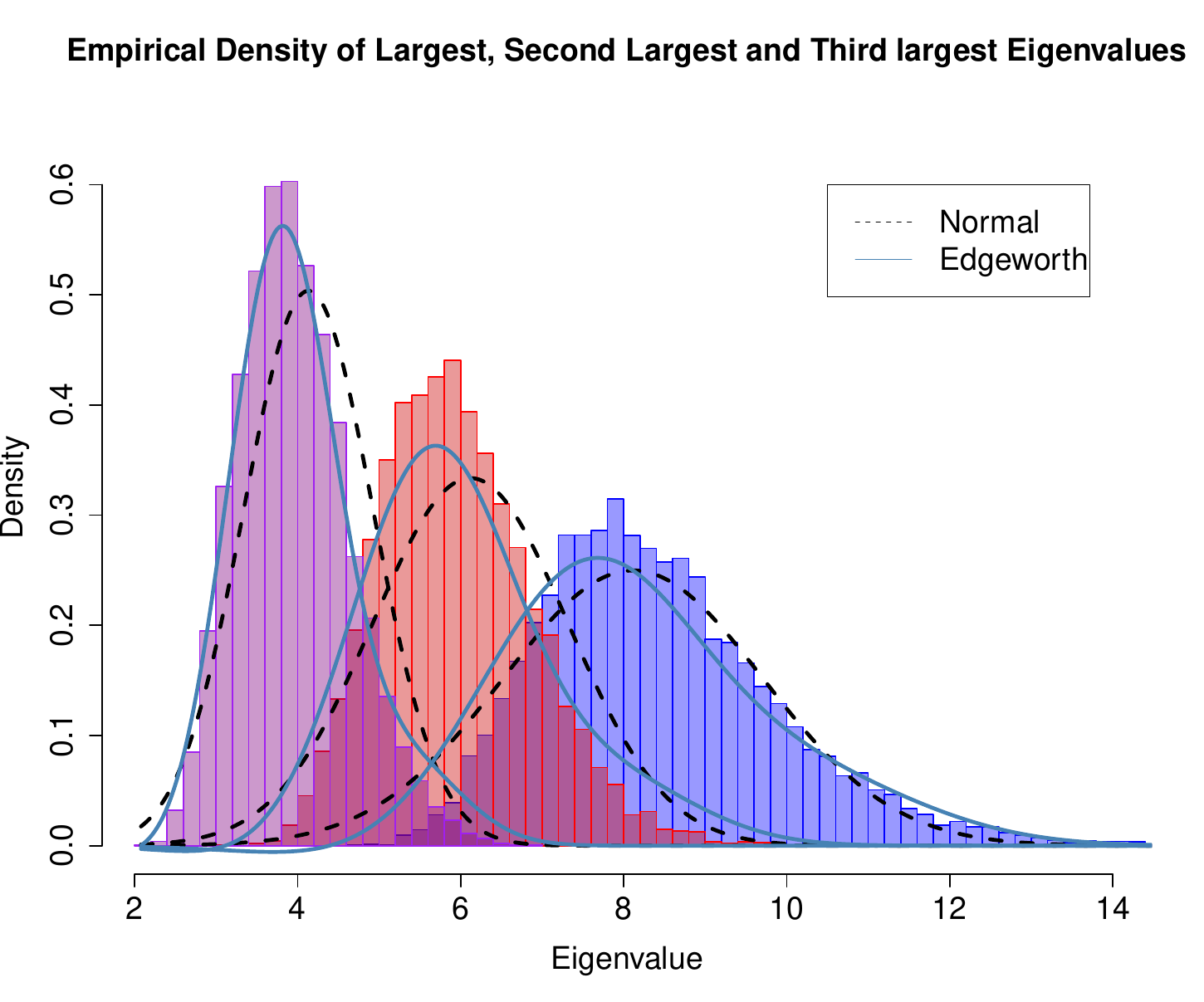}
		\caption{ $Ga(1,1),~ l_1=8,~l_2=6,~l_3=4$}
		\label{fig:image21}
	\end{subfigure}%

	\begin{subfigure}{0.33\textwidth}
		\centering
		\includegraphics[width=\linewidth]{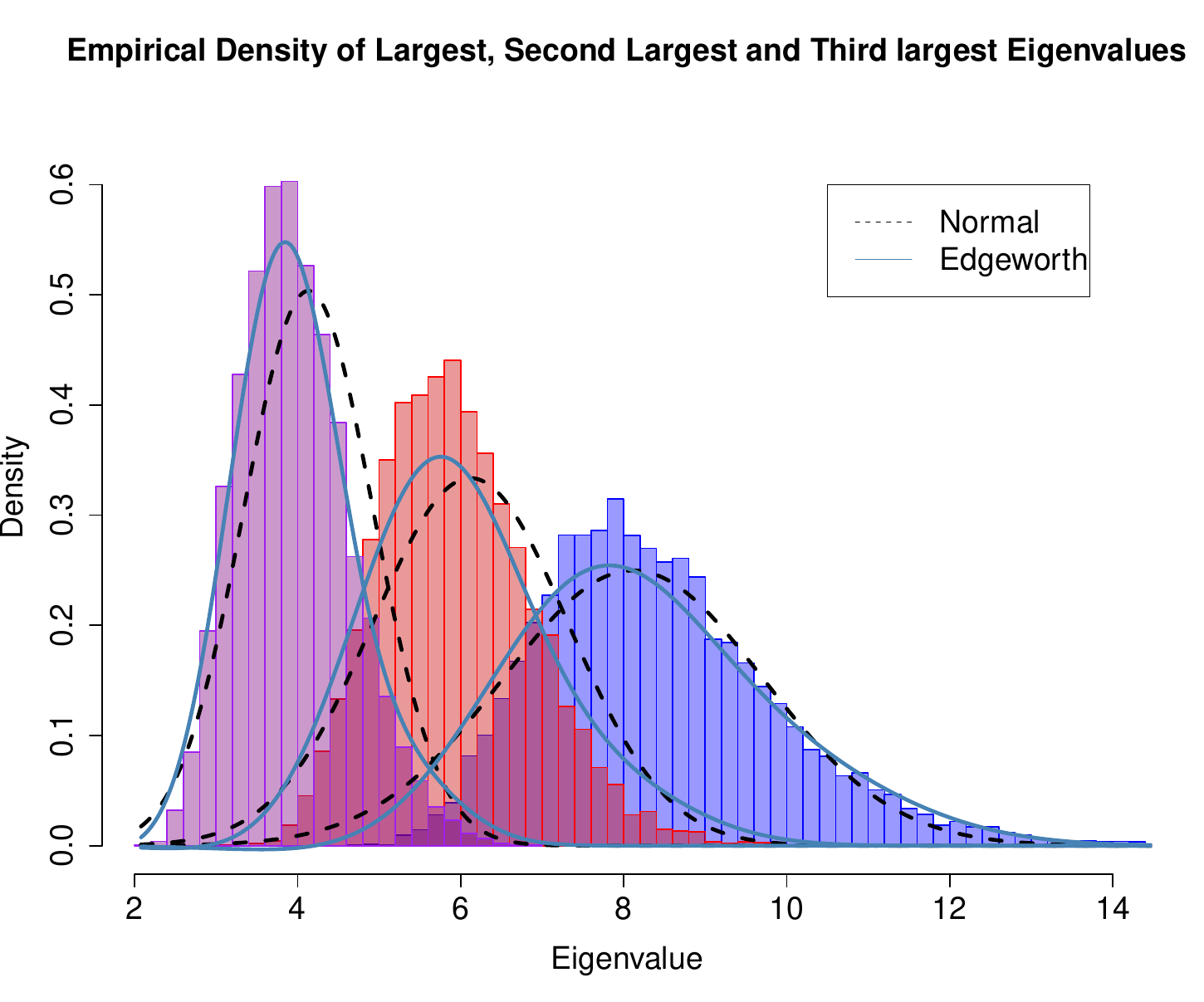}
		\caption{ $Ga(1,2),~ l_1=8,~l_2=6,~l_3=4$}
		\label{fig:image18}
	\end{subfigure}%
	\begin{subfigure}{0.33\textwidth}
		\centering
		\includegraphics[width=\linewidth]{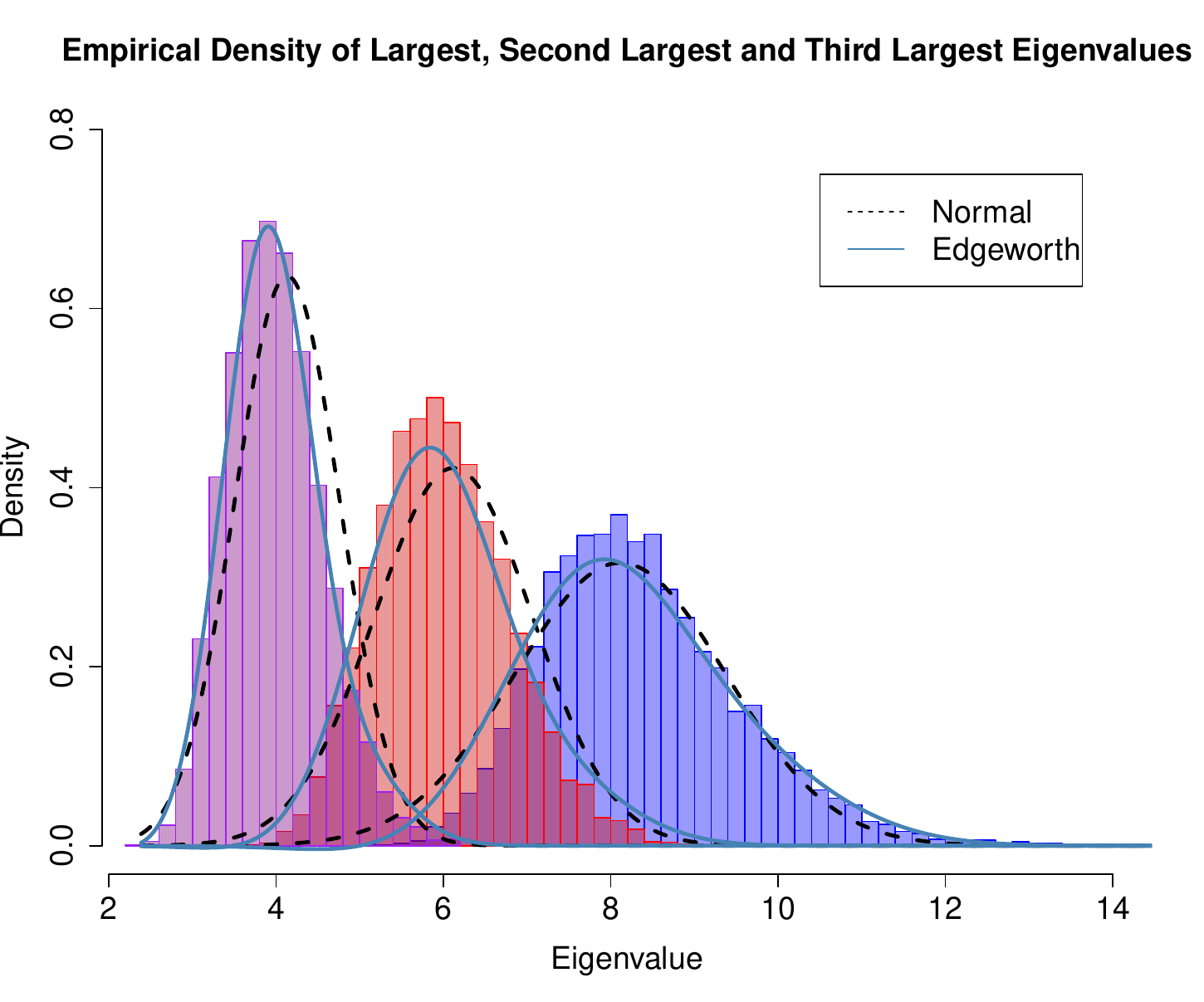}
		\caption{$Ga(2,2),~ l_1=8,~l_2=6,~l_3=4$}
		\label{fig:image19}
	\end{subfigure}
	\begin{subfigure}{0.33\textwidth}
		\centering
		\includegraphics[width=\linewidth]{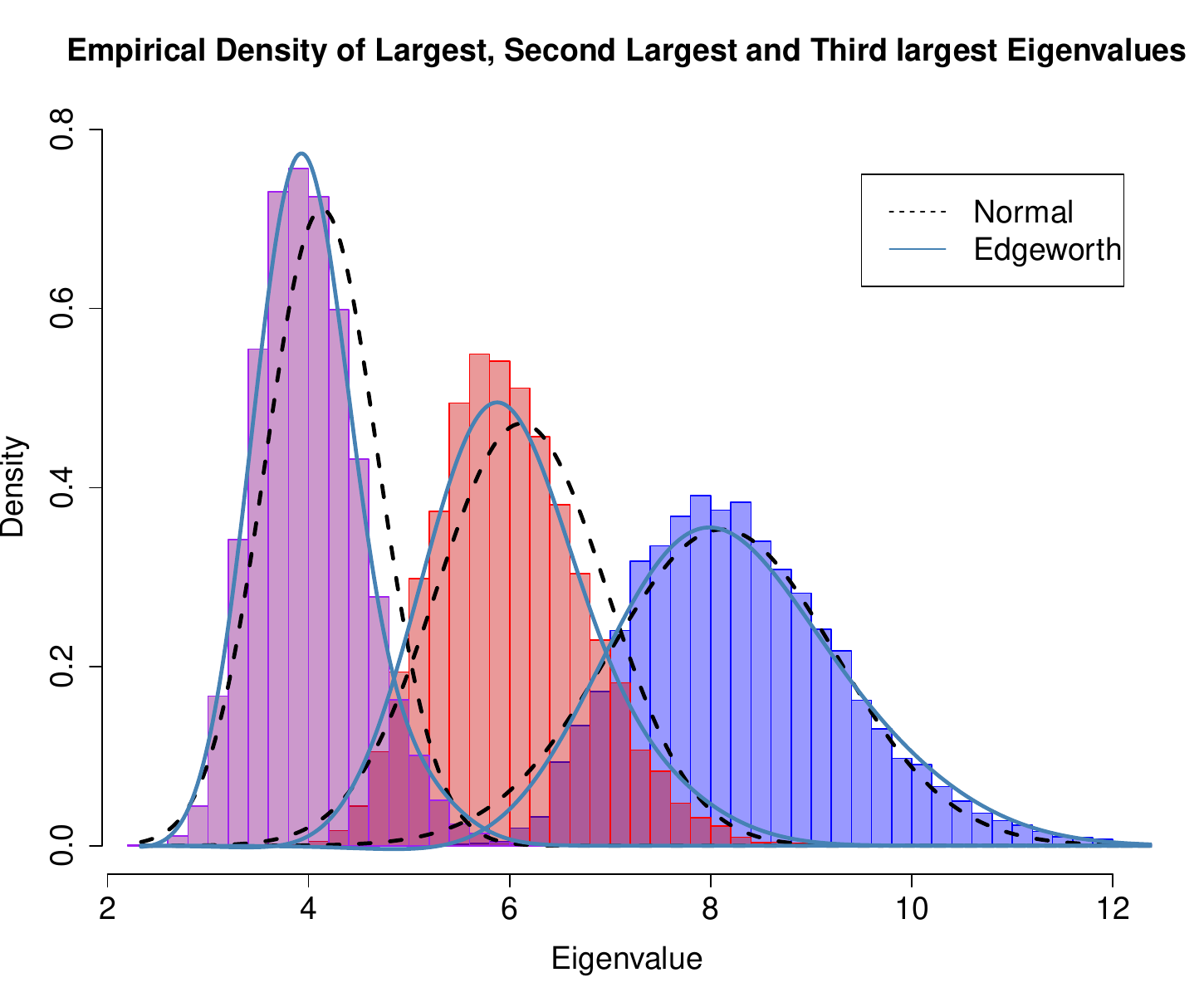}
		\caption{ $Ga(3,3),~ l_1=8,~l_2=6,~l_3=4$}
		\label{fig:image22}
	\end{subfigure}
	\caption{ Edgeworth expansion  for different samples under Setting 3}
	\label{fig:both_images3}
\end{figure}

\begin{figure}[H]
	
	\begin{subfigure}{0.33\textwidth}
		\centering
		\includegraphics[width=\linewidth]{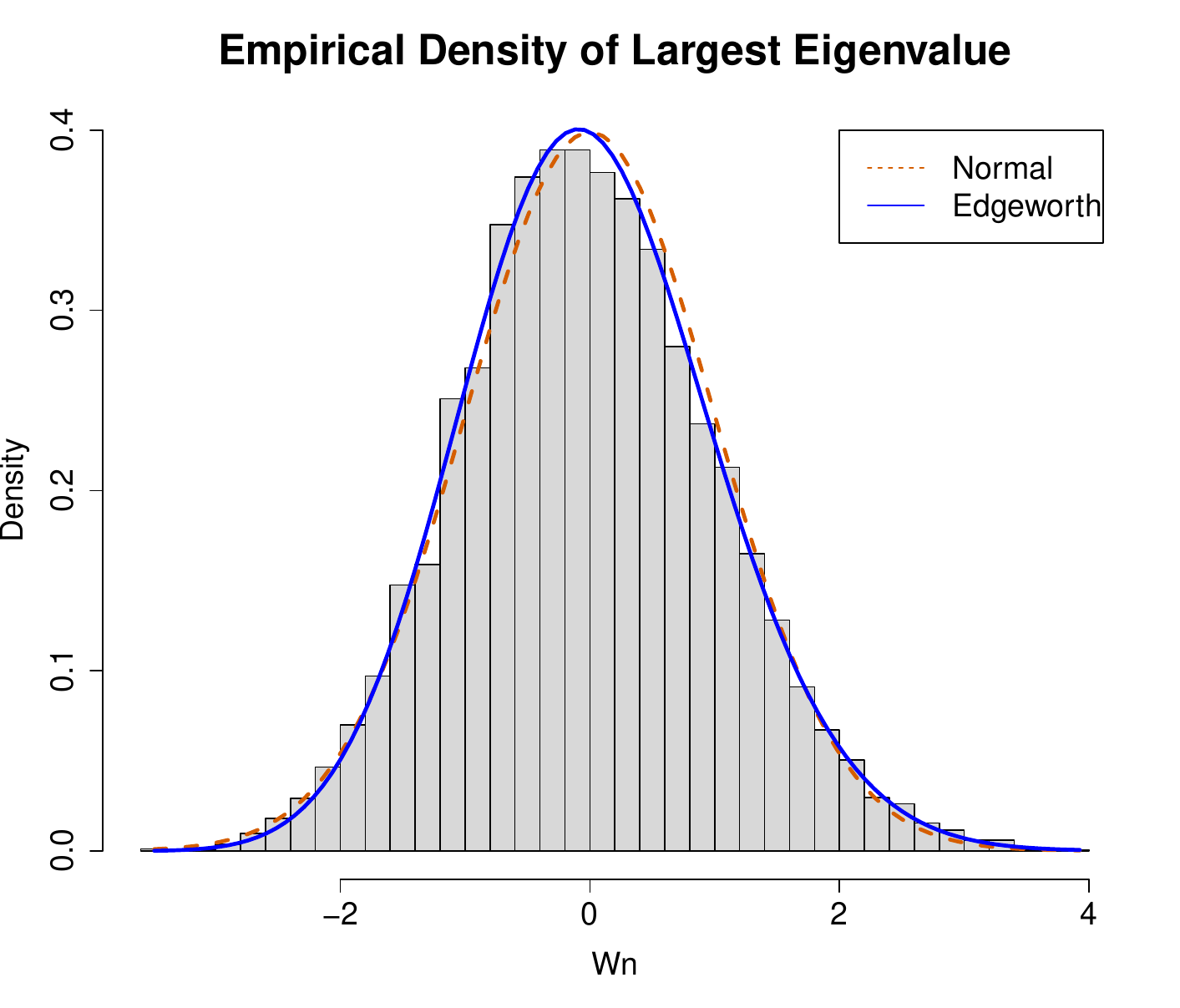}
		\caption{ $U(-\sqrt{3},\sqrt{3}),~ l=4$}
		\label{fig:image21}
	\end{subfigure}%
	\begin{subfigure}{0.33\textwidth}
		\centering
		\includegraphics[width=\linewidth]{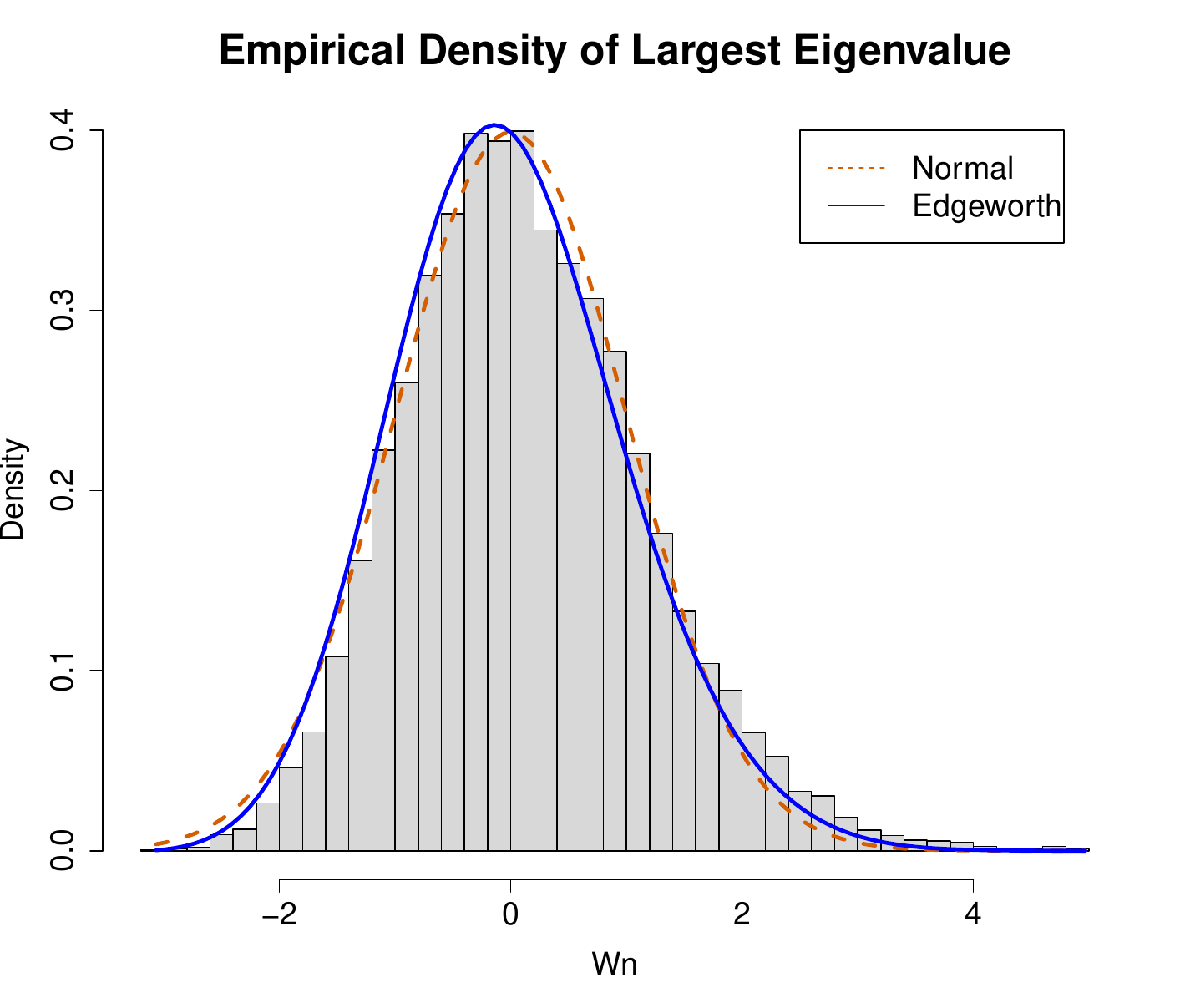}
		\caption{$\chi^2(1),~ l=4$}
		\label{fig:image18}
	\end{subfigure}%
	\begin{subfigure}{0.33\textwidth}
		\centering
		\includegraphics[width=\linewidth]{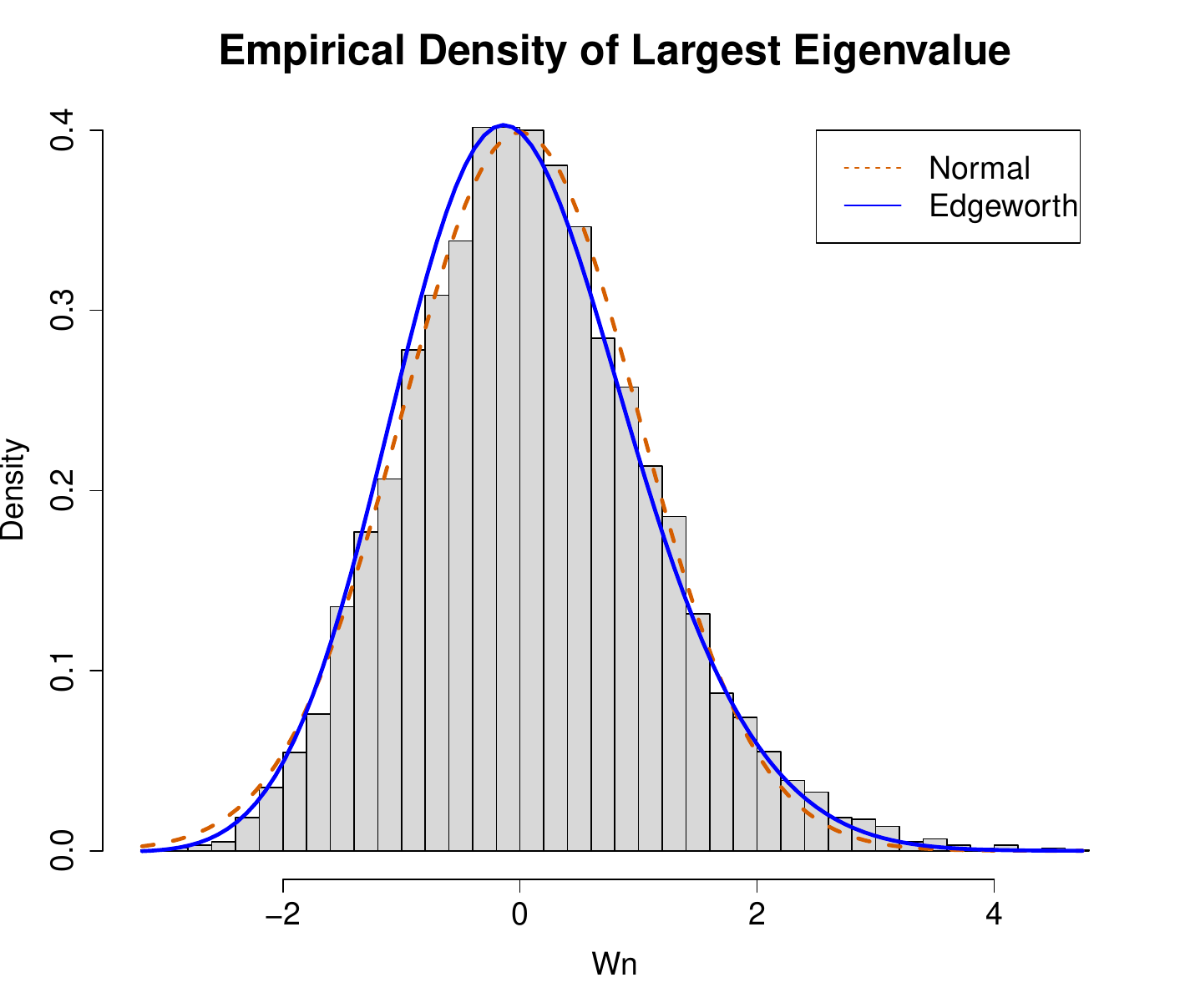}
		\caption{  $Ga(1,1),~ l=4$}
		\label{fig:image21}
	\end{subfigure}%

	\begin{subfigure}{0.33\textwidth}
		\centering
		\includegraphics[width=\linewidth]{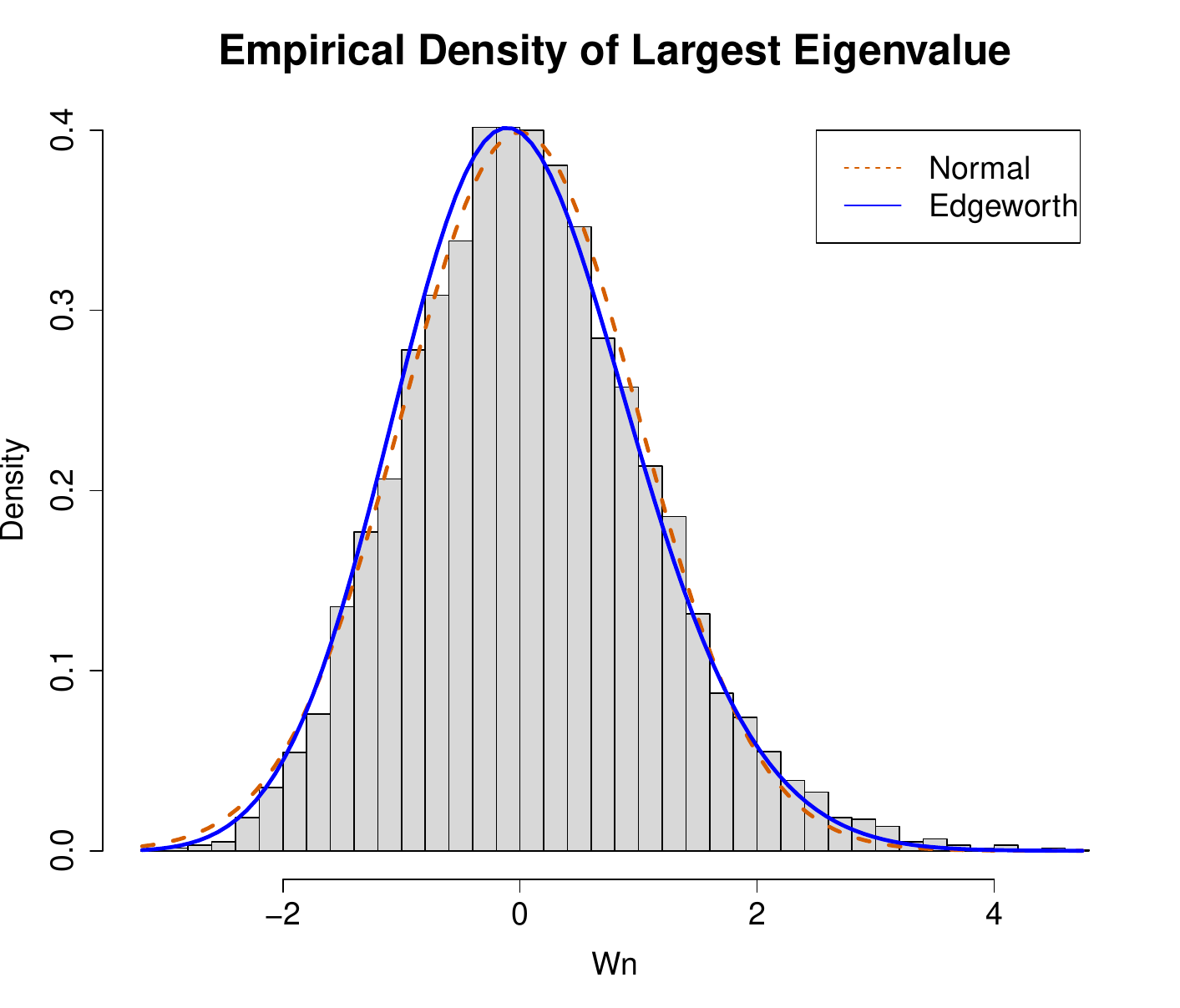}
		\caption{ $Ga(1,2),~ l=4$}
		\label{fig:image18}
	\end{subfigure}%
	\begin{subfigure}{0.33\textwidth}
		\centering
		\includegraphics[width=\linewidth]{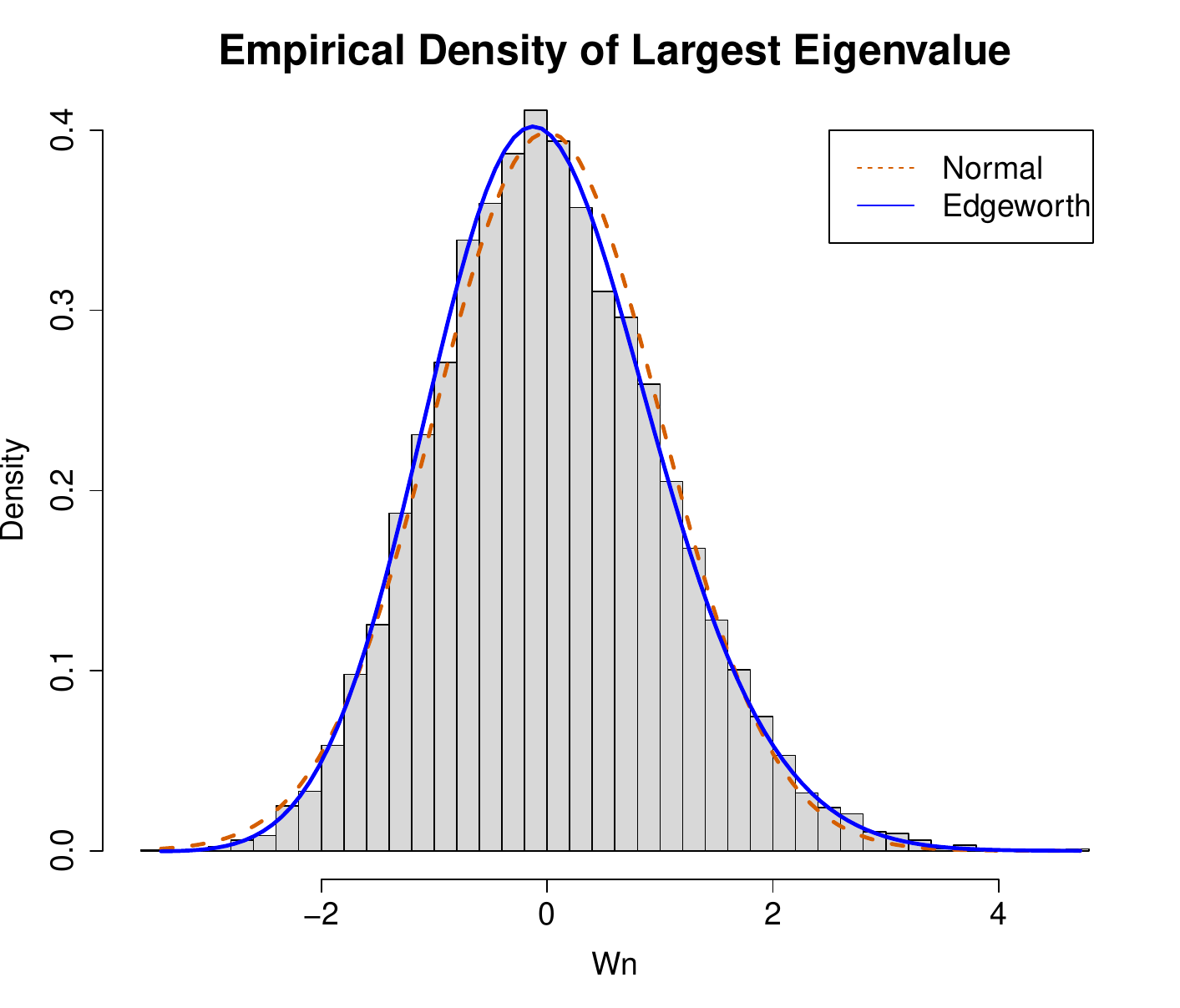}
		\caption{$Ga(2,2),~ l=4$}
		\label{fig:image19}
	\end{subfigure}
	\begin{subfigure}{0.33\textwidth}
		\centering
		\includegraphics[width=\linewidth]{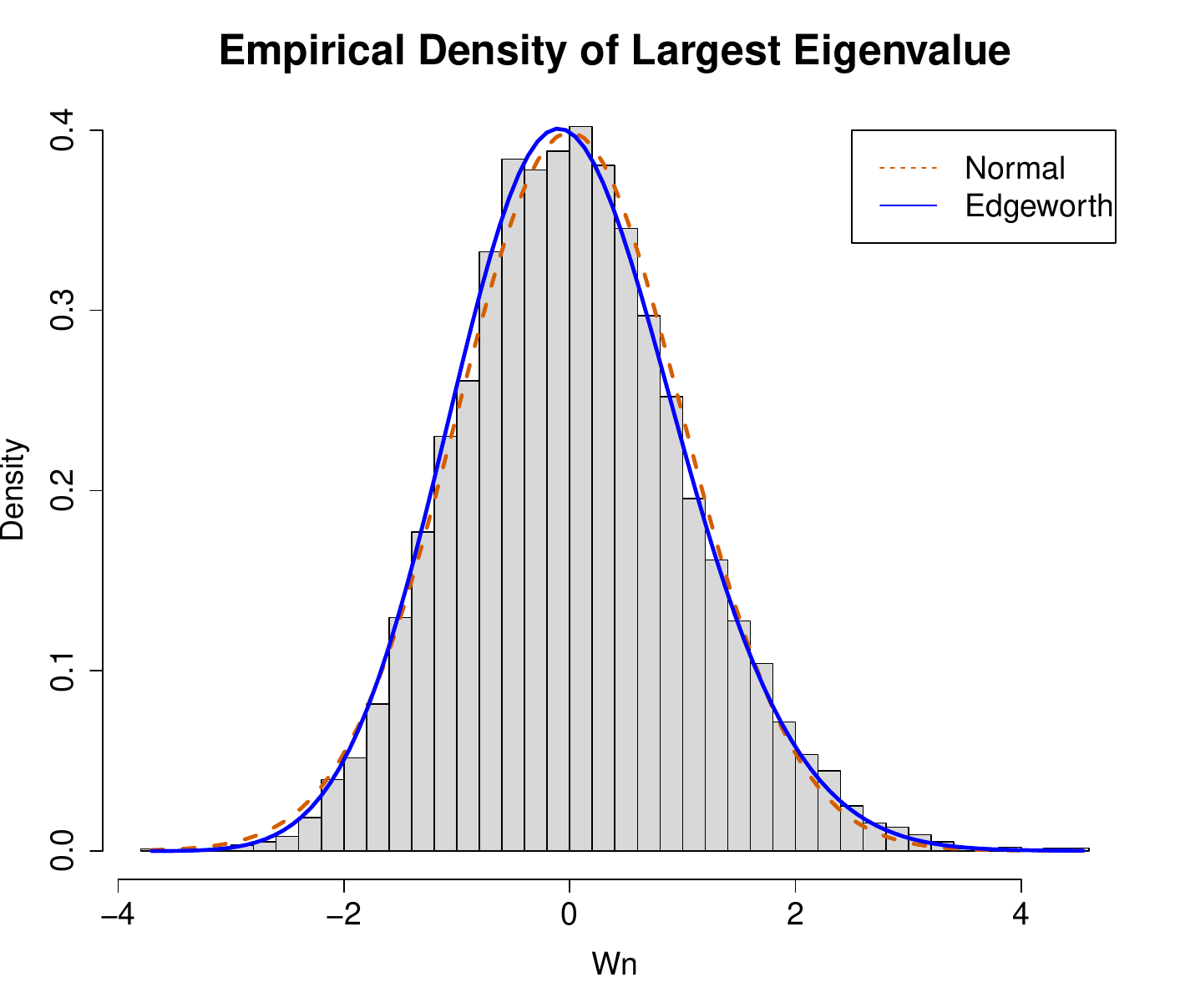}
		\caption{$Ga(3,3),~ l=4$}
		\label{fig:image22}
	\end{subfigure}
	\caption{ Edgeworth expansion  for different samples under Setting 4}
	\label{fig:both_images4}
\end{figure}

\begin{figure}[H]
	
	\begin{subfigure}{0.33\textwidth}
		\centering
		\includegraphics[width=\linewidth]{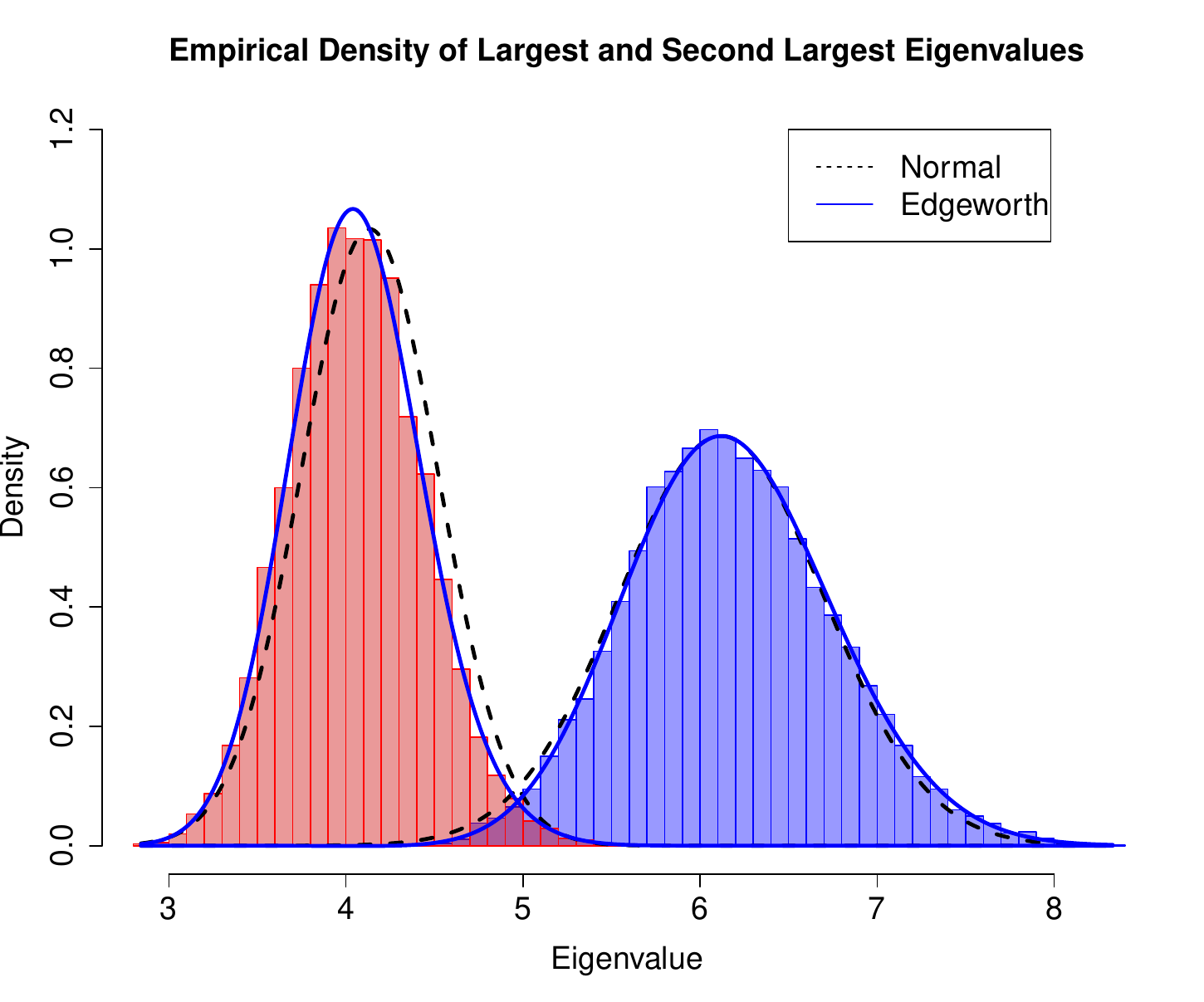}
		\caption{  $U(-\sqrt{3},\sqrt{3}),~l_1=6,~l_2=4$}
		\label{fig:image21}
	\end{subfigure}%
	\begin{subfigure}{0.33\textwidth}
		\centering
		\includegraphics[width=\linewidth]{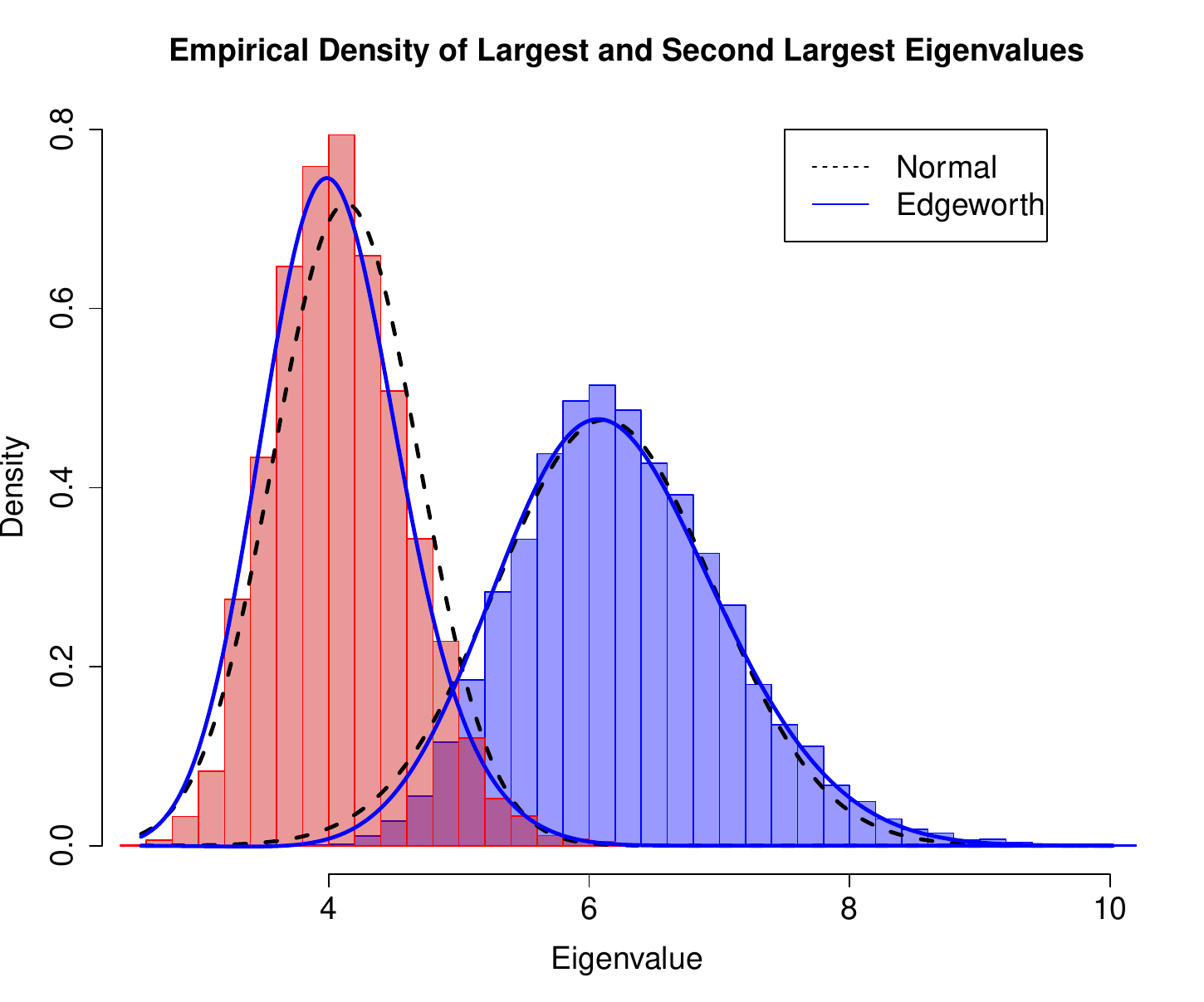}
		\caption{ $\chi^2(1),~l_1=6,~l_2=4$}
		\label{fig:image18}
	\end{subfigure}%
	\begin{subfigure}{0.33\textwidth}
		\centering
		\includegraphics[width=\linewidth]{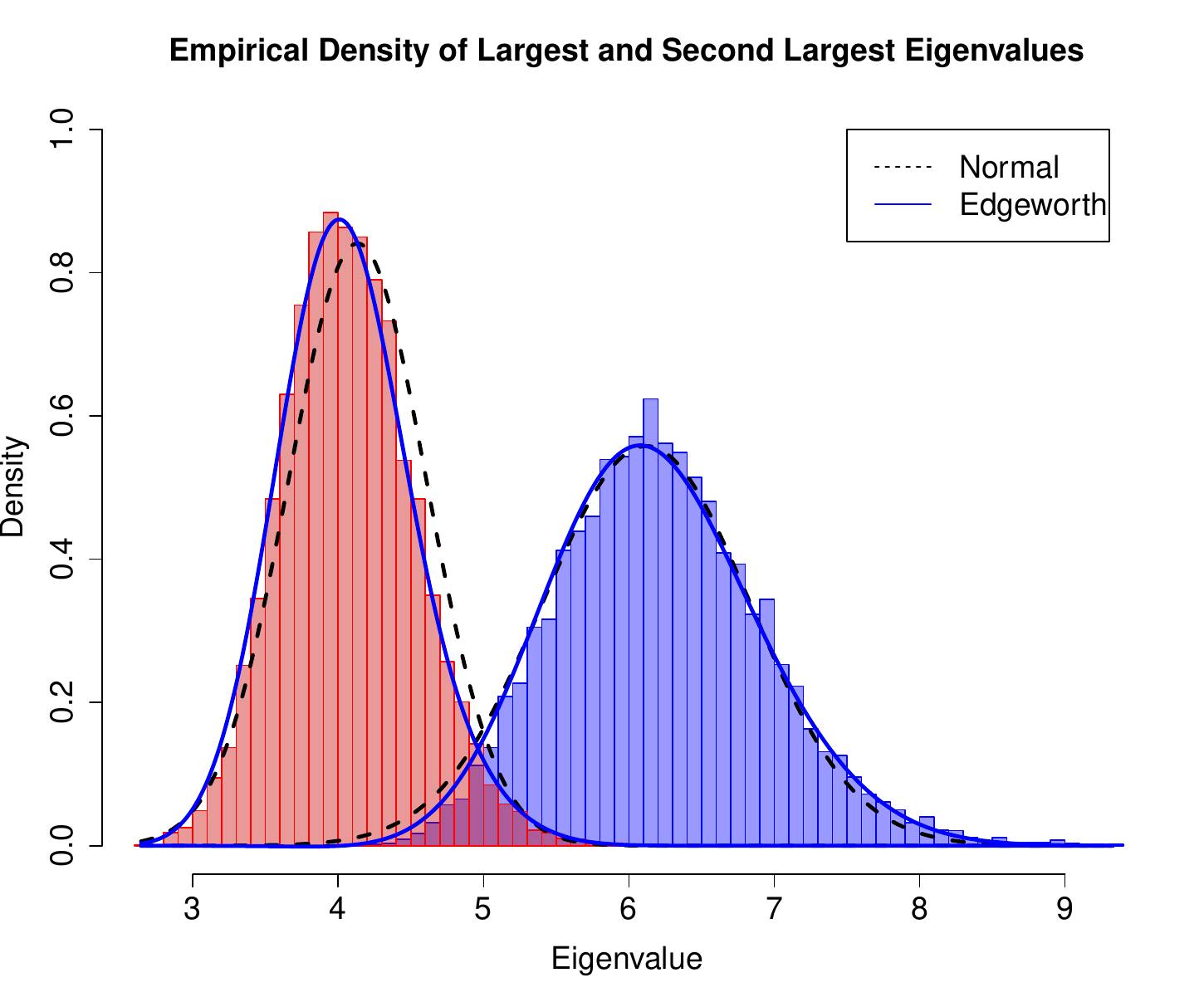}
		\caption{ $Ga(1,1),~l_1=6,~l_2=4$}
		\label{fig:image21}
	\end{subfigure}%

	\begin{subfigure}{0.33\textwidth}
		\centering
		\includegraphics[width=\linewidth]{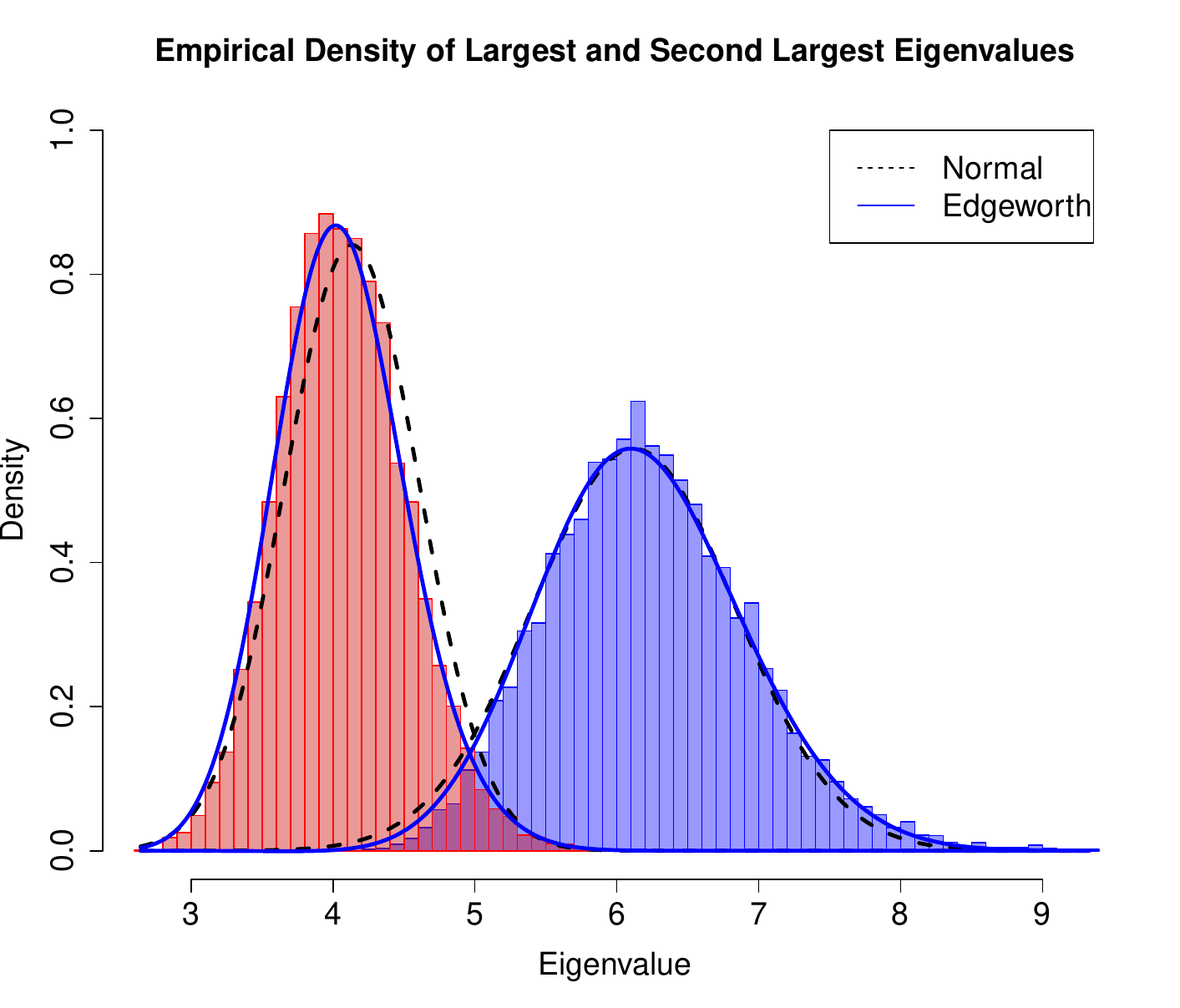}
		\caption{ $Ga(1,2),~ l_1=6,~l_2=4$}
		\label{fig:image18}
	\end{subfigure}%
	\begin{subfigure}{0.33\textwidth}
		\centering
		\includegraphics[width=\linewidth]{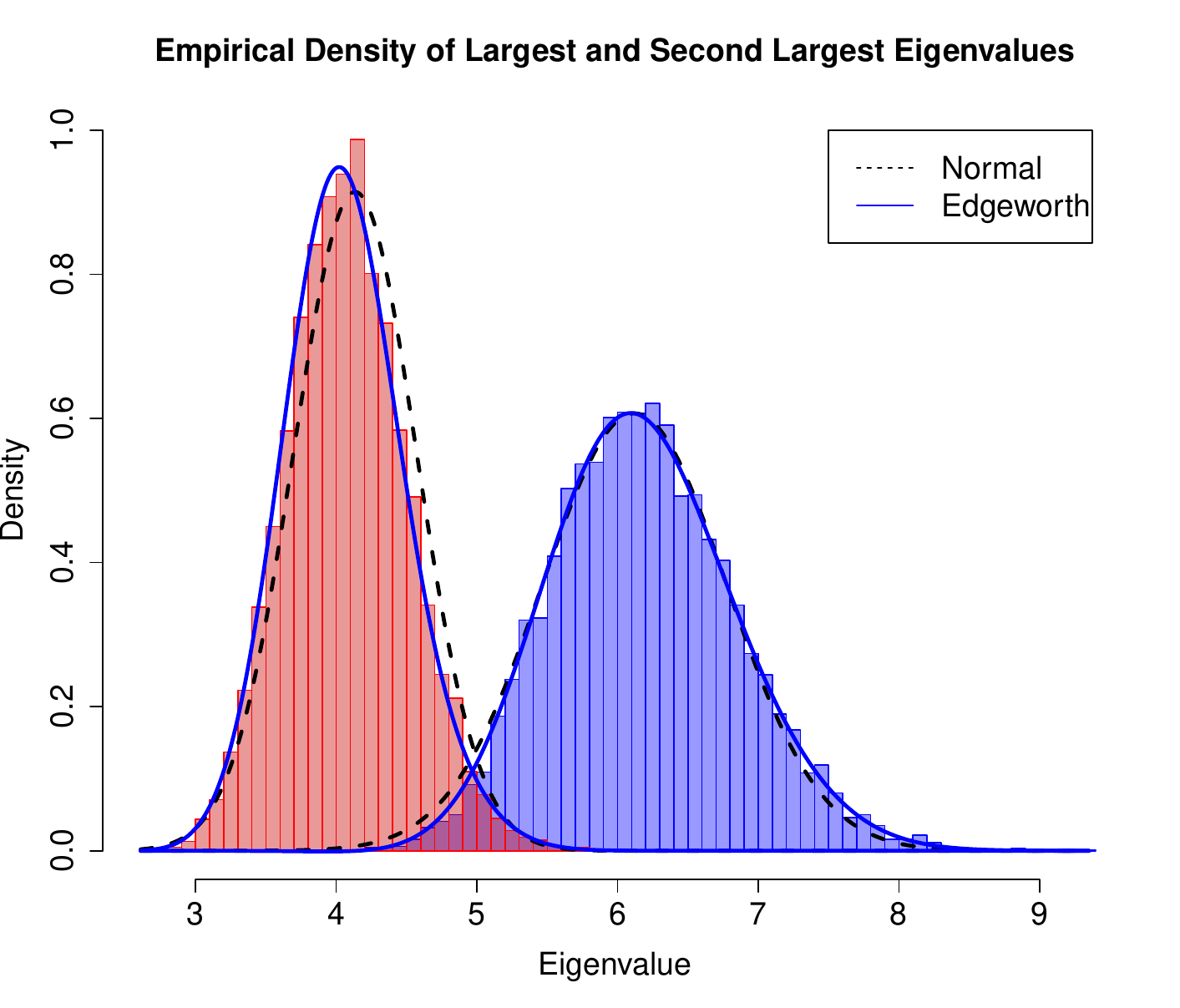}
		\caption{$Ga(2,2),~ l_1=6,~l_2=4$}
		\label{fig:image19}
	\end{subfigure}
	\begin{subfigure}{0.33\textwidth}
		\centering
		\includegraphics[width=\linewidth]{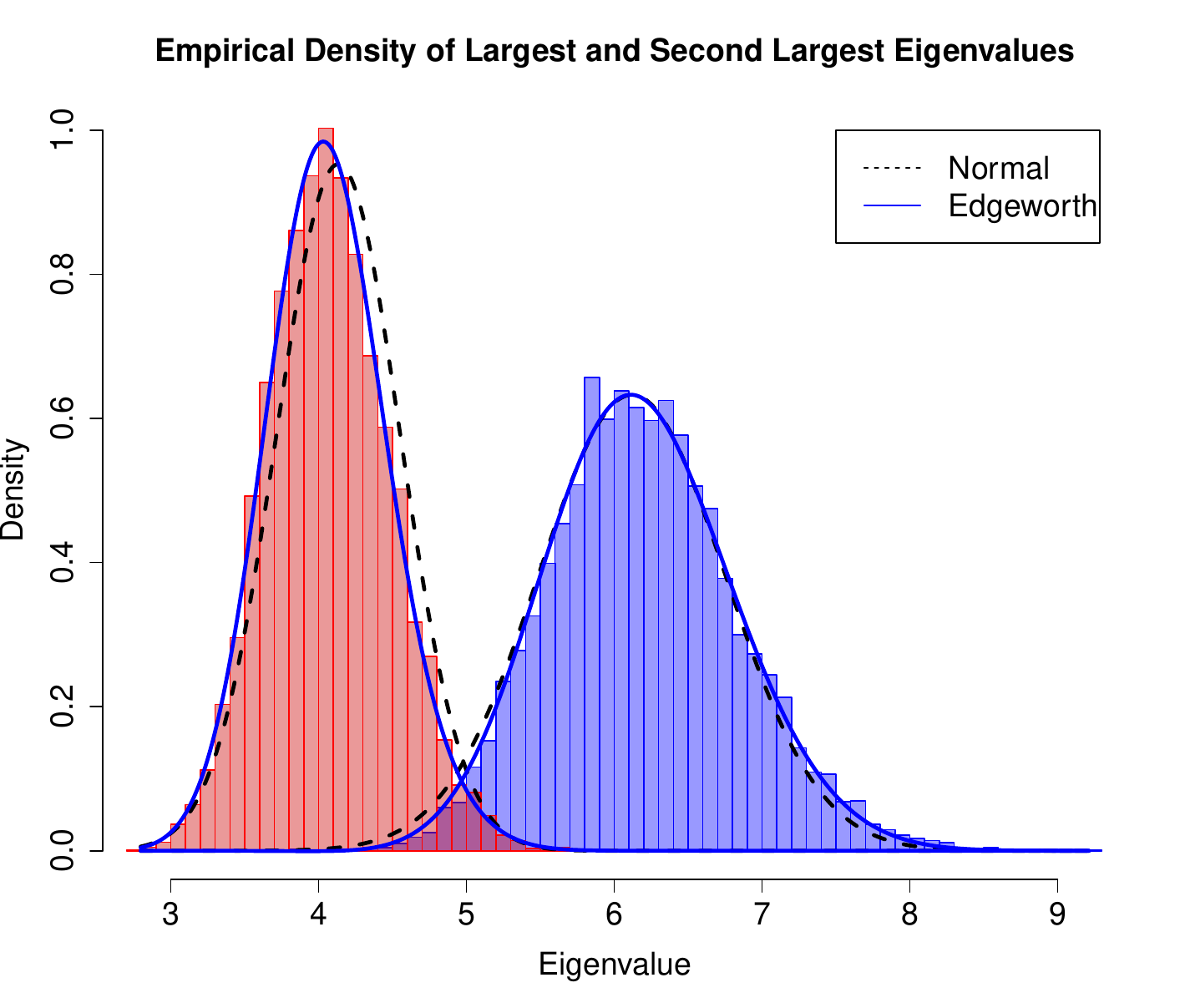}
		\caption{$Ga(3,3),~ l_1=6,~l_2=4$}
		\label{fig:image22}
	\end{subfigure}
	\caption{ Edgeworth expansion  for different samples under Setting 5}
	\label{fig:both_images5}
\end{figure}

\begin{figure}[H]
	
	\begin{subfigure}{0.33\textwidth}
		\centering
		\includegraphics[width=\linewidth]{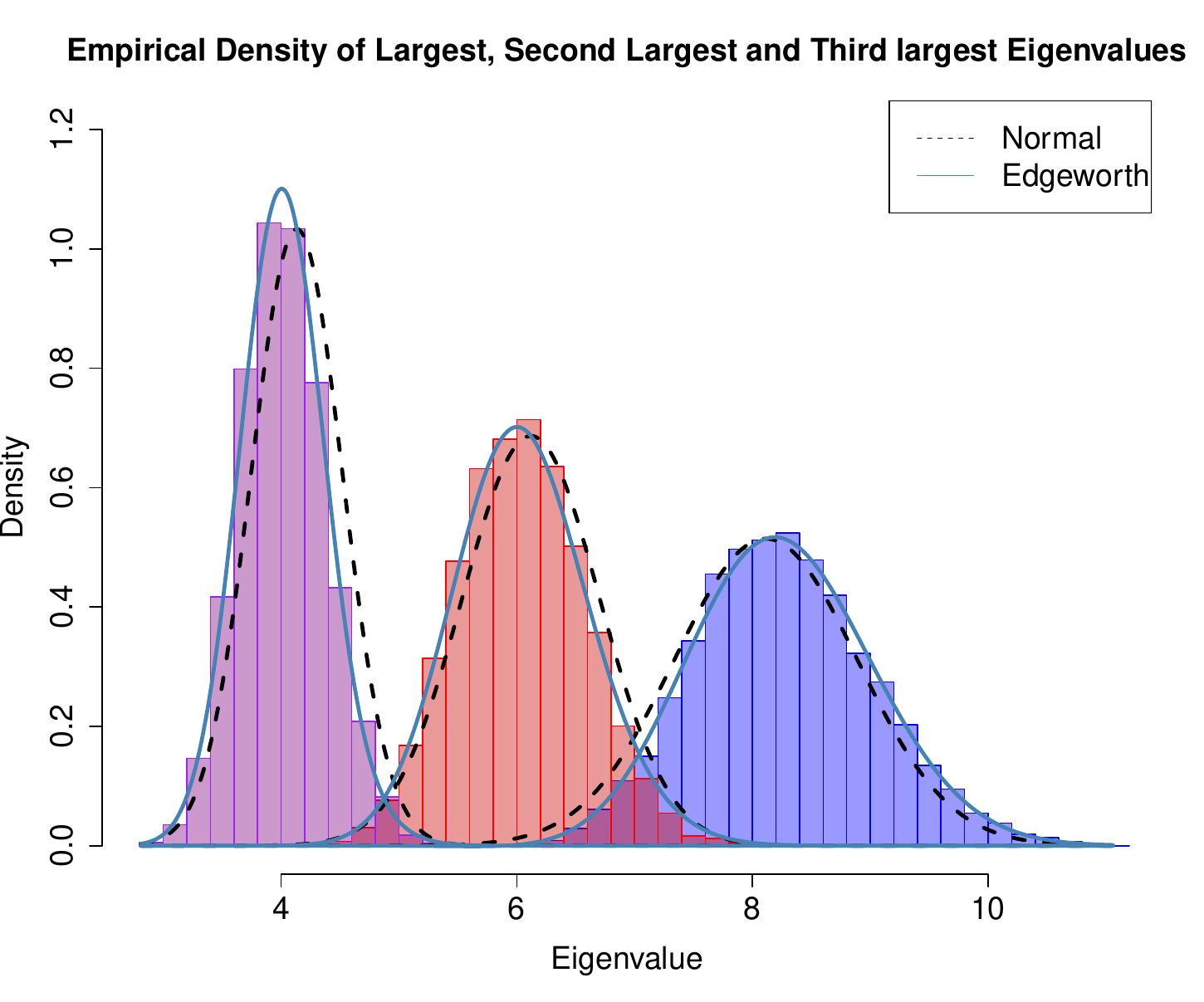}
		\caption{ $U(-\sqrt{3},\sqrt{3})$}
		\label{fig:image21}
	\end{subfigure}%
	\begin{subfigure}{0.33\textwidth}
		\centering
		\includegraphics[width=\linewidth]{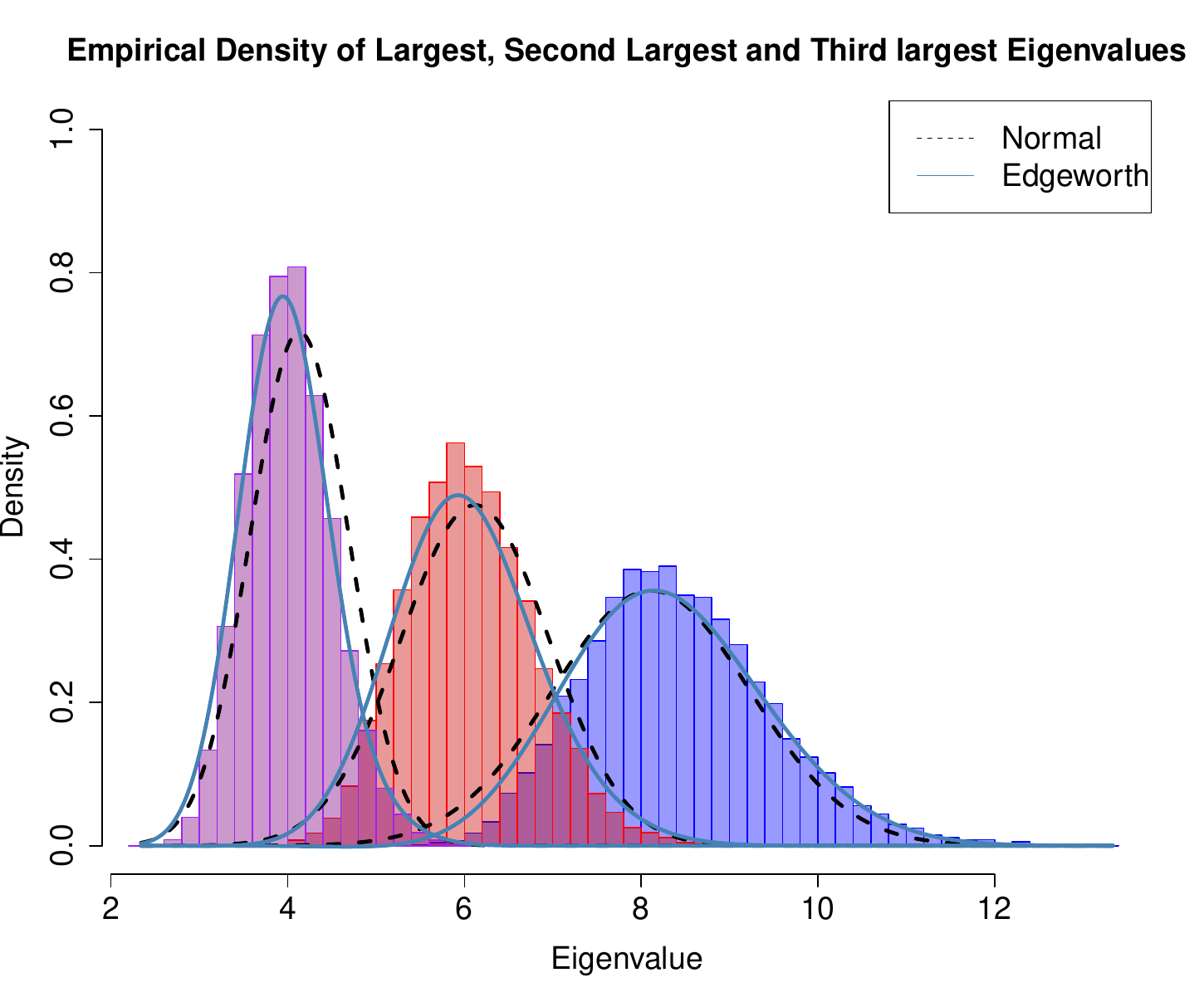}
		\caption{ $\chi^2(1),~ l_1=8,~l_2=6,~l_3=4$}
		\label{fig:image18}
	\end{subfigure}%
	\begin{subfigure}{0.33\textwidth}
		\centering
		\includegraphics[width=\linewidth]{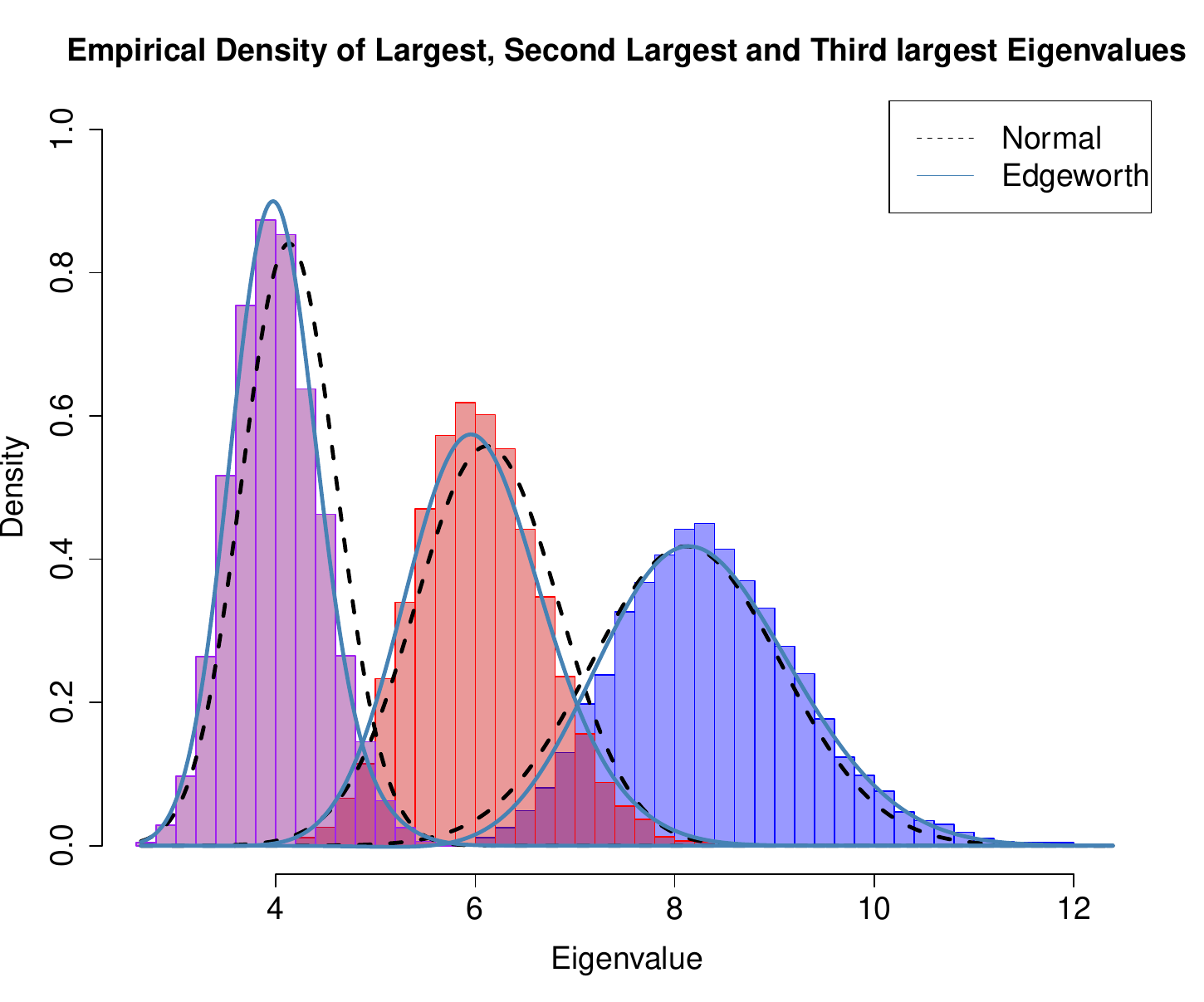}
		\caption{ $Ga(1,1),~ l_1=8,~l_2=6,~l_3=4$}
		\label{fig:image21}
	\end{subfigure}%

	\begin{subfigure}{0.33\textwidth}
		\centering
		\includegraphics[width=\linewidth]{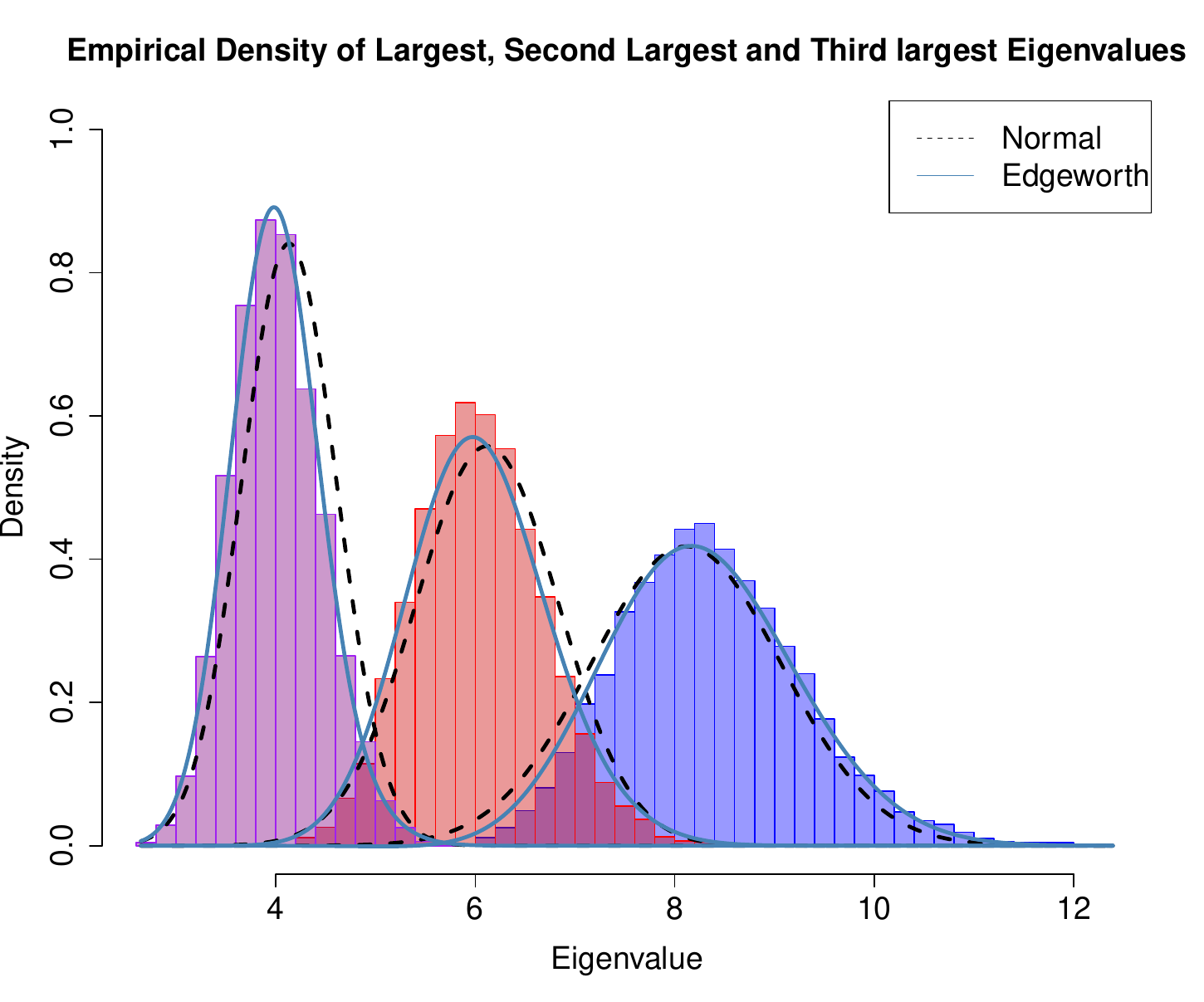}
		\caption{ $Ga(1,2),~ l_1=8,~l_2=6,~l_3=4$}
		\label{fig:image18}
	\end{subfigure}%
	\begin{subfigure}{0.33\textwidth}
		\centering
		\includegraphics[width=\linewidth]{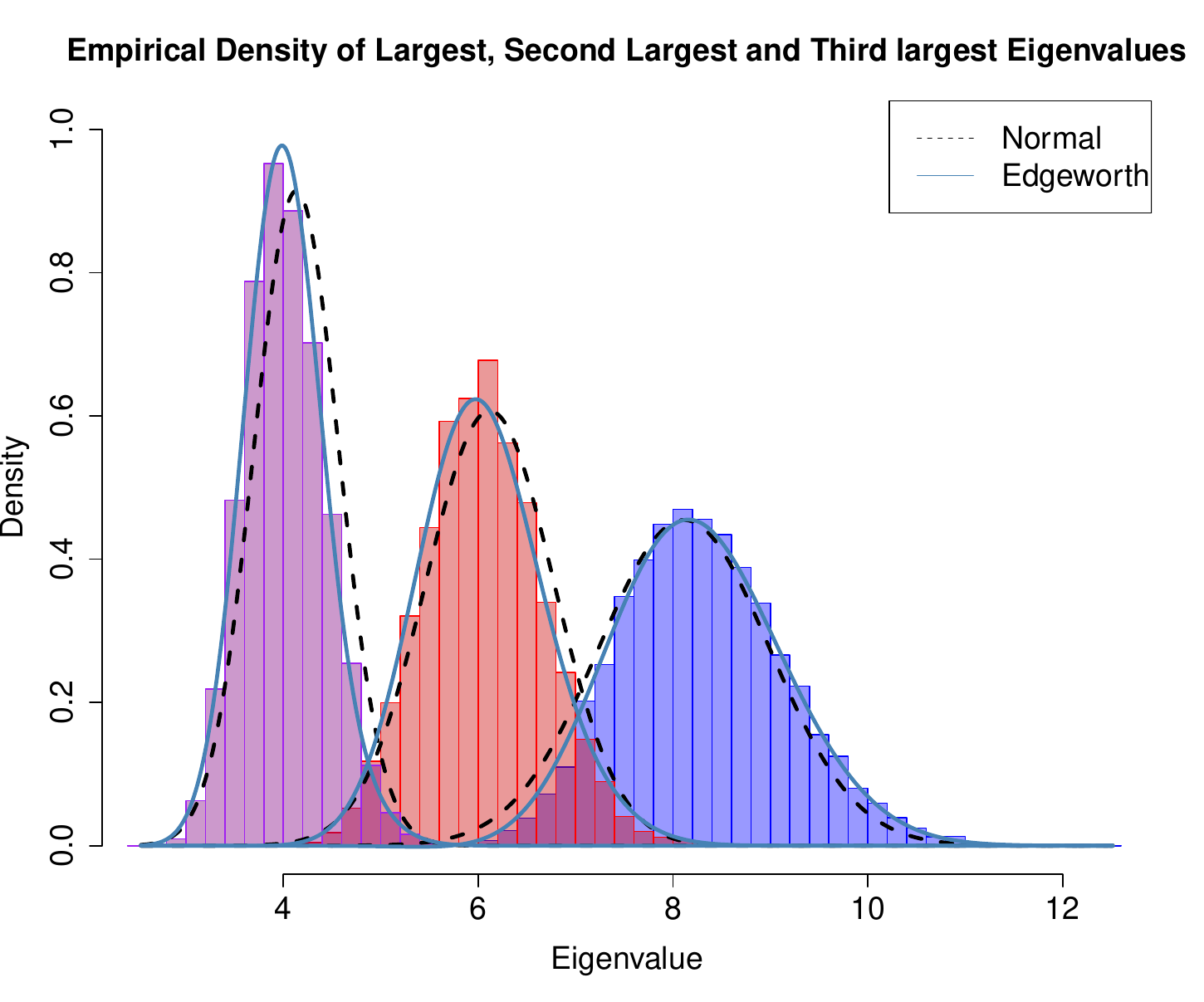}
		\caption{$Ga(2,2),~ l_1=8,~l_2=6,~l_3=4$}
		\label{fig:image19}
	\end{subfigure}
	\begin{subfigure}{0.33\textwidth}
		\centering
		\includegraphics[width=\linewidth]{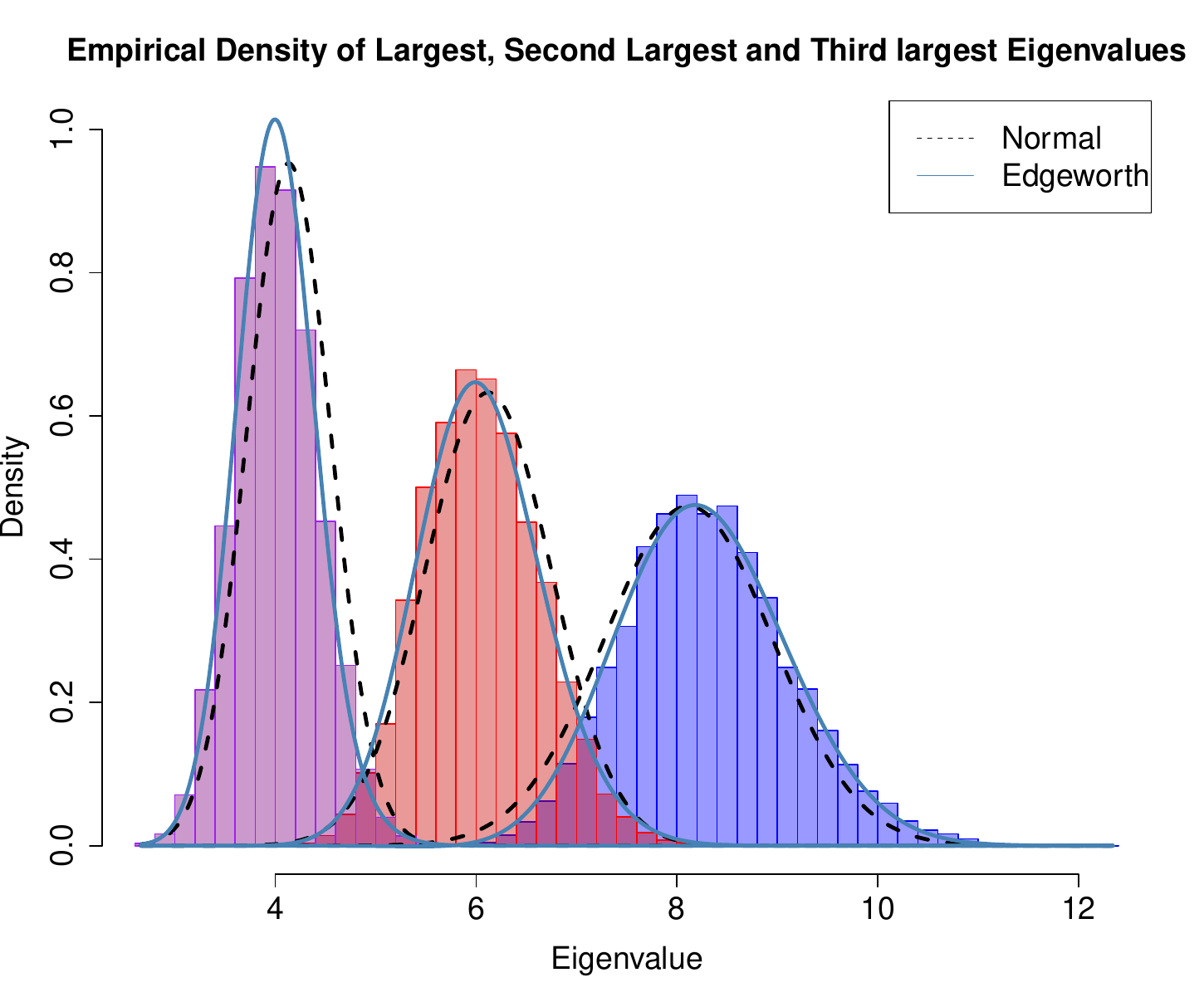}
		\caption{$Ga(3,3),~ l_1=8,~l_2=6,~l_3=4$}
		\label{fig:image22}
	\end{subfigure}
	\caption{ Edgeworth expansion  for different samples under Setting 6}
	\label{fig:both_images6}
\end{figure}

\begin{figure}[H]
	
	\begin{subfigure}{0.33\textwidth}
		\centering
		\includegraphics[width=\linewidth]{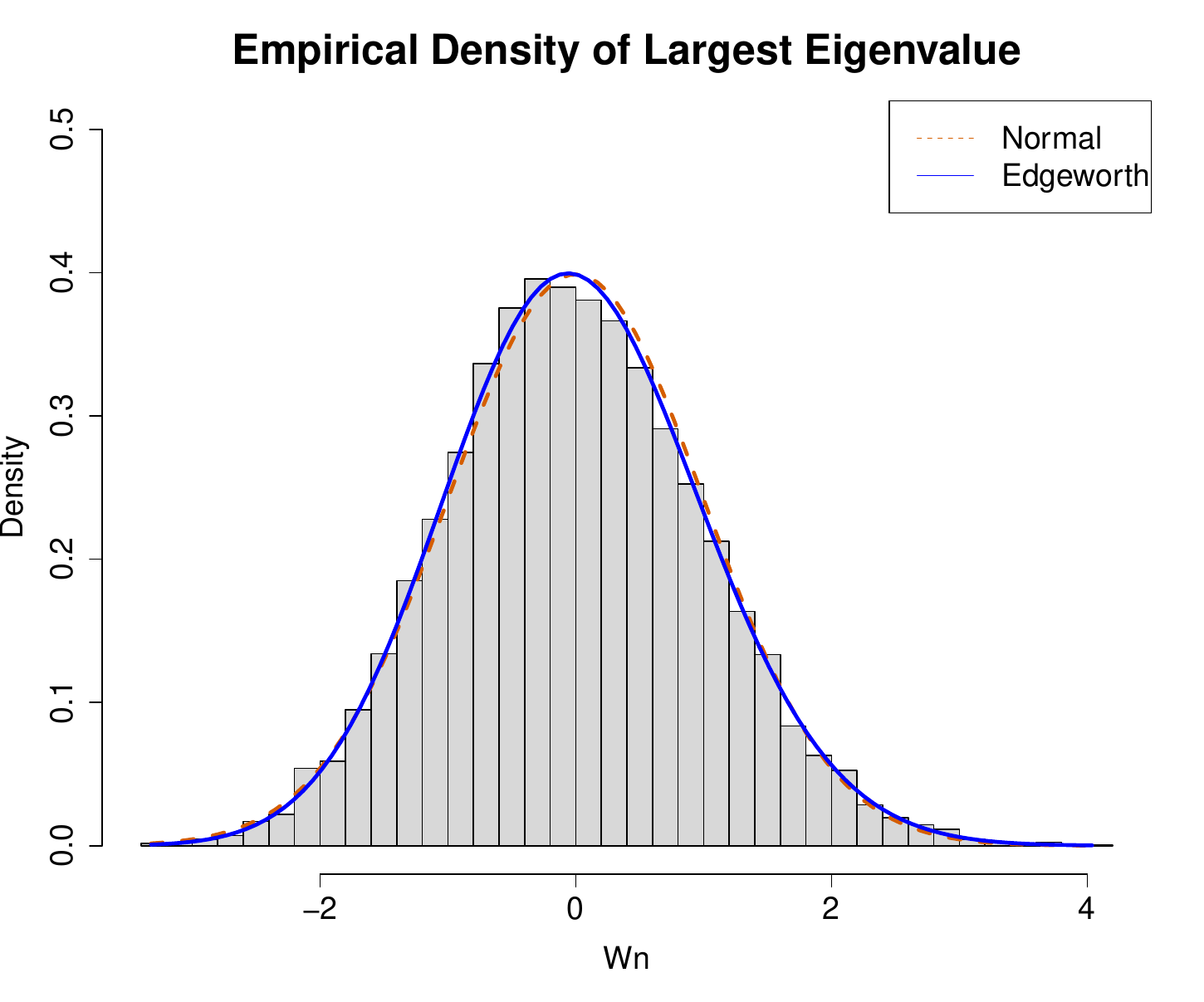}
		\caption{ $U(-\sqrt{3},\sqrt{3}),~ l=4$}
		\label{fig:image21}
	\end{subfigure}%
	\begin{subfigure}{0.33\textwidth}
		\centering
		\includegraphics[width=\linewidth]{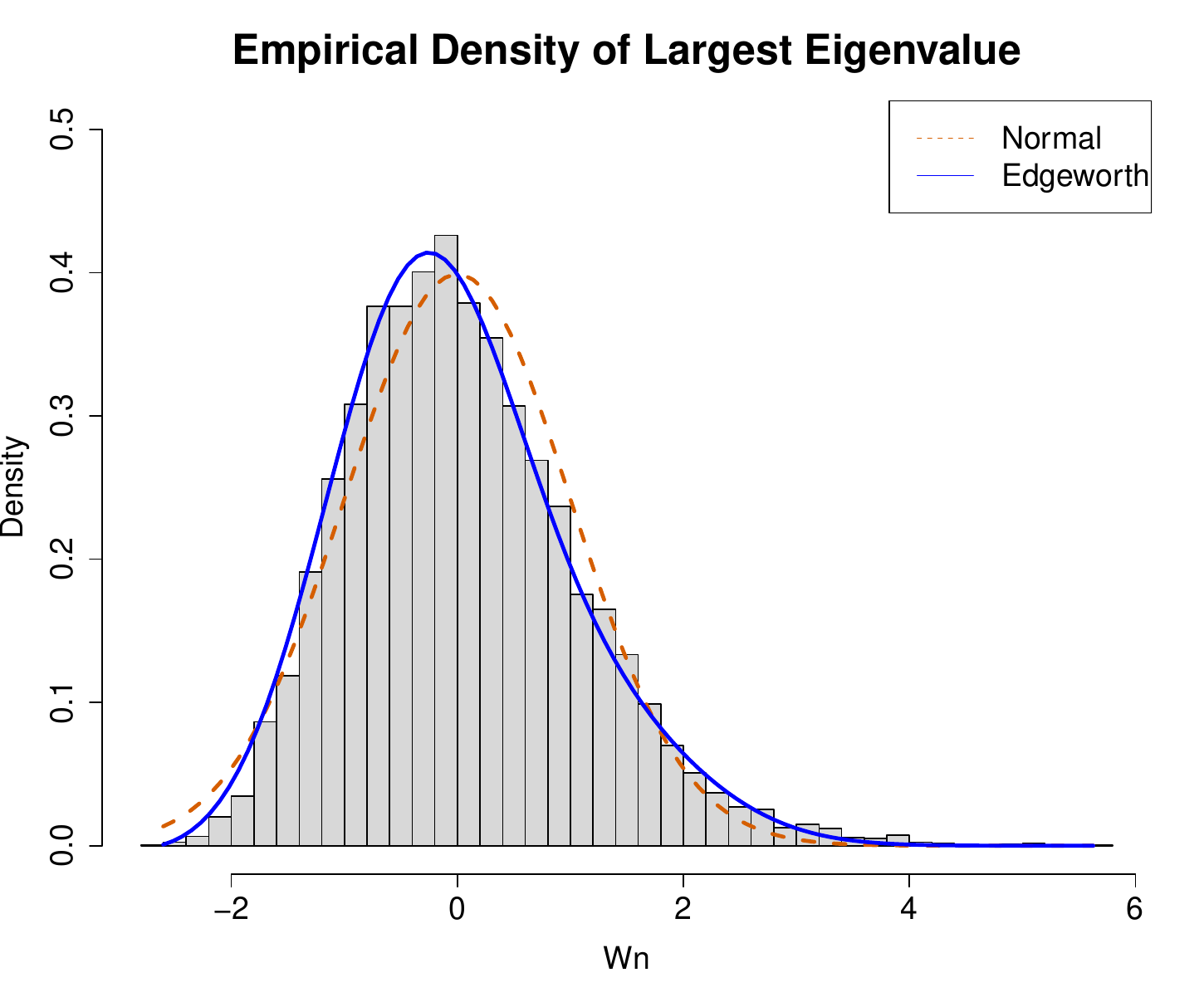}
		\caption{ $\chi^2(1),~ l=4$}
		\label{fig:image18}
	\end{subfigure}%
	\begin{subfigure}{0.33\textwidth}
		\centering
		\includegraphics[width=\linewidth]{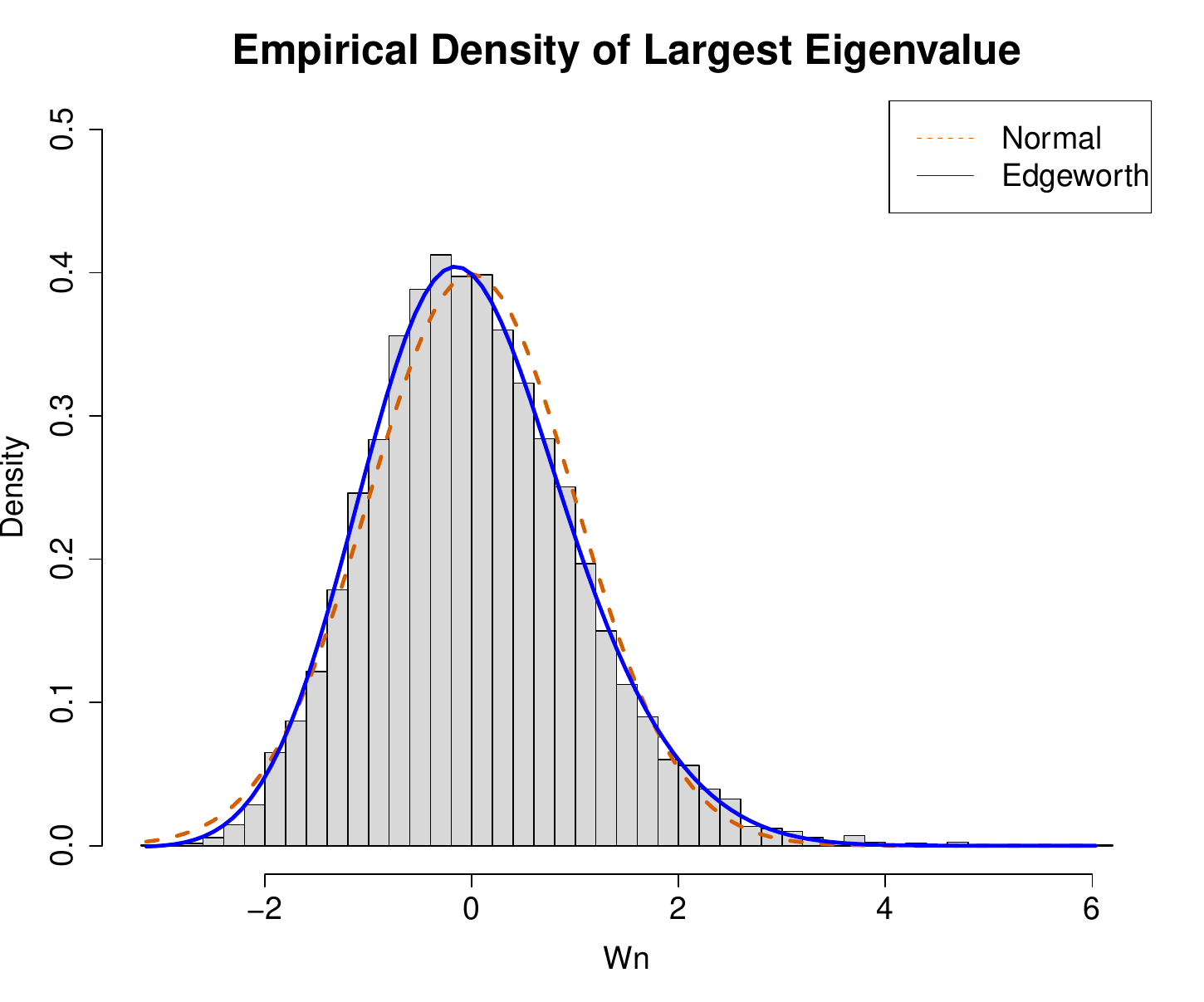}
		\caption{ $Ga(1,1),~ l=4$}
		\label{fig:image21}
	\end{subfigure}%

	\begin{subfigure}{0.33\textwidth}
		\centering
		\includegraphics[width=\linewidth]{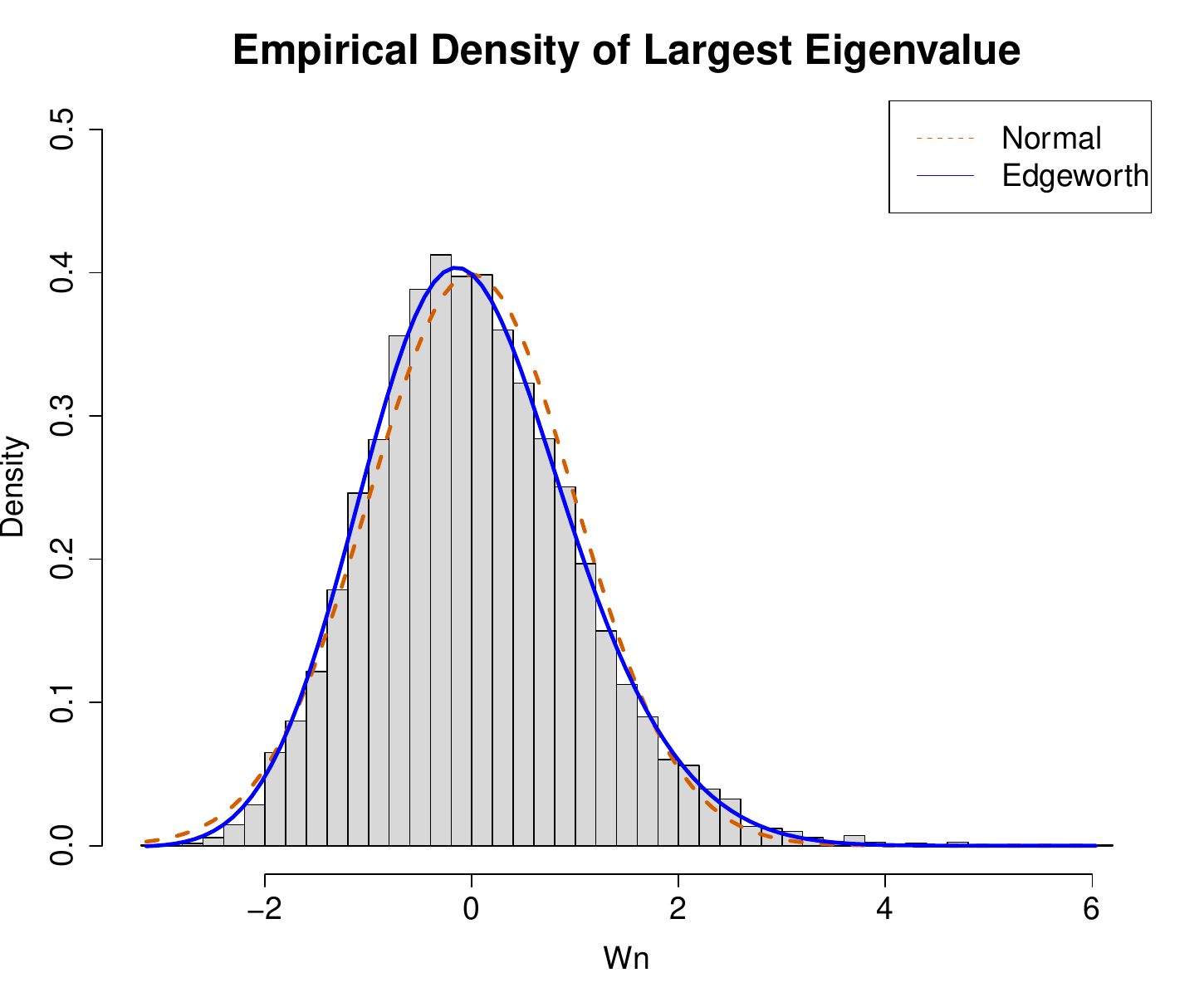}
		\caption{ $Ga(1,2),~ l=4$}
		\label{fig:image18}
	\end{subfigure}%
	\begin{subfigure}{0.33\textwidth}
		\centering
		\includegraphics[width=\linewidth]{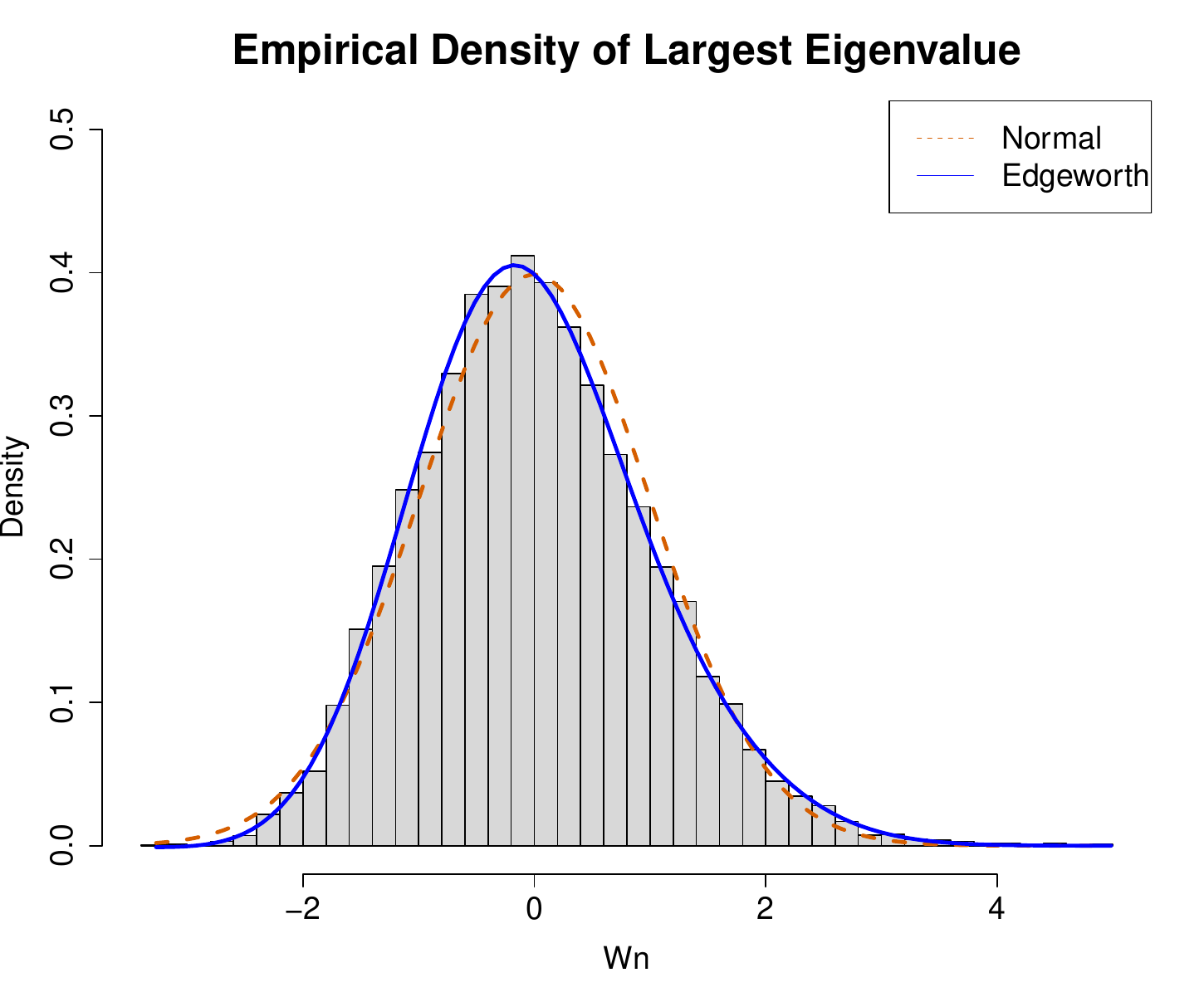}
		\caption{$Ga(2,2),~ l=4$}
		\label{fig:image19}
	\end{subfigure}
	\begin{subfigure}{0.33\textwidth}
		\centering
		\includegraphics[width=\linewidth]{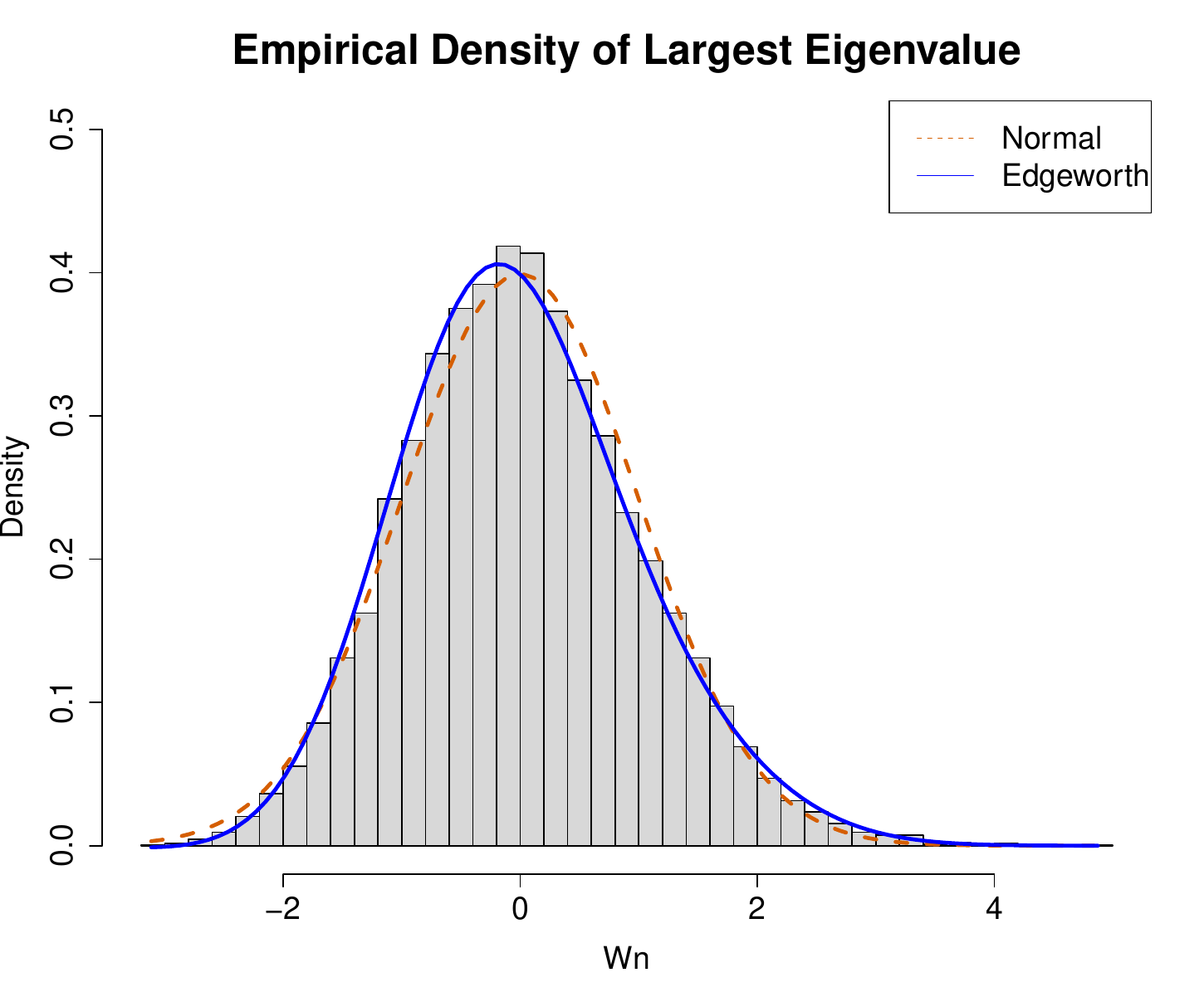}
		\caption{$Ga(3,3),~ l=4$}
		\label{fig:image22}
	\end{subfigure}
	\caption{ Edgeworth expansion  for different samples under Setting 7}
	\label{fig:both_images7}
\end{figure}

\begin{figure}[H]
	
	\begin{subfigure}{0.33\textwidth}
		\centering
		\includegraphics[width=\linewidth]{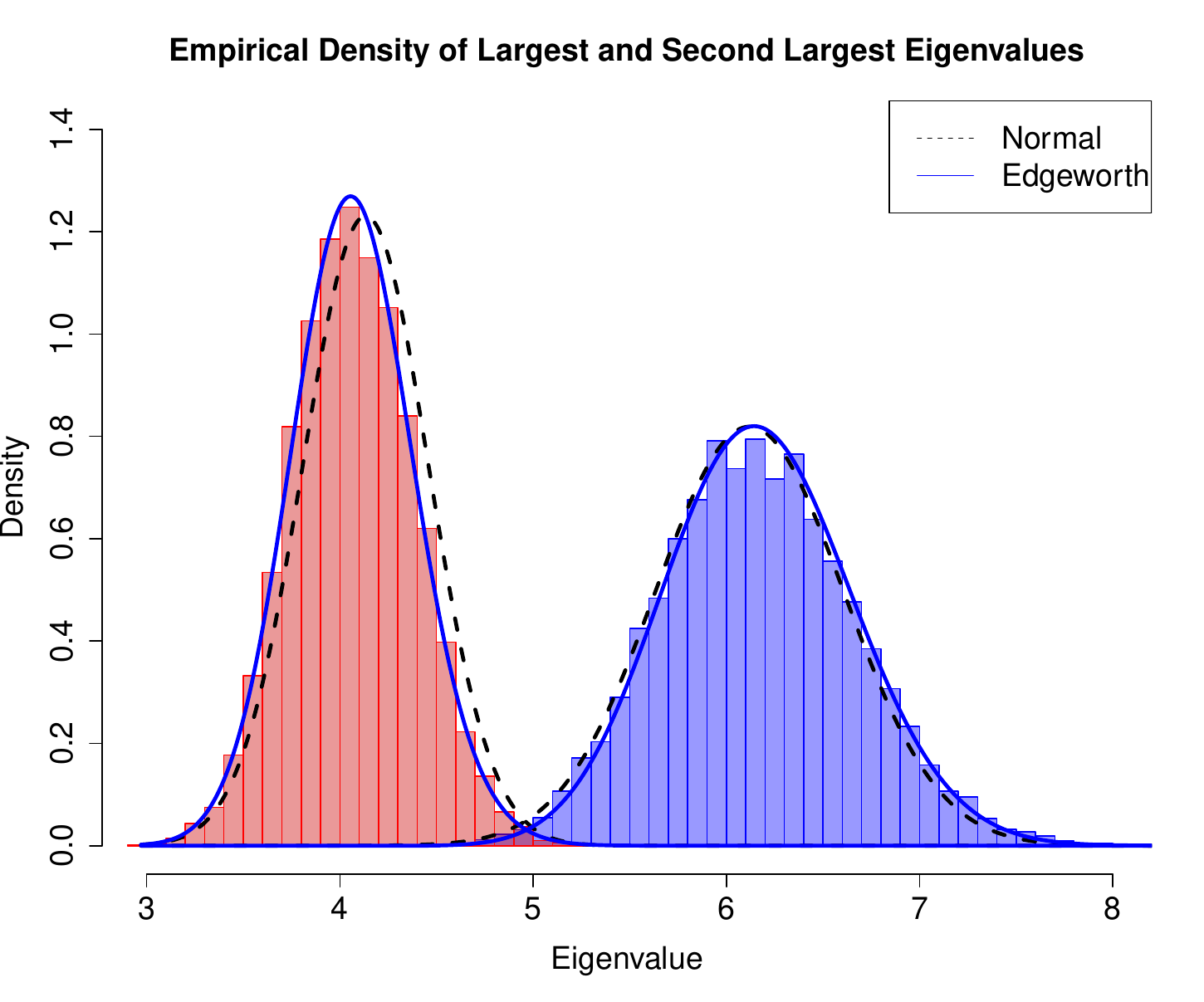}
		\caption{ $U(-\sqrt{3},\sqrt{3}),l_1=6,l_2=4$}
		\label{fig:image21}
	\end{subfigure}%
	\begin{subfigure}{0.33\textwidth}
		\centering
		\includegraphics[width=\linewidth]{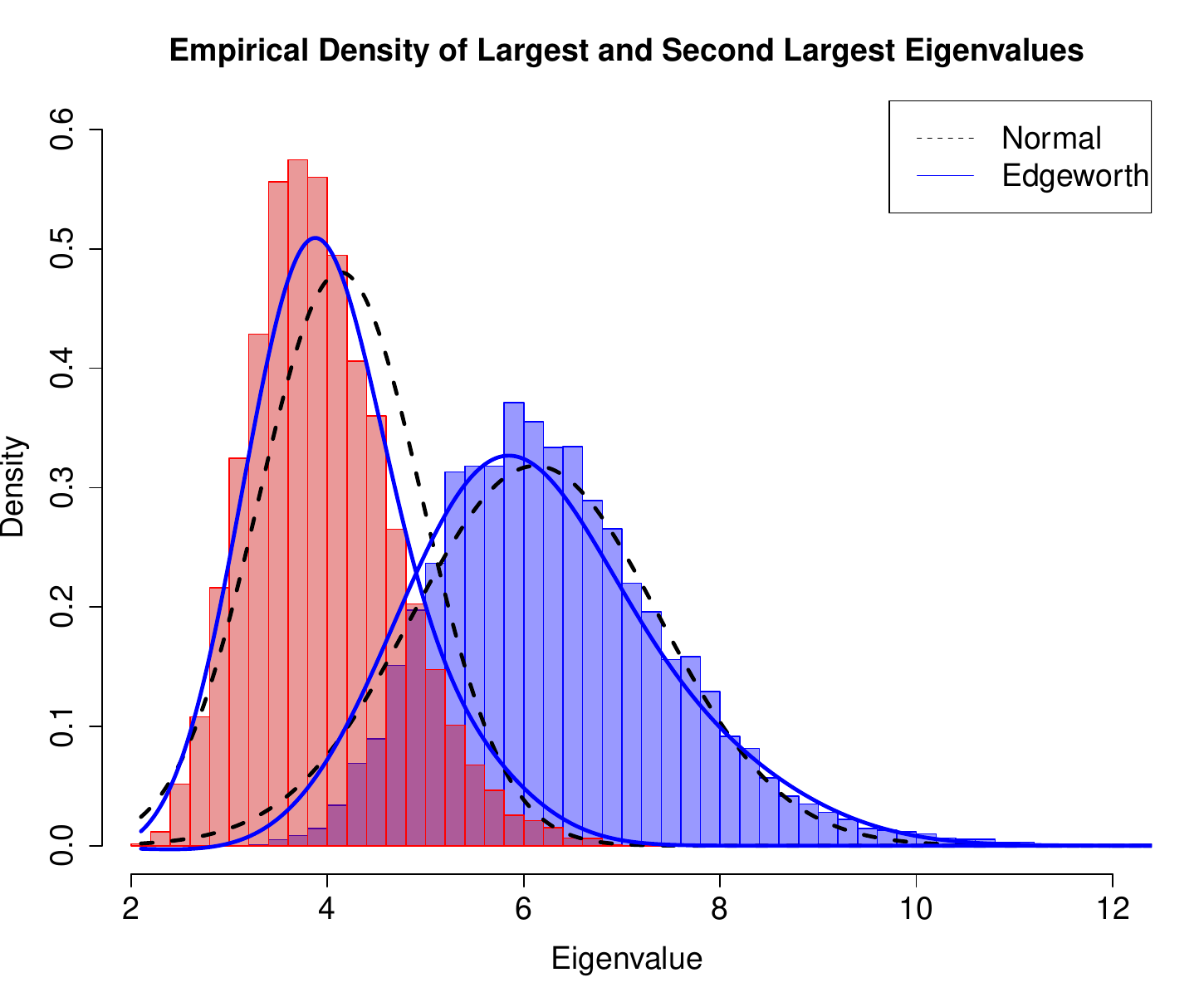}
		\caption{ $\chi^2(1),~ l_1=6,~l_2=4$}
		\label{fig:image18}
	\end{subfigure}%
	\begin{subfigure}{0.33\textwidth}
		\centering
		\includegraphics[width=\linewidth]{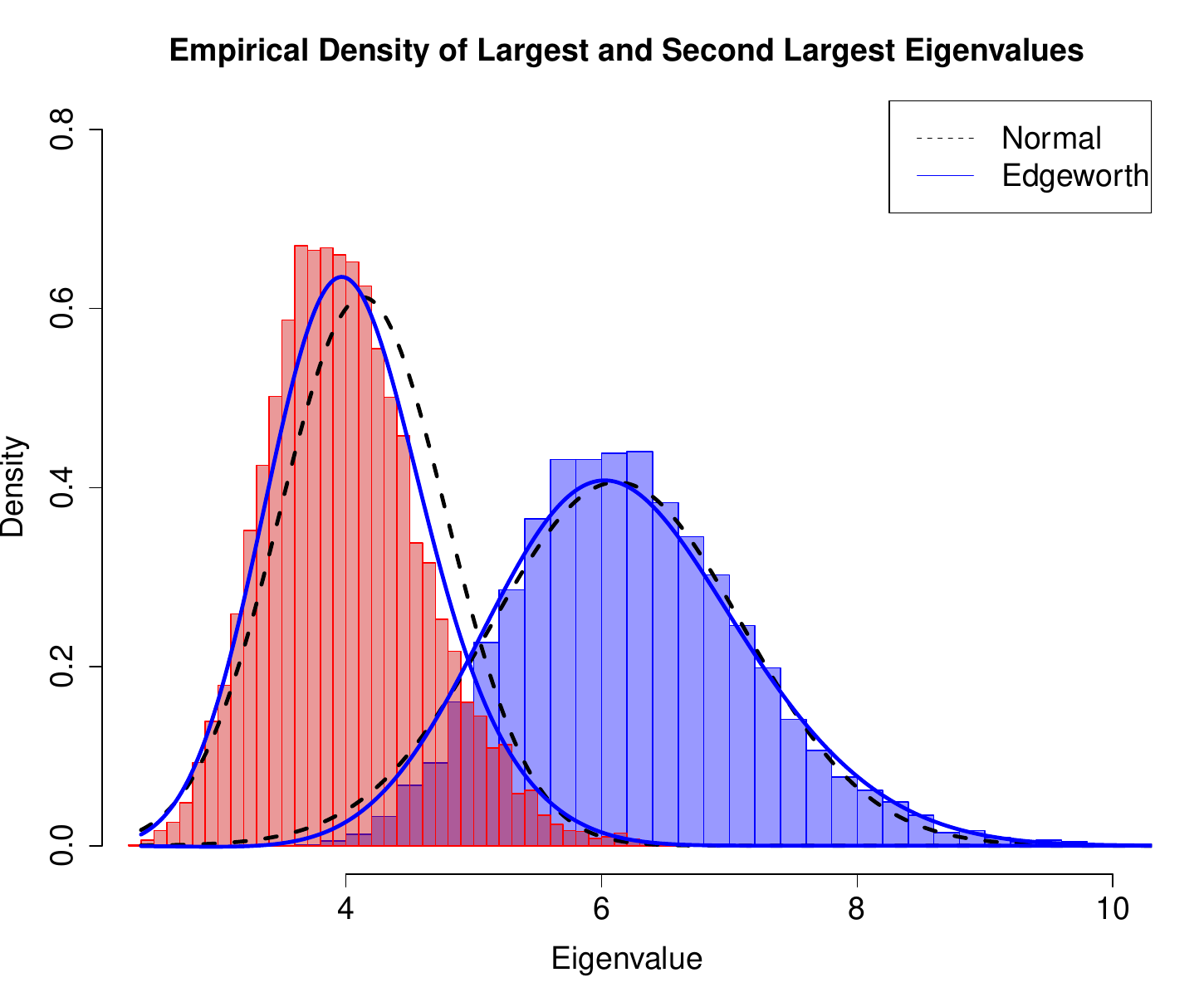}
		\caption{ $Ga(1,1),~ l_1=6,~l_2=4$}
		\label{fig:image21}
	\end{subfigure}%

	\begin{subfigure}{0.33\textwidth}
		\centering
		\includegraphics[width=\linewidth]{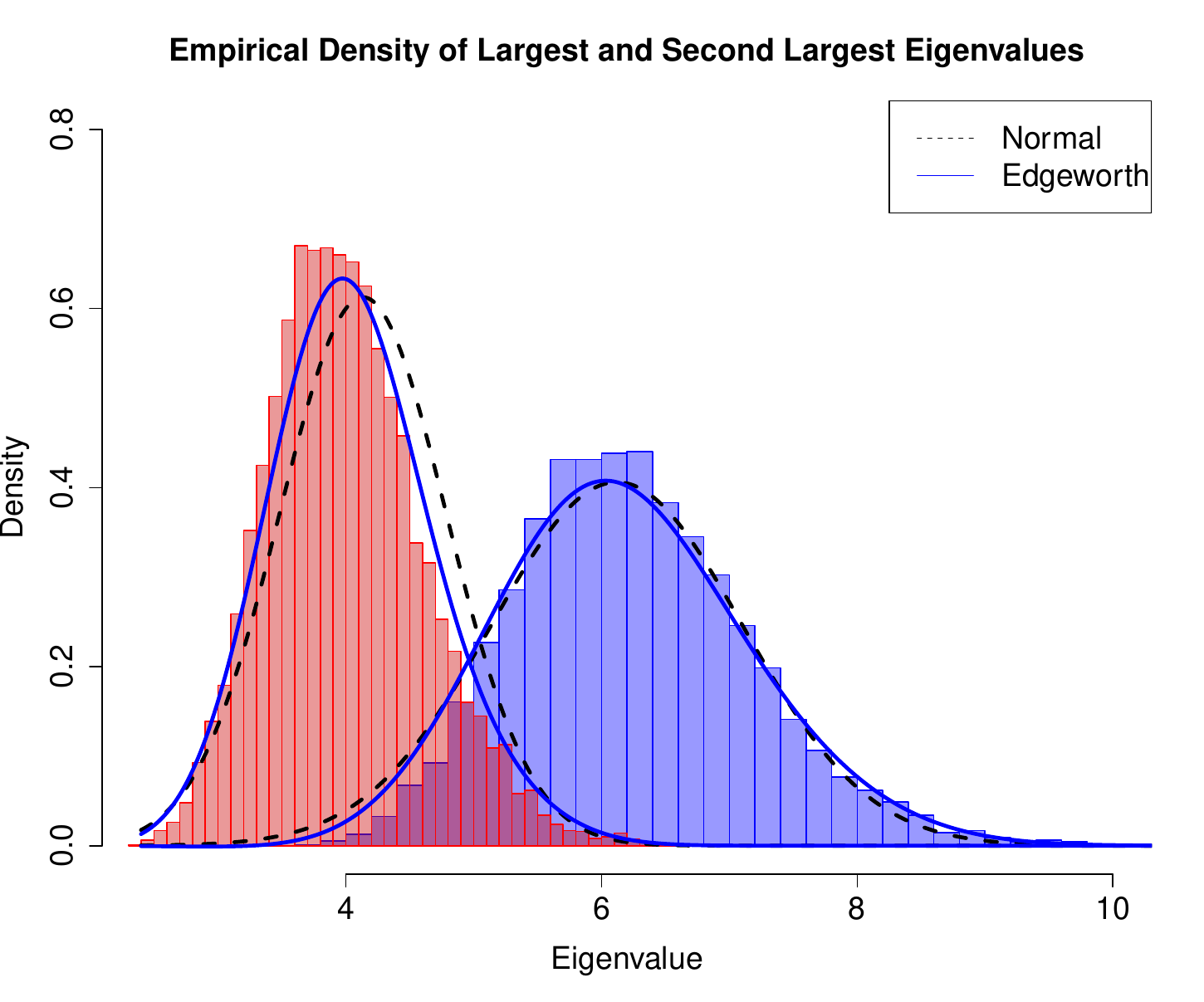}
		\caption{ $Ga(1,2),~ l_1=6,~l_2=4$}
		\label{fig:image18}
	\end{subfigure}%
	\begin{subfigure}{0.33\textwidth}
		\centering
		\includegraphics[width=\linewidth]{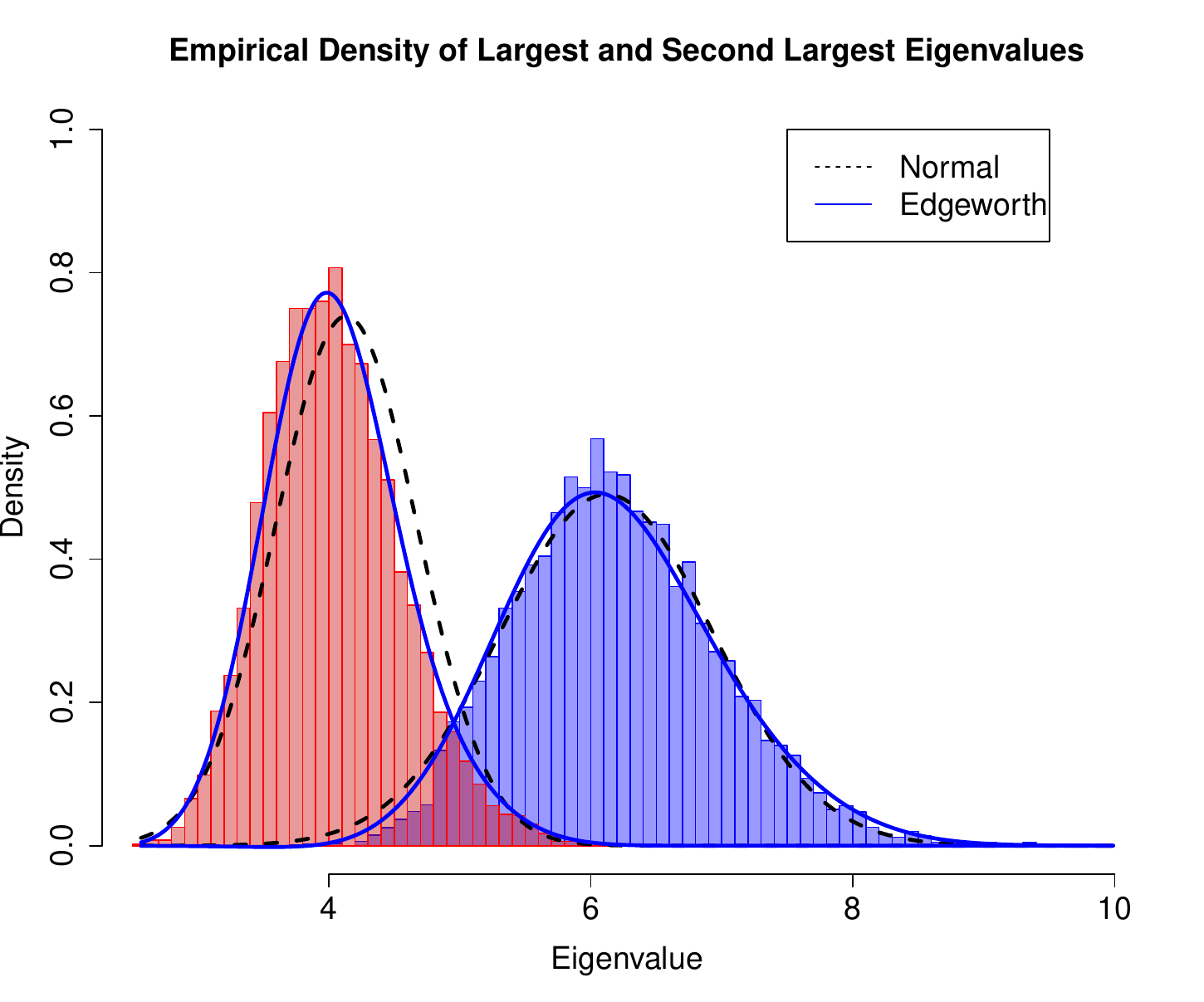}
		\caption{$Ga(2,2),~ l_1=6,~l_2=4$}
		\label{fig:image19}
	\end{subfigure}
	\begin{subfigure}{0.33\textwidth}
		\centering
		\includegraphics[width=\linewidth]{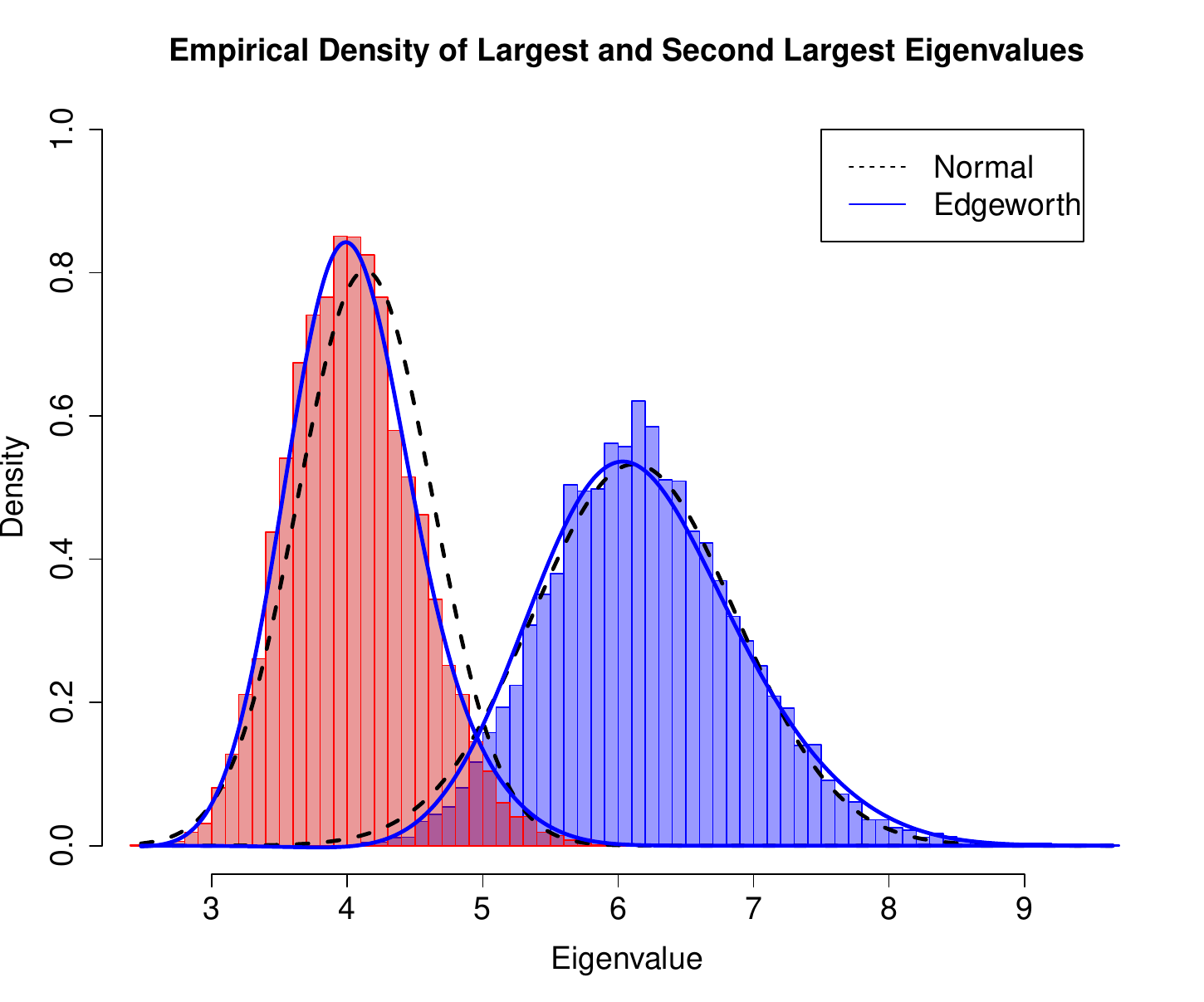}
		\caption{$Ga(3,3),~ l_1=6,~l_2=4$}
		\label{fig:image22}
	\end{subfigure}
	\caption{ Edgeworth expansion  for different samples under Setting 8}
	\label{fig:both_images8}
\end{figure}

\begin{figure}[H]
	
	\begin{subfigure}{0.33\textwidth}
		\centering
		\includegraphics[width=\linewidth]{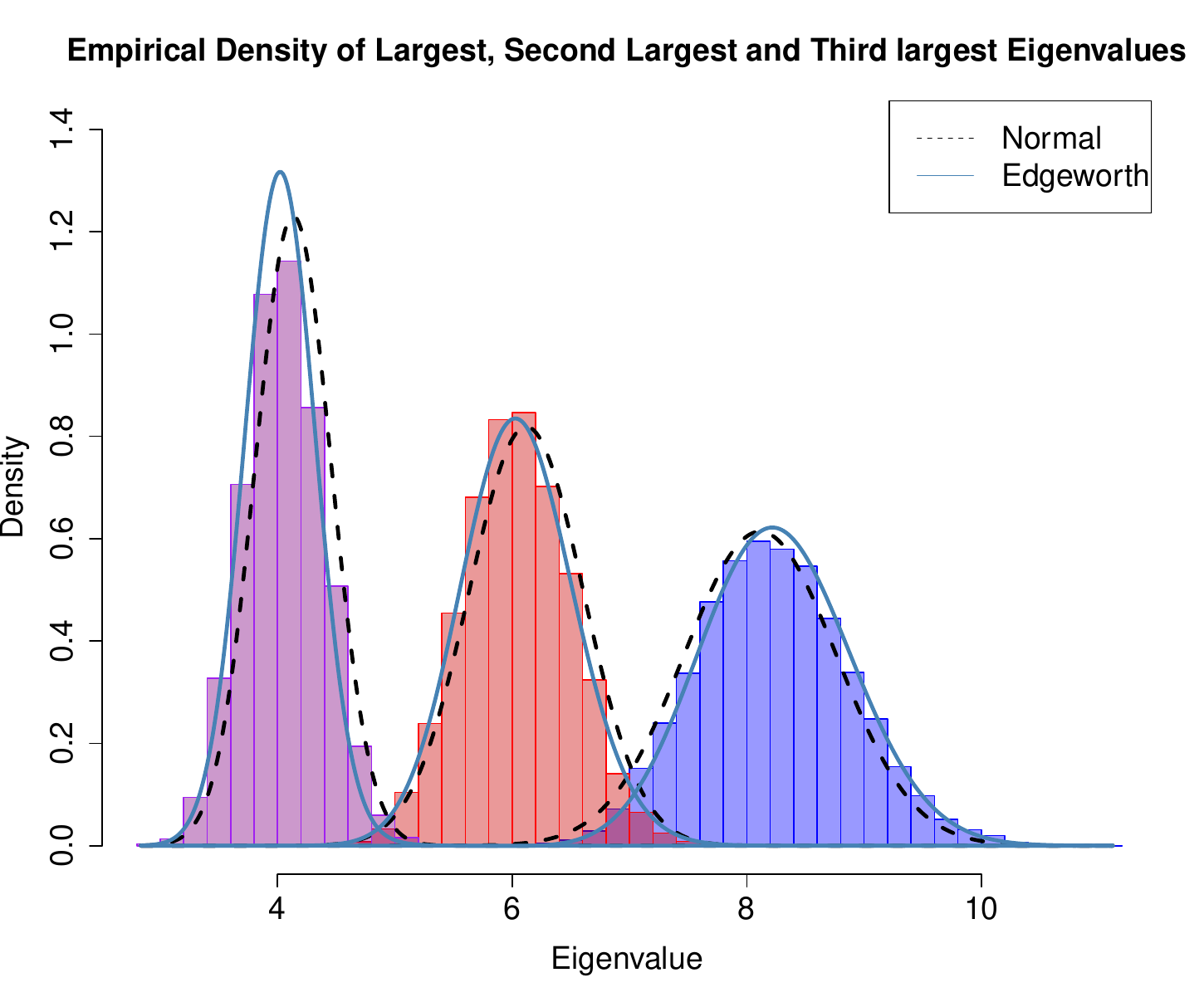}
		\caption{  $U(-\sqrt{3},\sqrt{3})$}
		\label{fig:image21}
	\end{subfigure}%
	\begin{subfigure}{0.33\textwidth}
		\centering
		\includegraphics[width=\linewidth]{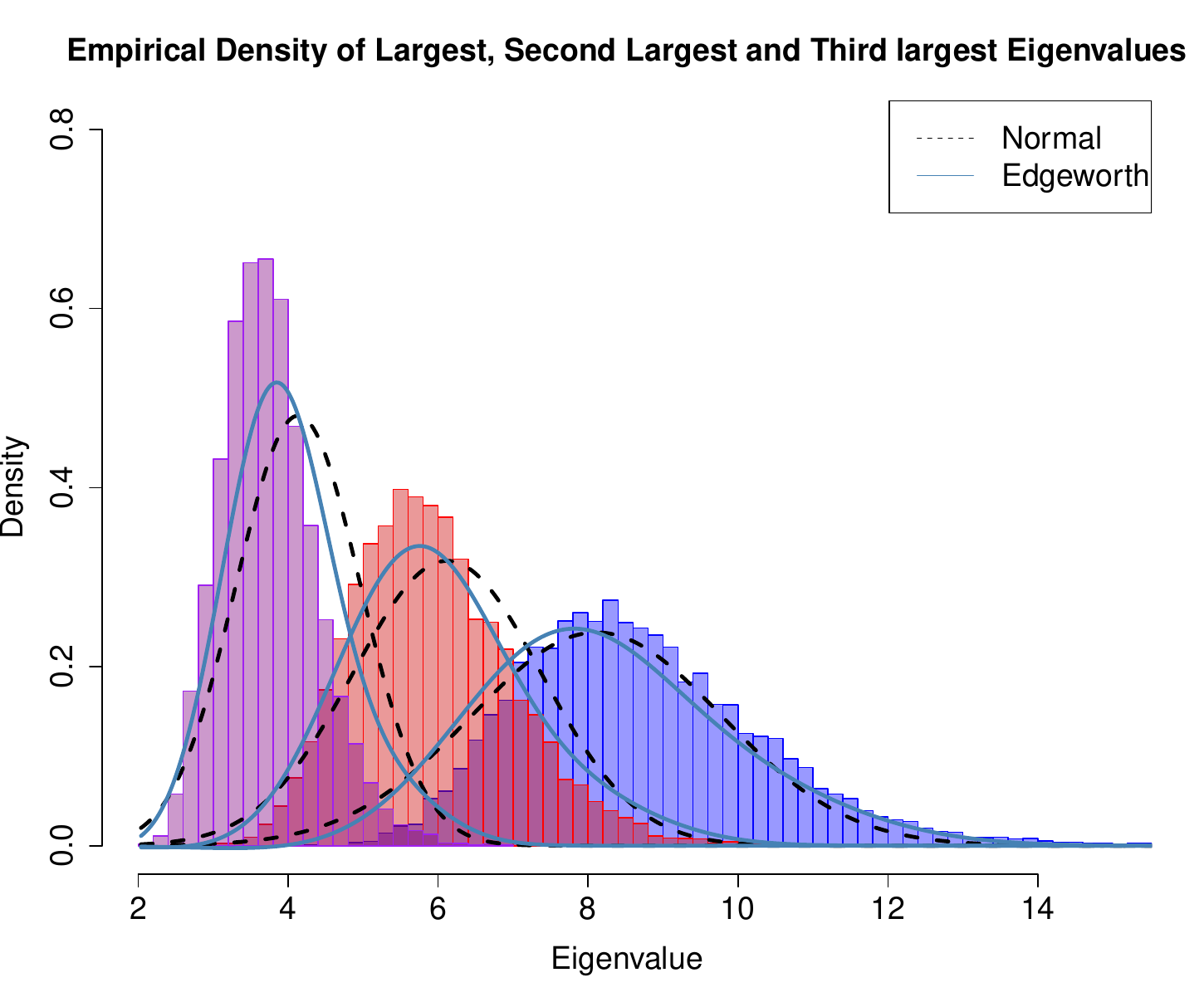}
		\caption{ $\chi^2(1),~ l_1=8,~l_2=6,~l_3=4$}
		\label{fig:image18}
	\end{subfigure}%
	\begin{subfigure}{0.33\textwidth}
		\centering
		\includegraphics[width=\linewidth]{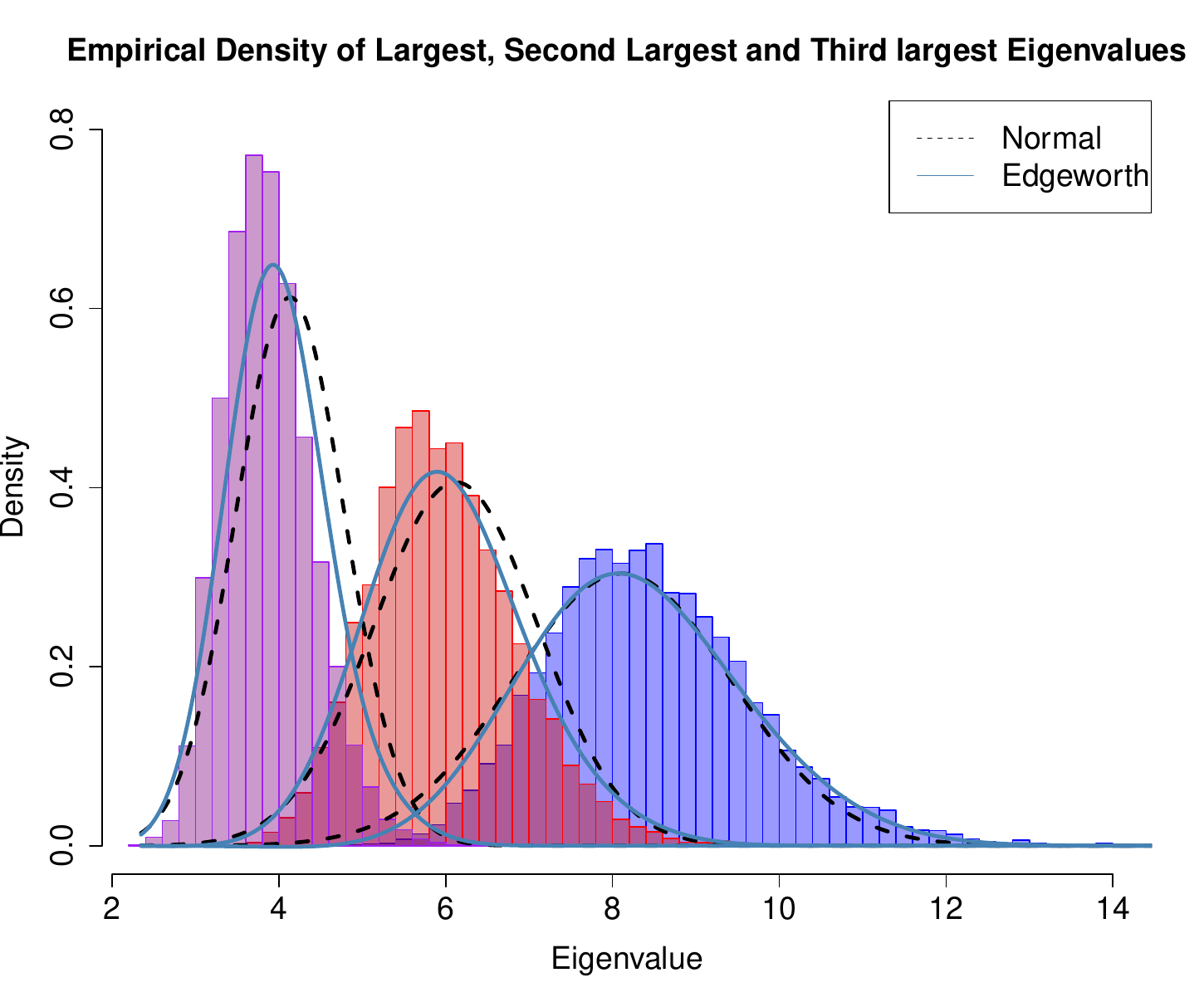}
		\caption{  $Ga(1,1),~ l_1=8,~l_2=6,~l_3=4$}
		\label{fig:image21}
	\end{subfigure}%

	\begin{subfigure}{0.33\textwidth}
		\centering
		\includegraphics[width=\linewidth]{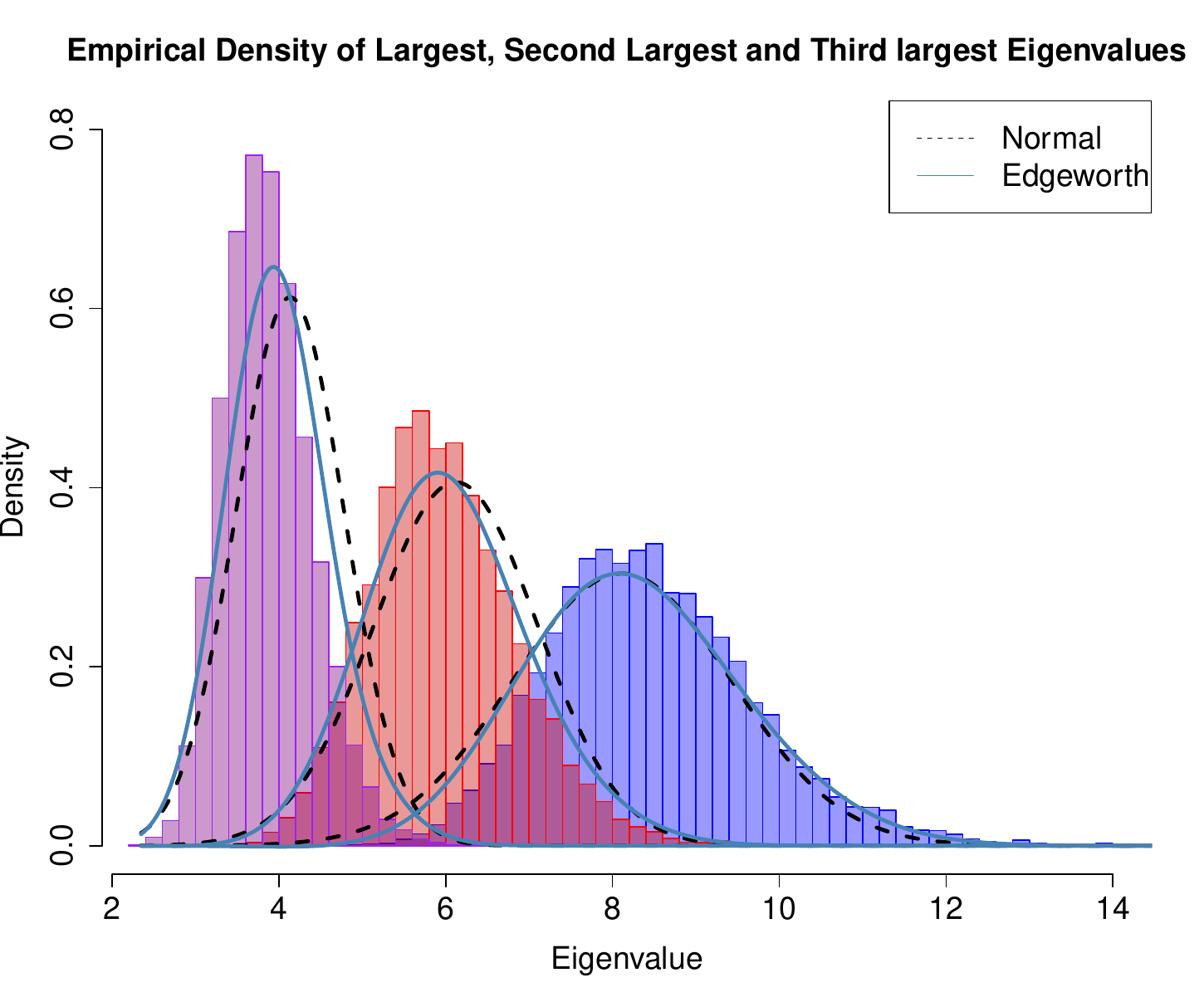}
		\caption{$Ga(1,2),~ l_1=8,~l_2=6,~l_3=4$}
		\label{fig:image18}
	\end{subfigure}%
	\begin{subfigure}{0.33\textwidth}
		\centering
		\includegraphics[width=\linewidth]{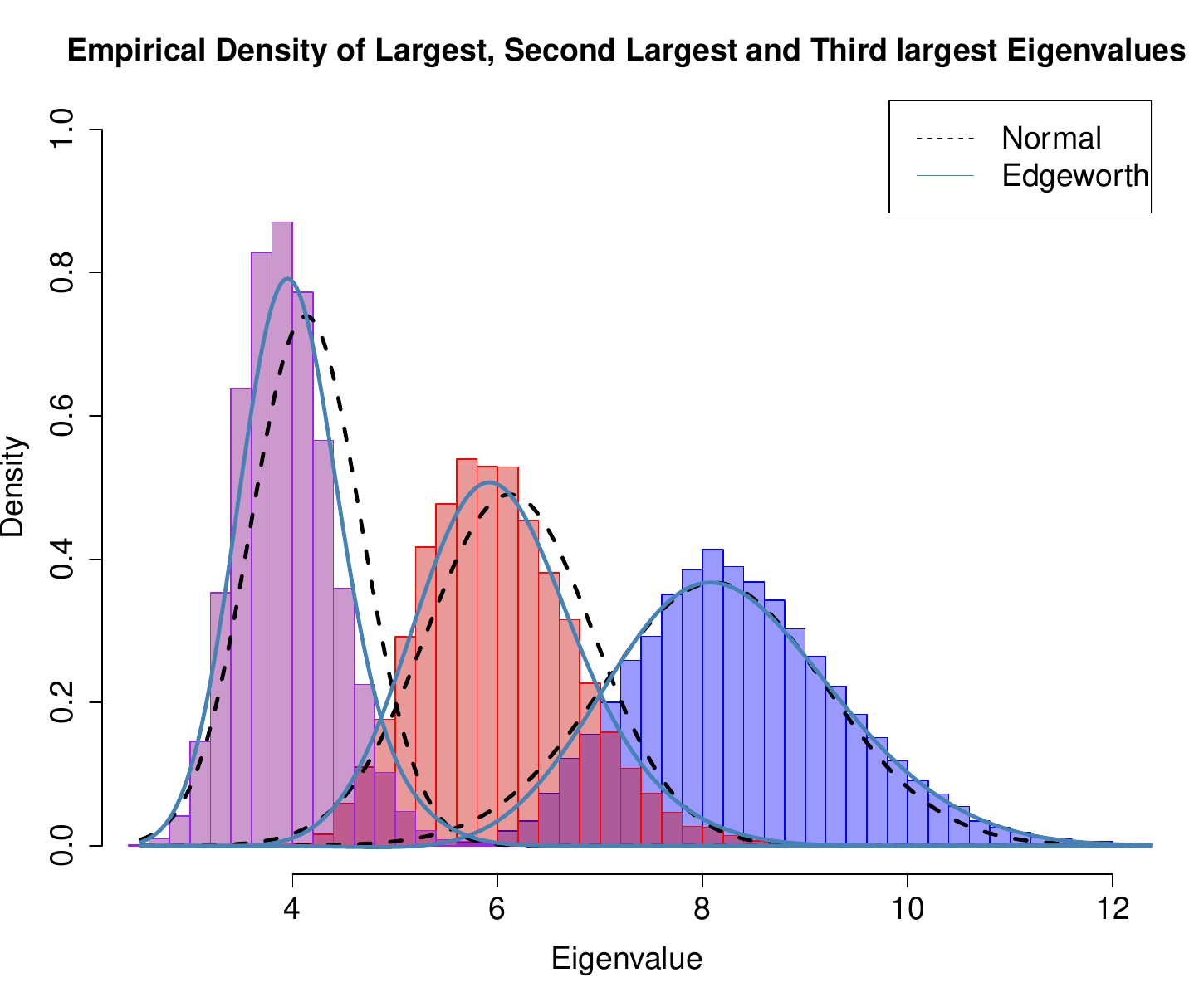}
		\caption{$Ga(2,2),~ l_1=8,~l_2=6,~l_3=4$}
		\label{fig:image19}
	\end{subfigure}
	\begin{subfigure}{0.33\textwidth}
		\centering
		\includegraphics[width=\linewidth]{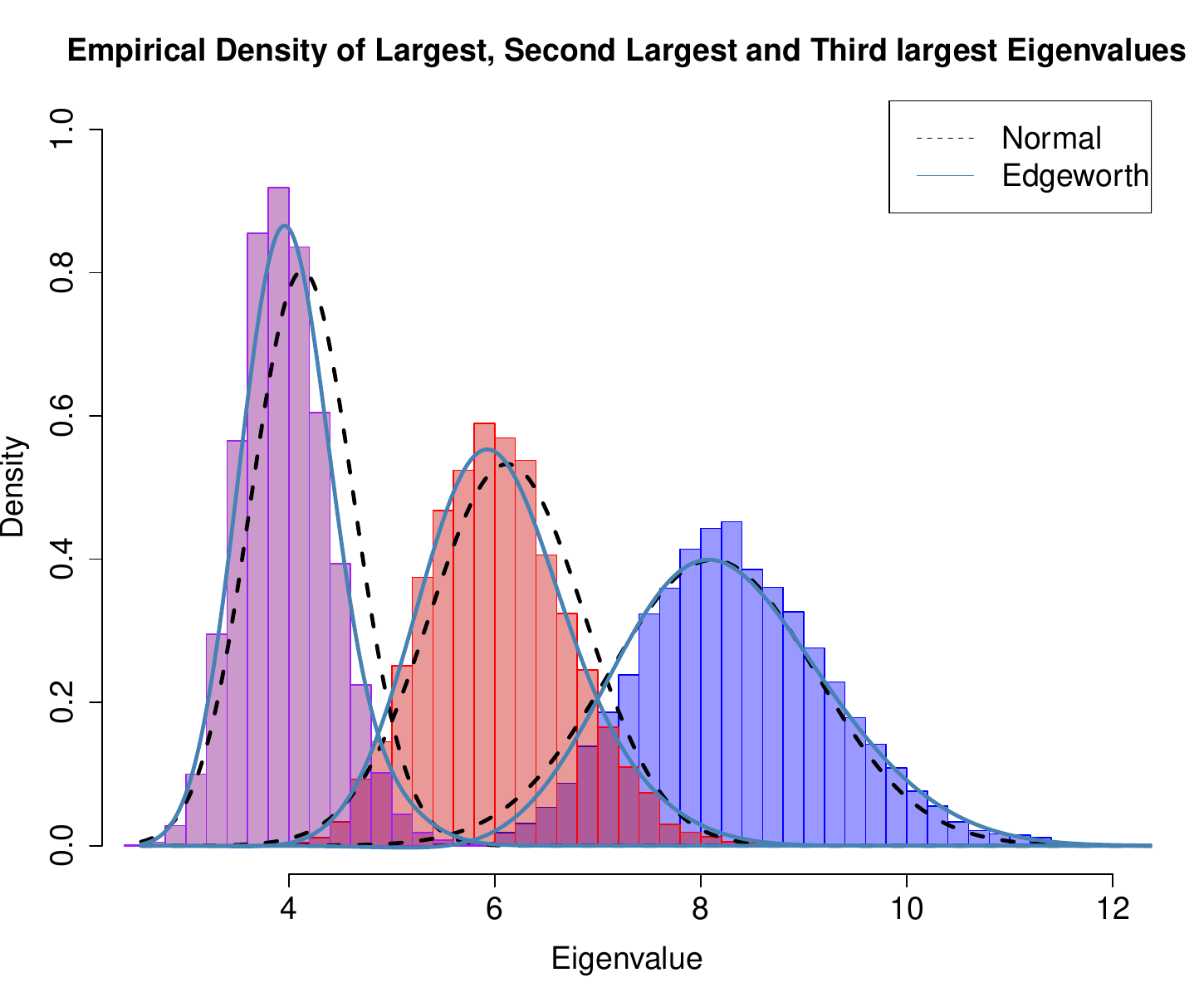}
		\caption{$Ga(3,3),~ l_1=8,~l_2=6,~l_3=4$}
		\label{fig:image22}
	\end{subfigure}
	\caption{ Edgeworth expansion  for different samples under Setting 9}
	\label{fig:both_images9}
\end{figure}


\end{document}